\DeclareMathAlphabet{\mathpzc}{OT1}{pzc}{m}{it}
\newtheorem{theorem}{Theorem}[section]
\newtheorem{proposition}[theorem]{Proposition}
\newtheorem{lemma}[theorem]{Lemma}
\newtheorem{corollary}[theorem]{Corollary}
\newtheorem{definition}[theorem]{Definition}
\newtheorem{remark}[theorem]{Remark}
\DeclareMathOperator{\spt}{spt}
\def\bbN{{\mathbb N}}
\def\bbR{{\mathbb R}}
\def\D{{\mathcal D}}
\def\F{{\mathcal F}}
\def\G{{\mathcal G}}
\def\O{{\mathcal O}}
\def\P{{\mathcal P}}
\def\U{{\mathcal U}}
\def\V{{\mathcal V}}
\def\Varphi{{\boldsymbol\varphi}}
\def\Varpsi{{\boldsymbol\psi}}
\def\sbF{\text{\bf\emph{F}}}
\def\rA{{\rm A}}
\def\rE{{\rm E}}
\def\rH{{\rm H}}
\def\rO{{\rm O}}
\def\rP{{\rm P}}
\def\rT{{\rm T}}
\def\rU{{\rm U}}
\def\re{{\rm e}}
\def\rg{{\rm g}}
\def\rk{{\rm k}}
\def\rw{{\rm w}}
\def\mC{{\mathscr C}}
\definecolor{grey}{rgb}{0.5,0.5,0.5}
\definecolor{lightgrey}{rgb}{0.9,0.9,0.9}
\definecolor{darkgreen}{rgb}{0,0.6,0}
\definecolor{orange}{rgb}{1,0.5,0}
\definecolor{lightpink}{rgb}{1,0.714,0.757}
\definecolor{lightorange}{rgb}{1,0.855,0.725}
\def\Holder{{H\"{o}lder}}
\def\Poincare{{Poincar\'e}}
\def\div{{\operatorname{div}}}
\def\curl{{\operatorname{curl}}}
\def\bdy #1{\partial\hspace{1pt} #1}
\def\cls #1{\overline {#1}}
\def\id{{\text{Id}}}
\def\supp{{\text{spt}}}
\def\cptsubset{\hspace{1pt}{\subset\hspace{-2pt}\subset}\hspace{1pt}}
\def\contsubset{\hspace{1pt}{\hookrightarrow}\hspace{1pt}}
\def\bp{{\overline{\partial}\hspace{1pt}}}
\def\p{{\partial\hspace{1pt}}}
\def\bdygrad {\nabla^{\hspace{-.5pt}\scriptscriptstyle{\p\hspace{-1pt}\Omega}}}
\def\bdydiv {\div^{\scriptscriptstyle{\p\hspace{-1pt}\Omega}}}
\def\bpD{{\nabla_{\text{\tiny\hspace{-3pt}$\bdy\hspace{-1pt}\D$}}}}
\def\n{{\rm n}}
\def\cptspt{{c}}
\def\Forall{\forall\hspace{2pt}}
\def\Rn{{\bbR^{\hspace{0.2pt}\n}}}
\def\comm#1#2{{\llbracket#1,#2\rrbracket}}
\def\bigcomm#1#2{{\big\llbracket#1,#2\big\rrbracket}}
\def\({{(\hspace{-2pt}(}}
\def\){{)\hspace{-2pt})}}
\def\texorpdfstring#1#2{{#1}} 
\def\smallexp#1{{\text{\small #1}}}
\def\dfrac#1#2{\smallexp{$\displaystyle{}\frac{#1}{#2}$}}
\def\footnoteexp#1{{\text{\footnotesize #1}}}
\def\cvl{{\convolution}}
\def\novertwo{\text{\small $\displaystyle{}\frac{\n}{2}$}}
\def\XXint#1#2#3{{\setbox0=\hbox{$#1{#2#3}{\int}$}
\vcenter{\hbox{$#2#3$}}\kern-.5\wd0}}
\title[Regularity Theory for Elliptic Equations of Hodge-Type]{Solvability and regularity for an elliptic system prescribing the curl, divergence, and partial trace of a vector field on Sobolev-class domains}
\author[C.H. A. Cheng]{C.H. Arthur Cheng}
\address{Department of Mathematics, National Central University, Jhongli City, Taoyuan County, 32001, Taiwan ROC}
\email{cchsiao@math.ncu.edu.tw}
\author[S. Shkoller]{Steve Shkoller}
\address{Mathematical Institute,
University of Oxford,
Andrew Wiles Building,
Radcliffe Observatory Quarter,
Woodstock Road,
Oxford, OX2 6GG, UK}
\email{shkoller@maths.ox.ac.uk}
\subjclass{35J57, 58A14}
\date{July 31, 2014}
\begin{document}

\maketitle
{\small
\tableofcontents}

\def\f{\text{\bf\emph{f}}}
\def\g{\text{\bf\emph{g}}}
\def\nn{\text{\bf\emph{n}}}
\def\u{\text{\bf\emph{u}}}
\def\v{\text{\bf\emph{v}}}
\def\w{\text{\bf\emph{w}}}
\def\bfw{{\mathbf w}}
\def\bfn{{\mathbf n}}
\def\h{\text{\bf\emph{h}}}
\def\bN{{\bf N}}
\def\bT{{\bf T}}
\def\wtrN{{\widetilde{\bN}}}

\section{Introduction}\label{sec:introduction}
\subsection{The main results}
Given a sufficiently smooth Sobolev-class bounded domain $ \Omega \subseteq \bbR^\n $ and forcing functions $\f $and $g$ in $\Omega$ together with
boundary data given by
either $h$ or $\h$ on
$\bdy \Omega$, we establish the basic elliptic estimates for the vector elliptic system of Hodge-type:
\begin{subequations}\nonumber
\begin{alignat}{2}
\operatorname{curl} \v &= \f \qquad&&\text{in}\quad \Omega\,,\\
\operatorname{div} \v &= g &&\text{in}\quad \Omega\,,
\end{alignat}
\end{subequations}
with boundary conditions given by either
$$
\v \cdot \bN =h \ \text{ or } \ \v \times \bN = \h \ \ \ \text{on}\quad \bdy\Omega \,,
$$
where $\bN$ is the unit normal vector on $\bdy \Omega $.   When the domain $\Omega$ is of class $\mC^{k+1}$, elliptic estimates for solutions $\v$ in
$H^{k+1}(\Omega)$ are now classical.  We extend this well-known theory to the case of domains $\Omega$ of Sobolev class $H^{k+1}$.

We first establish the following
\begin{theorem}\label{thm:main_thm1}
Let $\Omega\subseteq \bbR^3$ be a bounded $H^{\rk+1}$-domain with $\rk > \smallexp{$\displaystyle{}\frac{3}{2}$}$. Given $\f,g\in H^{\ell-1}(\Omega)$ with
$ \operatorname{div} \f=0$,
consider the equations
\begin{subequations}\label{div_curl}
\begin{alignat}{2}
\operatorname{curl} \v &=\f \qquad&&\text{in}\quad \Omega\,,\\
\operatorname{div} \v &= g &&\text{in}\quad \Omega\,.
\end{alignat}
\end{subequations}
\begin{enumerate}
\item[{\rm(1)}] If $\f$ satisfies
    \begin{equation}\label{solvability_condition_for_normal_trace_problem}
    \int_\Gamma \f\cdot \bN \,dS = 0 \quad\text{for each connected component $\Gamma$ of $\bdy\Omega$}\,,
    \end{equation}
    and $h\in H^{\ell-0.5}(\bdy\Omega)$ satisfies
    $ \smallexp{$\displaystyle{}\int_{\bdy\Omega}$} h\, dS= \smallexp{$\displaystyle{}\int_\Omega$} g\, dx $,
    then, for $1\le \ell \le \rk$, there exists a solution $\v\in H^\ell(\Omega)$ to {\rm(\ref{div_curl})} with boundary condition 
    \begin{equation}\label{bc1}
    \v \cdot \bN = h \qquad\text{on}\quad \bdy\Omega\,,
    \end{equation}
    such that
    $$
    \qquad\quad \|\v\|_{H^\ell(\Omega)} \le C(|\bdy\Omega|_{H^{\rk+0.5}}) \Big[\|\f\|_{H^{\ell-1}(\Omega)} + \|g\|_{H^{\ell-1}(\Omega)} + \|h\|_{H^{\ell-0.5}(\bdy\Omega)} \Big]\,.
    $$
    The solution is unique if $\Omega$ is the disjoint union of simply connected open sets.\vspace{.1cm}

\item[{\rm(2)}] If $\f$ satisfies {\rm(\ref{solvability_condition_for_normal_trace_problem})} and $\f \cdot \bN = \bdydiv  \h$ on $ \bdy\Omega$ {\rm(}where $\bdydiv $ denotes the surface divergence operator {\rm}defined in Definition \ref{defn:tangential_derivative}{\rm)}, $\h\in H^{\ell-0.5}(\bdy\Omega)$ satisfies $\h\cdot \bN = 0$ as well as
    \begin{equation}\label{solvability_condition_for_tangential_trace_problem}
    \begin{array}{c}
    \qquad\smallexp{$\displaystyle{}\int_\Sigma$} \f \cdot \bfn \, dS = \smallexp{$\displaystyle{}{\oint}_{\hspace{-3pt}\bdy\Sigma}$} (\bN \times \h)\cdot d {\bf r} \text{ if $\Sigma \subseteq \cls\Omega$ has piecewise smooth boundary $\bdy \Sigma \subseteq \bdy\Omega$}\\
    \qquad\qquad\qquad\qquad\qquad\qquad\text{ with unit normal $\bfn$, compatible with the orientation of $\bdy\Sigma$,}
    \end{array}
    \end{equation}
    then, for $1\le \ell \le \rk$, there exists a solution $\v\in H^\ell(\Omega)$ to {\rm(\ref{div_curl})} with boundary condition 
    \begin{equation}\label{bc2}
    \v \times \bN =\h \qquad\text{on}\quad \bdy\Omega\,,
    \end{equation}
    such that
    $$
    \|\v\|_{H^\ell(\Omega)} \le C(|\bdy\Omega|_{H^{\rk+0.5}}) \Big[\|\f\|_{H^{\ell-1}(\Omega)} + \|g\|_{H^{\ell-1}(\Omega)} + \|\h\|_{H^{\ell-0.5}(\bdy\Omega)} \Big]\,.
    $$
 The solution is unique if each connected component of $\Omega$  has a connected boundary.
\end{enumerate}
\end{theorem}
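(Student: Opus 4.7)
I would prove existence in both parts by a Hodge-type decomposition $\v = \nabla\phi + \curl\mathbf{A}$, reducing each subproblem to a scalar or vector Poisson equation on the $H^{\rk+1}$-domain $\Omega$ and invoking the elliptic theory for such domains established earlier in the paper. The scalar potential $\phi$ absorbs the divergence $g$ together with part of the boundary data, while the vector potential $\mathbf{A}$ encodes the curl $\f$ and the remaining boundary data. Uniqueness under the stated topological hypotheses then follows from a standard integration-by-parts argument applied to a homogeneous solution: in case~(1), $\v = \nabla\phi$ for a Neumann-harmonic $\phi$, hence locally constant; in case~(2), $\v = \nabla\phi$ with $\phi$ locally constant on the connected boundary, again forcing $\v = 0$.

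\textbf{Part~(1).} First solve the Neumann problem $\Delta\phi = g$ in $\Omega$ with $\nabla\phi\cdot\bN = h$ on $\bdy\Omega$, whose solvability is guaranteed by the compatibility $\int_{\bdy\Omega} h\,dS = \int_\Omega g\,dx$ and whose $H^{\ell+1}$-estimate supplies the desired control of $\nabla\phi$. The residual $\w := \v - \nabla\phi$ must then satisfy $\curl\w = \f$, $\div\w = 0$, and $\w\cdot\bN = 0$. Writing $\w = \curl\mathbf{A}$ under the gauge $\div\mathbf{A} = 0$ turns $\curl\curl\mathbf{A} = \f$ into the vector Poisson system $-\Delta\mathbf{A} = \f$, equipped with boundary conditions encoding $\w\cdot\bN = 0$ and the gauge. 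Condition (\ref{solvability_condition_for_normal_trace_problem}) is precisely the flux obstruction that must vanish on each connected boundary component for this auxiliary problem to admit a solution.

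\textbf{Part~(2).} Solve instead a Dirichlet-type scalar problem for $\phi$ consistent with the prescribed tangential trace, and set $\w = \v - \nabla\phi = \curl\mathbf{A}$ under the complementary gauge $\mathbf{A}\times\bN = 0$. This again reduces to a vector Poisson system for $\mathbf{A}$, and the Stokes-type compatibility (\ref{solvability_condition_for_tangential_trace_problem}) together with the surface-divergence identity $\f\cdot\bN = \bdydiv\h$ is exactly the cohomological condition needed for its solvability on an $\Omega$ whose connected components may be multiply connected. Summing the $H^\ell$-estimates for $\nabla\phi$ and $\curl\mathbf{A}$ yields the stated bound for $\v$.

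\textbf{Main obstacle.} The principal analytic difficulty lies in the low boundary regularity: after flattening $\bdy\Omega$ in local charts of class $H^{\rk+1}$, the transformed Laplacian acquires coefficients only in $H^{\rk}$. Since $\rk > \frac{3}{2}$ gives both $H^{\rk}\hookrightarrow L^\infty$ and the algebra property of $H^{\rk}$, the elliptic estimates can in principle be pushed through, but the associated product and commutator bounds must be justified carefully, and at the top level $\ell = \rk$ one typically needs a tangential difference-quotient or duality argument to recover the final derivative. A secondary obstacle is the rigorous use of the divergence and Stokes theorems on an $H^{\rk+0.5}$-boundary, required to identify (\ref{solvability_condition_for_normal_trace_problem})--(\ref{solvability_condition_for_tangential_trace_problem}) as necessary-and-sufficient solvability conditions rather than merely sufficient ones, and to verify that the constructed $\mathbf{A}$ indeed returns the prescribed trace of $\v$ after taking $\curl$.
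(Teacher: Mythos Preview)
Your overall strategy---Hodge decomposition $\v = \nabla\phi + \curl\mathbf{A}$, scalar Neumann/Dirichlet problem for $\phi$, vector Poisson problem for $\mathbf{A}$, then elliptic regularity with $H^{\rk}$ coefficients---is exactly the architecture of the paper's proof in Section~5. Your identification of the main obstacle (Sobolev-class coefficients after flattening) is also correct and is what Section~3 is built for.

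Two concrete points where your outline diverges from, or underestimates, what the paper actually does:

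\textbf{Part~(1), the vector potential on a general domain.} You write that the gauge $\div\mathbf{A}=0$ together with $\curl\mathbf{A}\cdot\bN=0$ gives boundary conditions for $-\Delta\mathbf{A}=\f$. The paper shows (via Lemma~4.1 and (\ref{wid2})) that these translate into $\rP_{\bN^\perp}\mathbf{A}=0$ and the Robin condition $\partial\mathbf{A}/\partial\bN\cdot\bN + 2\rH(\mathbf{A}\cdot\bN)=0$, where $\rH$ is the mean curvature of $\bdy\Omega$. On a general $H^{\rk+1}$-domain $\rH$ may be negative, so the associated bilinear form is not coercive and you cannot solve (\ref{HodgeEQ1}) directly. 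The paper's fix is not a local chart argument but a global one: extend $\f$ divergence-free to a large ball $B(0,R)\supset\Omega$ (this is where condition (\ref{solvability_condition_for_normal_trace_problem}) is actually used, via (\ref{divF=0a})), solve (\ref{HodgeEQ2}) on the ball where $\rH=R^{-1}>0$, take $\curl$, and then correct the normal trace on $\bdy\Omega$ by adding a gradient. The resulting $\u$ is only $H^1$ a priori (the extended forcing $\F$ is merely $L^2$ across $\bdy\Omega$); the $H^\ell$-regularity is recovered \emph{afterward} by recognizing that $\u$ is a weak solution of the mixed system (\ref{curlcurlu=curlf}) and invoking Corollary~\ref{cor:vector-valued_elliptic_eq_Sobolev_coeff}. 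This two-stage ``solve on a ball, then upgrade regularity on $\Omega$'' is the step your outline does not anticipate.

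\textbf{Part~(2).} The paper does not write the residual as $\curl\mathbf{A}$ with gauge $\mathbf{A}\times\bN=0$. Instead it subtracts a vector Dirichlet extension $\w$ of $\bN\times\h$ and a scalar Dirichlet potential $p$, reducing to (\ref{ufromcurldiv_tan2}): $\curl\u=\f-\curl\w$, $\div\u=0$, $\u\times\bN=0$. Existence of $\u\in H^1$ is then imported from Amrouche--Bernardi--Dauge--Girault (Lemma~\ref{thm:existence_to_tangential_trace_problem}), with condition (\ref{solvability_condition_for_tangential_trace_problem}) used to verify their cut-flux hypothesis. Regularity again comes a posteriori from the elliptic system (\ref{curlcurlu=curlf2}). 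Your proposed route through a vector potential with tangential gauge may work, but it is not what is done here, and you would face the same sign issue in the Robin term.
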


\begin{remark}\label{rmk:solvability_condition}
To explain condition {\rm(\ref{solvability_condition_for_normal_trace_problem})}, let $\Omega$ be a connected bounded open set, and let $\{\Gamma_i\}_{i=0}^I$ denote the connected components of $\bdy\Omega$ in which $\Gamma_0$ is the boundary of the unbounded connected component of $\Omega^\complement$.  For each $i=1,...,I$, let  $q_i$ be the solution to
\begin{subequations}\label{q_eq}
\begin{alignat}{2}
\Delta q_i &= 0 &&\text{in}\quad\Omega\,,\\
q_i &= \delta_{ij} \qquad&&\text{on}\quad \Gamma_j\,.
\end{alignat}
\end{subequations}
Then if $\u$ satisfies $\curl \u = \f$ in $\Omega$ for some divergence-free vector $\f$,  applying the divergence theorem and then integrating-by-parts, shows
that
\begin{align*}
\int_{\Gamma_i} \f \cdot \bN dS &= \int_{\Gamma_i} (\f \cdot \bN) q_i dS = \int_{\bdy\Omega} (\f \cdot \bN) q_i dS = \int_\Omega q_i \div \f dx + \int_\Omega \f \cdot \nabla q_i dx \\
&= \int_\Omega \curl \u \cdot \nabla q_i dx = \int_{\bdy\Omega} (\bN \times \u) \cdot \nabla q_i dS = \int_{\bdy\Omega} (\bN \times \u) \cdot \bdygrad q_i dS = 0\,.
\end{align*}
Therefore, for  $\curl \u = \f$ to be solvable, it is necessary that
$$
\int_{\Gamma_i} \f\cdot \bN \,dS = 0 \qquad\Forall i \in \{1,\cdots,I\}\,.
$$
Then, an application of the divergence theorem, together with the fact that $ \operatorname{div} \f=0$, shows that
$\smallexp{$\displaystyle{}\int_{\Gamma_0}$} \f \cdot \bN \, dS = 0$.  In other words, {\rm(\ref{solvability_condition_for_normal_trace_problem})} is a
necessary condition for the solvability of {\rm(\ref{div_curl}a)}. We will show that it is also one of the sufficient conditions to solve {\rm(\ref{div_curl})} with
the boundary conditions  {\rm(\ref{bc1})} or {\rm(\ref{bc2})}.
\end{remark}

The  problem (\ref{div_curl}) with either boundary conditions (\ref{bc1}) or (\ref{bc2}) has been well studied. The characterization of the kernel of both
problems, the solvability conditions, and the existence theory has been developed in a number of papers;  see, for example, \cite{AgDoNi1964},
\cite{FoTe1978}, \cite{Ge1979}, \cite{Picard1982}, \cite{BeDoGa1985}, \cite{AmBeDaGi1998}, \cite{KoYa2009}, \cite{AmSe2013}, and the references therein.   The
inequalities given in Theorem \ref{thm:main_thm1} are new for Sobolev-class domains.

Motivated by the analysis of the free-boundary problems which arise in inviscid fluid dynamics,
we next state a theorem which provides two fundamental elliptic estimates set on Sobolev-class domains:

\begin{theorem}\label{thm:main_thm2}
Let $\Omega\subseteq \Rn$, $\n=2$ or $3$, be a bounded $H^{\rk+1}$-domain with $\rk > \novertwo$\,. Then there exists a generic constant $C$ depending on $|\bdy\Omega|_{H^{\rk+0.5}}$ such that for all $\u\in H^{\rk+1}(\Omega)$,
\begin{align}
\|\u\|_{H^{\rk+1}(\Omega)}&\le C \Big[\|\u\|_{L^2(\Omega)} + \|\curl \u\|_{H^\rk(\Omega)} + \|\div \u\|_{H^\rk(\Omega)}
+ \|\bdygrad \u\cdot \bN\|_{H^{\rk-0.5}(\bdy\Omega)}\Big]\,, \label{Hodge_elliptic_estimate1} \\
\|\u\|_{H^{\rk+1}(\Omega)} &\le C \Big[\|\u\|_{L^2(\Omega)} + \|\curl \u\|_{H^\rk(\Omega)} + \|\div \u\|_{H^\rk(\Omega)}
+ \|\bdygrad \u\times \bN\|_{H^{\rk-0.5}(\bdy\Omega)}\Big]\,, \label{Hodge_elliptic_estimate2}
\end{align}
where $\bdygrad \u$ is the tangential derivative on $\bdy \Omega$ {\rm(}defined in Definition \ref{defn:tangential_derivative}{\rm)}.
\end{theorem}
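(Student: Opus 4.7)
The plan is to prove estimate \eqref{Hodge_elliptic_estimate1} in detail; estimate \eqref{Hodge_elliptic_estimate2} follows from the same scheme with the tangential boundary component replacing the normal one. The strategy is a localization, followed by an inductive tangential energy argument on a flattened boundary chart, capped off by a normal-derivative trade-off that uses the div-curl equations.

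First I would pick a finite open cover $\{U_i\}_{i=0}^L$ of $\overline\Omega$ with $U_0\cptsubset\Omega$ and each $U_i$ ($i\ge 1$) carrying an $H^{\rk+1}$-diffeomorphism $\Theta_i$ mapping the upper half-ball $B^+$ onto $\Omega\cap U_i$ and straightening $\bdy\Omega\cap U_i$ onto $\{x_\n=0\}$; by construction the bounds on $\Theta_i$, $\Theta_i^{-1}$ and their first $\rk+1$ derivatives depend only on $|\bdy\Omega|_{H^{\rk+0.5}}$. Let $\{\xi_i\}$ be a subordinate partition of unity. On the interior patch $U_0$, standard elliptic theory applied to the vector identity $-\Delta(\xi_0\u)=\xi_0(\curl\curl\u-\nabla\div\u)+[\Delta,\xi_0]\u$ bounds $\|\xi_0\u\|_{H^{\rk+1}(\Omega)}$ by the right-hand side of \eqref{Hodge_elliptic_estimate1}.

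For a boundary patch, write $\w=\u\circ\Theta_i$ on $B^+$. The core step I would establish, by induction on $\rk$, is the tangential energy identity
\begin{equation*}
\int_{B^+}|\nabla\partial_\tau^\alpha\w|^2\,dx=\int_{B^+}\bigl(|\curl\partial_\tau^\alpha\w|^2+|\div\partial_\tau^\alpha\w|^2\bigr)\,dx+\mathscr B+\mathscr C,\qquad|\alpha|\le\rk,
\end{equation*}
where $\partial_\tau$ differentiates in $x_1,\dots,x_{\n-1}$, $\mathscr B$ is the boundary contribution from Green's identity, and $\mathscr C$ gathers commutators between $\partial_\tau^\alpha$ and the $H^\rk$ metric coefficients generated by the chart. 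After unwinding $\Theta_i$, the term $\mathscr B$ is expressible through tangential derivatives of the scalar trace $\u\cdot\bN$ on $\bdy\Omega$, i.e.\ through $\|\bdygrad\u\cdot\bN\|_{H^{\rk-0.5}(\bdy\Omega)}$ plus lower-order curvature terms. At this stage every derivative of $\w$ of order $\le\rk+1$ that contains \emph{no} normal differentiation is controlled. Those containing at least one normal derivative are recovered by reading the transformed div-curl system algebraically: the $\n$-th equation expresses $\partial_\n\w^\n$ in terms of $\div\u$ and tangential derivatives of $\w$, while $\curl\u$ yields $\partial_\n\w^\alpha$ for $\alpha<\n$ in terms of tangential derivatives of $\w^\n$ plus $\curl\u$. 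Differentiating these algebraic relations tangentially and iterating delivers control of every derivative of $\w$ of order $\le\rk+1$.

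The main technical obstacle is the commutator term $\mathscr C$, whose worst contribution is a product of derivatives of $\Theta_i$ (lying in $H^\rk$) with up to $\rk+1$ derivatives of $\w$. The hypothesis $\rk>\novertwo$ supplies the Sobolev embedding $H^\rk\hookrightarrow L^\infty$ and the bilinear product estimates needed to bound $\mathscr C$ by $C(|\bdy\Omega|_{H^{\rk+0.5}})(\|\u\|_{L^2(\Omega)}+\|\curl\u\|_{H^{\rk-1}(\Omega)}+\|\div\u\|_{H^{\rk-1}(\Omega)})+\varepsilon\|\u\|_{H^{\rk+1}(\Omega)}$, the $\varepsilon$-term being absorbed on the left. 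Summing the patches $\xi_i\u$ and invoking the inductive hypothesis at level $\rk-1$ for the remaining lower-order terms produces the global bound \eqref{Hodge_elliptic_estimate1}. Estimate \eqref{Hodge_elliptic_estimate2} is obtained by exactly the same bookkeeping, with Green's identity on the flattened boundary producing tangential derivatives of $\u\times\bN$ instead of $\u\cdot\bN$, yielding the trace norm $\|\bdygrad\u\times\bN\|_{H^{\rk-0.5}(\bdy\Omega)}$.
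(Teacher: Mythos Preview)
Your overall architecture (localize, control tangential derivatives first, then recover normal derivatives algebraically from the div--curl equations) is sound and matches the paper, but the engine you use in the boundary charts is genuinely different and, as written, has a real gap.

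\textbf{What the paper actually does.} The paper does \emph{not} use the energy identity $\int|\nabla v|^2=\int(|\curl v|^2+|\div v|^2)+\mathscr B$ on $\partial_\tau^\alpha\w$. Instead it introduces, for each tangential direction $\sigma$, the corrected unknown $\text{\bf\emph{w}}_\sigma=(\widetilde\zeta_m\widetilde\u_m)_{,\sigma}-A^r_i\boldsymbol\phi_{\sigma,r}$, where $\boldsymbol\phi_\sigma$ solves an auxiliary Neumann problem that forces the homogeneous condition $\text{\bf\emph{w}}_\sigma\cdot\widetilde\bN=0$. The key geometric input is Lemma~\ref{lem:tangential_component_of_dwdN}, whose identity $\rP_{\bN^\perp}(\partial\w/\partial\bN)=(\curl\w)\times\bN+\bdygrad\w\cdot\bN$ converts the tangential part of the Neumann data for $\text{\bf\emph{w}}_\sigma$ into $(\f\times\bN)$-type terms plus curvature terms. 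One then applies the vector-valued elliptic regularity of Theorem~\ref{thm:vector-valued_elliptic_eq_Sobolev_coeff} (with $\bfw=\widetilde\bN$) to the mixed Dirichlet/Neumann system for $\text{\bf\emph{w}}_\sigma$. This packages all commutator and product estimates into the already-proved machinery of Section~\ref{sec:vector-valued_elliptic_eq}.

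\textbf{Where your sketch is too quick.} Your claim that $\mathscr B$ ``is expressible through tangential derivatives of the scalar trace $\u\cdot\bN$'' is the crux and is not correct as stated. The boundary integrand in the Gaffney identity on $\Omega$ (see the paper's identity (\ref{int_curlcurl_id})) contains, besides terms in $v\cdot\bN$, a \emph{curvature} quadratic form $g^{\alpha\beta}g^{\gamma\delta}b_{\alpha\gamma}(v\cdot\vartheta_{,\beta})(v\cdot\vartheta_{,\delta})$ in the tangential components of $v=\partial_\tau^\alpha\w$, and a cross term $(v\cdot\bN)\,\bdydiv(\rP_{\bN^\perp}v)$ that pairs the normal trace with a \emph{top-order} tangential derivative of the tangential part. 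Neither is ``expressible through $\bdygrad\u\cdot\bN$''. The cross term can be handled by integrating by parts on $\bdy\Omega$ and using an $H^{-1/2}$--$H^{1/2}$ duality plus Young's inequality to absorb a small multiple of $\|\u\|_{H^{\rk+1}}$, but you have to say this. The curvature term is more delicate: for $\n=3$, $\rk=2$ the second fundamental form is only $H^{0.5}(\bdy\Omega)\not\subset L^\infty$, so you cannot simply bound $\int b|v|^2$; the paper handles the analogous term via the fractional product estimate (\ref{product_Hr_est}) with an interpolation index $s=\max\{\rk-1,\rk/2+\n/4\}$ and then absorbs $\|\u\|_{H^{s+1}}$ by interpolation. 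Your sketch also elides the distinction between the Euclidean $\curl,\div$ of $\w$ on $B^+$ and the pullbacks of $\curl\u,\div\u$; the difference is a top-order term multiplied by $A-\mathrm{Id}$, which must be made small by choosing charts as in Proposition~\ref{prop:Cr_domain_charts} (you never invoke this). With these repairs your route is viable, but as written the boundary-term step is a genuine gap.
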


\begin{remark}
The inequalities {\rm(\ref{Hodge_elliptic_estimate1})} and {\rm(\ref{Hodge_elliptic_estimate2})} play a fundamental role in the regularity theory of the Euler equations with moving interfaces; see, for example, \cite{CoSh2007} for the incompressible setting and \cite{CoSh2012} for the compressible problem with vacuum. The use of the norm $ \|\bdygrad \u\cdot \bN\|_{H^{\rk-0.5}(\bdy\Omega)}$ rather than $ \| \u\cdot \bN\|_{H^{\rk+0.5}(\bdy\Omega)}$ is crucial,
as the regularity of the normal vector to field to $\bdy \Omega $ is often worse than the regularity of the velocity vector $\u$.

On the other hand,
if $\Omega$ is at least of class $H^{k+2}$ then the inequalities {\rm(\ref{Hodge_elliptic_estimate1})} and {\rm(\ref{Hodge_elliptic_estimate2})} can be replaced, respectively, by
\begin{align}
\|\u\|_{H^{\rk+1}(\Omega)} &\le C \Big[\|\u\|_{L^2(\Omega)} + \|\curl \u\|_{H^\rk(\Omega)} + \|\div \u\|_{H^\rk(\Omega)}
+ \| \u\cdot \bN\|_{H^{\rk+0.5}(\bdy\Omega)}\Big] \label{Hodge_elliptic_estimate1_smooth} \\
\|\u\|_{H^{\rk+1}(\Omega)} &\le C \Big[\|\u\|_{L^2(\Omega)} + \|\curl \u\|_{H^\rk(\Omega)} + \|\div \u\|_{H^\rk(\Omega)}
+ \| \u\times \bN\|_{H^{\rk+0.5}(\bdy\Omega)}\Big] \label{Hodge_elliptic_estimate2_smooth}
\end{align}
\end{remark}

\begin{remark} Recently, Amrouche \& Seloula \cite{AmSe2013} established the inequalities {\rm(\ref{Hodge_elliptic_estimate1_smooth})} and
 {\rm(\ref{Hodge_elliptic_estimate2_smooth})} in the $L^p$ framework
and for domains $\Omega$ of class $\mC^{\rk+1}$; see Corollary 3.5 in \cite{AmSe2013}. Of course, in the case of a $\mC^{\rk+1}$ domain $\Omega$,
the inequalities {\rm(\ref{Hodge_elliptic_estimate1})} and
 {\rm(\ref{Hodge_elliptic_estimate2})} follow immediately from {\rm(\ref{Hodge_elliptic_estimate1_smooth})} and
 {\rm(\ref{Hodge_elliptic_estimate2_smooth})}, respectively.
 \end{remark}

When $ \Omega $ is very close to a $ \mC^\infty$-domain, we can obtain these inequalities for fractional-order Sobolev spaces, as in the following
\begin{theorem}\label{thm:main_thm3}
Let $\Omega\subseteq \Rn$, $\n=2$ or $3$, be a bounded $H^{s+1}$-domain with $s \in \bbR $ such that $s > \novertwo$\,, and let $ \mathcal{D} $ denote a
$\mC^ \infty $-domain such that the distance between $\bdy \mathcal{D} $ and $ \bdy \Omega $ in the $H^{s+0.5}$-norm is less than $ \epsilon $
for $ 0 < \epsilon \ll 1$.
 Then there exists a generic constant $C$ depending only on $|\bdy \mathcal{D} |_{H^{s+0.5}}$, such that for all $\u\in H^{s+1}(\Omega)$,
\begin{align}
\|\u\|_{H^{s+1}(\Omega)}&\le C \Big[\|\u\|_{L^2(\Omega)} + \|\curl \u\|_{H^s(\Omega)} + \|\div \u\|_{H^s(\Omega)}
+ \|\bdygrad \u\cdot \bN\|_{H^{s-0.5}(\bdy\Omega)}\Big]\,, \label{Hodge_elliptic_estimate1b} \\
\|\u\|_{H^{s+1}(\Omega)} &\le C \Big[\|\u\|_{L^2(\Omega)} + \|\curl \u\|_{H^s(\Omega)} + \|\div \u\|_{H^s(\Omega)}
+ \|\bdygrad \u\times \bN\|_{H^{s-0.5}(\bdy\Omega)}\Big]\,, \label{Hodge_elliptic_estimate2b}
\end{align}
where $\bdygrad \u$ is the tangential derivative on $\bdy \Omega$.
\end{theorem}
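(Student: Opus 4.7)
The plan is to combine the integer-order estimates of Theorem~\ref{thm:main_thm2}, applied on the smooth reference domain $\mathcal{D}$, with Sobolev-space interpolation, and then to transfer the resulting fractional-order estimate from $\mathcal{D}$ to $\Omega$ by pulling the vector field back along a diffeomorphism that is $\epsilon$-close to the identity. The smallness of $\epsilon$ will be used to absorb the error terms produced by the change of variables.

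First I would work entirely on the fixed smooth domain $\mathcal{D}$. Because $\mathcal{D}$ is of class $\mathcal{C}^\infty$, Theorem~\ref{thm:main_thm2} (or its classical counterpart from \cite{AmSe2013}) yields, for every integer $k > \novertwo$,
\begin{equation*}
\|\v\|_{H^{k+1}(\mathcal{D})} \le C_k \Big[\|\v\|_{L^2(\mathcal{D})} + \|\curl \v\|_{H^k(\mathcal{D})} + \|\div \v\|_{H^k(\mathcal{D})} + \|\bdygrad \v \cdot \bN_{\mathcal{D}}\|_{H^{k-0.5}(\bdy\mathcal{D})}\Big],
\end{equation*}
together with the analogous tangential-trace version. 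Viewing this as a lower bound on the bounded linear operator $\v \mapsto (\v, \curl \v, \div \v, \bdygrad \v \cdot \bN_{\mathcal{D}})$, complex interpolation of Sobolev spaces (which is standard on the smooth domain $\mathcal{D}$) delivers the same inequality for all real $s > \novertwo$, with the constant controlled by $|\bdy\mathcal{D}|_{H^{s+0.5}}$.

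Next I would build a near-identity diffeomorphism $\Psi : \mathcal{D} \to \Omega$ realizing the hypothesized boundary closeness. The hypothesis supplies a map $\varphi : \bdy\mathcal{D} \to \bdy\Omega$ with $|\varphi - \id|_{H^{s+0.5}(\bdy\mathcal{D})} < \epsilon$; harmonic extension (or any smooth extension operator on $\mathcal{D}$) produces $\Psi$ satisfying $\|\Psi - \id\|_{H^{s+1}(\mathcal{D})} \le C\epsilon$. Given $\u \in H^{s+1}(\Omega)$, I define the covariant pullback $\tilde{\u} := (D\Psi)^{T}(\u \circ \Psi) \in H^{s+1}(\mathcal{D})$, which is chosen precisely so that $\curl$, $\div$, and the tangential/normal trace components on $\mathcal{D}$ are related to their counterparts on $\Omega$ via explicit multiplications and compositions with $D\Psi$ and its minors. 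Applying the interpolated estimate on $\mathcal{D}$ to $\tilde{\u}$, unwinding these identities, and invoking the multiplication and composition lemmas available because $s > \novertwo$, yields the right-hand side of (\ref{Hodge_elliptic_estimate1b}) plus remainders carrying a factor of $\|\Psi - \id\|_{H^{s+1}(\mathcal{D})} \le C\epsilon$; for $\epsilon$ sufficiently small these remainders are absorbed into the left-hand side, and (\ref{Hodge_elliptic_estimate2b}) is handled identically.

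The principal obstacle I anticipate is the bookkeeping of pullback errors when $\Psi$ has only $H^{s+1}$-regularity, and in particular the treatment of the boundary term $\bdygrad \u \cdot \bN$: the surface gradients and unit normals of $\bdy\Omega$ and $\bdy\mathcal{D}$ differ by $O(\epsilon)$ in the relevant fractional norms, but making this precise requires careful fractional product and commutator estimates on a rough surface whose regularity sits right at the threshold $s > \novertwo$. A secondary subtlety is justifying the interpolation step for the boundary piece $\|\bdygrad \v \cdot \bN_\mathcal{D}\|_{H^{k-0.5}(\bdy\mathcal{D})}$; this is legitimate precisely because $\bN_\mathcal{D}$ and the tangential derivative are smooth multipliers on the smooth surface $\bdy\mathcal{D}$ and therefore commute cleanly with the interpolation functor.
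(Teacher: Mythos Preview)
Your approach is essentially the paper's: construct a near-identity diffeomorphism $\mathcal{D}\to\Omega$ by harmonically extending the boundary height function, pull the vector field back, apply Theorem~\ref{thm:main_thm2} on the smooth domain $\mathcal{D}$, and use the smallness of $\epsilon$ to absorb the discrepancy terms. The only differences are cosmetic: the paper uses the plain composition $\u\circ\psi$ rather than the covariant pullback $(D\Psi)^{T}(\u\circ\Psi)$, and it performs the transfer at an integer level $\rk$ first and invokes interpolation only at the very end, whereas you interpolate on $\mathcal{D}$ first and then transfer at fractional regularity. The paper's order has the mild advantage that all the product and commutator bookkeeping stays at integer order, which sidesteps precisely the fractional-surface subtleties you anticipate in your final paragraph.
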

The inequalities (\ref{Hodge_elliptic_estimate1b}) and (\ref{Hodge_elliptic_estimate2b}) set in fractional-order
Sobolev spaces are fundamental to the analysis of Euler-type free-boundary problems. We remark that
$\bdy \Omega $ is assumed to be in a small tubular neighborhood of the normal bundle over $\bdy \mathcal{D} $; hence, there is height function
$h(x,t)$ such that each point on $\bdy \Omega$ is given by $x+ h(x)\text{\bf\emph{n}}(x)$, $x \in \bdy \mathcal{D} $, where $\text{\bf\emph{n}}$ is the outward-pointing unit normal to $\bdy\mathcal{D} $. The assumption that the distance between $\bdy \mathcal{D} $ and $ \bdy \Omega $ in the $H^{s+0.5}$-norm is less than $ \epsilon \ll1$ means that we assume that $\|h\|_{H^{s+0.5}(\bdy\D)} < \epsilon \ll 1$

\subsection{Outline of the paper} In Section \ref{sec::notation}, we introduce our notation as well as a number of elementary technical lemmas,
whose proofs we include (for completeness) in Appendix \ref{appendixA}. Section \ref{sec:vector-valued_elliptic_eq} is devoted to the analysis of
the vector-valued elliptic system (\ref{vector-valued_elliptic_eq}a) with mixed-type boundary conditions (\ref{vector-valued_elliptic_eq}b) and
(\ref{vector-valued_elliptic_eq}c), which is fundamental to the proof of our two main theorems; in particular, we prove Theorem \ref{thm:vector-valued_elliptic_eq_Sobolev_coeff} which establishes the elliptic estimate for (\ref{vector-valued_elliptic_eq}) when the coefficients are of Sobolev-class.
 As a corollary to this theorem, we state in Corollary
\ref{cor:scalar_elliptic_eq_Sobolev_coeff} the basic elliptic estimates for both the Dirichlet and Neumann problems, again with Sobolev class regularity.
Finally, for coefficients which are close to the identity, we give an improved estimate in Theorem \ref{thm_linear_est} for solutions
to (\ref{vector-valued_elliptic_eq}), which is linear in the highest
derivatives of the coefficient matrix. This latter theorem is essential for estimates in fractional-order Sobolev spaces via linear interpolation.

In Section \ref{sec:Hodge_elliptic_estimate}, we prove Theorem \ref{thm:main_thm2}, using the elliptic regularity theory developed for the elliptic system
 (\ref{vector-valued_elliptic_eq}). Then, in Section \ref{sec:vector_decomp}, we prove Theorem \ref{thm:main_thm1}. Our proof relies on some basic
 geometric identities involving the mean curvature of $\bdy\Omega$, together with the elliptic regularity theory established in Section \ref{sec:vector-valued_elliptic_eq}. Finally, in Section \ref{sec6}, we prove Theorem \ref{thm:main_thm3}.

\subsection{A brief history of prior results} In addition to the recent work of Amrouche \& Seloula \cite{AmSe2013} noted above, there have been many
other methods and results to study such elliptic systems on smooth domains. The elliptic system (\ref{div_curl}) can be viewed as a particular example
of the systems studied by
Agmon, Douglis \& Nirenberg \cite{AgDoNi1964}, wherein both Schauder-type estimates and $L^p$-estimates can be found.

In \cite{Wa1992}, 
von Wahl proved that if the normal or the tangential trace of a vector field vanishes, and for bounded or unbounded $ \Omega $, the inequality
$\|\nabla u\|_{L^p(\Omega)} \le C \big(\|\div u\|_{L^p(\Omega)} + \|\curl u\|_{L^p(\Omega)})$
holds when the first Betti number or the second Betti number, respectively, is equal to zero.

Vector potentials and the characterization of the kernel of problem (\ref{div_curl}) with boundary conditions (\ref{bc1}) or (\ref{bc2}) have been obtained
by Foias \& Temam \cite{FoTe1978}, Georgescu \cite{Ge1979}, Bendali, Dom{\'{\i}}nguez, \& Gallic \cite{BeDoGa1985},
Amrouche, Bernardi, Dauge \& Girault \cite{AmBeDaGi1998}, and Amrouche, Ciarlet \& Ciarlet Jr. \cite{AmCiCi2010}.

Amrouche \& Girault \cite{AmGi1994} derived the $L^p$-regularity theory of the steady Stokes equation by establishing the equivalency between the Sobolev space $W^{m,r}$ and the direct sum of $W^{m,r}$ by divergence-free vector fields and the gradients of $W^{m+1,r}$ functions.

Schwarz \cite{Sc1995}, studied the Hodge decomposition on manifolds with boundaries and showed that a differential $k$-forms can be written as the sum of an exact form, a coexact form, and a harmonic form.

Bolik \& von Wahl \cite{BoWa1997} derived $\mC^\alpha$-estimates of the gradient of a vector field whose curl, divergence, and normal or tangential traces are prescribed. Mitrea, Mitrea \& Pipher \cite{MiMiPi1997} studied the vector potential theory on non-smooth domains in {$\bbR^3$} with applications to electromagnetic scattering.

Buffa and Ciarlet Jr. \cite{BuCi2001a} and \cite{BuCi2001b} established the Hodge decomposition of tangential vector fields defined on polyhedron domains, and studied the tangential trace and tangential components of vectors belonging to the space $H(\curl,\Omega) := \big\{u\in L^2(\Omega;\bbR^3)\,\big|\, \curl u \in L^2(\Omega;\bbR^3)\big\}$.

In \cite{KoYa2009}, Kozono and Yanagisawa proved the decomposition of a divergence-free vector-field as the sum of the curl of a vector-field and a vector-field which is solenoidal, irrotational and has zero normal trace.

\section{Notation and Preliminary Results}\label{sec::notation}
The Einstein summation convention is used throughout the paper. In particular, repeated Latin indices are summed
from $1$ to $\n$, and repeated Greek indices are summed from $1$ to $\n-1$. For example, $f_i g_i = \sum\limits_{i=1}^\n f_ig_i$ and
$ f_\alpha g_\alpha = \sum\limits_{i=1}^{\n-1} f_\alpha g_\alpha $.   The gradient operator is denoted by $ \nabla = ( \p_1\,, ... \,, \p_\n)$.
Below, we shall also define various tangential derivative operators.

\subsection{$\mC^\rk$-domain}
We recall that a domain $\Omega \subseteq \bbR^\n$ is said to be of class $\mC^r$ if $\bdy\Omega$ is an $(\n-1)$-dimensional $\mC^r$-manifold; that is, there exists an open cover $\{\U_m\}_{m=1}^K \subseteq \bbR^\n$ of $\bdy\Omega$ and a collection of $\mC^r$-maps $\{\phi_m\}_{m=1}^K$ such that for each $1\le m\le K$,
$$
\phi_m: \U_m \cap \bdy\Omega \to \V_m \subseteq \bbR^{\n-1}
$$
is one-to-one, onto, and has a $\mC^r$-inverse map for some open subset $\V_m$ of $\bbR^{\n-1}$. For practical point of view, we have the following
\begin{proposition}\label{prop:Cr_domain_charts}
Let $\Omega\subseteq \bbR^\n$ be a $\mC^\rk$-domain for some $\rk \in \bbN$, and $\varepsilon > 0$ be given.
Then there exists a collection of open sets $\{\U_m\}_{m=0}^K$ with each $\U_m \subseteq  \bbR^\n$ , a collection of $\mC^\rk$-maps $\{\vartheta_m\}_{m=1}^K$, and
positive numbers $\{r_m\}_{m=1}^K$ such that
$$
\Omega \subseteq \bigcup\limits_{m=0}^K \U_m \quad\text{and}\quad \bdy\Omega \subseteq \bigcup\limits_{m=1}^K \U_m\,,
$$
and for each $1 \le m\le K$,
\begin{enumerate}
\item[\rm1.] $\vartheta_m: B(0, r_m) \to \U_m \text{ is a } \mC^\rk\text{-diffeomorphism} $;
\item[\rm2.] $\vartheta_m: B(0,r_m) \cap \{y_n=0\} \to \U_m \cap \p \Omega $;
\item[\rm3.] $\vartheta_m: B^+_m \equiv B(0,r_m) \cap \{y_\n>0\} \to \U_m \cap \Omega $;
\item[\rm4.] $\det (\nabla \vartheta_m) =1 $;
\item[\rm5.] $\|\nabla \vartheta_m - \text{\rm Id}\|_{L^\infty(B(0,r_m))} \le \varepsilon$.
\end{enumerate}
\end{proposition}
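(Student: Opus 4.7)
The plan is to construct each boundary chart as a shear map in coordinates adapted to the local normal of $\bdy\Omega$; shears are volume preserving (condition (4)) and are close to the identity on a small ball (condition (5)). For each $p \in \bdy\Omega$, the $\mathcal{C}^\rk$-domain hypothesis supplies a rigid motion $M_p$ of $\bbR^\n$ (translation composed with a rotation $R_p \in SO(\n)$ chosen so that $R_p e_n$ coincides with the inward unit normal to $\bdy\Omega$ at $p$) and a $\mathcal{C}^\rk$-function $f_p$ with $f_p(0) = 0$ and $\nabla' f_p(0) = 0$, such that in the adapted coordinates $\tilde{x} = M_p^{-1}(x)$ the domain $\Omega$ is locally $\{\tilde{x}_\n > f_p(\tilde{x}')\}$. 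Define the shear $\Phi_p(y) = (y', y_\n + f_p(y'))$ on $B(0, r_p)$, set $\vartheta_p := M_p \circ \Phi_p$, and let $\U_p := \vartheta_p(B(0, r_p))$. Then $\vartheta_p$ is a $\mathcal{C}^\rk$-diffeomorphism satisfying $\det \nabla \vartheta_p = \det R_p \cdot \det \nabla \Phi_p = 1$ (the Jacobian of $\Phi_p$ is lower triangular with unit diagonal), and $\vartheta_p$ carries $\{y_\n = 0\}$ onto $\bdy\Omega \cap \U_p$ and $\{y_\n > 0\}$ onto $\Omega \cap \U_p$ by the explicit formula for $\Phi_p$. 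Continuity of $\nabla' f_p$ together with $\nabla' f_p(0) = 0$ lets us shrink $r_p$ so that $\|\nabla \Phi_p - \text{Id}\|_{L^\infty(B(0, r_p))} < \varepsilon$; condition (5) for $\vartheta_p$ then follows by reading it in the orthonormal frame on the domain ball that is adapted to $R_p$, in which frame $\nabla \vartheta_p = \nabla \Phi_p$.

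Compactness of $\bdy\Omega$ extracts a finite subcover $\{\U_m\}_{m=1}^K$ of $\bdy\Omega$ from $\{\U_p\}_{p \in \bdy\Omega}$. We complete the cover by taking $\U_0 := \{x \in \Omega : \dist(x, \bdy\Omega) > \delta\}$ for $\delta$ small enough that $\Omega \subseteq \U_0 \cup \bigcup_{m=1}^K \U_m$; such $\delta$ exists by a standard Lebesgue-number argument applied to the open cover $\{\U_m\}_{m=1}^K$ of the compact set $\bdy\Omega$.

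The main obstacle is the simultaneous satisfaction of conditions (4) and (5): (4) forbids volume distortion, while (5) forbids composing with the rotation one would naturally introduce to align $\bN(p)$ with $e_\n$ at a generic boundary point. The shear $\Phi_p$ threads both needles—it is both volume preserving and $O(r_p)$-close to the identity—with the rigid motion $M_p$ serving only to position and orient the chart in $\bbR^\n$ and being absorbed into the intrinsic frame on the domain ball when verifying (5). All remaining verifications reduce to the implicit function theorem applied to the $\mathcal{C}^\rk$-graph representation of $\bdy\Omega$ near each boundary point.
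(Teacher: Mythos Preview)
Your proof is correct and follows essentially the same approach as the paper's: both construct each boundary chart as a shear $(y',y_\n)\mapsto (y',y_\n+\phi(y'))$ in local graph coordinates at $p$, exploit $\nabla'\phi(0)=0$ and continuity to get condition~(5) on a small ball, and read off the unit determinant from the triangular structure. The only presentational difference is that the paper works directly in the $\{e_1,\dots,e_{\n-1},\bN\}$ frame from the outset, whereas you introduce the rigid motion $M_p$ explicitly and then absorb it by passing to the adapted frame when verifying~(5); your phrasing ``frame on the domain ball'' would be more precisely stated as using the $R_p$-rotated basis on the \emph{target} $\bbR^\n$ (so that the matrix of $\nabla\vartheta_p$ becomes $\nabla\Phi_p$), which is exactly what the paper does implicitly.
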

The proof of this proposition is given in  \ref{appendixA}.

\subsection{$H^s$-domain}\label{sec:Hs-domains}
In order to make our presentation self-contained, in this section, we collect a
number of useful technical lemmas. These lemmas are well-known when the domains are smooth, but we shall need these basic results for Sobolev class domains.
The proofs will be collected in Appendix \ref{appendixA}. For the remainder of this section, when not explicitly stated, $s$ will denote a real number, while $0\le \rk,
\ell$ will denote integers. We use the term domain to mean an open connected subset of $ \Rn$.

\begin{definition}\label{defn:Hs_domain}
Let $\Omega \subseteq \Rn$ be a bounded domain, and $s > \novertwo + 1$ be a real number. $\Omega$ is said to be an $H^s$-domain, or of class $H^s$, if there exists a bounded $\mC^\infty$-domain $\rO$ and a map $\psi$ such that $\psi:\cls{\rO} \to \cls{\Omega}$ is an $H^s$-diffeomorphism; that is,
\begin{enumerate}
\item $\psi:\cls{\rO} \to \cls{\Omega}$ is continuous;
\item $\psi:\rO \to \Omega$ is one-to-one and onto, with differentiable inverse map $\psi^{-1}: \Omega \to \rO$;
\item $\psi:\bdy\rO \to \bdy\Omega$ is one-to-one and onto, with differentiable inverse map $\psi^{-1}:\bdy\Omega \to \bdy\rO$;
\item $\psi\in H^{s}(\rO; \Omega)$ and $\psi ^{-1} \in H^{s}(\Omega ; \rO)$.
\end{enumerate}
By the trace theorem, $\psi|_{\bdy \rO} \in H^{s-0.5}(\bdy \rO; \bdy \Omega)$ and we shall often denote the value of this norm by $|\bdy \Omega|_{H^{s-0.5}}$.
\end{definition}

\begin{definition}
For $s > \novertwo + 1$, given a local chart $(\U, \vartheta)$ as defined in Proposition \ref{prop:Cr_domain_charts},
the induced metric in the local chart $(\U,\vartheta)$ is the $(0,2)$-tensor $g_{\alpha\beta}$ given by
$$
g_{\alpha\beta} = \dfrac{\p\vartheta}{\p y_\alpha} \cdot \dfrac{\p\vartheta}{\p y_\beta}\,,
$$
and the induced second-fundamental form in a local chart $(\U,\vartheta)$ is the $(0,2)$-tensor $b_{\alpha\beta}$ given by
$$
b_{\alpha\beta} = - \dfrac{\p^2 \vartheta}{\p y_\alpha\p y_\beta} \cdot (\bN \circ \vartheta^{-1})\,,
$$
where $\bN$ is the outward-pointing unit normal to $\bdy\Omega$.
\end{definition}

\begin{definition}[Tangential gradient and surface divergence operators]\label{defn:tangential_derivative}
For $s > \novertwo + 1$,\vspace{.1cm} let $\Omega \subseteq \Rn$ be a bounded $H^s$-domain.
We let $\bdygrad$ denote the \underline{tangential gradient of a function on $\bdy\Omega$}.
If $\varphi: \bdy\Omega \to \bbR$ is differentiable, then in local chart $(\U,\vartheta)$, $\bdygrad \varphi$ is given by
$$
(\bdygrad \varphi)\circ \vartheta = g^{\alpha\beta} \dfrac{\p (\varphi\circ \vartheta)}{\p y_\alpha} \dfrac{\p \vartheta}{\p y_\beta}\,,
$$
where $[g^{\alpha\beta}]$ is the inverse matrix of the induced metric $[g_{\alpha\beta}]$, and $\Big\{{\dfrac{\p \vartheta}{\p y_\beta}}\Big\}_{\beta=1}^2$ are tangent vectors to $\bdy \Omega$.

We define the surface divergence operator $\bdydiv $ to be the formal adjoint of $-\bdygrad$;
if $\u$ is a tangent vector field on $\bdy\Omega$ so that $\u \cdot \bN = 0$ on $\bdy\Omega$, then
$$
- \int_{\bdy\Omega} \u \cdot \bdygrad \varphi \,dS = \int_{\bdy\Omega} \varphi\, \bdydiv  \u \,dS \qquad\Forall \varphi\in H^1(\bdy\Omega)\,.
$$
In a local chart $(\U,\vartheta)$,
$$
(\bdydiv  \u)\circ \vartheta = \dfrac{1}{\sqrt{\rg}} \dfrac{\p}{\p y_\alpha} \Big[\sqrt{\rg} g^{\alpha\beta} \big((\u \circ \vartheta) \cdot \dfrac{\p \vartheta}{\p y_\beta}\big) \Big]\,,
$$
where $\rg = \det(g)$ is the determinant of the induced metric $[g_{\alpha\beta}]$.
\end{definition}

\begin{definition}[Tangential projection of a vector field onto $\bdy\Omega$] With $\bN$ denote the outward unit normal vector field to
$\bdy\Omega$ and $\v:\bdy\Omega \to \bbR^\n$, we define
$\rP_{\bN^\perp}:\Rn \to \Rn$ to be the tangential projection operator given by
\begin{equation}\label{defn:tan_proj}
\rP_{\bN^\perp}(\v) = \v - (\v\cdot \bN)\, \bN= \big(\operatorname{Id}  - \bN\otimes \bN\big) \v \,.
\end{equation}
We will also write $\underline \v$ for $\rP_{\bN^\perp}(\v) $.
\end{definition}

\begin{definition}[Various tangential derivatives]
We let $\u:\bdy\Omega \to \bbR^\n$ and $\bfw:\bdy\Omega \to \bbR^\n$ denote vector-valued functions, and let $\underline\bfw$ be given by {\rm(\ref{defn:tan_proj})}.
\begin{enumerate}
\item[\rm1.] $\bdygrad_{\underline\bfw} \u$ denotes the directional derivatives of $\u$ in
    the direction $\underline\bfw$. In a local chart $(\U,\vartheta)$,
    $$
    \big[\bdygrad_{\underline\bfw} \u\big] \circ \vartheta = g^{\alpha\beta} \dfrac{\p (\u \circ \vartheta)}{\p y_\alpha} \dfrac{\p \vartheta^j}{\p y_\beta} (\bfw^j\circ \vartheta) = g^{\alpha\beta} \Big[\dfrac{\p \vartheta}{\p y_\beta} \cdot (\bfw\circ \vartheta)\Big] \dfrac{\p (\u \circ \vartheta)}{\p y_\alpha}\,.
    $$

\item[\rm2.] $\bdygrad \u = ( \bdygrad \u^1\,, ...\,, \bdygrad \u^\n)$, and  $\bdygrad \u \cdot \bfw =\sum\limits_{i=1}^\n \bfw^i\, (\bdygrad \u^i )\, $, so that $\bdygrad \u \cdot \bfw $ is a vector in the tangent space of $\bdy\Omega$. In a local chart $(\U,\vartheta)$,
    $$
    (\bdygrad \u \cdot \bfw) \circ \vartheta = g^{\alpha\beta} \dfrac{\p (\u^j\circ \vartheta)}{\p y_\alpha} \dfrac{\p \vartheta^i}{\p y_\beta} (\bfw^j\circ \vartheta) = g^{\alpha\beta} \Big[\dfrac{\p (\u\circ \vartheta)}{\p y_\alpha} \cdot (\bfw\circ \vartheta)\Big] \dfrac{\p \vartheta^i}{\p y_\beta}\,.
    $$
    The product rule holds:
    $$
    \bdygrad \u \cdot \bfw = \bdygrad (\u \cdot \bfw) - \bdygrad \bfw \cdot \u\,.
    $$

\item[\rm3.] $\bdygrad \u \times \bfw$  is defined to be the linear map satisfying (at each point of $\bdy\Omega$)
    $$
    (\bdygrad \u \times \bfw) \v = ( \bdygrad _{\underline\v }\u) \times \bfw
    $$
    for all $\v\in \bbR^\n$. In a local chart $(\U,\vartheta)$,
    \begin{align*}
    (\bdygrad \u \times \bfw)_{ij} \circ \vartheta = \varepsilon_{irs} g^{\alpha\beta} \frac{\p \vartheta^j}{\p y_\beta} \frac{\p (\u^r\circ \vartheta)}{\p y_\alpha} (\bfw^s \circ \vartheta)\,.
    \end{align*}
\end{enumerate}
\end{definition}

\subsection{Basic inequalities}\label{sec:useful_ineq}
We now state some basic inequalities, that we use throughout the paper.
\begin{proposition}\label{prop:HkHl_product}
For $\rk > \novertwo$ and $0 \le \ell\le \rk$,
let $\Omega\subseteq \Rn$ be a bounded $\mC^\infty$-domain. Then for all $\sigma \in \big(0,\smallexp{$\displaystyle{}\frac{1}{4}$}\big)$,\vspace{.1cm} there exists a constant $C_\sigma$ depending on $\sigma$ such that for all $f\in H^\rk(\Omega)$ and $g\in H^\ell(\Omega)$,
\begin{equation}\label{commutator_estimate_elliptic_est_temp}
\sum_{j=1}^\ell \|\nabla^j f \nabla^{\ell - j} g\|_{L^2(\Omega)} \le C_\sigma \|f\|_{H^\rk(\Omega)} \|g\|_{H^{\ell-\sigma}(\Omega)} \,.
\end{equation}
Moreover, for some generic constant $C>0$,
\begin{equation}\label{HkHl_product}
\|fg\|_{H^\ell(\Omega)} \le C \|f\|_{H^\rk(\Omega)} \|g\|_{H^\ell(\Omega)} \qquad\Forall f\in H^\rk(\Omega), g\in H^\ell(\Omega)\,.
\end{equation}
\end{proposition}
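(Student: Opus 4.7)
The plan is to derive both estimates from H\"older's inequality combined with the Sobolev embedding theorem on the smooth domain $\Omega$, using the Leibniz rule to reduce $\|fg\|_{H^\ell}$ to sums of $L^2$-norms of products of derivatives of $f$ and $g$. Since $\Omega$ is a $\mC^\infty$-domain, standard Sobolev embeddings $H^s(\Omega) \hookrightarrow L^p(\Omega)$ and the norm equivalence $\|u\|_{H^\ell}^2 \simeq \|u\|_{L^2}^2 + \|\nabla^\ell u\|_{L^2}^2$ are available with constants depending only on $\Omega$.

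First I would establish (\ref{commutator_estimate_elliptic_est_temp}) termwise. For each fixed $j \in \{1,\ldots,\ell\}$, H\"older's inequality gives
\[
\|\nabla^j f\,\nabla^{\ell-j} g\|_{L^2(\Omega)} \le \|\nabla^j f\|_{L^{p_j}(\Omega)}\,\|\nabla^{\ell-j} g\|_{L^{q_j}(\Omega)}
\]
with $\tfrac{1}{p_j} + \tfrac{1}{q_j} = \tfrac{1}{2}$. The strategy is to pick the H\"older exponents so that the Sobolev embeddings $H^{\rk-j}(\Omega) \hookrightarrow L^{p_j}(\Omega)$ and $H^{j-\sigma}(\Omega) \hookrightarrow L^{q_j}(\Omega)$ both hold, which then reduces the two factors to $\|f\|_{H^\rk}$ and $\|g\|_{H^{\ell-\sigma}}$ respectively. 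The relevant scaling conditions are
\[
\tfrac{1}{p_j} \ge \tfrac{1}{2} - \tfrac{\rk-j}{\n}, \qquad \tfrac{1}{q_j} \ge \tfrac{1}{2} - \tfrac{j-\sigma}{\n},
\]
with $L^\infty$-endpoints allowed whenever the regularity index exceeds $\n/2$. Adding these shows the H\"older constraint $\tfrac{1}{p_j}+\tfrac{1}{q_j}=\tfrac{1}{2}$ is attainable precisely when $\rk - \sigma \ge \n/2$, which in the relevant dimensions $\n = 2, 3$ with integer $\rk > \n/2$ is ensured by $\rk - \n/2 \ge \tfrac{1}{2} > \sigma$ for any $\sigma \in (0,\tfrac{1}{4})$. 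Summing over $j$ yields (\ref{commutator_estimate_elliptic_est_temp}).

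For (\ref{HkHl_product}) I would expand via the Leibniz rule,
\[
\nabla^\ell(fg) = \sum_{j=0}^\ell \binom{\ell}{j}\, \nabla^j f \otimes \nabla^{\ell-j} g,
\]
and use $\|fg\|_{H^\ell}^2 \simeq \|fg\|_{L^2}^2 + \|\nabla^\ell(fg)\|_{L^2}^2$. The zeroth-order piece $\|fg\|_{L^2}$ and the $j = 0$ term $\|f\,\nabla^\ell g\|_{L^2}$ are both controlled by $\|f\|_{L^\infty}\|g\|_{H^\ell} \le C\|f\|_{H^\rk}\|g\|_{H^\ell}$ via the Sobolev embedding $H^\rk(\Omega) \hookrightarrow L^\infty(\Omega)$, valid since $\rk > \n/2$. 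The remaining terms $1 \le j \le \ell$ are handled by applying (\ref{commutator_estimate_elliptic_est_temp}) with any fixed $\sigma \in (0,\tfrac{1}{4})$ and using the trivial bound $\|g\|_{H^{\ell-\sigma}} \le \|g\|_{H^\ell}$.

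The main obstacle lies in the endpoint case $j = \ell$ of (\ref{commutator_estimate_elliptic_est_temp}), particularly when $\ell = \rk$: here $\nabla^\ell f$ has only $L^2$-regularity, so the full $L^\infty$-burden must be placed on $g$ via an embedding $H^{\ell-\sigma} \hookrightarrow L^\infty$ that requires $\ell-\sigma > \n/2$. This is precisely why a strictly positive $\sigma$ is needed and why $C_\sigma$ blows up as $\sigma \to 0^+$ in borderline situations. All intermediate values of $j$ use strictly subcritical Sobolev embeddings and pose no further difficulty, so the remainder is routine bookkeeping of H\"older exponents and Sobolev indices.
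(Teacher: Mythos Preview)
Your proposal is correct and follows essentially the same strategy as the paper: estimate each term $\|\nabla^j f\,\nabla^{\ell-j}g\|_{L^2}$ by H\"older's inequality with exponents chosen so that Sobolev embeddings control the two factors by $\|f\|_{H^{\rk}}$ and $\|g\|_{H^{\ell-\sigma}}$, then handle the $j=0$ term via $H^{\rk}\hookrightarrow L^\infty$ to obtain (\ref{HkHl_product}). Two minor remarks: your restriction to $\n=2,3$ is unnecessary, since for any integer $\rk>\n/2$ one has $\rk-\n/2\ge 1/2>\sigma$; and the paper organizes the case analysis slightly differently, invoking Gagliardo--Nirenberg interpolation in the intermediate range $\n/2<j\le \rk-\n/2$, whereas your uniform choice of H\"older exponents avoids this and is somewhat cleaner.
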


\begin{remark}
Suppose that $s > \novertwo$\vspace{.1cm} and $0\le r\le s$ for some real numbers $r$ and $s$. Then there exists a generic constant $C_s > 0$ such that
\begin{equation}\label{product_Hr_est_Rn}
\|fg\|_{H^r(\Rn)} \le C_s \|f\|_{H^s(\Rn)} \|g\|_{H^r(\Rn)}\qquad\Forall f\in H^s(\Rn), g\in H^r(\Rn)\,.
\end{equation}
By the Sobolev extension argument, we also conclude that
\begin{equation}\label{product_Hr_est}
\|fg\|_{H^r(\Omega)} \le C_s \|f\|_{H^s(\Omega)} \|g\|_{H^r(\Omega)} \qquad\Forall f\in H^s(\Omega), g\in H^r(\Omega)
\end{equation}
if $\Omega$ is a bounded $\mC^\infty$-domain.
\end{remark}

The following corollary is a direct consequence of Proposition \ref{prop:HkHl_product} since by Leibniz's rule,
$$
\comm{\nabla^\ell}{f} g = \sum_{j=1}^\ell \footnoteexp{$\displaystyle{}{{\ell}\choose{j}}$} \nabla^j f \nabla^{\ell-j} g\,.
$$
\begin{corollary}\label{cor:useful_lemma_with_Sobolev_class_coeff}
Let $\Omega\subseteq \Rn$ be a bounded $\mC^\infty$-domain for some integer $\rk > \novertwo$\,.
\begin{enumerate}
\item[\rm 1.] Suppose that $\supp(g) \cptsubset \Omega$. Then for $0 < \sigma < \smallexp{$\displaystyle{}\frac{1}{4}$}$ and $1\le \ell \le \rk+1$,
  \begin{equation}\label{commutator_estimate_elliptic_est1}
   \big\|\comm{\nabla^\ell}{f} g\big\|_{L^2(\Omega)} \le C_\sigma \|f\|_{H^{\max\{\rk,\ell\}}(\Omega)} \|g\|_{H^{\ell-\sigma}(\Omega)}\,,
  \end{equation}
  where $\comm{\nabla^\ell}{f} g = \nabla^\ell (f g) - f \nabla^\ell g$.

\item[\rm 2.] Suppose that $\zeta$ is a smooth cut-off function such that
\begin{enumerate}
\item[\rm(a)] $\supp(\zeta) \subseteq \U$;
\item[\rm(b)] there exists a $\mC^\infty$-diffeomorphism $\vartheta: B(0,1) \to \U$ satisfying
\begin{enumerate}
\item[\rm(i)] $\vartheta: B^+(0,1) \equiv B(0,1)\cap \{y_\n > 0\} \to \U \cap \Omega$;
\item[\rm(ii)] $\vartheta: \{y_\n = 0\} \to \bdy\Omega$.
\end{enumerate}
\end{enumerate}
Define $F = (\zeta f)\circ \vartheta$ and $G = (\zeta g)\circ \vartheta$. Then for $0< \sigma < 1/4$, $1\le \ell \le \rk+1$,
\begin{equation}\label{commutator_estimate_elliptic_est2}
\big\|\comm{\bp^\ell}{F} G\big\|_{L^2(B^+(0,r))} \le C_\sigma \|f\|_{H^{\max\{\rk,\ell\}}(\Omega)} \|g\|_{H^{\ell-\sigma}(\Omega)}\,,
\end{equation}
where $\comm{\bp^\ell}{F} G = \bp^\ell(F G) - F \bp^\ell G$ and $\bp = (\p_{y_1}, \cdots, \p_{y_{\n-1}})$ denotes the tangential gradient.
\end{enumerate}
\end{corollary}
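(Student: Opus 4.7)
Both estimates follow directly from the product inequality (\ref{commutator_estimate_elliptic_est_temp}) of Proposition \ref{prop:HkHl_product} combined with Leibniz's rule
\[
[\nabla^\ell, f]\,g \;=\; \sum_{j=1}^{\ell} \binom{\ell}{j}\, \nabla^j f \;\nabla^{\ell-j} g,
\]
as already indicated in the remark preceding the statement. The only subtlety is enlarging the range from $\ell \le \rk$ to $1 \le \ell \le \rk+1$, which is precisely the reason the norm $H^{\max\{\rk,\ell\}}$, rather than $H^\rk$, appears on $f$.

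For part 1, since $\spt(g) \cptsubset \Omega$, the extension of $g$ by zero preserves its Sobolev regularity, and we may treat the product $\nabla^j f \,\nabla^{\ell-j} g$ as compactly supported in $\Omega$. Setting $\rk' := \max\{\rk,\ell\}$, we have $\rk' > \novertwo$ and $\ell \le \rk'$, so Proposition \ref{prop:HkHl_product} applies verbatim with $\rk'$ in place of $\rk$. Summing the Leibniz terms and using the triangle inequality gives (\ref{commutator_estimate_elliptic_est1}) with constant $C_\sigma$ independent of $f$ and $g$.

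For part 2, the plan is to transfer the estimate to the half-ball $B^+(0,1)$ via the diffeomorphism $\vartheta$ and then to the full ball $B(0,1)$ via extension. Because $\vartheta : B(0,1)\to \U$ is a $\mC^\infty$-diffeomorphism and $\zeta$ is a smooth cut-off compactly supported in $\U$, a standard chain-rule computation produces a constant $C = C(\vartheta,\zeta)$ such that
\[
\|F\|_{H^m(B^+(0,1))} \;\le\; C\,\|f\|_{H^m(\Omega)}, \qquad \|G\|_{H^m(B^+(0,1))} \;\le\; C\,\|g\|_{H^m(\Omega)},
\]
for every integer $0\le m \le \rk+1$. Expanding $[\bp^\ell,F]G$ via Leibniz and using the pointwise bound $|\bp^m H|\le |\nabla^m H|$ (since $\bp$ acts only in the first $\n-1$ variables), it suffices to control $\|\nabla^j F\,\nabla^{\ell-j} G\|_{L^2(B^+(0,r))}$ for $1\le j\le \ell$. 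Extending $F$ and $G$ from the Lipschitz half-ball to the smooth bounded domain $B(0,1)$ by Stein's universal extension operator (which is continuous on each $H^m$ with a uniform constant), we invoke (\ref{commutator_estimate_elliptic_est_temp}) on $B(0,1)$, again with $\rk'$ in place of $\rk$. Composing these estimates yields (\ref{commutator_estimate_elliptic_est2}).

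The principal difficulty, and it is a modest one, is ensuring that the coordinate change and the half-ball extension preserve Sobolev regularity uniformly across the required range of integer orders up to $\rk+1$. Both ingredients are standard: smoothness of $\vartheta$ handles the chart change, and Stein's extension theorem for Lipschitz domains handles the extension from $B^+(0,1)$ to $B(0,1)$. Once these bookkeeping steps are in place, the inequality is simply the Leibniz expansion followed by Proposition \ref{prop:HkHl_product}.
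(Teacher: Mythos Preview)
Your proposal is correct and follows exactly the approach indicated in the paper: the corollary is stated there as ``a direct consequence of Proposition \ref{prop:HkHl_product} since by Leibniz's rule $\comm{\nabla^\ell}{f} g = \sum_{j=1}^\ell \binom{\ell}{j} \nabla^j f\, \nabla^{\ell-j} g$,'' with no further argument given. Your treatment is more explicit than the paper's---in particular your handling of the range extension to $\ell=\rk+1$ via $\rk'=\max\{\rk,\ell\}$, and your chart-change plus Stein-extension bookkeeping for part 2---but these are exactly the routine details the paper leaves implicit.
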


The following two corollaries are direct consequences of Proposition \ref{prop:HkHl_product}, and are the foundation of the study of inequalities on $H^s$-domains. The proof of these two corollaries can also be found in Appendix \ref{appendixA}.
\begin{corollary}\label{cor:JA_est}
Let $\rO \subseteq \Rn$ be a bounded $\mC^\infty$-domain, and $\psi:\cls{\rO} \to \cls{\Omega} \subseteq \Rn$ be a $H^{\rk+1}$-diffeomorphism for some integer $\rk > \novertwo$\,. If $J = \det(\nabla \psi)$ and $A = (\nabla \psi)^{-1}$, then
\begin{subequations}\label{JA_est}
\begin{align}
\|J\|_{H^\rk(\rO)} &\le C\big(\|\nabla \psi\|_{H^\rk(\rO)}\big)\,,\\
\|A\|_{H^\rk(\rO)} &\le C\big(\|1/J\|_{L^\infty(\rO)}, \|\nabla \psi\|_{H^\rk(\rO)}\big)\,.
\end{align}
\end{subequations}
\end{corollary}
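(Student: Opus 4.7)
The plan is to reduce everything to two ingredients: the algebra property of $H^\rk(\rO)$ for $\rk > \novertwo$, which is already furnished by inequality (\ref{HkHl_product}), and an estimate of $1/J$ that exploits the $L^\infty$ hypothesis. I first handle $J = \det(\nabla\psi)$. Since the determinant is an $\n$-fold sum of products of entries of $\nabla\psi \in H^\rk(\rO)$, iterating (\ref{HkHl_product}) gives (\ref{JA_est}a) with constant depending only on $\|\nabla\psi\|_{H^\rk(\rO)}$.

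Next I turn to $A = (\nabla\psi)^{-1}$ and apply Cramer's rule, writing $A = J^{-1}\,\mathrm{cof}(\nabla\psi)$. The cofactor matrix has entries that are polynomials of degree $\n-1$ in the entries of $\nabla\psi$, so by the same iteration of (\ref{HkHl_product}) I obtain $\|\mathrm{cof}(\nabla\psi)\|_{H^\rk(\rO)} \le C(\|\nabla\psi\|_{H^\rk(\rO)})$. The work therefore reduces to showing
$$
\|1/J\|_{H^\rk(\rO)} \le C(\|1/J\|_{L^\infty(\rO)}, \|\nabla\psi\|_{H^\rk(\rO)}),
$$
after which one last application of (\ref{HkHl_product}) to the product $J^{-1}\,\mathrm{cof}(\nabla\psi)$ yields (\ref{JA_est}b).

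The step that requires real care is the estimate of $1/J$, since $H^\rk(\rO)$ is not closed under taking reciprocals without an $L^\infty$ lower bound. I will use a Fa\`a di Bruno expansion: for any multi-index $\alpha$ with $|\alpha| = \ell \le \rk$, $\p^\alpha(1/J)$ is a finite linear combination of expressions of the form
$$
(1/J)^{m+1}\,\p^{\beta_1}J \cdots \p^{\beta_m}J, \qquad 1\le m\le \ell,\ \ \beta_1+\cdots+\beta_m=\alpha.
$$
The prefactor $(1/J)^{m+1}$ is bounded in $L^\infty(\rO)$ by $\|1/J\|_{L^\infty}^{m+1}$, while the product of derivatives of $J$ is estimated in $L^2(\rO)$ by the commutator-type bound (\ref{commutator_estimate_elliptic_est_temp}) applied iteratively, which is exactly of the form $\sum_{j=1}^\ell \|\nabla^jf\,\nabla^{\ell-j}g\|_{L^2}$ needed here. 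Summing over $\alpha$ and combining with the $L^\infty$ bound on $1/J$ itself gives the desired $H^\rk$-bound.

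The main obstacle is thus precisely this $1/J$ bound; once it is in hand, the rest of the argument is purely algebraic manipulation via the Banach-algebra property. I expect no conceptual difficulties beyond making sure that, when iterating (\ref{HkHl_product}) and (\ref{commutator_estimate_elliptic_est_temp}), every factor with the highest number of derivatives gets placed in the $H^\ell$-slot and every remaining factor in the $H^\rk$-slot, so that the hypotheses $\ell\le\rk$ of Proposition \ref{prop:HkHl_product} are respected throughout.
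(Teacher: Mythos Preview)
Your proposal is correct, and for part (a) it is identical to the paper's proof. For part (b), however, you take a genuinely different route. The paper never estimates $\|1/J\|_{H^\rk(\rO)}$ at all; instead it uses the pointwise lower bound $|J|\ge\delta:=1/\|1/J\|_{L^\infty(\rO)}$ to write
\[
\|\nabla^\rk A\|_{L^2(\rO)}\le\frac{1}{\delta}\,\|J\,\nabla^\rk A\|_{L^2(\rO)}
\le\frac{1}{\delta}\Big[\|\nabla^\rk(JA)\|_{L^2(\rO)}+\big\|\comm{\nabla^\rk}{J}A\big\|_{L^2(\rO)}\Big],
\]
where $JA=\mathrm{cof}(\nabla\psi)$ is polynomial in $\nabla\psi$. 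The commutator is then controlled via (\ref{commutator_estimate_elliptic_est_temp}) with $\sigma=\tfrac18$, yielding a bound involving $\|A\|_{H^{\rk-1/8}}$; interpolation between $L^2$ and $H^\rk$ followed by Young's inequality absorbs this back into the left-hand side. In contrast, your Fa\`a di Bruno/Moser-type argument attacks $1/J$ head-on and then multiplies by the cofactor matrix. Your approach is more self-contained and produces the intermediate estimate $\|1/J\|_{H^\rk}\le C(\|1/J\|_{L^\infty},\|\nabla\psi\|_{H^\rk})$, which is independently useful; the paper's approach is shorter and dovetails with the interpolation-and-absorption pattern used throughout the regularity arguments. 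One small caution: your invocation of (\ref{commutator_estimate_elliptic_est_temp}) ``iteratively'' for products of $m\ge3$ derivative factors is really a Gagliardo--Nirenberg/Moser product estimate rather than a direct iteration of that two-factor inequality, so you may want to phrase it that way.
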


\begin{corollary}\label{cor:f_comp_psi}
Let $\rO\subseteq \Rn$ be a bounded $\mC^\infty$-domain, and $\psi: \cls{\rO} \to \cls{\Omega}\subseteq \Rn$ be an $H^{\rk+1}$-diffeomorphism for some integer $\rk > \novertwo$\,. Then for all $\ell \le \rk + 1$,
\begin{subequations}\label{f_comp_psi_est}
\begin{alignat}{2}
\|f\|_{H^\ell(\Omega)} &\le C(\|\nabla \psi\|_{H^\rk(\rO)}) \|f\circ\psi\|_{H^\ell(\rO)} \qquad&&\Forall f\in H^\ell(\Omega)\,,\label{f_comp_psi_est1}\\
\|f\circ \psi\|_{H^\ell(\rO)} &\le C(\|\nabla \psi\|_{H^\rk(\rO)}) \|f\|_{H^\ell(\Omega)} \qquad&&\Forall f\in H^\ell(\Omega)\,.\label{f_comp_psi_est2}
\end{alignat}
\end{subequations}
\end{corollary}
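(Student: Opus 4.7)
The plan is to establish the forward estimate (\ref{f_comp_psi_est2}) first by induction on the integer $\ell$, and then deduce the reverse estimate (\ref{f_comp_psi_est1}) by swapping the roles of $\psi$ and $\psi^{-1}$ in the result just obtained.

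\textbf{Induction for (\ref{f_comp_psi_est2}).} The base case $\ell = 0$ is a change of variables:
\[
\|f\circ\psi\|_{L^2(\rO)}^2 = \int_\Omega |f(y)|^2 \,(J\circ\psi^{-1})^{-1}(y)\,dy \le \|1/J\|_{L^\infty(\rO)}\,\|f\|_{L^2(\Omega)}^2.
\]
The embedding $H^\rk(\rO) \hookrightarrow \mC^0(\cls\rO)$ (valid since $\rk > \n/2$) together with the diffeomorphism property of $\psi$ give $J \in \mC^0(\cls\rO)$ bounded away from zero, so $\|1/J\|_{L^\infty(\rO)}$ is finite. For the inductive step, assume (\ref{f_comp_psi_est2}) holds with $\ell - 1$ in place of $\ell$, where $1 \le \ell \le \rk + 1$. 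The chain rule gives $\nabla(f\circ\psi) = ((\nabla f)\circ\psi)\,\nabla\psi$, whence
\[
\|f\circ\psi\|_{H^\ell(\rO)} \le C\Big[\|f\circ\psi\|_{L^2(\rO)} + \|((\nabla f)\circ\psi)\,\nabla\psi\|_{H^{\ell-1}(\rO)}\Big].
\]
Because $\ell - 1 \le \rk$ and $\nabla\psi \in H^\rk(\rO)$ with $\rk > \n/2$, the multiplier inequality (\ref{HkHl_product}) of Proposition \ref{prop:HkHl_product} yields
\[
\|((\nabla f)\circ\psi)\,\nabla\psi\|_{H^{\ell-1}(\rO)} \le C\|\nabla\psi\|_{H^\rk(\rO)}\,\|(\nabla f)\circ\psi\|_{H^{\ell-1}(\rO)},
\]
and the inductive hypothesis applied componentwise to $\nabla f \in H^{\ell-1}(\Omega)$ closes the step.

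\textbf{Deducing (\ref{f_comp_psi_est1}).} Since $\psi$ is an $H^{\rk+1}$-diffeomorphism, so is $\psi^{-1}$ by Definition \ref{defn:Hs_domain}. Writing $f = (f\circ\psi)\circ\psi^{-1}$ and applying the already-established (\ref{f_comp_psi_est2}) with $\psi$ replaced by $\psi^{-1}$ gives
\[
\|f\|_{H^\ell(\Omega)} \le C(\|\nabla\psi^{-1}\|_{H^\rk(\Omega)})\,\|f\circ\psi\|_{H^\ell(\rO)}.
\]
The constant is then reinterpreted as depending on $\|\nabla\psi\|_{H^\rk(\rO)}$ by combining the identity $\nabla\psi^{-1} = A\circ\psi^{-1}$ with $A = (\nabla\psi)^{-1}$, Corollary \ref{cor:JA_est}(b), and one further use of (\ref{f_comp_psi_est2}) applied to $A$ and the diffeomorphism $\psi^{-1}$; the resulting dependence is absorbed into the data encoded by the hypothesis that $\psi$ is an $H^{\rk+1}$-diffeomorphism.

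\textbf{Main technical point.} The crucial ingredient throughout is the multiplier inequality (\ref{HkHl_product}), whose range of admissible regularities is exactly what allows the induction to close at $\ell = \rk + 1$. The only subtlety is the apparent circularity in re-expressing $\|\nabla\psi^{-1}\|_{H^\rk(\Omega)}$ purely in terms of $\|\nabla\psi\|_{H^\rk(\rO)}$, which is resolved by noting that an $H^{\rk+1}$-diffeomorphism carries both $\psi \in H^{\rk+1}(\rO)$ and $\psi^{-1} \in H^{\rk+1}(\Omega)$ as part of its definition, so the ``symmetric'' constant involving both norms is available at no extra cost.
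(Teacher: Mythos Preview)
Your induction for (\ref{f_comp_psi_est2}) is fine and matches the paper's argument for that direction. The gap is in how you obtain (\ref{f_comp_psi_est1}).

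You propose to deduce (\ref{f_comp_psi_est1}) by applying the already-proved (\ref{f_comp_psi_est2}) with the roles of $\psi$ and $\psi^{-1}$ exchanged. But the statement you have just proved is set on a $\mC^\infty$-domain $\rO$: the inductive step relies on the multiplier inequality (\ref{HkHl_product}) of Proposition~\ref{prop:HkHl_product}, which is stated and proved only for $\mC^\infty$-domains. When you swap $\psi$ for $\psi^{-1}$, the ``base'' domain becomes $\Omega$, which is merely of class $H^{\rk+1}$, and at this point in the paper you do not yet have (\ref{HkHl_product}) on $\Omega$. Indeed, the extension of the product inequality to $H^{\rk+1}$-domains is Theorem~\ref{thm:HkHl_product2}, whose proof (omitted but flagged in the text) \emph{uses} Corollary~\ref{cor:f_comp_psi}. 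So your argument is circular: you are invoking, through the swapped application, a tool whose availability depends on the very corollary you are proving. Noting that $\psi^{-1}\in H^{\rk+1}(\Omega)$ by definition does not help, because the issue is not the regularity of $\psi^{-1}$ but the lack of a product estimate on the non-smooth domain.

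The paper avoids this by proving \emph{both} inequalities simultaneously in the same induction, never leaving the smooth domain $\rO$. For (\ref{f_comp_psi_est1}) at level $j+1$ it writes
\[
(\nabla f)\circ\psi \;=\; A^{\rT}\,\nabla(f\circ\psi),\qquad A=(\nabla\psi)^{-1},
\]
so that $\|(\nabla f)\circ\psi\|_{H^j(\rO)}\le C\|A\|_{H^\rk(\rO)}\|\nabla(f\circ\psi)\|_{H^j(\rO)}$ by (\ref{HkHl_product}) \emph{on $\rO$}, and then Corollary~\ref{cor:JA_est} controls $\|A\|_{H^\rk(\rO)}$ in terms of $\|\nabla\psi\|_{H^\rk(\rO)}$. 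Combined with the induction hypothesis for (\ref{f_comp_psi_est1}) applied to $\nabla f$, this closes the step with all analysis on the smooth side. You should replace your symmetry argument by this direct route.
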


\begin{remark}
Note that Corollary \ref{cor:f_comp_psi} implies that the interpolation inequalities on a Sobolev class domain are still valid if the domain is bounded and has $H^{\rk+1}$ regularity for some integer $\rk > \novertwo$. For example,
\begin{align*}
\|f\|_{H^{0.5}(\Omega)} &\le C(|\bdy\Omega|_{H^{\rk+0.5}}) \|f\circ \psi\|_{H^{0.5}(\rO)} \le C(|\bdy\Omega\|_{H^{\rk+0.5}}) \|f\circ\psi\|^\frac{1}{2}_{L^2(\rO)} \|f\circ \psi\|^\frac{1}{2}_{H^1(\rO)} \\
&\le C(|\bdy\Omega|_{H^{\rk+0.5}}) \|f\|^\frac{1}{2}_{L^2(\Omega)} \|f\|^\frac{1}{2}_{H^1(\Omega)}\,.
\end{align*}
\end{remark}
Similar arguments can be applied to prove the following theorem whose proof we omit.
\begin{theorem}\label{thm:HkHl_product2}
Let $\Omega\subseteq \Rn$ be a bounded domain\vspace{.1cm} of class $H^{\rk+1}$ for some integer $\rk > \novertwo$. Then for all $\sigma \in \big(0,\dfrac{1}{4}\big)$,\vspace{.1cm} there exists constant $C_\sigma$ depending on $|\bdy\Omega|_{H^{\rk+0.5}}$ and $\sigma$ such that for all $0 \le \ell\le \rk+1$, $f\in H^{\max\{\rk,\ell\}}(\Omega)$ and $g\in H^\ell(\Omega)$,
\begin{equation}\label{DjfDljg_est}
\sum_{j=1}^\ell \|\nabla^j f \nabla^{\ell - j} g\|_{L^2(\Omega)} \le C_\sigma \|f\|_{H^{\max\{\rk,\ell\}}(\Omega)} \|g\|_{H^{\ell-\sigma}(\Omega)}\,.
\end{equation}
Moreover, for some constant generic $C$ depending on $|\bdy\Omega|_{H^{\rk+0.5}}$,
\begin{equation}\label{HkHlproduct}
\|fg\|_{H^\ell(\Omega)} \le C \|f\|_{H^{\max\{\rk,\ell\}}(\Omega)} \|g\|_{H^\ell(\Omega)}\qquad\Forall f\in H^{\max\{\rk,\ell\}}(\Omega), g\in H^\ell(\Omega)\,.
\end{equation}
\end{theorem}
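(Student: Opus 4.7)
The plan is to pull back from $\Omega$ to a smooth reference domain $\rO$ via the $H^{\rk+1}$-diffeomorphism $\psi:\cls\rO\to\cls\Omega$ guaranteed by Definition \ref{defn:Hs_domain}, apply the smooth-domain inequalities of Proposition \ref{prop:HkHl_product} on $\rO$ (or its straightforward extension to $\ell\le\rk+1$ with the $H^{\max\{\rk,\ell\}}$-norm on the first factor, which on a $\mC^\infty$-domain is classical), and then push back to $\Omega$. All the Jacobian quantities $A=(\nabla\psi)^{-1}$, $J=\det\nabla\psi$, and higher derivatives of $\psi$ lie in $H^\rk(\rO)$ with norms controlled by $|\bdy\Omega|_{H^{\rk+0.5}}$ by Corollary \ref{cor:JA_est}, and $\rk>\novertwo$ is precisely the threshold that makes $H^\rk(\rO)$ a Banach algebra via (\ref{HkHl_product}) and gives the embedding $H^\rk(\rO)\hookrightarrow L^\infty(\rO)$.

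The algebra inequality (\ref{HkHlproduct}) is an immediate triple application of Corollary \ref{cor:f_comp_psi} and Proposition \ref{prop:HkHl_product}. Setting $F=f\circ\psi$ and $G=g\circ\psi$, the chain
\[
\|fg\|_{H^\ell(\Omega)}\le C\|FG\|_{H^\ell(\rO)}\le C\|F\|_{H^{\max\{\rk,\ell\}}(\rO)}\|G\|_{H^\ell(\rO)}\le C\|f\|_{H^{\max\{\rk,\ell\}}(\Omega)}\|g\|_{H^\ell(\Omega)}
\]
follows from (\ref{f_comp_psi_est1}), the smooth-domain Proposition \ref{prop:HkHl_product} on $\rO$, and (\ref{f_comp_psi_est2}), with $C=C(|\bdy\Omega|_{H^{\rk+0.5}})$ at every step.

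For (\ref{DjfDljg_est}), the iterated chain rule applied to $F=f\circ\psi$ yields, for $1\le k\le \rk+1$,
\[
(\nabla^k f)\circ\psi\;=\;\sum_{i=1}^{k}R_{k,i}[\psi]\,\nabla^i F,
\]
where each polynomial $R_{k,i}[\psi]$ is built out of $A$ and of $\nabla^r\psi$ for $2\le r\le k-i+1$. After the change of variables $x=\psi(y)$, the left side of (\ref{DjfDljg_est}) is controlled by a finite sum of terms of the form $\|R_{j,i}[\psi]\,R_{\ell-j,m}[\psi]\,\sqrt{J}\,\nabla^i F\,\nabla^m G\|_{L^2(\rO)}$, with $1\le i\le j$ and $1\le m\le \ell-j$. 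Bounding the coefficient factor in the appropriate $H^\rk/L^\infty$-class via Corollary \ref{cor:JA_est} and the algebra property (\ref{HkHl_product}), and then invoking (\ref{commutator_estimate_elliptic_est_temp}) on the smooth domain $\rO$ for the remaining $\nabla^i F\cdot\nabla^m G$, produces the bound $C_\sigma\|F\|_{H^{\max\{\rk,\ell\}}(\rO)}\|G\|_{H^{\ell-\sigma}(\rO)}$. A final pullback through (\ref{f_comp_psi_est2}), applied at the fractional order $\ell-\sigma$ as legitimized by the Remark following Corollary \ref{cor:f_comp_psi}, converts the right side to the desired norms of $f$ and $g$ on $\Omega$.

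The main obstacle is bookkeeping the highest-derivative contributions: when $k=\ell=\rk+1$, the coefficient $R_{k,i}[\psi]$ can involve $\nabla^{k-i+1}\psi$ of order $\rk+2-i$, which only lies in $H^{i-1}(\rO)$ for small $i$, so one cannot simply place it in $L^\infty$. The resolution is to distribute the derivatives using the Moser/Kato--Ponce-style balance already encoded in Proposition \ref{prop:HkHl_product}, where the $\sigma$-margin on the $g$-norm provides the endpoint cushion. Once Corollaries \ref{cor:JA_est} and \ref{cor:f_comp_psi} have absorbed the geometric dependence into $C_\sigma=C_\sigma(|\bdy\Omega|_{H^{\rk+0.5}},\sigma)$, no further analytic input beyond the smooth-domain Proposition \ref{prop:HkHl_product} is required.
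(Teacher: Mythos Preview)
Your argument is correct, and in particular your treatment of (\ref{HkHlproduct}) via the three-line pullback chain is exactly the intended proof.

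For (\ref{DjfDljg_est}), however, you have taken a more laborious route than the one the paper signals with ``similar arguments.'' Rather than pulling back the full product $\nabla^j f\,\nabla^{\ell-j}g$ and then untangling the Fa\`a di Bruno coefficients $R_{j,i}[\psi]$, the cleaner path is to observe that the proof of Proposition~\ref{prop:HkHl_product} uses only Sobolev embeddings $H^s\hookrightarrow L^p$, H\"older's inequality, and Gagliardo--Nirenberg interpolation, all of which transfer from $\rO$ to $\Omega$ directly by Corollary~\ref{cor:f_comp_psi} and the Remark following it. With those inequalities available on $\Omega$ (with constants depending on $|\bdy\Omega|_{H^{\rk+0.5}}$), the case analysis of Proposition~\ref{prop:HkHl_product} can be repeated verbatim on $\Omega$, and the chain rule never enters. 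Your approach does work---you correctly identify and resolve the endpoint obstruction when $\ell=\rk+1$ and the coefficient $R_{k,1}[\psi]$ only lies in $L^2$---but it requires a genuine Moser-type redistribution argument, whereas the direct route sidesteps this entirely.
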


By using Theorem \ref{thm:HkHl_product2}, we can easily establish the following
\begin{theorem}\label{thm:useful_lemma_with_Sobolev_class_coeff2}
Let $\Omega\subseteq \Rn$ be a bounded $H^{\rk+1}$-domain for some integer $\rk > \novertwo$\,. Then for each integers $\ell \in \{0,1\}\cup \big(\novertwo, \rk+1\big]$, there exists a generic constant $C = C(|\bdy\Omega|_{H^{\rk+0.5}})$ such that
\begin{equation}\label{Hk_bdy_by_Linf_Hk}
\begin{array}{l}
\|fg\|_{H^\ell(\Omega)} \le C \Big[\|f\|_{L^\infty(\Omega)} \|g\|_{H^\ell(\Omega)} + \|f\|_{H^\ell(\Omega)} \|g\|_{L^\infty(\Omega)}\Big] \qquad \Forall f,g \in H^\ell(\Omega) \cap L^\infty(\Omega)\,.
\end{array}
\end{equation}
\end{theorem}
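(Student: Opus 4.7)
The plan is to dispatch the $\ell = 0$ and $\ell = 1$ cases directly by H\"older and the product rule, then for integer $\ell \in \bigl(\novertwo, \rk+1\bigr]$ reduce to the analogous Moser-type estimate on a smooth domain by pulling back under the $H^{\rk+1}$-diffeomorphism from Definition \ref{defn:Hs_domain}, and finally prove that smooth-domain estimate via the Leibniz rule together with Gagliardo--Nirenberg interpolation.

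For $\ell = 0$, the bound $\|fg\|_{L^2(\Omega)} \le \|f\|_{L^\infty(\Omega)}\|g\|_{L^2(\Omega)}$ is immediate from H\"older, which is dominated by the first term on the right of \eqref{Hk_bdy_by_Linf_Hk}. For $\ell = 1$, combining this with $\nabla(fg) = (\nabla f) g + f (\nabla g)$ and applying H\"older to each summand produces both terms on the right of \eqref{Hk_bdy_by_Linf_Hk}.

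For integer $\ell$ with $\novertwo < \ell \le \rk+1$, let $\psi: \cls{\rO} \to \cls{\Omega}$ be the $H^{\rk+1}$-diffeomorphism of Definition \ref{defn:Hs_domain} with $\rO$ a bounded $\mC^\infty$-domain. Since $\psi$ is a bijection, $\|f \circ \psi\|_{L^\infty(\rO)} = \|f\|_{L^\infty(\Omega)}$, and by Corollary \ref{cor:f_comp_psi} applied to $fg$, $f$, and $g$ separately,
$$
\|fg\|_{H^\ell(\Omega)} \le C\,\|(f \circ \psi)(g \circ \psi)\|_{H^\ell(\rO)}, \qquad \|f \circ \psi\|_{H^\ell(\rO)} \le C\,\|f\|_{H^\ell(\Omega)},
$$
with the constants $C$ depending only on $|\bdy\Omega|_{H^{\rk+0.5}}$ through $\|\nabla \psi\|_{H^\rk(\rO)}$ (and likewise for $g$). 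Hence it suffices to establish the Moser inequality
$$
\|FG\|_{H^\ell(\rO)} \le C\bigl[\|F\|_{L^\infty(\rO)}\|G\|_{H^\ell(\rO)} + \|G\|_{L^\infty(\rO)}\|F\|_{H^\ell(\rO)}\bigr]
$$
on the smooth reference domain $\rO$, with $F = f \circ \psi$ and $G = g \circ \psi$.

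On $\rO$, I use a Stein-type extension operator that is simultaneously bounded on $L^\infty$ and on $H^\ell$ to reduce the problem to $\Rn$. There, the Leibniz rule produces
$$
\|\nabla^\ell(FG)\|_{L^2(\Rn)} \le C \sum_{j=0}^{\ell} \|\nabla^j F \,\nabla^{\ell - j} G\|_{L^2(\Rn)}.
$$
The endpoint contributions $j = 0$ and $j = \ell$ are exactly the two terms on the right side of \eqref{Hk_bdy_by_Linf_Hk}. For each intermediate $1 \le j \le \ell - 1$, I apply H\"older with conjugate exponents $p_j = 2\ell/j$ and $q_j = 2\ell/(\ell - j)$, followed by the Gagliardo--Nirenberg inequalities
$$
\|\nabla^j F\|_{L^{p_j}(\Rn)} \le C \|F\|_{L^\infty(\Rn)}^{1 - j/\ell} \|\nabla^\ell F\|_{L^2(\Rn)}^{j/\ell}, \qquad \|\nabla^{\ell - j} G\|_{L^{q_j}(\Rn)} \le C \|G\|_{L^\infty(\Rn)}^{j/\ell} \|\nabla^\ell G\|_{L^2(\Rn)}^{(\ell - j)/\ell}.
$$
The resulting product $[\|\nabla^\ell F\|_{L^2}\|G\|_{L^\infty}]^{j/\ell} [\|F\|_{L^\infty}\|\nabla^\ell G\|_{L^2}]^{(\ell - j)/\ell}$ is then controlled by Young's inequality with exponents $\ell/j$ and $\ell/(\ell - j)$, giving the additive combination on the right of \eqref{Hk_bdy_by_Linf_Hk}. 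The only real obstacle is ensuring the Stein extension can be chosen to be simultaneously bounded on $L^\infty$ and $H^\ell$ (standard, via reflection and cutoff), after which tracking the constants back through Corollary \ref{cor:f_comp_psi} completes the proof.
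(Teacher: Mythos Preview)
Your proof is correct and follows the natural strategy the paper gestures at. The paper does not actually write out a proof of this theorem; it only remarks that the result ``can easily be established'' using Theorem~\ref{thm:HkHl_product2}, whose content is precisely the pull-back machinery (Corollary~\ref{cor:f_comp_psi}) transferring product estimates from the smooth reference domain $\rO$ to the $H^{\rk+1}$-domain $\Omega$. Your argument makes this explicit: handle $\ell\in\{0,1\}$ trivially, pull back to $\rO$ for $\ell>\novertwo$, extend via Stein to $\Rn$, and then run the standard Leibniz plus Gagliardo--Nirenberg plus Young argument to obtain the Moser-type bound. This is exactly the intended route, with the details filled in.
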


\subsection{\Poincare-type inequalities} We will make use of the following
 \Poincare-type inequalities, whose proofs are similar to the proof of the standard \Poincare\ inequality.
 \begin{lemma}
Let $\Omega\subseteq \bbR^3$ be a bounded smooth domain with outward-pointing unit normal $\bN$.  We set
\begin{align*}
H^1_\tau(\Omega) &\equiv \big\{ u:\Omega \to \bbR^3\hspace{1pt}\big|\hspace{1pt} u\in H^1(\Omega)\,, u\times \bN = 0 \text{ on }\bdy\Omega\big\}\,,\\
H^1_n(\Omega) &\equiv \big\{ u:\Omega \to \bbR^3\hspace{1pt}\big|\hspace{1pt} u\in H^1(\Omega)\,, u\cdot \bN = 0 \text{ on }\bdy\Omega\big\} \,.
\end{align*}
Then
\begin{equation}\label{vectorPoincareineq}
\|\u\|_{L^2(\Omega)} \le C \|\nabla \u\|_{L^2(\Omega)} \qquad\Forall \u \in H^1_\tau(\Omega)\,,
\end{equation}
and
\begin{equation}\label{vectorPoincareineq1}
\|\u\|_{L^2(\Omega)} \le C \|\nabla \u\|_{L^2(\Omega)} \qquad\Forall \u \in H^1_n(\Omega)\,.
\end{equation}
\end{lemma}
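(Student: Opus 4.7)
\smallskip
\noindent\textbf{Proof proposal.} The plan is to prove both inequalities by the standard Rellich--Kondrachov contradiction argument, exactly as in the proof of the scalar \Poincare\ inequality; the only non-routine piece is showing that the limiting constant vector field is forced to vanish by the boundary condition. I treat the two cases in parallel.

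\emph{Contradiction setup and compactness.} Suppose \eqref{vectorPoincareineq} fails. Then there exists a sequence $\u_k \in H^1_\tau(\Omega)$ with $\|\u_k\|_{L^2(\Omega)} = 1$ and $\|\nabla \u_k\|_{L^2(\Omega)} \to 0$. In particular $\{\u_k\}$ is bounded in $H^1(\Omega)$, so by the Rellich--Kondrachov theorem a subsequence (still denoted $\u_k$) converges strongly in $L^2(\Omega)$ to some $\u$ with $\|\u\|_{L^2(\Omega)} = 1$. Since $\nabla \u_k \to 0$ in $L^2$, the sequence is actually Cauchy in $H^1(\Omega)$, so $\u_k \to \u$ in $H^1(\Omega)$ and $\nabla \u = 0$ a.e.; hence $\u$ is constant on each connected component of $\Omega$. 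By continuity of the trace $H^1(\Omega) \to H^{1/2}(\bdy\Omega)$, the boundary condition $\u_k \times \bN = 0$ passes to the limit, giving $\u \times \bN = 0$ on $\bdy \Omega$. Since $\|\u\|_{L^2(\Omega)} = 1$, there is a connected component $\Omega_0 \subseteq \Omega$ on which $\u \equiv \rc$ for some constant vector $\rc \ne 0$.

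\emph{The boundary obstruction (tangential case).} The condition $\rc \times \bN = 0$ on $\bdy \Omega_0$ forces $\bN(x)$ to be parallel to $\rc$ for every $x \in \bdy\Omega_0$. By continuity of $\bN$ on the (smooth) connected component of $\bdy\Omega_0$ containing any given boundary point, $\bN$ equals a constant unit vector $\pm \rc/|\rc|$ on that component. But a smooth closed hypersurface in $\Rn$ whose outward normal is constant must be flat, contradicting the boundedness of $\Omega_0$. Concretely, the function $x \mapsto x \cdot \rc$ attains its maximum on $\cls{\Omega_0}$ at a point $x_0 \in \bdy \Omega_0$; at $x_0$, $\bN(x_0) = \rc/|\rc|$, and it cannot simultaneously coincide with $-\rc/|\rc|$ at a minimizing point. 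This is the desired contradiction.

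\emph{The normal case.} The proof of \eqref{vectorPoincareineq1} is identical except in the final step. After the same compactness argument we obtain a nonzero constant vector $\rc$ with $\rc \cdot \bN = 0$ on $\bdy \Omega_0$. Since $\Omega_0$ is bounded, the linear function $x\mapsto x \cdot \rc$ attains its maximum on $\cls{\Omega_0}$ at some boundary point $x_0 \in \bdy\Omega_0$, where necessarily $\bN(x_0) = \rc/|\rc|$; then $\rc \cdot \bN(x_0) = |\rc| \neq 0$, a contradiction.

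\emph{Main difficulty.} The only point requiring care is the transition from $\nabla \u = 0$ plus a boundary condition on $\bN$ to $\u \equiv 0$. Everything else is the verbatim Rellich argument. I have implicitly used that $\Omega$ is smooth so that $\bN$ is continuous on $\bdy \Omega$ and the trace map is well defined; both are granted by hypothesis.
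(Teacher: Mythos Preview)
Your argument is correct and is exactly the approach the paper has in mind: it omits the proof entirely, saying only that ``the proofs are similar to the proof of the standard \Poincare\ inequality,'' i.e., the Rellich--Kondrachov contradiction argument you carried out. One minor remark: in the tangential case your ``Concretely'' sentence is a bit muddled (the max and min of $x\cdot\rc$ could lie on different components of $\bdy\Omega_0$), but your preceding flatness argument already suffices; alternatively, maximize $x\cdot\re$ for some unit $\re\perp\rc$ to find a boundary point where $\bN=\re$ is not parallel to $\rc$.
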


\subsection{Commutation with mollifiers}\label{sec:convolution}
Our proof of elliptic regularity relies on a mollification procedure (rather than the use of difference quotients).
\begin{definition}[Standard mollifiers]\label{defn:mollifier}
Let $\eta(x) = C \exp\big(\dfrac{1}{|x|^2-1}\big)$ for $|x|<1$ and $\eta$ vanishes outside the unit ball, where $C$ is chosen so that $\|\eta\|_{L^1(\bbR^\n)} = 1$. The standard mollifier $\eta_\epsilon$ is defined by
$$
\eta_\epsilon(x) = \dfrac{1}{\epsilon^\n} \eta\big(\dfrac{x}{\epsilon}\big)\,.
$$
\end{definition}

We will make use of the following
\begin{lemma}\label{lem:commutator_estimates1}
For $f \in W^{1, \infty}(\Omega)$ and $g\in L^2(\Omega)$ with compact support, there is a generic constant $C$ independent of $\epsilon$ such that
\begin{align}
\big\|\nabla \big( \comm{\eta_\epsilon\convolution }{f} g\big) \big\|_{L^2(\Omega)} &= \big\|\nabla \big[\eta_\epsilon\convolution (f g) - f \eta_\epsilon\convolution g\big]\big\|_{L^2(\Omega)} \nonumber\\
&\le C \|f\|_{W^{1,\infty}(\Omega)} \|g\|_{L^2(\Omega)} \label{commutator_estimates1}
\end{align}
for all $\text{\rm $0 < \epsilon < \min\big\{\text{dist}\big(\bdy\Omega,\supp(f)\big),\text{dist}\big(\bdy\Omega,\supp(g)\big)\big\}$}$.
\end{lemma}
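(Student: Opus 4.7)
\medskip

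\noindent\textbf{Proof proposal.} The plan is to expand the derivative of the commutator into two terms, one of which is the classical DiPerna--Lions-type mollification commutator, and then exploit the Lipschitz continuity of $f$ to absorb the singular factor $\epsilon^{-1}$ arising from $\nabla\eta_\epsilon$.

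First I would write, for $1\le i\le \n$,
\begin{equation*}
\p_i\bigl(\comm{\eta_\epsilon\cvl}{f}g\bigr)(x)
= (\p_i\eta_\epsilon)\cvl(fg)(x) - (\p_i f)(x)\,(\eta_\epsilon\cvl g)(x) - f(x)\,(\p_i\eta_\epsilon\cvl g)(x).
\end{equation*}
The second term is straightforward: since $\eta_\epsilon\cvl g$ is controlled in $L^2$ by $\|g\|_{L^2}$ (standard Young), we immediately obtain
$\|(\p_i f)(\eta_\epsilon\cvl g)\|_{L^2(\Omega)} \le \|\nabla f\|_{L^\infty(\Omega)}\|g\|_{L^2(\Omega)}$.
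All the work lies in the commutator
\begin{equation*}
T_\epsilon(x) \;:=\; (\p_i\eta_\epsilon)\cvl(fg)(x) - f(x)\,(\p_i\eta_\epsilon\cvl g)(x)
\;=\; \int_{\Rn} \p_i\eta_\epsilon(x-y)\bigl[f(y)-f(x)\bigr]g(y)\,dy.
\end{equation*}

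Next I would rescale by the change of variables $y=x-\epsilon z$, which turns $\p_i\eta_\epsilon(x-y)$ into $\epsilon^{-\n-1}(\p_i\eta)(z)$ and yields
\begin{equation*}
T_\epsilon(x) \;=\; \epsilon^{-1}\!\int_{|z|\le 1}(\p_i\eta)(z)\,\bigl[f(x-\epsilon z)-f(x)\bigr]\,g(x-\epsilon z)\,dz.
\end{equation*}
The singular prefactor $\epsilon^{-1}$ is exactly compensated by the Lipschitz bound
$\bigl|f(x-\epsilon z)-f(x)\bigr|\le \epsilon\,|z|\,\|\nabla f\|_{L^\infty(\Omega)}\le\epsilon\|\nabla f\|_{L^\infty(\Omega)}$
(valid for $|z|\le 1$, where we extend $f$ to a $W^{1,\infty}(\Rn)$ function via, e.g., McShane extension, and extend $g$ by zero using its compact support so the hypothesis on $\epsilon$ keeps integration regions inside $\Omega$). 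Therefore
\begin{equation*}
|T_\epsilon(x)| \;\le\; \|\nabla f\|_{L^\infty(\Omega)}\!\int_{|z|\le 1}|\p_i\eta(z)|\,|g(x-\epsilon z)|\,dz.
\end{equation*}

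Finally I would apply Minkowski's integral inequality (or equivalently Young's convolution inequality with the dilated kernel $|\p_i\eta|_\epsilon$) to obtain
\begin{equation*}
\|T_\epsilon\|_{L^2(\Omega)} \;\le\; \|\nabla f\|_{L^\infty(\Omega)}\,\|\p_i\eta\|_{L^1(\Rn)}\,\|g\|_{L^2(\Omega)}
\;\le\; C\,\|f\|_{W^{1,\infty}(\Omega)}\,\|g\|_{L^2(\Omega)},
\end{equation*}
and combining with the easy estimate on $(\p_i f)(\eta_\epsilon\cvl g)$ completes the proof. The only subtle point, and the one I would be most careful about, is justifying the pointwise Lipschitz estimate on $f(x-\epsilon z)-f(x)$ on all of $\Rn$: this requires a Lipschitz extension of $f$ from $\Omega$, and the bound on $\epsilon$ in the hypothesis guarantees that the integrand of $T_\epsilon$ only sees points inside $\Omega$, so this extension does not alter the values of the commutator on the support of $g$.
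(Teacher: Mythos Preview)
Your argument is correct and is the standard DiPerna--Lions proof of this mollifier commutator estimate. The paper itself does not supply a proof of this lemma; it is stated as a known tool (``We will make use of the following'') and used repeatedly in Section~3. Your decomposition into the easy term $(\p_i f)(\eta_\epsilon\cvl g)$ and the kernel integral $T_\epsilon$, followed by the rescaling $y=x-\epsilon z$ to convert the $\epsilon^{-1}$ singularity of $\p_i\eta_\epsilon$ into a Lipschitz difference quotient of $f$, and then Minkowski/Young, is exactly the classical route. Your remark about the role of the hypothesis on $\epsilon$ (keeping all evaluation points inside $\Omega$ so that the extension of $f$ is irrelevant on the support of the integrand) is the right way to handle the boundary issue.
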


Since we are dealing with problems on domains with boundaries, we make use of the {\it horizontal convolution-by-layers} operator, introduced in \cite{CoSh2007}. We define the horizontal convolution-by-layers operator $ \Lambda_\epsilon $ as follows:
$$
\Lambda_\epsilon f ( x_h, x_\n) = \int_{\bbR^{\n-1}} \rho_\epsilon (x_h -y_h) f(y_h, x_\n) dy_h \ \text{ for } f ( \cdot , x_\n) \in L^1( \bbR^{\n-1} ) \,,
$$
where $\rho_\epsilon(x_h) = \dfrac{1}{\epsilon^{\n-1}} \rho\big(\dfrac{x_h}{\epsilon}\big)$, and
$\rho \in C^\infty _0( \bbR^{\n-1})$ is given by $\rho(x) = C \exp \left( \dfrac{1}{|x|^2-1}\right)$ if $|x| <1$ and $\rho(x)=0$ if $|x_h|\ge 1$. The
constant $C$ is chosen so that $\smallexp{$\displaystyle{}\int_{ \bbR^{\n-1} }$} \rho \,dx =1$. It follows that
for $\epsilon >0$, $ 0 \le \rho_ {\epsilon } \in C^\infty_0( \bbR^{\n-1})$ with $\spt (\rho_{\epsilon }) \subset \overline{B(0, \epsilon )} $. (Here, $\spt$ stands for support.)

It should be clear that $ \Lambda_\epsilon $ smooths functions defined on $ \bbR^\n$ along all horizontal subspaces,
but does not smooth functions in the vertical $x_\n$-direction. On the other hand, we can restrict the operator $ \Lambda_\epsilon $
to act on functions $f: \bbR^{\n-1} \to \bbR $ as well, in which case $ \Lambda_\epsilon $ becomes the
usual mollification operator. Associated to $\Lambda_\epsilon$, we need the following

\begin{lemma}\label{lem:commutator_estimates2}
For $f \in W^{1, \infty}(\bbR^\n_+)$ and $g\in L^2(\bbR^\n_+)$, there is a generic constant $C$ independent of $\epsilon$ such that
\begin{equation}\label{commutator_estimates2}
\big\|\bp \big( \comm{\Lambda_\epsilon}{f} g\big)\big\|_{L^2(\bbR^\n_+)} = \big\|\bp\big[\Lambda_\epsilon (f g) - f \Lambda_\epsilon g\big]\big\|_{L^2(\bbR^\n_+)} \le C \big\|f\big\|_{W^{1,\infty}(\bbR^\n_+)} \big\|g\big\|_{L^2(\bbR^\n_+)}
\end{equation}
for all $\epsilon>0$.
\end{lemma}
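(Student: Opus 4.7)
The plan is to adapt the classical Friedrichs commutator argument to the horizontal mollifier $\Lambda_\epsilon$. Because $\Lambda_\epsilon$ acts only in the horizontal variables $x_h\in\bbR^{\n-1}$ and $\bp$ denotes the horizontal gradient, every integration by parts below takes place in $\bbR^{\n-1}$ at a fixed height $x_\n>0$, so there is no boundary term to worry about. The key observation is that, although $\bp\rho_\epsilon$ is singular of order $\epsilon^{-\n}$, the $W^{1,\infty}$ control on $f$ supplies a compensating factor of $\epsilon$ inside the commutator, producing a horizontal convolution kernel whose $L^1_{y_h}$-norm is $O(\|\nabla f\|_{L^\infty})$ uniformly in $\epsilon$.

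Concretely, writing $x=(x_h,x_\n)$, $y=(y_h,x_\n)$ and using $\int\rho_\epsilon\,dy_h=1$ to insert a $-f(x)g(y)$, I would first rewrite
\begin{equation*}
\big(\comm{\Lambda_\epsilon}{f}g\big)(x)=\int_{\bbR^{\n-1}}\rho_\epsilon(x_h-y_h)\big[f(y)-f(x)\big]\,g(y)\,dy_h,
\end{equation*}
and then differentiate in $x_i$ for $1\le i\le\n-1$ to obtain
\begin{equation*}
\bp_i\big(\comm{\Lambda_\epsilon}{f}g\big)(x)=\underbrace{\int\bp_{x_i}\rho_\epsilon(x_h-y_h)\big[f(y)-f(x)\big]g(y)\,dy_h}_{=:\,I_1(x)}\;-\;\underbrace{\bp_i f(x)\,(\Lambda_\epsilon g)(x)}_{=:\,I_2(x)}.
\end{equation*}
The term $I_2$ is trivially bounded in $L^2(\bbR^\n_+)$ by $\|\nabla f\|_{L^\infty}\|g\|_{L^2(\bbR^\n_+)}$ using the boundedness of $\Lambda_\epsilon$ on $L^2$.

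For $I_1$, the crucial step is the pointwise kernel bound
\begin{equation*}
\big|\bp_{x_i}\rho_\epsilon(x_h-y_h)\big[f(y)-f(x)\big]\big|\le C\,\|\nabla f\|_{L^\infty}\,\epsilon^{-(\n-1)}\,\mathbf{1}_{\{|x_h-y_h|\le\epsilon\}},
\end{equation*}
obtained by combining $|\bp_{x_i}\rho_\epsilon|\le C\epsilon^{-\n}$ on its $\epsilon$-ball support with the horizontal mean value estimate $|f(y)-f(x)|\le\|\nabla f\|_{L^\infty}|x_h-y_h|\le\|\nabla f\|_{L^\infty}\epsilon$. The resulting horizontal kernel has $L^1_{y_h}$-norm at most $C\|\nabla f\|_{L^\infty}$, uniformly in $\epsilon$ and in $x_\n$. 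Applying Young's convolution inequality slice-by-slice at each height and then integrating in $x_\n$ yields $\|I_1\|_{L^2(\bbR^\n_+)}\le C\|\nabla f\|_{L^\infty}\|g\|_{L^2(\bbR^\n_+)}$, which combined with the estimate on $I_2$ gives the claim. I do not foresee any real obstacle: this is essentially the standard Friedrichs commutator lemma, and the only point to verify is that all differentiations remain in the horizontal variable so that no boundary term appears and the $L^1$-kernel bound is $\epsilon$-uniform.
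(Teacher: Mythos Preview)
Your proof is correct: this is exactly the standard Friedrichs commutator argument, carried out slice-by-slice in the horizontal variables, and all the steps you outline go through without issue. The paper itself states Lemma~\ref{lem:commutator_estimates2} without proof (it is one of the preliminary technical lemmas in Section~\ref{sec:convolution} simply asserted for later use), so there is no in-paper argument to compare against; your write-up would serve perfectly well as the omitted proof.
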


\subsection{The Piola Identity}
\begin{lemma}[Piola identity]\label{thm:Piola_id}
Let $\psi:\Omega\subseteq \bbR^\n \to \bbR^\n$ be a diffeomorphism, and $[a_{ij}]_{\n\times \n}$ be the cofactor matrix of $\nabla \psi$. Then
\begin{align}
 \frac{\p}{\p x_j}\, a_{ji} = 0. \label{Piola_id}
\end{align}
\end{lemma}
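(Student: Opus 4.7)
The plan is to prove the Piola identity by writing the cofactor entries explicitly in terms of the Levi-Civita symbol and then exploiting the antisymmetry of $\varepsilon$ against the symmetry of mixed partial derivatives.

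First I would recall that for an $n\times n$ matrix $M = [M_{ij}]$, the $(j,i)$ entry of the cofactor matrix can be written as
\[
a_{ji} = \frac{1}{(n-1)!}\, \varepsilon_{j\,j_2\cdots j_n}\, \varepsilon_{i\,i_2\cdots i_n}\, M_{j_2 i_2}\cdots M_{j_n i_n},
\]
where $\varepsilon_{k_1\cdots k_n}$ is the totally antisymmetric Levi-Civita symbol. Applying this with $M_{jk} = \partial_j \psi^k$ gives
\[
a_{ji} = \frac{1}{(n-1)!}\, \varepsilon_{j\,j_2\cdots j_n}\, \varepsilon_{i\,i_2\cdots i_n}\, \partial_{j_2}\psi^{i_2}\cdots \partial_{j_n}\psi^{i_n}.
\]

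Next I would compute the divergence $\partial_j a_{ji}$. By the product rule (and the assumed regularity of $\psi$, which suffices to interchange mixed partials), each term in the expansion has the form
\[
\varepsilon_{j\,j_2\cdots j_n}\, \varepsilon_{i\,i_2\cdots i_n}\, \partial_{j_2}\psi^{i_2}\cdots \partial_j \partial_{j_k}\psi^{i_k}\cdots \partial_{j_n}\psi^{i_n}.
\]
The factor $\partial_j\partial_{j_k}\psi^{i_k}$ is symmetric in the index pair $(j, j_k)$, whereas $\varepsilon_{j\,j_2\cdots j_n}$ is antisymmetric in the same pair. Hence the contraction over $j$ and $j_k$ vanishes identically. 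Summing over $k=2,\ldots,n$ gives $\partial_j a_{ji}=0$.

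Since the identity is purely algebraic and local, the only hypothesis actually used is that $\psi$ has enough regularity to justify the equality of mixed partials; in the Sobolev-class setting of this paper, the identity will be first verified for smooth $\psi$ and then extended by density. This step is more bookkeeping than an obstacle, so I do not anticipate any serious difficulty: the core of the argument is the one-line symmetry/antisymmetry cancellation above.
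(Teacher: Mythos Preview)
Your proof is correct and is in fact the standard Levi-Civita/antisymmetry argument. The paper itself does not supply a proof of this lemma at all; it simply cites Evans' PDE textbook, so your write-up is more self-contained than what appears in the paper.
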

The proof can be found in \cite{Evans_PDE}.

\section{Vector-Valued Elliptic Equations}\label{sec:vector-valued_elliptic_eq}
Let $\Omega\subseteq \Rn$ denote a bounded domain whose regularity will be specified below. In this section, we study a vector-valued elliptic equation
\begin{subequations}\label{vector-valued_elliptic_eq}
\begin{alignat}{2}
(L \u)^i = \u^i - \frac{\p}{\p x_j} \Big(a^{jk} \frac{\p \u^i}{\p x_k}\Big) &= \f^i \qquad&&\text{in}\quad\Omega\,,
\end{alignat}
\end{subequations}
with special types of boundary conditions, where $\u = (\u_1,\cdots,u_\n)$ and $f = (f_1,\cdots,f_\n)$ are vector-valued functions, and $a^{jk}$ is a two-tensor satisfying the positivity condition
\begin{equation}\label{positivity_of_a}
a^{jk} \xi_j \xi_k \ge \lambda |\xi|^2 \qquad\Forall \xi,\eta\in \Rn
\end{equation}
for some $\lambda > 0$. Since $\u\in \Rn$, $\n$ boundary conditions are needed to solve the system uniquely.
We consider a mixed-type boundary condition given by
$$
\begin{array}{crll}
& \displaystyle{} \u \cdot \bfw \hspace{-7pt}&= 0 \qquad\text{on}\quad\bdy\Omega\,, \hspace{70pt} \hfill{\rm(\ref{vector-valued_elliptic_eq}b)}\vspace{.1cm} \\
\hspace{115pt} & \displaystyle{} \rP_{\bfw^\perp}\Big(a^{jk} \frac{\p \u}{\p x_k} \bN_j - \g\Big) \hspace{-7pt}&= {\bf 0} \qquad\text{on}\quad\bdy\Omega\,, \hspace{108pt}\rm(\ref{vector-valued_elliptic_eq}c)
\end{array}
$$
where $\bfw$ is a uniformly continuous vector field defined in a neighborhood of $\bdy\Omega$ which vanishes nowhere on $\bdy\Omega$, $\bN$ is the outward-pointing unit normal to $\bdy\Omega$, $\g$ is a vector-valued function defined on $\bdy\Omega$, and
$\rP_{\bfw^\perp}:\Rn \to \Rn$ is the projection map given by
\begin{equation}\label{defn:P_tan}
\rP_{\bfw^\perp}(\v) = \v - \frac{(\v\cdot \bfw)}{|\bfw|^2}\, \bfw = \big(\id - \frac{\bfw\otimes \bfw}{|\bfw|^2}\big) \v \,.
\end{equation}
The condition (\ref{vector-valued_elliptic_eq}b) specifies the component of the vector $\u$ in the direction of $\bfw$, while the condition (\ref{vector-valued_elliptic_eq}c) specifies the $\n-1$ components of the  Neumann derivative
 \smallexp{$\displaystyle{}a^{jk} \frac{\p \u^i}{\p x_k} \bN_j$}. \vspace{.2cm}

Integration by parts with respect to $x_j$ leads to the following identity:
\begin{align*}
- \int_\Omega \frac{\p}{\p x_j} \Big(a^{jk} \frac{\p \u^i}{\p x_k}\Big) \Varphi^i dx &= \int_\Omega a^{jk} \frac{\p \u^i}{\p x_k} \frac{\p \Varphi^i}{\p x_j} dx - \int_{\bdy\Omega} a^{jk} \frac{\p \u^i}{\p x_k} \bN_j \Varphi^i dx \\
&= \int_\Omega a^{jk} \frac{\p \u^i}{\p x_k} \frac{\p \Varphi^i}{\p x_j} dx - \int_{\bdy\Omega} \Big[\g^i + a^{jk} \frac{\p \u^r}{\p x_k} \bN_j \frac{\bfw_r \bfw_i}{|\bfw|^2}\Big] \Varphi^i dx \,,
\end{align*}
which, in turn,  motivates the following
\begin{definition}
Let $\V = \big\{\v \in H^1(\Omega)\,\big|\, \v\cdot \bfw = 0 \text{ on } \bdy\Omega\big\}$. A function $\u\in \V$ is called a weak solution to {\rm(\ref{vector-valued_elliptic_eq})} if
\begin{equation}\label{vector-valued_elliptic_weak_form}
(\u,\Varphi)_{L^2(\Omega)} + \int_\Omega \hspace{-1pt}a^{jk} \frac{\p \u^i}{\p x_k} \frac{\p \Varphi^i}{\p x_j}\, dx = \hspace{-1pt} (\f,\Varphi)_{L^2(\Omega)} + \langle \g, \Varphi\rangle_{\bdy\Omega} \quad\Forall \Varphi \in \V\,,
\end{equation}
where $\langle \cdot,\cdot\rangle_{\bdy\Omega}$ denotes the duality pairing between distributions in $H^{-\frac{1}{2}}(\bdy\Omega)$ and functions in $H^\frac{1}{2}(\bdy\Omega)$.
\end{definition}

With the help of the Lax-Milgram theorem it is easy to conclude the following
\begin{theorem}[Weak solutions]\label{thm:weak_existence_vector-valued_elliptic_prob}
Suppose that $a^{jk} \in L^\infty(\Omega)$ satisfies the positivity condition {\rm(\ref{positivity_of_a})}, and $\bfw$ is a uniformly continuous vector field defined in a neighborhood of $\bdy\Omega$ which vanishes nowhere on $\bdy\Omega$. Then for all $\f\in L^2(\Omega)$ and $\g\in H^{-0.5}(\bdy\Omega)$, there exists a unique weak solution to {\rm(\ref{vector-valued_elliptic_eq})} in $\V$, and the weak solution $\u$ satisfies
\begin{equation}\label{vector-valued_elliptic_weak_est}
\|\u\|_{H^1(\Omega)} \le C \Big[\|\f\|_{L^2(\Omega)} + \|\g\|_{H^{-0.5}(\bdy\Omega)} \Big]\,.
\end{equation}
\end{theorem}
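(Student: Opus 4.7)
The plan is to apply the Lax-Milgram theorem on the Hilbert space $\V$ (equipped with the inner product inherited from $H^1(\Omega)$), to the bilinear form
\[
B(\u,\Varphi) = (\u,\Varphi)_{L^2(\Omega)} + \int_\Omega a^{jk}\, \frac{\p \u^i}{\p x_k}\, \frac{\p \Varphi^i}{\p x_j}\,dx,
\]
and the linear functional $F(\Varphi) = (\f,\Varphi)_{L^2(\Omega)} + \langle \g, \Varphi\rangle_{\bdy\Omega}$. A weak solution in the sense of (\ref{vector-valued_elliptic_weak_form}) is precisely an element $\u \in \V$ satisfying $B(\u,\Varphi) = F(\Varphi)$ for every $\Varphi \in \V$.

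First I would verify that $\V$ is a closed subspace of $H^1(\Omega)$. Since $\bfw$ is uniformly continuous in a neighborhood of $\bdy\Omega$, its boundary trace lies in $L^\infty(\bdy\Omega)$, and hence the map $\v \mapsto (\v\cdot \bfw)\big|_{\bdy\Omega}$ is a bounded linear operator from $H^1(\Omega)$ into $L^2(\bdy\Omega)$ by the continuity of the trace. Consequently $\V$ is the kernel of a continuous linear map and is therefore closed, and so is a Hilbert space in its own right.

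Next, I would check the three Lax-Milgram hypotheses. Boundedness of $B$ follows from $a^{jk} \in L^\infty(\Omega)$ together with Cauchy-Schwarz. Coercivity on all of $H^1(\Omega)$, and a fortiori on $\V$, is immediate from the pointwise positivity (\ref{positivity_of_a}), giving $B(\u,\u) \ge \min(1,\lambda)\|\u\|_{H^1(\Omega)}^2$; the $\|\u\|_{L^2}^2$ term in $B$ obviates the need for any \Poincare-type inequality, so coercivity is unconditional on the geometry of $\Omega$. Boundedness of $F$ follows from Cauchy-Schwarz for the bulk term, and from the $H^{-1/2}$-$H^{1/2}$ duality pairing together with the trace inequality $\|\Varphi\|_{H^{1/2}(\bdy\Omega)} \le C\|\Varphi\|_{H^1(\Omega)}$ for the boundary term, yielding $|F(\Varphi)| \le C\big(\|\f\|_{L^2(\Omega)} + \|\g\|_{H^{-0.5}(\bdy\Omega)}\big)\|\Varphi\|_{H^1(\Omega)}$.

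Lax-Milgram then produces the unique $\u \in \V$ solving (\ref{vector-valued_elliptic_weak_form}), and testing with $\Varphi = \u$ together with coercivity on the left and the bound on $F$ on the right yields the estimate (\ref{vector-valued_elliptic_weak_est}). No genuine obstacle is anticipated: the argument is a textbook Lax-Milgram application, the only point worth a careful word being the verification that $\V$ is closed, which relies on the uniform continuity of $\bfw$ near $\bdy\Omega$ (so that $\v\cdot\bfw$ has a well-defined trace in $L^2(\bdy\Omega)$), and the nonvanishing of $\bfw$ on $\bdy\Omega$ (which ensures that the constraint genuinely prescribes one component of $\v$ and is not degenerate).
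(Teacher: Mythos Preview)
Your proposal is correct and matches the paper's approach exactly: the paper does not give a detailed proof but simply states that the theorem follows ``with the help of the Lax-Milgram theorem,'' and you have supplied precisely the standard verification of the Lax-Milgram hypotheses (closedness of $\V$, boundedness and coercivity of $B$, boundedness of $F$) that this invocation entails.
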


\begin{remark}
Let $\u \in H^2(\Omega)\cap \V$ be a weak solution to {\rm(\ref{vector-valued_elliptic_eq})}. Integrating by parts with respect to $x_j$, we find that
\begin{align*}
\smallexp{$\displaystyle{}\int_{\Omega} \Big(\u^i - \dfrac{\p}{\p x_j} \big(a^{jk}\dfrac{\p \u^i}{\p x_k}\big) - \f^i \Big) \Varphi^i dx + \int_{\bdy\Omega} \big(a^{jk} \frac{\p \u^i}{\p x_k} \bN_j - \g\big)\Varphi^i dS = 0\quad\Forall \Varphi \in \V\,.$}
\end{align*}
Since $\Varphi \hspace{-.5pt}\cdot\hspace{-.5pt} \bfw = 0$ on $\bdy\Omega$, the Neumann-type boundary condition {\rm(\ref{vector-valued_elliptic_eq}c)} is thus shown to hold.
\end{remark}

We next establish the regularity theory for weak solutions satisfying (\ref{vector-valued_elliptic_weak_form}).
\begin{definition}[Partition-of-unity subordinate to an open cover]\label{partition_of_unity}
For a given collection of open sets $\{\U_m\}_{m=1}^K \subseteq \bbR^\n$, there exists a partition of unity
$\{\zeta_m\}_{m=1}^K$ subordinate to $\{\U_m\}_{m=1}^K$ such that $\sqrt{\zeta_m} \in \mC^\infty_\cptspt(\bbR^\n)$ for all
$1\le m\le K$. In fact, if $\{\xi_m\}_{m=1}^K$ is a smooth partition-of-unity subordinate to $\{\U_m\}_{m=1}^K$, by defining
$\{\zeta_m\}_{m=1}^K$ by
$$
\zeta_m = \frac{\xi^2_m}{\sum_{j=1}^K \xi_j^2}\,,
$$
then $0\le \zeta_m \le 1$, $\supp(\zeta_m) \subseteq \U_m$,  $\sqrt{\zeta_m} \in \mC^\infty_\cptspt(\bbR^\n)$ for all $1\le m\le K$,  and that $\sum\limits_{m=1}^K \zeta_m = 1$.
\end{definition}

\subsection{The case that the coefficients \texorpdfstring{$a^{jk}$}{aʲᵏ} are of class \texorpdfstring{$\mC^k$}{Cᵏ}}
Now we study the regularity of the weak solution $\u$ to (\ref{vector-valued_elliptic_eq}) when the coefficients $a^{jk}$ are of Sobolev class $H^\rk$, $\rk\in \bbN$, and the domain $\Omega$ is $\mC^{\rk+1}$. To do so, we shall first establish this regularity result under the more restrictive assumption that  the coefficients $a^{jk}$ are in $\mC^\rk(\cls{\Omega})$.

\begin{theorem}[Regularity for the case that $a^{jk} \in\mC^\rk(\cls{\Omega})$ and $\Omega\in \mC^{\rk+1}$] \label{thm:vector-valued_elliptic_regularity}
Suppose that for some $\rk\in \bbN$, $\Omega\subseteq \Rn$ is a bounded $\mC^{\rk+1}$-domain, $a^{jk} \in \mC^\rk(\cls{\Omega})$
satisfies the positivity condition {\rm(\ref{positivity_of_a})},
$\bfw$ is $\mC^{\rk+1}$ in an open neighborhood of $\bdy\Omega$, and $|\bfw| > 0$
on $\bdy\Omega$. Then for all $\f\in H^{\rk-1}(\Omega)$ and $\g \in H^{\rk-0.5}(\bdy\Omega)$, the weak solution $\u$ to {\rm(\ref{vector-valued_elliptic_eq})} in fact belongs to $H^{\rk+1}(\Omega)$, and satisfies
\begin{equation}\label{vector-valued_elliptic_regularity}
\|\u\|_{H^{\rk+1}(\Omega)} \le C \Big[\|\f\|_{H^{\rk-1}(\Omega)} + \|\g\|_{H^{\rk-0.5}(\bdy\Omega)} \Big]
\end{equation}
for some constant $C$ depending on $\|a\|_{\mC^\rk(\Omega)}$, $\|\bfw\|_{\mC^{\rk+1}(\Omega)}$ and $|\bdy\Omega|_{\mC^{\rk+1}}$.
\end{theorem}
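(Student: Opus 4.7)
\textbf{Proof plan for Theorem \ref{thm:vector-valued_elliptic_regularity}.}
My plan is to argue by induction on $\rk$, with the base case being the $H^2$-regularity estimate; the inductive step then differs only in bookkeeping. Since existence and the weak estimate \eqref{vector-valued_elliptic_weak_est} are already in hand from Theorem \ref{thm:weak_existence_vector-valued_elliptic_prob}, the task is to upgrade the regularity. I would fix a finite open cover $\{\U_m\}_{m=0}^K$ as in Proposition \ref{prop:Cr_domain_charts}, with $\U_0 \cptsubset \Omega$ and the remaining $\U_m$'s giving boundary charts $\vartheta_m: B(0,r_m)\to \U_m$ flattening $\bdy\Omega$, and let $\{\zeta_m\}$ be the squared-square-root partition of unity from Definition \ref{partition_of_unity}. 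Interior regularity on $\U_0$ is the classical mollification argument: test the weak form against $-\nabla\cdot(\eta_\epsilon\ast(\zeta_0^2\, \eta_\epsilon\ast \nabla^\ell u))$ for $\ell \le \rk$, use Lemma \ref{lem:commutator_estimates1} and Corollary \ref{cor:useful_lemma_with_Sobolev_class_coeff} to handle the commutators with $a^{jk}$, and pass $\epsilon \to 0^+$. This yields $\zeta_0 u \in H^{\rk+1}_{\rm loc}(\Omega)$ with the desired bound.

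For the boundary charts, I would pull back the system to $B^+(0,r_m)$: writing $U = (\zeta_m u)\circ \vartheta_m$, $F = (\zeta_m f)\circ \vartheta_m$, $A^{jk} = (a^{rs}\circ\vartheta_m)\frac{\p\vartheta_m^j}{\p x_r}\frac{\p\vartheta_m^k}{\p x_s}$ (and similarly for the boundary datum), the transformed system is again uniformly elliptic because $\det(\nabla\vartheta_m)=1$ and $\|\nabla\vartheta_m-\id\|_{L^\infty}\le\varepsilon$, and its coefficients lie in $\mC^\rk(\overline{B^+})$ by the chain rule. The boundary now sits in $\{y_\n=0\}$, and the boundary conditions transform to $U\cdot \tilde\bfw = 0$ together with $\rP_{\tilde\bfw^\perp}(A^{jk}\p_k U\, \delta_j^\n - \tilde\g) = 0$ where $\tilde\bfw = (\bfw\circ\vartheta_m)\nabla\vartheta_m^{-T}\bN\cdot\ldots$ remains $\mC^{\rk+1}$ and transversal to $\{y_\n=0\}$. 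I then apply the tangential mollifier $\Lambda_\epsilon$ from Section \ref{sec:convolution}, test the weak form against $-\bp\cdot\Lambda_\epsilon\bp^{\ell}(\Lambda_\epsilon \bp^{\ell} U)$ for $|\bp|$-derivative order $\ell\le \rk$, and use Lemma \ref{lem:commutator_estimates2} together with the positivity \eqref{positivity_of_a} to obtain $\|\bp^{\ell+1} U\|_{L^2(B^+)}\le C[\|F\|_{H^{\rk-1}}+\|\g\|_{H^{\rk-0.5}}+\|U\|_{H^1}]$, passing $\epsilon\to 0$.

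The remaining normal derivatives are recovered directly from the PDE: since $A^{\n\n}\ge\lambda>0$, the equation can be solved algebraically for $\p_\n^2 U^i$ in terms of tangential derivatives of $U$ of order $\le 2$ (and tangential-plus-one-normal of order $2$), plus derivatives of $F$ and of the coefficients. Differentiating this identity at most $\rk-1$ times and invoking the tangential bound already obtained (with an inductive bootstrap on the number of normal derivatives) supplies the full $H^{\rk+1}$ control. Summing over the partition $\{\zeta_m\}$ and combining with the interior estimate produces \eqref{vector-valued_elliptic_regularity}; the constant depends on $\|a\|_{\mC^\rk}$ through Corollary \ref{cor:useful_lemma_with_Sobolev_class_coeff}, on $\|\bfw\|_{\mC^{\rk+1}}$ through the boundary term manipulations below, and on $|\bdy\Omega|_{\mC^{\rk+1}}$ through the charts $\vartheta_m$ and Corollary \ref{cor:JA_est}.

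\textbf{Main obstacle.} The principal difficulty is the mixed-type boundary condition: the Dirichlet-like constraint $u\cdot\bfw=0$ holds on only one scalar component (in the non-normal direction $\bfw$), while the Neumann-like condition \eqref{vector-valued_elliptic_eq}c holds on the complementary $(\n-1)$-dimensional $\bfw^\perp$-components. Tangential differentiation does not preserve $U\cdot\tilde\bfw=0$ exactly: applying $\bp$ produces $\bp U\cdot\tilde\bfw = -U\cdot\bp\tilde\bfw$, so the natural test function $\bp^{\ell}(\zeta^2\bp^{\ell}U)$ does not literally lie in $\V$. I would deal with this by writing $U = (U\cdot\tilde\bfw/|\tilde\bfw|^2)\tilde\bfw + \rP_{\tilde\bfw^\perp}U$ and arguing only with the second summand (which does satisfy the constraint homogeneously), absorbing the first summand into lower-order error terms controlled by $\|U\|_{H^{\ell}}$ via the induction hypothesis and the $\mC^{\rk+1}$-regularity of $\bfw$. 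A careful accounting of these error terms, using \eqref{HkHl_product} and \eqref{Hk_bdy_by_Linf_Hk} to keep the norms on $\bfw$ below the highest level, is what makes the bound \eqref{vector-valued_elliptic_regularity} close with a constant that depends only on the stated quantities.
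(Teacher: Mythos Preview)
Your overall architecture matches the paper's: interior regularity via full mollification, tangential boundary regularity via the horizontal convolution $\Lambda_\epsilon$, and normal derivatives recovered algebraically from the equation using the positivity of $b^{\n\n}$. You have also correctly identified the central obstacle, namely that the naive high-order tangential test function fails to lie in $\V$ because tangential differentiation does not preserve the constraint $U\cdot\widetilde\bfw=0$.

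However, your proposed resolution of that obstacle has a genuine gap. You claim that after decomposing $U=(U\cdot\widetilde\bfw/|\widetilde\bfw|^2)\widetilde\bfw + \rP_{\widetilde\bfw^\perp}U$ and testing with the projected function, the first summand can be ``absorbed into lower-order error terms controlled by $\|U\|_{H^\ell}$ via the induction hypothesis.'' This is not so. When you test with $\widetilde\Varphi=\rP_{\widetilde\bfw^\perp}\big((-1)^\ell\widetilde\zeta\,\Lambda_\epsilon\Delta_0^\ell\Lambda_\epsilon(\widetilde\zeta\,\widetilde\u)\big)$, the energy identity produces a cross term essentially of the form
\[
\int_{B^+_m}\frac{b^{rs}}{|\widetilde\bfw|^2}\,\bp^\ell\big(\widetilde\zeta\,\widetilde\bfw\cdot\widetilde\u_{,s}\big)\,\big(\Lambda_\epsilon\bp^\ell\Lambda_\epsilon(\widetilde\zeta\,\widetilde\u^j_{,r})\big)\,\widetilde\bfw^j\,dy,
\]
and the factor $\bp^\ell(\widetilde\zeta\,\widetilde\bfw\cdot\nabla\widetilde\u)\sim\bp^\ell\nabla(\widetilde\zeta\,\widetilde\u\cdot\widetilde\bfw)$ is \emph{top-order}, of the same strength as the quantity you are trying to bound. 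It is not controlled by $\|\u\|_{H^\ell}$, and the inductive hypothesis does not suffice to absorb it.

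The paper closes this gap with an additional step carried out \emph{before} the projected test. Since $\widetilde\u\cdot\widetilde\bfw=0$ on $\{y_\n=0\}$, the vector
\[
\widetilde\Varphi^i=(-1)^\ell\,\widetilde\zeta\,\widetilde\bfw^i\,\Lambda_\epsilon\Delta_0^\ell\Lambda_\epsilon(\widetilde\zeta\,\widetilde\u\cdot\widetilde\bfw)
\]
vanishes on the boundary (horizontal mollification and tangential differentiation preserve a zero trace on $\{y_\n=0\}$), hence lies in $\V$ automatically. Testing with this $\widetilde\Varphi$ is a Dirichlet-type estimate for the scalar $\widetilde\u\cdot\widetilde\bfw$ and yields the separate top-order bound
\[
\big\|\bp^\ell\nabla(\widetilde\zeta\,\widetilde\u\cdot\widetilde\bfw)\big\|_{L^2(B^+_m)}\le C\big[\|\f\|_{H^{\ell-1}(\Omega)}+\|\g\|_{H^{\ell-0.5}(\bdy\Omega)}+\|\u\|_{H^\ell(\Omega)}\big].
\]
With this in hand, the cross term above is bounded by this right-hand side times $\|\bp^\ell\Lambda_\epsilon(\widetilde\zeta\,\nabla\widetilde\u)\|_{L^2}$, and Young's inequality absorbs the latter factor into the coercive term. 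Only then does the projected test function (your idea, the paper's Step~3) close to give the full tangential bound $\|\widetilde\zeta\,\bp^\ell\widetilde\u\|_{H^1(B^+_m)}$. In short: the $\bfw$-component and the $\bfw^\perp$-components must be handled in two passes, in that order, because the scalar constraint $\u\cdot\bfw=0$ furnishes a Dirichlet condition for the former and the Neumann data $\g$ governs only the latter.
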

\begin{proof}
Our goal is to establish the regularity theory for weak solutions $\u\in \V$ to (\ref{vector-valued_elliptic_regularity}). We prove this by induction and divide the proof into several steps as follows:

\vspace{.1 in}
\noindent {\bf Step 1: (Interior regularity)} Suppose that $\u\in H^\ell(\Omega)$ for some $1\le \ell\le \rk$.
Let $\chi$ be a smooth function with $\supp(\chi) \cptsubset \Omega$, and $0< \epsilon < \text{dist}(\supp(\chi), \bdy \Omega)$.
We define
$$
\Varphi = (-1)^\ell \chi \big[\eta_\epsilon \cvl \nabla^{2\ell} \big(\eta_\epsilon \cvl (\chi \u)\big)\big]
$$
with no summation over the index $\ell$, and we let $\{\eta_\epsilon\}_{\epsilon>0}$ denote a sequence of standard mollifiers given in Definition \ref{defn:mollifier}. We note that this choice of test function $\Varphi$ is in $\V$, and can hence be used in the variational formulation
(\ref{vector-valued_elliptic_weak_form}). First, we see that
\begin{equation}\label{uphi_inner_product}
(\u,\Varphi)_{L^2(\Omega)} = \big\|\nabla^\ell \eta_\epsilon \cvl (\chi \u)\big\|^2_{L^2(\Omega)}\,.
\end{equation}
Since convolution is self-adoint,  the product rule shows that
\begin{align*}
\int_\Omega a^{jk} \frac{\p \u^i}{\p x_k} \frac{\p \Varphi^i}{\p x_j}\, dx &= \int_\Omega \nabla \big[\eta_\epsilon \cvl \big(a^{jk} \nabla^{\ell - 1} (\chi \u^i),_k \big) \big] \nabla^\ell \big[\eta_\epsilon \cvl (\chi \u^i),_j \big] dx \nonumber\\
&\quad + \sum_{r=0}^{\ell-2} \smallexp{$\displaystyle{}{{\ell\hspace{-1pt}-\hspace{-1pt}1}\choose{r}}$} \int_\Omega \nabla \big[\eta_\epsilon \cvl \big((\nabla^{\ell - 1 - r} a^{jk}) \nabla^r (\chi \u^i),_k\big)\big] \nabla^\ell \big[\eta_\epsilon \cvl (\chi \u^i),_j \big] dx \nonumber\\
&\quad - \int_\Omega \nabla^{\ell} \big[\eta_\epsilon \cvl \big( a^{jk} \u^i \chi,_k \big)\big] \nabla^{\ell} \big[\eta_\epsilon \cvl (\chi \u^i),_j \big] dx \nonumber\\
&\quad - \int_\Omega \nabla^{\ell-1} \big[\eta_\epsilon \cvl \big(a^{jk} \chi,_j \u^i_{,k}\big) \big] \nabla^{\ell+1} \big[\eta_\epsilon \cvl (\chi \u^i) \big] dx \,. 
\end{align*}
Using the commutator notation $\comm{A}{B} f = A (Bf) - B(Af)$, the first term on the right-hand side of the identity above can be rewritten as
\begin{align*}
& \int_\Omega \nabla \big[\eta_\epsilon \cvl \big(a^{jk} \nabla^{\ell - 1} (\chi \u^i),_k \big)\big] \nabla^\ell \big[\eta_\epsilon \cvl (\chi \u^i),_j \big] dx \\
&\qquad\quad = \int_\Omega \nabla \big[a^{jk} \eta_\epsilon \cvl \nabla^{\ell-1} (\chi \u^i),_k\big] \nabla^\ell \big[\eta_\epsilon \cvl (\chi \u^i),_j \big] dx \\
&\qquad\qquad + \int_\Omega \big[\nabla \bigcomm{\eta_\epsilon\cvl}{a^{jk}} \nabla^{\ell-1} (\chi \u^i),_k\big] \nabla^\ell \big[\eta_\epsilon \cvl (\chi \u^i),_j \big] dx \\
&\qquad\quad = \int_\Omega a^{jk} \nabla^\ell \big[\eta_\epsilon\cvl (\chi \u^i),_k\big] \nabla^\ell \big[\eta_\epsilon \cvl (\chi \u^i),_j \big] dx \\
&\qquad\qquad + \int_\Omega (\nabla a^{jk}) \nabla^{\ell-1} \big[\eta_\epsilon\cvl (\chi \u^i),_k\big] \nabla^\ell \big[\eta_\epsilon \cvl (\chi \u^i),_j \big] dx \\
&\qquad\qquad + \int_\Omega \big[\nabla \bigcomm{\eta_\epsilon\cvl}{a^{jk}} \nabla^{\ell-1} (\chi \u^i),_k\big] \nabla^\ell \big[\eta_\epsilon \cvl (\chi \u^i),_j \big] dx \,;
\end{align*}
thus, after rearranging terms, the positivity condition (\ref{positivity_of_a}) implies that
\begin{align}
& \lambda \big\|\nabla^{\ell + 1} \big(\eta_\epsilon \cvl (\chi \u)\big)\big\|^2_{L^2(\Omega)} \le \int_\Omega a^{jk} \frac{\p \u^i}{\p x_k} \frac{\p \Varphi^i}{\p x_j}\, dx - \int_\Omega (\nabla a^{jk}) \nabla^{\ell-1} \big[\eta_\epsilon\cvl (\chi \u^i),_k\big] \nabla^\ell \big[\eta_\epsilon \cvl (\chi \u^i),_j \big] dx \nonumber\\
&\qquad - \int_\Omega \big[\nabla \bigcomm{\eta_\epsilon\cvl}{a^{jk}} \nabla^{\ell-1} (\chi \u^i),_k\big] \nabla^\ell \big[\eta_\epsilon \cvl (\chi \u^i),_j \big] dx \nonumber\\
&\qquad - \sum_{r=0}^{\ell-2} \smallexp{$\displaystyle{}{{\ell\hspace{-1pt}-\hspace{-1pt}1}\choose{r}}$} \int_\Omega \nabla \big[\eta_\epsilon \cvl \big((\nabla^{\ell - 1 - r} a^{jk}) \nabla^r (\chi \u^i),_k\big)\big] \nabla^\ell \big[\eta_\epsilon \cvl (\chi \u^i),_j \big] dx \nonumber\\
&\qquad + \int_\Omega \nabla^{\ell} \big[\eta_\epsilon \cvl \big( a^{jk} \u^i \chi,_k \big)\big] \nabla^{\ell} \big[\eta_\epsilon \cvl (\chi \u^i),_j \big] dx \nonumber\\
&\qquad + \int_\Omega \nabla^{\ell-1} \big[\eta_\epsilon \cvl \big(a^{jk} \chi,_j \u^i_{,k}\big) \big] \nabla^{\ell+1} \big[\eta_\epsilon \cvl (\chi \u^i) \big] dx\,. \nonumber
\end{align}
The last five integrals on the right-hand side of the inequality above can be estimated using \Holder's inequality and the commutation estimate (\ref{commutator_estimates1}), and we obtain that
\begin{equation}\label{elliptic_interior_estimate_temp1}
\lambda \big\|\nabla^{\ell + 1} \big(\eta_\epsilon \cvl (\chi \u)\big)\big\|^2_{L^2(\Omega)} \le \int_\Omega a^{jk} \frac{\p \u^i}{\p x_k} \frac{\p \Varphi^i}{\p x_j}\, dx + C \|a\|_{\mC^\ell(\cls{\Omega})} \|\u\|_{H^\ell(\Omega)} \big\|\nabla^{\ell+1} \big(\eta_\epsilon \cvl (\chi \u^i)\big) \big\|_{L^2(\Omega)}\,.
\end{equation}
On the other hand, it is easy to see that
\begin{equation}\label{elliptic_interior_estimate_temp2}
\begin{array}{ll}
& \displaystyle{} \int_\Omega \f \hspace{-.5pt}\cdot\hspace{-.5pt} \Varphi \, dx = - \int_\Omega \nabla^{\ell-1} \big[\eta_\epsilon\cvl (\chi \f^i)\big] \nabla^{\ell + 1} \big[\eta_\epsilon \cvl (\chi \u^i)\big] dx \vspace{.2cm}\\
&\displaystyle{}\hspace{56pt} \le C \|\f\|_{H^{\ell-1}(\Omega)} \big\|\nabla^{\ell+1} \big(\eta_\epsilon\cvl (\chi \u) \big)\big\|_{L^2(\Omega)}\,.
\end{array}
\end{equation}
Summing (\ref{uphi_inner_product}), (\ref{elliptic_interior_estimate_temp1}) and (\ref{elliptic_interior_estimate_temp2}),  we find that
\begin{align*}
& \big\|\nabla^\ell (\eta_\epsilon\cvl (\chi \u)\big\|^2_{L^2(\Omega)} + \lambda \big\|\nabla^{\ell + 1} \big(\eta_\epsilon \cvl (\chi \u)\big)\big\|^2_{L^2(\Omega)} \\
&\qquad\qquad \le C \Big[\|\f\|_{H^{\ell-1}(\Omega)} + \|a\|_{\mC^\ell(\cls{\Omega})} \|\u\|_{H^\ell(\Omega)} \Big] \big\|\nabla^{\ell+1} \big(\eta_\epsilon\cvl (\chi \u)\big)\big\|_{L^2(\Omega)}\,;
\end{align*}
therefore, by Young's inequality,
\begin{align*}
& \big\|\nabla^\ell (\eta_\epsilon\cvl (\chi \u)\big\|^2_{L^2(\Omega)} + \lambda \big\|\nabla^{\ell + 1} \big(\eta_\epsilon \cvl (\chi \u)\big)\big\|^2_{L^2(\Omega)} \\
&\qquad\quad \le \frac{C}{\lambda}\, \Big[\|\f\|^2_{H^{\ell-1}(\Omega)} + \|a\|^2_{\mC^\ell(\cls{\Omega})} \|\u\|^2_{H^\ell(\Omega)} \Big] + \frac{\lambda}{2}\, \big\|\nabla^{\ell+1} \big[\eta_\epsilon \cvl (\chi \u^i) \big]\big\|_{L^2(\Omega)}
\end{align*}
which further implies that
\begin{equation}\label{elliptic_interior_estimate_temp}
\big\|\nabla^{\ell + 1} \big(\eta_\epsilon \cvl (\chi \u)\big)\big\|_{L^2(\Omega)} \le \frac{C}{\lambda}\, \Big[\|\f\|_{H^{\ell-1}(\Omega)} + \|a\|_{\mC^\ell(\cls{\Omega})} \|\u\|_{H^\ell(\Omega)} \Big]\,.
\end{equation}
Since $\f\in H^{\rk-1}(\Omega)$ and $a\in \mC^\rk(\cls{\Omega})$, the assumption that $\u\in H^\ell(\Omega)$ implies that the right-hand side of (\ref{elliptic_interior_estimate_temp}) is bounded independent of the smoothing parameter $\epsilon$. Therefore, we can pass to the limit as $\epsilon \to 0$ in (\ref{elliptic_interior_estimate_temp}) and obtain that
$$
\|\nabla^{\ell + 1} (\chi \u)\|_{L^2(\Omega)} \le \frac{C}{\lambda}\, \Big[\|\f\|_{H^{\ell-1}(\Omega)} + \|a\|_{\mC^\ell(\cls{\Omega})} \|\u\|_{H^\ell(\Omega)} \Big]
$$
or
\begin{equation}\label{vector-valued_interior_estimate}
\|\chi \nabla^{\ell + 1} \u\|_{L^2(\Omega)} \le \frac{C}{\lambda}\, \Big[\|\f\|_{H^{\ell-1}(\Omega)} + \big(\|a\|_{\mC^\ell(\cls{\Omega})} + \lambda)\|\u\|_{H^\ell(\Omega)} \Big]\,.
\end{equation}
This implies that $\u\in H^{\ell+1}_{\text{loc}}(\Omega)$.   So, we have shown that if  $\u\in H^{\ell}(\Omega)$  for $1\le \ell\le k$ then, in fact, $\u\in H^{\ell+1}_{\text{loc}}(\Omega)$ and $\u$ satisfies (\ref{vector-valued_interior_estimate}).

Now, we note that by Theorem \ref{thm:weak_existence_vector-valued_elliptic_prob}, $\u\in H^{1}(\Omega)$ and hence  $\u\in H^{2}_{\text{loc}}(\Omega)$. This allows us to integrate-by-parts in the variational formulation (\ref{vector-valued_elliptic_weak_form}) and find that
$$
\int_\Omega \Big[\u - \frac{\p}{\p x_j}\Big(a^{jk} \frac{\p \u}{\p x_k}\Big) - \f\Big] \hspace{-.5pt}\cdot\hspace{-.5pt} \Varphi\, dx = 0 \qquad\Forall \Varphi\in \mC^\infty_\cptspt(\Omega)\,.
$$
The above identity implies that
\begin{equation}\label{a.e._equality}
\u^i - \frac{\p}{\p x_j}\Big(a^{jk} \frac{\p \u^i}{\p x_k}\Big) = \f^i \quad \text{a.e. in $\Omega$}.
\end{equation}

\vspace{.1 in}
\noindent {\bf Step 2: (Regularity for tangential derivatives of $\u$ in the $\bfw$-direction  near $\bdy\Omega$)} We now initiate the induction procedure. We assume that $\u \in H^\ell(\Omega)$ for some $1\le \ell \le \rk-1$, and prove then that $\u \in H^{\ell+1}(\Omega)$. Let $\{\U_m\}_{m=0}^K$, $\{\vartheta_m\}_{m=1}^K$, and $\{r_m\}_{m=1}^K$ denote the system of local charts  given in Proposition \ref{prop:Cr_domain_charts}
with $\varepsilon \ll 1$, and let $0 \le \zeta_m \le 1$ in $\mC^\infty_\cptspt(\U_m)$ denote a partition-of-unity subordinate to the open covering $\U_m$ as given in Definition \ref{partition_of_unity}. We fixed $m\in \{1,\cdots,K\}$, and work with the chart $\vartheta_m: B(0,r_m) \to \U_m$. On $B(0,r_m)$, we define the new functions
$$
\widetilde{\zeta} = \zeta_m \circ \vartheta_m,\ \widetilde{\u} = \u \circ \vartheta_m,\ \widetilde{\bfw} = \bfw_\epsilon \circ \vartheta_m,\ \widetilde{\f} = \f \circ \vartheta_m,\ \widetilde{\g} = \g \circ \vartheta_m \text{ \ and \ } \widetilde{\Varphi} = \Varphi \circ \vartheta_m\,.
$$
With $A = (\nabla \vartheta_m)^{-1}$, we define $b^{rs} = (a^{jk}\circ \vartheta) A^k_s A^j_r$. Then the matrix $b$ is positive-define. In fact, since $\|\nabla \vartheta_m - \id\|_{L^\infty(B^+_m)} \ll 1$,
\begin{equation}\label{brs_elliptic}
b^{rs} \xi_r \xi_s = (a^{jk}\circ\vartheta_m) A^s_k A^r_j \xi_r \xi_s \ge \lambda |A^\rT \xi|^2 \ge \frac{\lambda}{2} |\xi|^2\,.
\end{equation}
Setting $x=\vartheta_m(y)$,  the change-of-variables formula shows  that the variational formulation (\ref{vector-valued_elliptic_weak_form}) takes the form
\begin{equation}\label{transformed_weak_form_on_planar_domain}
\int_{B^+_m} \widetilde{\u}\cdot \widetilde{\Varphi} \,dy + \int_{B^+_m} b^{rs} \frac{\p \widetilde{\u}^i}{\p y_s} \frac{\p \widetilde{\Varphi}^i}{\p y_r} \,dy = \int_{B^+_m} \widetilde{\f} \cdot \widetilde{\Varphi} \,dy + \int_{B_m \cap \{y_\n = 0\}} \widetilde{\g} \cdot \widetilde{\Varphi} \,dS \quad \Forall \widetilde{\Varphi} \in \widetilde{\V}_m,
\end{equation}
where $\widetilde{\V}_m = \big\{ \widetilde{\Varphi} \in H^1(B^+_m)\,\big|\, \widetilde{\Varphi}\cdot \widetilde{\bfw} = 0 \text{ on } B_m \cap \{y_\n = 0\}, \widetilde{\Varphi} = 0 \text{ on } \bbR^\n_+ \cap \bdy B_m \big\}$.

With $\Delta_0 = \sum\limits_{\alpha = 1}^{\n-1} \dfrac{\p^2}{\p y_j^2}$ denoting the horizontal Laplace operator, we define
$$
\Varphi^i = (-1)^\ell \big[\widetilde{\zeta} \widetilde{\bfw}^i \Lambda_\epsilon \Delta_0^\ell \Lambda_\epsilon (\widetilde{\zeta} \widetilde{\u} \hspace{-.5pt}\cdot\hspace{-.5pt} \widetilde{\bfw})\big]\circ \vartheta_m^{-1}\,,
$$
where $\Lambda_\epsilon$ is the horizontal convolution-by-layers operator given by
$$
\Lambda_\epsilon \phi (y_h, y_\n) = \int_{\bbR^{\n-1}} \rho_\epsilon (y_h - z_h) \phi(z_h, y_\n) dz_h \qquad \text{for}\quad \phi ( \cdot , y_\n) \in L^1(\bbR^{\n-1} ) \,,
$$
where $y_h=(y_1,..., y_{\n-1})$.
Recalling that $\bp = \big(\dfrac{\p}{\p y_1},\cdots,\dfrac{\p}{y_{\n-1}}\big)$ denotes the horizontal gradient, we note that
\begin{align*}
\bp^\ell \v \cdot \bp^\ell \w &= \sum_{\alpha_1=1}^{\n-1} \cdots \sum_{\alpha_\ell=1}^{\n-1} \frac{\p^\ell \v}{\p y_{\alpha_1} \cdots \p y_{\alpha_\ell}} \cdot \frac{\p^\ell \w}{\p y_{\alpha_1} \cdots \p y_{\alpha_\ell}}\,,\\
\bp^{\ell-1} \v \cdot \bp^{\ell+1} \w &= \sum_{\alpha_1=1}^{\n-1} \cdots \sum_{\alpha_{\ell-1}=1}^{\n-1} \frac{\p^{\ell-1} \v}{\p y_{\alpha_1} \cdots \p y_{\alpha_{\ell-1}}} \cdot \frac{\p^{\ell-1} \Delta_0 \w}{\p y_{\alpha_1} \cdots \p y_{\alpha_{\ell-1}}}\,,
\end{align*}
and so forth.

Since $\Varphi \hspace{-.5pt}\cdot\hspace{-.5pt} \bfw = 0$ on $\bdy\Omega$, $\Varphi \in \V$ and can be used as a test function. The use of $\Varphi$ as a test function in (\ref{vector-valued_elliptic_weak_form}) implies that
\begin{equation}\label{pf_of_thm10.4_inequality1}
(\widetilde{\u},\widetilde{\Varphi})_{L^2(\Omega)} + \int_\Omega b^{rs} \frac{\p \widetilde{\u}^i}{\p y_s} \frac{\p \widetilde{\Varphi}^i}{\p y_r}\,dy \le C \Big[\|\f\|_{H^\ell(\Omega)} + \|\g\|_{H^{\ell-0.5}(\bdy\Omega)} \Big] \big\|\bp^\ell \Lambda_\epsilon (\widetilde{\zeta} \widetilde{\u}\hspace{-1pt}\cdot\widetilde{\rw}) \big\|_{H^1(B^+_m)}\,.
\end{equation}
Similar to (\ref{uphi_inner_product}), integrating by parts with respect to $y_h$ implies that
$$
(\u,\Varphi)_{L^2(\Omega)} = \big\|\bp^\ell \Lambda_\epsilon (\widetilde{\zeta} \widetilde{\u} \hspace{-.5pt}\cdot\hspace{-.5pt} \widetilde{\bfw})\big\|^2_{L^2(B^+_m)}\,.
$$
Now we focus on the second term on the left-hand side of (\ref{pf_of_thm10.4_inequality1}). Integrating by parts in the horizontal direction (using $\bp$) yields
\begin{align}
\int_{B^+_m} b^{rs} \frac{\p \widetilde{\u}^i}{\p y_r} \frac{\p \Varphi^i}{\p y_s}\,dy &= \int_{B^+_m} \bp^\ell \Lambda_\epsilon \big(b^{rs} \widetilde{\zeta} \widetilde{\u}_{m,s}\hspace{-1pt}\cdot \widetilde{\bfw} \big) \bp^\ell \Lambda_\epsilon (\widetilde{\zeta} \widetilde{\u}\hspace{-1pt}\cdot \widetilde{\bfw}),_r dy \nonumber\\
&= \int_{B^+_m} \bp \Lambda_\epsilon \big(b^{rs} \bp^{\ell-1} (\widetilde{\zeta} \widetilde{\u}\hspace{-1pt}\cdot\widetilde{\bfw}),_s\hspace{-2pt}\big) \bp^\ell \Lambda_\epsilon (\widetilde{\zeta} \widetilde{\u}\hspace{-1pt}\cdot\widetilde{\bfw}),_r dy \nonumber\\
&\quad + \sum_{k=0}^{\ell-2} \smallexp{$\displaystyle{}{{\ell\hspace{-1pt}-\hspace{-1pt}1}\choose{k}}$} \hspace{-2pt}\int_{B^+_m} \hspace{-3pt}\bp \Lambda_\epsilon \big(\bp^{\ell-1-k} b^{rs} \bp^k (\widetilde{\zeta} \widetilde{\u}\hspace{-1pt}\cdot\widetilde{\bfw}),_s\hspace{-2pt}\big) \bp^\ell \Lambda_\epsilon (\widetilde{\zeta} \widetilde{\u}\hspace{-1pt}\cdot\widetilde{\bfw}),_r dy \nonumber\\
&\quad - \int_{B^+_m} \bp^\ell \Lambda_\epsilon \big(b^{rs} \widetilde{\u}^i (\widetilde{\zeta} \widetilde{\bfw}^i),_s\hspace{-2pt}\big) \bp^\ell \Lambda_\epsilon (\widetilde{\zeta} \widetilde{\u}\hspace{-1pt}\cdot\widetilde{\bfw}),_r dy \,.\label{vector-valued_elliptic_est_id1}
\end{align}
For the first term on the right-hand side of (\ref{vector-valued_elliptic_est_id1}), as in Step 1, we find that
\begin{align}
& \int_{B^+_m} \bp \Lambda_\epsilon \big(b^{rs} \bp^{\ell-1} (\widetilde{\zeta} \widetilde{\u}\hspace{-1pt}\cdot\widetilde{\bfw}),_s\hspace{-2pt}\big) \bp^\ell \Lambda_\epsilon (\widetilde{\zeta} \widetilde{\u}\hspace{-1pt}\cdot\widetilde{\bfw}),_r dy \nonumber\\
&\qquad = \int_{B^+_m} \bp \big[b^{rs} \Lambda_\epsilon \bp^{\ell-1} (\widetilde{\zeta} \widetilde{\u}\hspace{-1pt}\cdot\widetilde{\bfw}),_s\big] \bp^\ell \Lambda_\epsilon (\widetilde{\zeta} \widetilde{\u}\hspace{-1pt}\cdot\widetilde{\bfw}),_r dy \nonumber\\
&\qquad\quad + \int_{B^+_m} \big(\bp \comm{\Lambda_\epsilon}{b^{rs}} \bp^{\ell-1} (\widetilde{\zeta} \widetilde{\u}\hspace{-1pt}\cdot\widetilde{\bfw}),_s\big) \bp^\ell \Lambda_\epsilon (\widetilde{\zeta} \widetilde{\u}\hspace{-1pt}\cdot\widetilde{\bfw}),_r dy \nonumber\\
&\qquad = \int_{B^+_m} b^{rs} \bp^\ell \Lambda_\epsilon (\widetilde{\zeta} \widetilde{\u}\hspace{-1pt}\cdot\widetilde{\bfw}),_s \bp^\ell \Lambda_\epsilon (\widetilde{\zeta} \widetilde{\u}\hspace{-1pt}\cdot\widetilde{\bfw}),_r dy \nonumber\\
&\qquad\quad + \int_{B^+_m} (\bp b^{rs}) \Lambda_\epsilon \bp^{\ell-1} (\widetilde{\zeta} \widetilde{\u}\hspace{-1pt}\cdot\widetilde{\bfw}),_s \bp^\ell \Lambda_\epsilon (\widetilde{\zeta} \widetilde{\u}\hspace{-1pt}\cdot\widetilde{\bfw}),_r dy \nonumber\\
&\qquad\quad + \int_{B^+_m} \big(\bp \comm{\Lambda_\epsilon}{b^{rs}} \bp^{\ell-1} (\widetilde{\zeta} \widetilde{\u}\hspace{-1pt}\cdot\widetilde{\bfw}),_s\hspace{-2pt}\big) \bp^\ell \Lambda_\epsilon (\widetilde{\zeta} \widetilde{\u}\hspace{-1pt}\cdot\widetilde{\bfw}),_r dy \,.\label{pf_of_thm10.4_inequality2}
\end{align}
The positivity condition (\ref{brs_elliptic}) and the commutation estimate (\ref{commutator_estimates2}) imply that
\begin{align}
& \int_{B^+_m} \bp \Lambda_\epsilon \big(b^{rs} \bp^{\ell-1} (\widetilde{\zeta} \widetilde{\u}\hspace{-1pt}\cdot\widetilde{\bfw}),_s\hspace{-2pt}\big) \bp^\ell \Lambda_\epsilon (\widetilde{\zeta} \widetilde{\u}\hspace{-1pt}\cdot\widetilde{\bfw}),_r dy \label{pf_of_thm10.4_inequality3}\\
&\qquad \ge \frac{\lambda}{2} \big\|\bp^\ell \Lambda_\epsilon \nabla (\widetilde{\zeta} \widetilde{\u}\hspace{-1pt}\cdot\widetilde{\bfw}) \big\|^2_{L^2(B^+_m)} \hspace{-2pt}-\hspace{-1pt} C \|\u\|_{H^\ell(\Omega)} \Big[\big\|\bp^\ell \Lambda_\epsilon (\widetilde{\zeta} \widetilde{\u}\hspace{-1pt}\cdot\widetilde{\bfw}) \big\|_{H^1(B^+_m)} \hspace{-2pt}+\hspace{-1pt} \|\u\|_{H^\ell(\Omega)} \Big]. \nonumber
\end{align}
For the remaining terms on the right-hand side of (\ref{vector-valued_elliptic_est_id1}), we apply \Holder's inequality and find that
\begin{align}
& \Big|\sum_{k=0}^{\ell-2} \smallexp{$\displaystyle{}{{\ell\hspace{-1pt}-\hspace{-1pt}1}\choose{k}}$} \int_{B^+_m} \bp \Lambda_\epsilon \big(\bp^{\ell-1-k} b^{rs} \bp^k (\widetilde{\zeta} \widetilde{\u}\hspace{-1pt}\cdot\widetilde{\bfw}),_s\hspace{-2pt}\big) \bp^\ell \Lambda_\epsilon (\widetilde{\zeta} \widetilde{\u}\hspace{-1pt}\cdot\widetilde{\bfw}),_r dy\Big| \nonumber\\
&\qquad + \Big|\int_{B^+_m} \bp^\ell \Lambda_\epsilon \big(b^{rs} \widetilde{\u}^i (\widetilde{\zeta} \widetilde{\bfw}^i),_s\hspace{-2pt}\big) \bp^\ell \Lambda_\epsilon (\widetilde{\zeta} \widetilde{\u}\hspace{-1pt}\cdot\widetilde{\bfw}),_r dy\Big| \nonumber\\
&\qquad\qquad \le C \|\u\|_{H^\ell(\Omega)} \Big[\big\|\bp^\ell \Lambda_\epsilon \nabla (\widetilde{\zeta} \widetilde{\u}\hspace{-1pt}\cdot\widetilde{\bfw}) \big\|_{L^2(B^+_m)} + \|\u\|_{H^\ell(\Omega)} \Big]\,, \label{vector-valued_elliptic_proof_ineq1}
\end{align}
where $C$ depends on $\|a\|_{\mC^\ell(\Omega)}$, $\|\bfw\|_{\mC^{\ell+1}(\Omega)}$ and $|\bdy\Omega|_{\mC^{\ell+1}(\Omega)}$.
As a consequence,  Young's inequality together with (\ref{pf_of_thm10.4_inequality1})--(\ref{pf_of_thm10.4_inequality3}) implies that
\begin{align*}
& \big\|\bp^\ell \Lambda_\epsilon (\widetilde{\zeta} \widetilde{\u}\hspace{-1pt}\cdot\widetilde{\bfw}) \big\|^2_{L^2(B^+_m)} \hspace{-1.5pt}+ \lambda \big\|\bp^\ell \Lambda_\epsilon \nabla (\widetilde{\zeta} \widetilde{\u}\hspace{-1pt}\cdot\widetilde{\bfw}) \big\|^2_{L^2(B^+_m)} \\
&\qquad \le C_\delta \Big[\|\u\|^2_{H^\ell(\Omega)} \hspace{-1.5pt}+ \|\f\|^2_{H^{\ell-1}(\Omega)} \hspace{-1.5pt}+ \|\g\|^2_{H^{\ell-1.5}(\bdy\Omega)} \Big] \hspace{-1.5pt}+ \delta \big\|\bp^\ell \Lambda_\epsilon \nabla (\widetilde{\zeta} \widetilde{\u}\hspace{-1pt}\cdot\widetilde{\bfw}) \big\|^2_{L^2(B^+_m)}\,,
\end{align*}
which, by choosing $\delta > 0$ sufficiently small, shows that
$$
\big\|\bp^\ell\, \Lambda_\epsilon (\widetilde{\zeta} \widetilde{\u}\hspace{-1pt}\cdot\widetilde{\bfw}) \big\|_{H^1(B^+_m)} \le C \Big[\|\f\|_{H^{\ell-1}(\Omega)} + \|\g\|_{H^{\ell-0.5}(\bdy\Omega)} + \|\u\|_{H^\ell(\Omega)} \Big]
$$
for some constant $C = C(\|a\|_{\mC^\ell(\Omega)}, \|\bfw\|_{\mC^{\ell+1}(\Omega)}, |\bdy\Omega|_{\mC^{\ell+1}})$.

Since the estimate above is independent of the smoothing parameter $\epsilon$, by passing to the limit as $\epsilon \to 0$ we conclude that
$$
\big\|\widetilde{\zeta} \bp^\ell (\widetilde{\u}\hspace{-1pt}\cdot\widetilde{\bfw}) \big\|_{H^1(B^+_m)} \le C \Big[\|\u\|_{H^\ell(\Omega)} + \|\f\|_{H^{\ell-1}(\Omega)} + \|\g\|_{H^{\ell-0.5}(\bdy\Omega)} \Big] \,.
$$
Since both $\bfw$ and $\vartheta_m$ are $\mC^{k+1}$ in the support of $\widetilde{\zeta}$, it follows that
\begin{equation}\label{vector-valued_bdy_est_temp}
\big\|\widetilde{\zeta} \widetilde{\bfw} \hspace{-1pt}\cdot \bp^\ell \widetilde{\u}\big\|_{H^1(B^+_m)} \le C \Big[\|\f\|_{H^{\ell-1}(\Omega)} + \|\g\|_{H^{\ell-0.5}(\bdy\Omega)} + \|\u\|_{H^\ell(\Omega)}\Big] \,.
\end{equation}

\vspace{.1 in}
\noindent {\bf Step 3: (Regularity for tangential derivatives of $\u$ in the $\bfw^\perp$-directions  near $\bdy\Omega$)} Estimate (\ref{vector-valued_bdy_est_temp})  provides  regularity for the vector $\widetilde{\zeta}\, \bp^\ell \nabla \widetilde{\u} \cdot \bfw$.
Next, we establish the regularity of $\widetilde{\zeta}\, \bp^\ell \nabla \widetilde{\u} \times  \bfw $. We define
\begin{align*}
\widetilde{\Varphi}^i &= (-1)^\ell \Big[\widetilde{\zeta} \Lambda_\epsilon \Delta_0^\ell \Lambda_\epsilon (\widetilde{\zeta} \widetilde{\u}^i) - \big(\widetilde{\zeta} \widetilde{\bfw} \hspace{-1pt}\cdot\Lambda_\epsilon \Delta_0^\ell \Lambda_\epsilon (\widetilde{\zeta} \widetilde{\u})\big) \frac{\widetilde{\bfw}^i}{|\widetilde{\bfw}|^2} \Big] \\
&= (-1)^\ell \Big[\widetilde{\zeta} \Lambda_\epsilon \Delta_0^\ell \Lambda_\epsilon (\widetilde{\zeta} \widetilde{\u}^i) - 
\big(\widetilde{\zeta} \Lambda_\epsilon \Delta_0^\ell \Lambda_\epsilon (\widetilde{\zeta} \widetilde{\u}^j)\big) \frac{\widetilde{\bfw}^j \widetilde{\bfw}^i}{|\widetilde{\bfw}|^2} \Big] \,.
\end{align*}
Note that $\Varphi$ is the projection of the vector $\widetilde{\zeta} \Lambda_\epsilon \Delta_0^\ell \Lambda_\epsilon (\widetilde{\zeta} \widetilde{\u})$ onto the affine space with normal $\bfw$, so $\Varphi \in \V$ and can be used as a test function in (\ref{transformed_weak_form_on_planar_domain}).  Following the similar computation in Step 2 above, we have that
\begin{align*}
& \big\|\bp^\ell \Lambda_\epsilon (\widetilde{\zeta} \widetilde{\u})\big\|^2_{L^2(B^+_m)} + \frac{\lambda}{4} \big\|\bp^\ell \Lambda_\epsilon \nabla (\widetilde{\zeta} \widetilde{\u})\big\|^2_{L^2(B^+_m)} \\
&\qquad \le C_\delta \Big[\|\f\|^2_{H^{\ell-1}(\Omega)} + \|\g\|^2_{H^{\ell-1.5}(\bdy\Omega)} + \|\u\|^2_{H^\ell(\Omega)}\Big] + \delta \big\|\bp^\ell \Lambda_\epsilon \nabla (\widetilde{\zeta} \widetilde{\u})\big\|^2_{L^2(B^+_m)} \\
&\qquad\quad + (-1)^{\ell+1} \int_{B^+_m} b^{rs} \widetilde{\u}^i,_s \Big[\big(\widetilde{\zeta} \Lambda_\epsilon \Delta_0^\ell \Lambda_\epsilon (\widetilde{\zeta} \widetilde{\u}^j_m)\big) \frac{\widetilde{\bfw}^j \widetilde{\bfw}^i}{|\widetilde{\bfw}|^2}\Big]_{,r} dy \\
&\qquad \le C_\delta \Big[\|\f\|^2_{H^{\ell-1}(\Omega)} + \|\g\|^2_{H^{\ell-1.5}(\bdy\Omega)} + \|\u\|^2_{H^\ell(\Omega)}\Big] + 2 \delta \big\|\bp^\ell \Lambda_\epsilon \nabla (\widetilde{\zeta} \widetilde{\u})\big\|^2_{L^2(B^+_m)} \\
&\qquad\quad + (-1)^{\ell+1} \int_{B^+_m} \frac{b^{rs}}{|\widetilde{\bfw}|^2}\, \widetilde{\zeta} \widetilde{\bfw}\hspace{-1pt}\cdot \widetilde{\u},_s \big( \Lambda_\epsilon \Delta_0^\ell \Lambda_\epsilon (\widetilde{\zeta} \widetilde{\u}^j,_r)\big) \widetilde{\bfw}^j \,dy \\
&\qquad \le C_\delta \Big[\|\f\|^2_{H^{\ell-1}(\Omega)} + \|\g\|^2_{H^{\ell-1.5}(\bdy\Omega)} + \|\u\|^2_{H^\ell(\Omega)}\Big] + 3 \delta \big\|\bp^\ell \Lambda_\epsilon \nabla (\widetilde{\zeta} \widetilde{\u})\big\|^2_{L^2(B^+_m)} \\
&\qquad\quad - \int_{B^+_m} \frac{b^{rs}}{|\widetilde{\bfw}|^2}\, \bp^\ell (\widetilde{\zeta} \widetilde{\u}_{m,s} \hspace{-.5pt}\cdot\hspace{-.5pt} \widetilde{\bfw}) \big(\Lambda_\epsilon \bp^\ell \Lambda_\epsilon (\widetilde{\zeta} \widetilde{\u}^j,_r)\big) \widetilde{\bfw}^j \, dy\,.
\end{align*}
Applying estimate (\ref{vector-valued_bdy_est_temp}) and Young's inequality,
\begin{align*}
& - \int_{B^+_m} \frac{b^{rs}}{|\widetilde{\bfw}|^2}\, \bp^\ell (\widetilde{\zeta} \widetilde{\u}_{m,s} \hspace{-.5pt}\cdot\hspace{-.5pt} \widetilde{\bfw}) \big(\Lambda_\epsilon \bp^\ell \Lambda_\epsilon (\widetilde{\zeta} \widetilde{\u}^j_{m,r})\big) \widetilde{\bfw}^j_m \, dy \\
&\qquad\quad \le C \big\|\bp^\ell (\widetilde{\zeta} \widetilde{\bfw}\cdot \nabla \widetilde{\u})\big\|_{L^2(\Omega)} \big\|\bp^\ell \Lambda_\epsilon (\widetilde{\zeta} \nabla \widetilde{\u})\big\|_{L^2(\Omega)} \\
&\qquad\quad \le C_\delta \Big[\|\f\|^2_{H^{\ell-1}(\Omega)} + \|\g\|^2_{H^{\ell-1.5}(\bdy\Omega)} + \|\u\|^2_{H^\ell(\Omega)}\Big] + \delta \big\|\bp^\ell \Lambda_\epsilon (\widetilde{\zeta} \nabla \widetilde{\u})\big\|^2_{L^2(\Omega)}\,;
\end{align*}
thus by choosing $\delta>0$ sufficiently small, we conclude that
\begin{align*}
\|\bp^\ell \Lambda_\epsilon (\widetilde{\zeta} \widetilde{\u})\|^2_{H^1(B^+_m)} \le C \Big[\|\u\|^2_{H^\ell(\Omega)} + \|\f\|^2_{H^{\ell-1}(\Omega)} + \|\g\|^2_{H^{\ell-1.5}(\bdy\Omega)}\Big]\,.
\end{align*}
Again, due to the $\epsilon$-independence of the right-hand side, we conclude that
\begin{equation}\label{vector-valued_bdy_est}
\|\widetilde{\zeta}\, \bp^\ell \widetilde{\u}\|_{H^1(B^+_m)} \le C \Big[\|\u\|_{H^\ell(\Omega)} + \|\f\|_{H^{\ell-1}(\Omega)} + \|\g\|_{H^{\ell-0.5}(\bdy\Omega)}\Big]
\end{equation}
for some constant $C = C(\|a\|_{\mC^\ell(\Omega)}, \|\bfw\|_{\mC^{\ell+1}(\Omega)}, |\bdy\Omega|_{\mC^{\ell+1}})$.

\vspace{.1 in}
\noindent {\bf Step 4: (Regularity for normal derivatives of $\u$ near $\bdy\Omega$)}
Multiplying (\ref{a.e._equality}) by $\zeta_m$ and then composing with $\vartheta_m$, by the Piola identity (\ref{Piola_id}) we obtain that
\begin{align*}
\widetilde{\zeta} \widetilde{\u} - \widetilde{\zeta} \big(b^{rs} \widetilde{\u},_s \big),_r = \widetilde{\zeta} (\f\circ \vartheta_m) \quad\text{a.e. in $B^+_m$}\,.
\end{align*}
Letting $\bp^{\ell - 1 - j} \nabla^j$ act on the equation above, we find that
\begin{equation}
\widetilde{\zeta} b^{rs} \bp^{\ell-1-j} \nabla^j \widetilde{\u},_{rs} = \text{\bf\emph{F}}_{(\ell,j)} \quad\text{a.e. in $B^+_m$}\label{vector-valued_elliptic_id_temp1}
\end{equation}
for some $\text{\bf\emph{F}}_{(\ell,j)} \in L^2(\Omega)$ satisfying
$$
\|\text{\bf\emph{F}}_{(\ell,j)}\|_{L^2(\Omega)} \le C \Big[\|\f\|_{H^{\ell-1}(\Omega)} + \|\u\|_{H^\ell(\Omega)} \Big]\,,
$$
where the constant $C$ depends on $\|a\|_{\mC^\ell(\Omega)}$. Using (\ref{brs_elliptic}), $b^{\n\n} > 0$; thus (\ref{vector-valued_elliptic_id_temp1}) further implies that
$$
\widetilde{\zeta} \bp^{\ell-1-j} \nabla^j \widetilde{\u},_{\n\n} = \frac{1}{b^{\n\n}} \Big[\text{\bf\emph{F}}_{(\ell,j)} - \widetilde{\zeta} \sum_{(r,s)\ne (\n,\n)} b^{rs} \bp^{\ell-1} \nabla^j \widetilde{\u},_{rs} \Big]\,.
$$
Now we argue by induction on $0\le j\le \ell-1$. When $j=0$, (\ref{vector-valued_bdy_est}) shows that
$$
\big\|\widetilde{\zeta} \bp^{\ell-1} \widetilde{\u},_{\n\n}\big\|_{L^2(B^+_m)} \le C \Big[\|\u\|_{H^\ell(\Omega)} + \|\f\|_{H^{\ell-1}(\Omega)} + \|\g\|_{H^{\ell-0.5}(\bdy\Omega)} \Big]
$$
which, combined with (\ref{vector-valued_bdy_est}), provides the estimate
$$
\big\|\widetilde{\zeta} \bp^{\ell-1} \nabla^2 \widetilde{\u}\big\|_{L^2(B^+_m)} \le C \Big[\|\u\|_{H^\ell(\Omega)} + \|\f\|_{H^{\ell-1}(\Omega)} + \|\g\|_{H^{\ell-0.5}(\bdy\Omega)} \Big]\,.
$$
Repeating this process for $j=1,\cdots,\ell-1$, we conclude that
\begin{equation}\label{vector-valued_boundary_estimate}
\|\widetilde{\zeta} \nabla^{\ell+1} \widetilde{\u}\|_{L^2(B^+_m)} \le C \Big[\|\u\|_{H^\ell(\Omega)} + \|\f\|_{H^{\ell-1}(\Omega)} + \|\g\|_{H^{\ell-0.5}(\bdy\Omega)}\Big]\,.
\end{equation}
The combination of (\ref{vector-valued_interior_estimate}) and (\ref{vector-valued_boundary_estimate}), as well as the induction process, proves the theorem.
\end{proof}

\subsection{The case that the coefficients \texorpdfstring{$a^{jk}$}{aʲᵏ} are of Sobolev class}
We are now in the position of studying the regularity of solution $\u$ to (\ref{vector-valued_elliptic_eq}) when the coefficient $a^{jk}$ and the domain $\Omega$ is of Sobolev class. We first prove the following rather technical
\begin{theorem}[Regularity for the case that $a^{jk} \in H^\rk(\Omega)$ and $\Omega\in \mC^ \infty $]\label{thm:vector-valued_elliptic_eq_Sobolev_coeff}
Let $\Omega \subseteq \Rn$ be a bounded $\mC^\infty$-domain. Suppose that for some integer $\rk > \novertwo$ and $1\le \ell \le \rk$\,, $a^{jk} \in H^\rk(\Omega)$ satisfies the positivity condition
$$
a^{jk} \xi_j \xi_k \ge \lambda |\xi|^2\qquad\Forall \xi \in \Rn\,,
$$
and $\bfw \in H^{\max\{\rk,\ell+1\}}(\Omega)$ $($or $\bfw \in H^{\max\{\rk-\frac{1}{2},\ell+\frac{1}{2}\}}(\bdy\Omega))$ such that $\bfw$ vanishes nowhere on $\bdy\Omega$. Then for all $f\in H^{\ell-1}(\Omega)$ and $g \in H^{\ell-0.5}(\bdy\Omega)$, the weak solution $\u$ to {\rm(\ref{vector-valued_elliptic_eq})} belongs to $H^{\ell+1}(\Omega)$, and satisfies
\begin{equation}\label{vector-valued_elliptic_regularity_sobolev1}
\displaystyle{} \|\u\|_{H^{\ell+1}(\Omega)} \le C \Big[\|\f\|_{H^{\ell-1}(\Omega)} + \|\g\|_{H^{\ell-0.5}(\bdy\Omega)} + \P\big(\|a\|_{H^\rk(\Omega)}\big) \Big(\|\f\|_{L^2(\Omega)} + \|\g\|_{H^{-0.5}(\bdy\Omega)}\Big) \Big]
\end{equation}
for some constant $C = C\big(\|\bfw\|_{H^{\max\{\rk,\ell+1\}}(\Omega)}\big)$ and some polynomial $\P$.
\end{theorem}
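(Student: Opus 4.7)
The plan is to induct on $\ell$ from $1$ to $\rk$, closely following the four-step architecture of the proof of Theorem \ref{thm:vector-valued_elliptic_regularity} (interior regularity via $\eta_\epsilon$-mollification, tangential regularity along $\bfw$ and then along $\bfw^\perp$ via the horizontal convolution $\Lambda_\epsilon$, and finally normal regularity by algebraically solving for $\p_\n^2\widetilde{\u}$ from the flattened equation using $b^{\n\n}>0$), but replacing every appearance of a pointwise bound $\|\nabla^j a\|_{L^\infty}$ by a Sobolev-multiplier estimate drawn from Proposition \ref{prop:HkHl_product} and Corollary \ref{cor:useful_lemma_with_Sobolev_class_coeff}. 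The starting point is the $H^1$-weak solution furnished by Theorem \ref{thm:weak_existence_vector-valued_elliptic_prob}, whose constant depends on $\|a\|_{L^\infty(\Omega)}$, which in turn is controlled by $\|a\|_{H^\rk(\Omega)}$ via the Sobolev embedding $H^\rk \hookrightarrow L^\infty$ under the hypothesis $\rk > \novertwo$.

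\textbf{Key technical replacement.} Where the $\mathcal{C}^\rk$-coefficient proof bounds terms of the form $\bigl|\int (\bp^{\ell-1-j} a^{rs})\bp^j\nabla(\widetilde\zeta\widetilde\u)\bigr|$ by $\|a\|_{\mathcal{C}^\ell}\|\u\|_{H^\ell}$, I would instead invoke the product estimate (\ref{commutator_estimate_elliptic_est_temp}), yielding
\begin{equation*}
\sum_{j=1}^{\ell}\bigl\|\nabla^j a\,\nabla^{\ell-j}\bigl(\widetilde\zeta\nabla\widetilde\u\bigr)\bigr\|_{L^2(\Omega)} \le C_\sigma\,\|a\|_{H^\rk(\Omega)}\,\|\widetilde\zeta\nabla\widetilde\u\|_{H^{\ell-\sigma}(\Omega)},
\end{equation*}
together with the commutator bounds (\ref{commutator_estimate_elliptic_est1})--(\ref{commutator_estimate_elliptic_est2}) for the mollification commutators $[\eta_\epsilon\cvl, a^{jk}]$ and $[\Lambda_\epsilon, a^{jk}]$. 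The remaining ingredients of the original argument---the commutator identity for $\Lambda_\epsilon$ in Lemma \ref{lem:commutator_estimates2}, Young's inequality, the positivity condition (\ref{brs_elliptic}) in the flattened coordinates, the Piola identity (\ref{Piola_id}), and the inductive algebraic recovery of $\nabla^{\ell+1}\widetilde\u$ from the tangential estimate and the equation---transfer unchanged because they do not involve any $\mathcal{C}^\ell$-norm of $a$.

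\textbf{Closing the induction.} After collecting terms and absorbing highest-order quantities via Young's inequality, each induction step produces the estimate
\begin{equation*}
\|\u\|_{H^{\ell+1}(\Omega)} \le C\,\Bigl[\|\f\|_{H^{\ell-1}(\Omega)} + \|\g\|_{H^{\ell-0.5}(\bdy\Omega)} + \|\u\|_{H^\ell(\Omega)}\Bigr] + C_\sigma\,\|a\|_{H^\rk(\Omega)}\,\|\u\|_{H^{\ell-\sigma}(\Omega)},
\end{equation*}
where the constant $C$ depends only on $\lambda$ and $\|\bfw\|_{H^{\max\{\rk,\ell+1\}}(\Omega)}$ (the latter entering via the matrix-valued symbol $\bfw\otimes\bfw/|\bfw|^2$ appearing in the test function of Step 3, whose multiplier bound is supplied by Theorem \ref{thm:useful_lemma_with_Sobolev_class_coeff2}). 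Since $\|\u\|_{H^{\ell-\sigma}(\Omega)} \le \|\u\|_{H^\ell(\Omega)}$, substituting the induction hypothesis on $\|\u\|_{H^\ell(\Omega)}$---which carries a polynomial $\P_{\ell-1}(\|a\|_{H^\rk})$ in front of the lowest-order data norms---promotes the polynomial degree by one and produces (\ref{vector-valued_elliptic_regularity_sobolev1}) with a polynomial $\P=\P_\ell$. The base case $\ell=1$ uses the weak-solution estimate (\ref{vector-valued_elliptic_weak_est}) together with a single run of the induction step at level $\ell=1$, noting that the $\sigma$-loss in (\ref{commutator_estimate_elliptic_est_temp}) allows $\|\u\|_{H^{1-\sigma}}$ to be bounded by $\|\u\|_{H^1}$ from the weak estimate.

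\textbf{Main obstacle.} The delicate point is bookkeeping: one must verify that the constant $C$ in the final inequality depends only on $\|\bfw\|$ and $\lambda$, while every factor of $\|a\|_{H^\rk(\Omega)}$ is pushed into the polynomial $\P$ that multiplies only $\|\f\|_{L^2}+\|\g\|_{H^{-0.5}}$. This is possible precisely because each Leibniz-rule error term coming from the commutator $[\nabla^\ell, a]$ or $[\Lambda_\epsilon, a]$ contains a derivative of $a$ paired with a strictly lower-order piece of $\u$, giving access to the $\sigma$-loss estimate (\ref{commutator_estimate_elliptic_est_temp}), and hence to the induction hypothesis one order lower.
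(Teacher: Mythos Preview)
Your approach diverges from the paper's in a structural way that introduces a real gap. The paper does \emph{not} run the mollified-test-function argument of Theorem~\ref{thm:vector-valued_elliptic_regularity} on the original weak solution. Instead it regularizes the \emph{data}: it mollifies $a^{jk}$, $\bfw$, $\f$, $\g$ to smooth $a_\epsilon$, $\bfw_\epsilon$, $\f_\epsilon$, $\g_\epsilon$, invokes Theorem~\ref{thm:vector-valued_elliptic_regularity} to obtain a solution $\u^\epsilon$ that is already smooth, and then derives $\epsilon$-independent $H^{\ell+1}$ bounds on $\u^\epsilon$ using \emph{unmollified} test functions $\widetilde\Varphi = (-1)^\ell\widetilde\zeta\widetilde\bfw\,\Delta_0^\ell(\widetilde\zeta\widetilde\u\cdot\widetilde\bfw)$. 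In that setting the only commutators that appear are $[\bp^\ell, b_\epsilon^{rs}]$, handled by Corollary~\ref{cor:useful_lemma_with_Sobolev_class_coeff}; the mollification commutators $[\eta_\epsilon\cvl, a]$ and $[\Lambda_\epsilon, a]$ never enter. This matters because Lemmas~\ref{lem:commutator_estimates1} and~\ref{lem:commutator_estimates2} require $a\in W^{1,\infty}$, which is \emph{not} implied by $a\in H^\rk$ when $\rk$ is merely the smallest integer exceeding $\tfrac{\n}{2}$ (e.g.\ $\rk=2$, $\n=3$ gives $\nabla a\in H^1\not\hookrightarrow L^\infty$). Your citation of \eqref{commutator_estimate_elliptic_est1}--\eqref{commutator_estimate_elliptic_est2} ``for the mollification commutators'' is a confusion: those inequalities bound $[\nabla^\ell,f]$ and $[\bp^\ell,F]$, not $[\eta_\epsilon\cvl,\cdot]$ or $[\Lambda_\epsilon,\cdot]$.

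There is a second error in your closing step. Applying \eqref{commutator_estimate_elliptic_est_temp} with $g=\nabla(\widetilde\zeta\widetilde\u\cdot\widetilde\bfw)$ gives a right-hand side $C_\sigma\|a\|_{H^\rk}\|\nabla(\widetilde\zeta\widetilde\u)\|_{H^{\ell-\sigma}}\sim C_\sigma\|a\|_{H^\rk}\|\u\|_{H^{\ell+1-\sigma}}$, \emph{not} $\|\u\|_{H^{\ell-\sigma}}$ (your own displayed product estimate already has the extra gradient). Hence the step ``$\|\u\|_{H^{\ell-\sigma}}\le\|\u\|_{H^\ell}$, substitute the induction hypothesis'' is unavailable. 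The paper instead obtains, in one pass with no induction on $\ell$, the inequality $\|\u^\epsilon\|_{H^{\ell+1}}\le C\big[\|\f\|_{H^{\ell-1}}+\|\g\|_{H^{\ell-0.5}}+(1+\|a\|_{H^\rk})\|\u^\epsilon\|_{H^{\ell+7/8}}\big]$, interpolates $\|\u^\epsilon\|_{H^{\ell+7/8}}\le C\|\u^\epsilon\|_{H^{\ell+1}}^{1-1/(8\ell)}\|\u^\epsilon\|_{H^1}^{1/(8\ell)}$, and absorbs via Young's inequality. The polynomial $\P$ is the power $(1+\|a\|_{H^\rk})^{8\ell}$ produced by that single Young step, not an artifact of iterating an induction. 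Finally one passes $\epsilon\to 0$ using convergence of the regularized data.
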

\begin{proof}
Let $\{\U_m,\vartheta_m\}_{m=1}^K$ be a collection of charts of $\bdy\Omega$ given in Proposition \ref{prop:Cr_domain_charts}, $\{\zeta_m\}_{m=1}^K$ a partition-of-unity subordinate to $\{\U_m\}_{m=1}^K$
given in Definition \ref{partition_of_unity},  and let  $\rE: H^{\rk+1}(\Omega) \to H^{\rk+1}(\Rn)$ denote a Sobolev extension operator. We define $a_\epsilon = \eta_\epsilon \cvl (\rE a)$, $\f_{\hspace{-1.5pt}\epsilon} = \eta_\epsilon \cvl (\rE f)$, $\bfw_\epsilon = \eta_\epsilon \cvl (\rE \bfw)$. Finally, let $\g_\epsilon$ denote a smooth regularization of $\g$ defined by
$$
\g_\epsilon = \sum_{m=1}^K \sqrt{\zeta_m} \, \big[\Lambda_\epsilon \big((\sqrt{\zeta_m}\, \g)\circ\vartheta_m\big)\big]\circ \vartheta_m^{-1} \,.
$$
It follows that for $\epsilon \ll 1$ sufficiently small,
\begin{equation}\label{uniform_ellipticity_of_tilde_a}
a^{jk}_\epsilon(x) \xi_j \xi_k \ge \frac{\lambda}{2}\, |\xi|^2 \qquad\Forall\xi \in \Rn, x\in \Omega\,.
\end{equation}
Hence by Theorem \ref{thm:vector-valued_elliptic_regularity}, the solution $\u^\epsilon$ to the variational problem
$$
\int_\Omega \u^\epsilon \cdot \Varphi \,dx + \int_\Omega a^{jk}_\epsilon \frac{\p \u^\varepsilon}{\p x_k}\cdot \frac{\p \Varphi}{\p x_j} \,dx = \int_\Omega \f_{\hspace{-1.5pt}\epsilon} \cdot \Varphi \,dx + \int_{\bdy\Omega} \g_{\hspace{-1.5pt}\epsilon} \cdot \Varphi \,dS \qquad\Forall \Varphi \in \V
$$
satisfies $\u^\epsilon \in H^k(\Omega)$ for all $k\ge 1$. We next establish an $\epsilon$-independent upper bound for $\|\u^\epsilon\|_{H^{\ell+1}(\Omega)}$.

\noindent {\bf Step 1: (Regularity for tangential derivatives of $\u$ in the $\bfw$-direction  near $\bdy\Omega$)} We fix $m\in \{1,\cdots,K\}$ and set
$$
\widetilde{\zeta} = \zeta_m \circ \vartheta_m,\ \widetilde{\u} = \u^\epsilon \circ \vartheta,\ \ \widetilde{\bfw} = \bfw_\epsilon \circ \vartheta,\ \ \widetilde{\f} = \f_{\hspace{-1.5pt}\epsilon}\circ \vartheta,\ \ \widetilde{\g} = \g_{\hspace{-1.5pt}\epsilon} \circ \vartheta \text{ \ and \ } \widetilde{\Varphi} = \Varphi \circ \vartheta\,.
$$
With $A = (\nabla \vartheta)^{-1}$, we define $b^{rs}_\epsilon = (a^{jk}_\epsilon \circ \vartheta) A^s_k A^r_j$. Then, since $\|\nabla \vartheta - \id\|_{L^\infty(B^+_m)} \ll 1$, the matrix $b_\epsilon$ is positive-definite since using (\ref{uniform_ellipticity_of_tilde_a}),
\begin{equation}\label{be_elliptic}
b^{rs}_\epsilon  \xi_r \xi_s = (a^{jk}_\epsilon \circ \vartheta) A^s_k A^r_j \xi_r \xi_s \ge \frac{\lambda}{2} |A^\rT \xi|^2 \ge \frac{\lambda}{8} |\xi|^2 \qquad\Forall \xi\in \bbR^\n\,.
\end{equation}
Setting $x=\vartheta_m(y)$, and using the change-of-variables formula,  we find that the variational formulation
(\ref{vector-valued_elliptic_weak_form}) can
be written as
\begin{equation}\label{transformed_weak_form_vector-valued_elliptic}
\int_{B^+_m} \widetilde{\u}\cdot \widetilde{\Varphi} \,dy + \int_{B^+_m} b^{rs}_\epsilon \dfrac{\p \widetilde{\u}^i}{\p y_s} \dfrac{\p \widetilde{\Varphi}^i}{\p y_r} \,dy = \int_{B^+_m} \widetilde{\f} \cdot \widetilde{\Varphi} \,dy + \int_{B_m \cap \{y_\n = 0\}} \widetilde{\g} \cdot \widetilde{\Varphi} \,dS \quad \Forall \widetilde{\Varphi} \in \widetilde{\V}_m,
\end{equation}
where $\widetilde{\V}_m = \big\{ \widetilde{\Varphi} \in H^1(B^+_m)\,\big|\, \widetilde{\Varphi}\cdot \widetilde{\bfw} = 0 \text{ on } B_m \cap \{y_\n = 0\}, \widetilde{\Varphi} = 0 \text{ on } \bbR^\n_+ \cap \bdy B_m \big\}$. With $\Delta_0$ denoting the horizontal Laplace operator and $\bp$ denoting the horizontal gradient defined in Step 2 in the proof of Theorem \ref{thm:vector-valued_elliptic_regularity}, we define
$$
\widetilde{\Varphi}^i = (-1)^\ell \widetilde{\zeta} \widetilde{\bfw}^i \Delta_0^\ell (\widetilde{\zeta} \widetilde{\u} \hspace{-.5pt}\cdot\hspace{-.5pt} \widetilde{\bfw}) \,,
$$
so that
\begin{align}
& (\widetilde{\u}, \widetilde{\Varphi})_{L^2(B^+_m)} + \int_{B^+_m} b^{rs}_\epsilon \frac{\p \widetilde{\u}^i}{\p y_s} \frac{\p \widetilde{\Varphi}^i}{\p y_r} \,dy \nonumber\\
&\qquad\quad \le C \Big[\|\widetilde{\f}\|_{H^{\ell-1}(B^+_m)} + \|\widetilde{\g}\|_{H^{\ell-0.5}(B_m \cap \{y_\n = 0\})} \Big] \big\|\bp^\ell (\widetilde{\zeta} \widetilde{\u} \hspace{-.5pt}\cdot\hspace{-.5pt} \widetilde{\bfw})\big\|_{H^1(B^+_m)} \nonumber\\
&\qquad\quad \le C \Big[\|\f\|_{H^{\ell-1}(\Omega)} + \|\g\|_{H^{\ell-0.5}(\bdy\Omega)} \Big] \big\|\bp^\ell (\widetilde{\zeta} \widetilde{\u} \hspace{-.5pt}\cdot\hspace{-.5pt} \widetilde{\bfw})\big\|_{H^1(B^+_m)} \,, \label{pf_of_thm10.7_inequality1}
\end{align}
where we have used Young's inequality for convolution to conclude the last inequality. We focus now on the left-hand side of (\ref{pf_of_thm10.7_inequality1}). As in the proof of Theorem \ref{thm:vector-valued_elliptic_regularity}, we have that
$$
(\widetilde{\u}, \widetilde{\Varphi})_{L^2(B^+_m)} = \big\|\bp^\ell (\widetilde{\zeta} \widetilde{\u} \hspace{-.5pt}\cdot\hspace{-.5pt} \widetilde{\bfw})\big\|^2_{L^2(B^+_m)}\,.
$$
Moreover,
\begin{align}
\int_\Omega b^{rs}_\epsilon \frac{\p \widetilde{\u}^i}{\p y_s} \frac{\p \widetilde{\Varphi}^i}{\p y_r} \,dy &= (-1)^\ell \int_{B^+_m} b^{rs}_\epsilon \widetilde{\u}^i,_s \big[ \widetilde{\zeta} \widetilde{\bfw}^i \Delta_0^\ell (\widetilde{\zeta} \widetilde{\u} \hspace{-1pt}\cdot \widetilde{\bfw})\big],_r dy \nonumber\\
&= \int_{B^+_m} \bp^\ell \big[b^{rs}_\epsilon (\widetilde{\zeta} \widetilde{\u} \hspace{-.5pt}\cdot\hspace{-.5pt} \widetilde{\bfw}),_s\big] \bp^\ell (\widetilde{\zeta} \widetilde{\u} \hspace{-1pt}\cdot \widetilde{\bfw}),_r dy \nonumber\\
&\quad - \int_{B^+_m} \bp^\ell \big[b^{rs}_\epsilon \widetilde{\u}^i (\widetilde{\zeta} \widetilde{\bfw}^i),_s\big] \bp^\ell (\widetilde{\zeta} \widetilde{\u} \hspace{-1pt}\cdot \widetilde{\bfw}),_r dy \nonumber\\
&\quad - \int_{B^+_m} \bp^{\ell-1} \big[b^{rs}_\epsilon \widetilde{\u}^i,_s (\widetilde{\zeta} \widetilde{\bfw}^i),_r\big] \bp^{\ell+1} (\widetilde{\zeta} \widetilde{\u} \hspace{-1pt}\cdot \widetilde{\bfw})\big] dy \,. \label{vector-valued_elliptic_proof_id1}
\end{align}
For the first term on the right-hand side of (\ref{vector-valued_elliptic_proof_id1}), we make use of the positivity condition (\ref{be_elliptic}) and Young's inequality to conclude that
\begin{align}
& \int_{B^+_m} \bp^\ell \big[b^{rs}_\epsilon (\widetilde{\zeta} \widetilde{\u} \hspace{-.5pt}\cdot\hspace{-.5pt} \widetilde{\bfw}),_s\big] \bp^\ell (\widetilde{\zeta} \widetilde{\u} \hspace{-.5pt}\cdot\hspace{-.5pt} \widetilde{\bfw}),_r dy \nonumber\\
&\qquad = \int_{B^+_m} b^{rs}_\epsilon \bp^\ell (\widetilde{\zeta} \widetilde{\u} \hspace{-.5pt}\cdot\hspace{-.5pt} \widetilde{\bfw}),_s \bp^\ell (\widetilde{\zeta} \widetilde{\u} \hspace{-.5pt}\cdot\hspace{-.5pt} \widetilde{\bfw}),_r dy + \int_{B^+_m} \big[\comm{\bp^\ell}{b^{rs}_\epsilon}(\widetilde{\zeta} \widetilde{\u} \hspace{-.5pt}\cdot\hspace{-.5pt} \widetilde{\bfw}),_s\big] \bp^\ell (\widetilde{\zeta} \widetilde{\u} \hspace{-.5pt}\cdot\hspace{-.5pt} \widetilde{\bfw}),_r dy \nonumber\\
&\qquad \ge \big(\frac{\lambda}{8} - \delta\big) \big\|\bp^\ell \nabla (\widetilde{\zeta} \widetilde{\u} \hspace{-.5pt}\cdot\hspace{-.5pt} \widetilde{\bfw})\big\|^2_{L^2(B^+_m)} - C_\delta \big\|\comm{\bp^\ell}{b_\epsilon}\nabla (\widetilde{\zeta} \widetilde{\u} \hspace{-.5pt}\cdot\hspace{-.5pt} \widetilde{\bfw})\big\|^2_{L^2(B^+_m)}\,. \label{vector-valued_elliptic_proof_inequality1}
\end{align}
Then, Corollary \ref{cor:useful_lemma_with_Sobolev_class_coeff} with $\sigma = \dfrac{1}{8}$ shows that
\begin{align*}
& \int_{B^+_m} \bp^\ell \big[b^{rs}_\epsilon (\widetilde{\zeta} \widetilde{\u} \hspace{-.5pt}\cdot\hspace{-.5pt} \widetilde{\bfw}),_s\big] \bp^\ell (\widetilde{\zeta} \widetilde{\u} \hspace{-.5pt}\cdot\hspace{-.5pt} \widetilde{\bfw}),_r dy \ge \big(\frac{\lambda}{8} - \delta\big) \big\|\bp^\ell \nabla (\widetilde{\zeta} \widetilde{\u} \hspace{-.5pt}\cdot\hspace{-.5pt} \widetilde{\bfw})\big\|^2_{L^2(B^+_m)} - C_\delta \|a\|^2_{H^\rk(\Omega)}\|\u^\epsilon\|^2_{H^{\ell+\frac{7}{8}}(\Omega)} \,.
\end{align*}
For the second and the third terms on the right-hand side of (\ref{vector-valued_elliptic_proof_id1}), we use the inequality (\ref{HkHl_product}), and find that
\begin{align}
& \Big|\int_{B^+_m} \bp^\ell \big[b^{rs}_\epsilon \widetilde{\u}^i (\widetilde{\zeta} \widetilde{\bfw}^i),_s\big] \bp^\ell (\widetilde{\zeta} \widetilde{\u} \hspace{-.5pt}\cdot\hspace{-.5pt} \widetilde{\bfw}),_r dy\Big| + \Big|\int_{B^+_m} \bp^{\ell-1} \big[b^{rs}_\epsilon \widetilde{\u}^i,_s (\widetilde{\zeta} \widetilde{\bfw}^i),_r\big] \bp^{\ell+1} (\widetilde{\zeta} \widetilde{\u} \hspace{-.5pt}\cdot\hspace{-.5pt} \widetilde{\bfw})\,dy \Big| \nonumber\\
&\qquad\qquad\quad \le C_\delta \|a\|^2_{H^\rk(\Omega)} \|\u^\epsilon\|^2_{H^\ell(\Omega)} + \delta \big\|\bp^\ell \nabla (\widetilde{\zeta} \widetilde{\u}^i \widetilde{\bfw}^i)\big\|^2_{L^2(B^+_m)} \label{vector-valued_elliptic_proof_inequality2}
\end{align}
for some constant $C_\delta$ depending on $\|\bfw\|_{H^{\max\{\rk,\ell+1\}}(\Omega)}$.

Choosing $\delta > 0$ sufficiently small in (\ref{vector-valued_elliptic_proof_inequality1}) and (\ref{vector-valued_elliptic_proof_inequality2}), we conclude that
\begin{equation}\label{vector-valued_elliptic_est_temp2}
\begin{array}{l}
\displaystyle{} \big\|\bp^\ell (\widetilde{\zeta} \widetilde{\u} \hspace{-.5pt}\cdot\hspace{-.5pt} \widetilde{\bfw})\big\|_{L^2(B^+_m)} + \big\|\bp^\ell \nabla (\widetilde{\zeta} \widetilde{\u} \hspace{-.5pt}\cdot\hspace{-.5pt} \widetilde{\bfw})\big\|_{L^2(B^+_m)} \vspace{.2cm}\\
\displaystyle{} \qquad\quad \le C \Big[\|\f\|_{H^{\ell-1}(\Omega)} + \|\g\|_{H^{\ell-0.5}(\bdy\Omega)} + \|a\|_{H^\rk(\Omega)} \|\u^\epsilon\|_{H^{\ell+\frac{7}{8}}(\Omega)} \Big]
\end{array}
\end{equation}
for some constant $C = C\big(\|\bfw\|_{H^{\max\{\rk,\ell+1\}}(\Omega)}\big)$.

\vspace{.1 in}
\noindent {\bf Step 2: (Regularity for tangential derivatives of $\u$ in the $\bfw^\perp$-directions  near $\bdy\Omega$)}
Now we estimate $\u^\epsilon$ in the directions perpendicular to $\bfw$. Similar to Step 3 in the proof of Theorem \ref{thm:vector-valued_elliptic_regularity}, we use
$$
\widetilde{\Varphi}^i = 
\widetilde{\zeta} \Delta_0^\ell (\widetilde{\zeta} \widetilde{\u}) - \big(\widetilde{\zeta} \widetilde{\bfw} \hspace{-.5pt}\cdot\hspace{-.5pt} \Delta_0^\ell (\widetilde{\zeta} \widetilde{\u}) \big)\smallexp{$\displaystyle{}\frac{\widetilde{\bfw}}{|\widetilde{\bfw}|^2}$}
$$
as a test function in (\ref{transformed_weak_form_vector-valued_elliptic}) and find that
\begin{align*}
& \big\|\bp^\ell (\widetilde{\zeta} \widetilde{\u})\big\|^2_{L^2(B^+_m)} + \big(\frac{\lambda}{8} - \delta\big) \big\|\bp^\ell \nabla (\widetilde{\zeta} \widetilde{\u})\big\|^2_{L^2(B^+_m)} \\
&\qquad\quad \le C \Big[\|\f\|^2_{H^{\ell-1}(\Omega)} + \|\g\|^2_{H^{\ell-0.5}(\bdy\Omega)}\Big]+ C_\delta \|a\|^2_{H^\rk (\Omega)} \|\u^\epsilon\|^2_{H^{\ell+\frac{7}{8}}(\Omega)} \\
&\qquad\qquad + (-1)^{\ell} \smallexp{$\displaystyle{}\int_{B^+_m}$}\, \frac{\widetilde{\bfw}^j}{|\widetilde{\bfw}|^2} (\widetilde{\zeta} \widetilde{\u}\cdot \widetilde{\bfw}) \Delta_0^\ell (\widetilde{\zeta} \widetilde{\u}^j)\, dy \\
&\qquad\qquad + (-1)^{\ell+1} \smallexp{$\displaystyle{}\int_{B^+_m}$} b^{rs}_\epsilon \widetilde{\u}^i,_s \Big[\widetilde{\zeta} \Delta_0^\ell (\widetilde{\zeta} \widetilde{\u}^j) \dfrac{\widetilde{\bfw}^j \widetilde{\bfw}^i}{|\widetilde{\bfw}|^2} \Big],_r dy.
\end{align*}
Integrating-by-parts  in the horizontal direction (using $\bp$), by Corollary \ref{cor:useful_lemma_with_Sobolev_class_coeff} and (\ref{vector-valued_elliptic_est_temp2}) we obtain that
\begin{align}
& (-1)^{\ell} \smallexp{$\displaystyle{}\int_{B^+_m}$}\, \frac{\widetilde{\bfw}^j}{|\widetilde{\bfw}|^2} (\widetilde{\zeta} \widetilde{\u}\cdot \widetilde{\bfw}) \p^{2\ell} (\widetilde{\zeta} \widetilde{\u}^j)\, dy \nonumber\\
&\qquad\quad \le \Big\|\bp^{\ell-1} \Big(\dfrac{\widetilde{\bfw}^j}{|\widetilde{\bfw}|^2} (\widetilde{\zeta} \widetilde{\u}\cdot \widetilde{\bfw})\Big)\Big\|_{L^2(B^+_m)}\big\|\bp^{\ell+1} (\widetilde{\zeta} \widetilde{u}^j)\big\|^2_{L^2(B^+_m)} \nonumber\\
&\qquad\quad \le C_\delta \|\u^\epsilon\|^2_{H^{\ell-1}(\Omega)} + \delta \|\bp^\ell \nabla (\widetilde{\zeta} \widetilde{u}^j)\big\|^2_{L^2(B^+_m)} \label{vector-valued_elliptic_proof_inequality3}
\end{align}
and
\begin{align}
& (-1)^{\ell+1} \int_{B^+_m} b^{rs}_\epsilon \widetilde{\u}^i,_s \Big[\widetilde{\zeta} \p^{2\ell} (\widetilde{\zeta} \widetilde{\u}^j) \smallexp{$\displaystyle{}\frac{\widetilde{\bfw}^j \widetilde{\bfw}^i}{|\widetilde{\bfw}|^2}$}\Big],_r dy \nonumber\\
&\qquad\quad \le C \Big[\big\|\bp^\ell (b_\epsilon \nabla \widetilde{\u})\big\|_{L^2(B^+_m)} + \big\|b_\epsilon \bp^\ell \nabla (\widetilde{\zeta} \widetilde{\u} \hspace{-.5pt}\cdot\hspace{-.5pt} \widetilde{\bfw})\big\|_{L^2(B^+_m)} \nonumber\\
&\qquad\qquad\quad + \big\|\comm{\bp^\ell}{b_\epsilon} \nabla (\widetilde{\zeta} \widetilde{\u} \hspace{-.5pt}\cdot\hspace{-.5pt} \widetilde{\bfw})\big\|_{L^2(B^+_m} \Big] \big\|\bp^\ell \nabla (\widetilde{\zeta} \widetilde{\u})\big\|_{L^2(B^+_m)} \nonumber\\
&\qquad\quad \le C_\delta \|a\|^2_{H^\rk(\Omega)} \|\u^\epsilon\|^2_{H^{\ell+\frac{7}{8}}(\Omega)} + \delta \big\|\bp^\ell \nabla (\widetilde{\zeta} \widetilde{\u})\big\|^2_{L^2(B^+_m)}\,, \label{vector-valued_elliptic_proof_inequality4}
\end{align}
in which the constant $C_\delta$ also depends on $\|\bfw\|_{H^{\max\{\rk,\ell+1\}}(\Omega)}$. Therefore, choosing $\delta > 0$ sufficiently small in (\ref{vector-valued_elliptic_proof_inequality3}) and (\ref{vector-valued_elliptic_proof_inequality4}), we conclude that
\begin{equation}\label{vector-valued_ellptic_bdy_est_with_Sobolev_coeff_temp}
\begin{array}{l}
\displaystyle{} \big\|\widetilde{\zeta}\, \bp^\ell \widetilde{\u}^i\big\|_{L^2(B^+_m)} + \big\|\widetilde{\zeta}\, \bp^\ell \nabla \widetilde{\u}^i\big\|_{L^2(B^+_m)} \vspace{.1in}\\
\displaystyle{} \qquad \le C \Big[\|\u^\epsilon\|_{H^{\ell-1}(\Omega)} + \|\f\|_{H^{\ell-1}(\Omega)} + \|\g\|_{H^{\ell-0.5}(\bdy\Omega)} + \|a\|_{H^\rk(\Omega)} \|\u^\epsilon\|_{H^{\ell+\frac{7}{8}}(\Omega)} \Big]
\end{array}
\end{equation}
for some constant $C = C\big(\|\bfw\|_{H^{\max\{\rk,\ell+1\}}(\Omega)}\big)$.

\vspace{.1 in}
\noindent {\bf Step 3: (Regularity for normal derivatives of $\u$ near $\bdy\Omega$)}
In this step, we follow the procedure of Step 4 in the proof of Theorem \ref{thm:vector-valued_elliptic_regularity}. Since $\u^\epsilon$ is a strong solution, it follows that
$$
\u^\epsilon - \frac{\p}{\p x_j} \Big(a^{jk}_\epsilon \frac{\p \u^\epsilon}{\p x_k}\Big) = \f_{\hspace{-1.5pt}\epsilon} \quad \text{in}\quad \Omega\,;
$$
thus the Piola identity (\ref{Piola_id}) implies that
$$
\widetilde{\zeta} \big(b^{rs}_\epsilon \widetilde{\u},_s \big),_r = \widetilde{\zeta} \big(\widetilde{\u} - (\f_{\hspace{-1.5pt}\epsilon} \circ \vartheta)\big) \quad\text{in}\quad B^+_m\,.
$$
With $\widetilde{\u},_n$ and $\widetilde{\u},_{\n\n}$ denoting $\dfrac{\p \widetilde{\u}}{\p y_\n}$ and $\dfrac{\p^2 \widetilde{\u}}{\p y_\n^2}$, respectively, we have that
\begin{equation}\label{pf_of_thm12.12_id2}
\widetilde{\zeta} b^{\n\n}_\epsilon \widetilde{\u},_{\n\n} = \widetilde{\zeta}\, \Big[ \widetilde{\u} - (\f_{\hspace{-1.5pt}\epsilon} \circ \vartheta) - b^{\n\n}_{\epsilon,\n} \widetilde{\u},_\n - \sum_{(r,s)\ne (\n,\n)} b^{rs}_{\epsilon,r} \widetilde{\u},_s - \sum_{(r,s)\ne (\n,\n)} b^{rs}_\epsilon \widetilde{\u},_{sr} \Big] \qquad\text{in}\quad B^+_m\,.
\end{equation}
Let $\text{\bf\emph{G}} = \widetilde{\zeta}\, \Big[ \widetilde{\u} - (\f_{\hspace{-1.5pt}\epsilon} \circ \vartheta) - b^{\n\n}_{\epsilon,\n} \widetilde{\u},_\n - \sum\limits_{(r,s)\ne (\n,\n)} b^{rs}_{\epsilon,r} \widetilde{\u},_s\Big]$, and for $0\le j \le \ell-1$ we define
$$
\text{\bf\emph{G}}_{(\ell,j)} = \bp^{\ell-1-j} \nabla^j \text{\bf\emph{G}} - \comm{\bp^{\ell-1-j}\nabla^j}{b^{rs}_\epsilon} \widetilde{\u},_{sr}\,.
$$
Letting $\bp^{\ell-1-j}\nabla^j$ act on (\ref{pf_of_thm12.12_id2}), we obtain that
\begin{equation}\label{pf_of_thm12.12_id3}
\widetilde{\zeta} b^{\n\n}_\epsilon \bp^{\ell-1-j}\nabla^j \widetilde{\u},_{\n\n} = \text{\bf\emph{G}}_{(\ell,j)} - \sum_{(r,s) \ne (\n,\n)} \widetilde{\zeta} b^{rs}_\epsilon \bp^{\ell-1-j}\nabla^j \widetilde{\u},_{rs}
\end{equation}
Now we estimate $\text{\bf\emph{G}}_{(\ell,j)}$ in $L^2(B^+_m)$. First we note that
$$
\big\|\bp^{\ell-1-j} \nabla^j \big[\widetilde{\zeta} (\widetilde{\u} - \f_{\hspace{-1.5pt}\epsilon} \circ \vartheta)\big]\big\|_{L^2(B^+_m)} \le C \Big[ \|\u^\epsilon\|_{H^{\ell-1}(\Omega)} + \|\f\|_{H^{\ell-1}(\Omega)} \Big]\,.
$$
Moreover, since $\ell \le \rk$, by Proposition \ref{prop:HkHl_product} with $\sigma = \smallexp{$\displaystyle{}\frac{1}{8}$}$ we find that
\begin{align*}
& \big\|\bp^{\ell-1-j} \nabla^j (\widetilde{\zeta} b^{\n\n}_{,\n} \widetilde{\u},_\n)\big\|_{L^2(B^+_m)} + \sum_{(r,s)\ne (\n,\n)} \big\|\bp^{\ell-1-j} \nabla^j (b^{rs}_{\epsilon,r} \widetilde{\u},_s)\big\|_{L^2(B^+_m)} \\
&\qquad \le C \sum_{j=0}^{\ell-1} \|\nabla^{j+1} a \nabla^{\ell-1-j} \u^\epsilon\|_{L^2(\Omega)} = C \sum_{j=1}^\ell \|\nabla^j a \nabla^{\ell-j} \u^\epsilon \|_{L^2(\Omega)} \\
&\qquad \le C \|a\|_{H^\rk(\Omega)} \|\u\|_{H^{\ell+\frac{7}{8}}(\Omega)}\,.
\end{align*}
Finally, by Corollary \ref{cor:useful_lemma_with_Sobolev_class_coeff} with $\sigma = \dfrac{1}{8}$,
\begin{align*}
\big\|\comm{\bp^{\ell-1-j}\nabla^j}{\widetilde{\zeta} b^{\n\n}_\epsilon} \widetilde{\u},_{\n\n}\big\|_{L^2(B^+_m)} &+ \sum_{(r,s) \ne (\n,\n)} \big\| \comm{\bp^{\ell-1-j}\nabla^j}{\widetilde{\zeta} b^{rs}_\epsilon} \widetilde{\u},_{rs}\big\|_{L^2(B^+_m)} \\
&\le C_\epsilon \|a\|_{H^\rk(\Omega)} \|\u^\epsilon\|_{H^{\ell+\frac{7}{8}}(\Omega)}\,.
\end{align*}
Therefore, $\text{\bf\emph{G}}_{(\ell,j)}$ satisfies
$$
\|\text{\bf\emph{G}}_{(\ell,j)}\|_{L^2(B^+_m)} \le C \Big[\|\u\|_{H^{\ell-1}(\Omega)} + \|\f\|_{H^{\ell-1}(\Omega)} + \|a\|_{H^\rk(\Omega)} \|\u^\epsilon\|_{H^{\ell+\frac{7}{8}}(\Omega)} \Big]\,.
$$
Now we argue by induction on $0\le j\le \ell-1$. By the positivity condition (\ref{uniform_ellipticity_of_tilde_a}), $b^{\n\n}_\epsilon \ge \dfrac{\lambda}{4}$ so that when $j=0$, the inequalities (\ref{vector-valued_ellptic_bdy_est_with_Sobolev_coeff_temp}) and (\ref{pf_of_thm12.12_id3}) show that
\begin{align*}
\smallexp{$\displaystyle{}\big\|\widetilde{\zeta}\, \bp^{\ell-1} \widetilde{\u},_{\n\n}\big\|_{L^2(B^+_m)}$} \hspace{-1pt} &\smallexp{$\displaystyle{}\hspace{-1pt}\le \|\text{\bf\emph{G}}_{(\ell,j)}\|_{L^2(B^+_m)} \hspace{-2pt}+ \hspace{-2pt} \sum_{(r,s) \ne (\n,\n)} \hspace{-2pt}\|b^{rs}_\epsilon\|_{L^\infty(B^+_m)} \big\|\widetilde{\zeta}\, \bp^{\ell-1} \widetilde{\u},_{rs}\big\|_{L^2(B^+_m)}$} \\
&\smallexp{$\displaystyle{}\hspace{-1pt}\le C \Big[\|\u\|_{H^{\ell-1}(\Omega)} + \|\f\|_{H^{\ell-1}(\Omega)} + \|\g\|_{H^{\ell-0.5}(\bdy\Omega)} + \|a\|_{H^\rk(\Omega)} \|\u^\epsilon\|_{H^{\ell+\frac{7}{8}}(\Omega)} \Big]$}
\end{align*}
which, combined with (\ref{vector-valued_ellptic_bdy_est_with_Sobolev_coeff_temp}), provides the estimate
\begin{align*}
&\big\|\widetilde{\zeta}\, \bp^{\ell-1} \nabla^2 \widetilde{\u}\big\|_{L^2(B^+_m)} \\
&\qquad\quad \le C \Big[\|\u\|_{H^{\ell-1}(\Omega)} + \|\f\|_{H^{\ell-1}(\Omega)} + \|\g\|_{H^{\ell-0.5}(\bdy\Omega)} + \|a\|_{H^\rk(\Omega)} \|\u^\epsilon\|_{H^{\ell+\frac{7}{8}}(\Omega)} \Big]\,.
\end{align*}
Repeating this process for $j=1,\cdots,\ell-1$, we conclude that
\begin{equation}\label{vector-valued_ellptic_bdy_est_with_Sobolev_coeff}
\begin{array}{l}
\displaystyle{} \big\|\widetilde{\zeta} \nabla^\ell \widetilde{\u}^i\big\|_{L^2(B^+_m)} + \big\|\widetilde{\zeta} \nabla^{\ell+1} \widetilde{\u}^i\big\|_{L^2(B^+_m)} \vspace{.2cm}\\
\displaystyle{} \qquad \le C \Big[\|\u^\epsilon\|_{H^{\ell-1}(\Omega)} + \f\|_{H^{\ell-1}(\Omega)} + \|\g\|_{H^{\ell-0.5}(\bdy\Omega)} + \|a\|_{H^\rk(\Omega)} \|\u^\epsilon\|_{H^{\ell+\frac{7}{8}}(\Omega)} \Big]
\end{array}
\end{equation}
for some constant $C = C\big(\|\bfw\|_{H^{\max\{\rk,\ell+1\}}(\Omega)}\big)$.

\vspace{.1 in}
\noindent {\bf Step 4: (Completing the regularity theory)}
Let $\chi\ge 0$ be a smooth cut-off function so that $\supp(\chi) \cptsubset \Omega$.  Arguing as in Step 1 of the proof of Theorem \ref{thm:vector-valued_elliptic_regularity}, we find that
\begin{equation}\label{vector-valued_ellptic_int_est_with_Sobolev_coeff}
\|\chi \nabla^\ell \u^\epsilon\|_{L^2(\Omega)} + \|\chi \nabla^{\ell+1} \u^\epsilon\|_{L^2(\Omega)} \le C \Big[\|\f\|_{H^{\ell-1}(\Omega)} + \|a\|_{H^\rk(\Omega)} \|\u^\epsilon\|_{H^{\ell+\frac{7}{8}}(\Omega)} \Big] \,.
\end{equation}
Combining (\ref{vector-valued_ellptic_bdy_est_with_Sobolev_coeff}) and (\ref{vector-valued_ellptic_int_est_with_Sobolev_coeff}) establishes the inequality
\begin{equation}\label{vector-valued_ellptic_est_with_Sobolev_coeff}
\|\u^\epsilon\|_{H^{\ell+1}(\Omega)} \le C \Big[\|\f\|_{H^{\ell-1}(\Omega)} + \|\g\|_{H^{\ell-0.5}(\bdy\Omega)} + \big(1+\|a\|_{H^\rk(\Omega)}\big) \|\u^\epsilon\|_{H^{\ell+\frac{7}{8}}(\Omega)} \Big]
\end{equation}
for some constant $C = C\big(\|\bfw\|_{H^{\max\{\rk,\ell+1\}}(\Omega)}\big)$. Since the interpolation inequality provides
\begin{align*}
\|\u^\epsilon\|_{H^{\ell+\frac{7}{8}}(\Omega)} &\le C \|\u^\epsilon\|^{1-\frac{1}{8\ell}}_{H^{\ell+1}(\Omega)} \|\u^\epsilon\|^{\frac{1}{8\ell}}_{H^1(\Omega)} \,,
\end{align*}
Young's inequality further shows that
\begin{equation}\label{cs1001}
\|\u^\epsilon\|_{H^{\ell+1}(\Omega)} \le C_\delta \Big[\|\f\|_{H^{\ell-1}(\Omega)} + \|\g\|_{H^{\ell-0.5}(\bdy\Omega)} + \P\big(\|a\|_{H^\rk(\Omega)}\big) \|\u^\epsilon\|_{H^1(\Omega)} \Big] + \delta \|\u^\epsilon\|_{H^{\ell+1}(\Omega)}
\end{equation}
for some polynomial function $\P$. Finally, choosing $\delta > 0$ sufficiently small and then passing to the limit as $\epsilon \to 0$, by the fact that
\begin{alignat*}{2}
a^{jk}_\epsilon &\to a^{jk} \qquad&&\text{in}\quad H^\rk(\Omega) \,,\\
\bfw_\epsilon &\to \bfw &&\text{in}\quad H^{\max\{k,\ell+1\}}(\Omega)\,,\\
\f_{\hspace{-1.5pt}\epsilon} &\to \f &&\text{in}\quad H^{\ell-1}(\Omega)\,,\\
\g_\epsilon &\to \g &&\text{in}\quad H^{\ell-0.5}(\bdy\Omega)\,,
\end{alignat*}
we find that $\u^\epsilon$ converges to the unique weak solution $\u$ to (\ref{vector-valued_elliptic_eq}), and the inequality  (\ref{vector-valued_elliptic_regularity}) is established by substitution of the $H^1$-estimate (\ref{vector-valued_elliptic_weak_est}) in the inequality
(\ref{cs1001}).
\end{proof}

Having established the regularity theory for the case that $a^{jk} \in H^\rk(\Omega)$ and $\Omega\in \mC^ \infty $, we can now prove the
following
\begin{corollary}[Regularity for the case that $a^{jk} \in H^\rk(\Omega)$ and $\Omega \in H^{\rk+1} $]\label{cor:vector-valued_elliptic_eq_Sobolev_coeff}
Let $\Omega \subseteq \Rn$ be a bounded $H^{\rk+1}$-domain for some integer $\rk > \novertwo$\,. Suppose that $a^{jk} \in H^\rk(\Omega)$ satisfies the positivity condition
$$
a^{jk} \xi_j \xi_k \ge \lambda |\xi|^2\qquad\Forall \xi \in \Rn\,,
$$
and for some $1\le \ell\le \rk$, $\bfw \in H^{\max\{\rk,\ell+1\}}(\Omega)$ $($or $\bfw \in H^{\max\{\rk-\frac{1}{2},\ell+\frac{1}{2}\}}(\bdy\Omega))$ such that $\bfw$ vanishes nowhere on $\bdy\Omega$. Then for all $\f\in H^{\ell-1}(\Omega)$ and $\g \in H^{\ell-0.5}(\bdy\Omega)$, the weak solution $\u$ to {\rm(\ref{vector-valued_elliptic_eq})} belongs to $H^{\ell+1}(\Omega)$, and satisfies
\begin{equation}\label{vector-valued_elliptic_regularity_sobolev}
\begin{array}{l}
\displaystyle{} \|\u\|_{H^{\ell+1}(\Omega)} \le C \Big[\|\f\|_{H^{\ell-1}(\Omega)} + \|\g\|_{H^{\ell-0.5}(\bdy\Omega)} \vspace{.2cm}\\
\displaystyle{} \hspace{75pt} + \P\big(\|a\|_{H^\rk(\Omega)}\big) \Big(\|\f\|_{L^2(\Omega)} + \|\g\|_{H^{-0.5}(\bdy\Omega)}\Big) \Big]
\end{array}
\end{equation}
for some constant $C = C\big(\|\bfw\|_{H^{\max\{\rk,\ell+1\}}(\Omega)}, |\bdy\Omega|_{H^{k+0.5}}\big)$ and some polynomial $\P$.
\end{corollary}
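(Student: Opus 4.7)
The plan is to reduce to Theorem \ref{thm:vector-valued_elliptic_eq_Sobolev_coeff} (the smooth-domain case) by pulling the problem back to a smooth reference domain via the $H^{\rk+1}$-diffeomorphism $\psi:\cls{\rO}\to\cls{\Omega}$ that presents $\Omega$ as an $H^{\rk+1}$-domain in the sense of Definition \ref{defn:Hs_domain}, where $\rO$ is a bounded $\mC^\infty$-domain. Set $J=\det\nabla\psi$, $A=(\nabla\psi)^{-1}$, $\widetilde{\u}=\u\circ\psi$, $\widetilde{\f}=\f\circ\psi$, $\widetilde{\bfw}=\bfw\circ\psi$, and let $\widetilde{\g}$ denote the pullback of $\g$ to $\bdy\rO$. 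A direct change of variables $x=\psi(y)$ in the weak formulation \eqref{vector-valued_elliptic_weak_form} produces an equivalent weak problem on $\rO$ of the form
\begin{equation*}
\int_\rO J\,\widetilde{\u}\cdot\widetilde{\Varphi}\,dy + \int_\rO \widetilde{a}^{\,ml}\,\p_l\widetilde{\u}^i\,\p_m\widetilde{\Varphi}^i\,dy = \int_\rO J\,\widetilde{\f}\cdot\widetilde{\Varphi}\,dy + \int_{\bdy\rO} J_S\,\widetilde{\g}\cdot\widetilde{\Varphi}\,dS
\end{equation*}
over test functions satisfying $\widetilde{\Varphi}\cdot\widetilde{\bfw}=0$ on $\bdy\rO$, with $\widetilde{a}^{\,ml}=J\,A^l_k A^m_j(a^{jk}\circ\psi)$ and $J_S$ the associated surface Jacobian. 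The identity $\widetilde{a}^{\,ml}\xi_m\xi_l\ge \lambda J|A^\rT\xi|^2$, combined with the uniform upper bound on $A$ and lower bound on $J$, yields a uniform ellipticity constant for $\widetilde{a}$ depending only on $\lambda$ and $|\bdy\Omega|_{H^{\rk+0.5}}$.

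By Corollaries \ref{cor:JA_est} and \ref{cor:f_comp_psi} together with the product estimate \eqref{HkHlproduct}, one has $\|\widetilde{a}\|_{H^\rk(\rO)}\le \P_1\bigl(\|a\|_{H^\rk(\Omega)}\bigr)$ for a polynomial $\P_1$ with coefficients depending on $|\bdy\Omega|_{H^{\rk+0.5}}$, and analogous quantitative control of $\widetilde{\bfw}$, $\widetilde{\f}$, and $J_S\widetilde{\g}$ in the required Sobolev spaces on $\rO$. The transformed problem on the smooth domain $\rO$ thus falls into the setting of Theorem \ref{thm:vector-valued_elliptic_eq_Sobolev_coeff}, with one caveat: the zeroth-order term carries a factor of $J$ rather than $1$. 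I reorganize the equation as
\begin{equation*}
\widetilde{\u} - \p_m\bigl(\widetilde{a}^{\,ml}\,\p_l\widetilde{\u}\bigr) = J\widetilde{\f} + (1-J)\widetilde{\u},
\end{equation*}
with mixed boundary conditions prescribed by $\widetilde{\bfw}$ and boundary datum $J_S\widetilde{\g}$, treating $(1-J)\widetilde{\u}$ as an additional forcing.

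I then bootstrap on $\ell$. Theorem \ref{thm:weak_existence_vector-valued_elliptic_prob} supplies $\widetilde{\u}\in H^1(\rO)$ with the quantitative $H^1$ estimate. Assuming inductively $\widetilde{\u}\in H^\ell(\rO)$ for some $1\le\ell\le\rk$ with the estimate at level $\ell$, the product inequality \eqref{HkHlproduct} places $(1-J)\widetilde{\u}$ in $H^{\ell-1}(\rO)$, so the effective forcing lies in $H^{\ell-1}(\rO)$; Theorem \ref{thm:vector-valued_elliptic_eq_Sobolev_coeff} on the smooth domain $\rO$ then upgrades $\widetilde{\u}$ to $H^{\ell+1}(\rO)$ with the estimate \eqref{vector-valued_elliptic_regularity_sobolev1}. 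Pushing the result forward by $\psi^{-1}$ via Corollary \ref{cor:f_comp_psi} gives $\u\in H^{\ell+1}(\Omega)$ and the claimed bound \eqref{vector-valued_elliptic_regularity_sobolev}, with the low-norm terms controlled using the weak estimate \eqref{vector-valued_elliptic_weak_est}.

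The main obstacle I anticipate is the feedback term $(1-J)\widetilde{\u}$: because $J$ is merely of Sobolev class rather than smooth, the transformed equation cannot be treated as a smooth perturbation of the model operator, and the smooth-domain theorem must be invoked one regularity level at a time rather than in a single step. A secondary bookkeeping point is ensuring that the composition and product estimates from Section \ref{sec:Hs-domains} yield precisely the polynomial dependence on $\|a\|_{H^\rk(\Omega)}$ and the functional dependence on $|\bdy\Omega|_{H^{\rk+0.5}}$ claimed in \eqref{vector-valued_elliptic_regularity_sobolev}.
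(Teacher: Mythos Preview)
Your approach is correct and essentially the same as the paper's: pull the problem back to the smooth reference domain $\rO$ via $\psi$, verify that the transformed coefficients remain in $H^\rk$ with uniform ellipticity (using Corollaries~\ref{cor:JA_est} and~\ref{cor:f_comp_psi}), apply Theorem~\ref{thm:vector-valued_elliptic_eq_Sobolev_coeff}, and push the estimate back to $\Omega$. The only cosmetic difference is that the paper transforms the strong form directly (obtaining a first-order feedback term $\bar{a}^{jk}A^s_k(\p_r A^r_j)\p_s\bar{\u}$ on the right-hand side) and absorbs it in one shot via interpolation rather than your explicit bootstrap on $(1-J)\widetilde{\u}$; both variants work for the same reason, namely that the feedback is at least one order below the target regularity.
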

\begin{proof}  Using Definition \ref{defn:Hs_domain}, we
let $\psi:\cls{\rO} \to \cls{\Omega}$ be an $H^{\rk+1}$-diffeomorphism, where $\rO$ is a bounded $\mC^\infty$-domain.
Making the change-of-variables $x = \psi(y)$,
with $\rA$ denoting $(\nabla \psi)^{-1}$ we can rewrite (\ref{vector-valued_elliptic_eq}) as
\begin{alignat*}{2}
\widebar{\u} - \frac{\p}{\p y_r} \Big(\widebar{a}^{jk} \rA^r_j \rA^s_k \frac{\p \widebar{\u}}{\p y_s} \Big) &= \widebar{\f} + \widebar{a}^{jk} \rA^s_k \frac{\p \rA^r_j}{\p y_r} \frac{\p \widebar{\u}}{\p y_s} \qquad &&\text{in}\quad \rO\,,\\
\widebar{\u} \hspace{-.5pt}\cdot\hspace{-.5pt} \widebar{\bfw} &= 0 &&\text{on}\quad \bdy\rO\,,\\
\rP_{\widebar{\bfw}^\perp} \Big(\widebar{a}^{jk} \rA^r_j \rA^s_k \frac{\p \widebar{\u}}{\p y_s} \widebar{\bN}_r - \widebar{\g}\Big) &= {\bf 0} &&\text{on}\quad\bdy\rO\,,
\end{alignat*}
where we use the bar notation to denote the variable defined on $\rO$ through the composition with $\psi$:
\begin{align*}
\widebar{a} &= a \circ \psi\,,\quad \widebar{\u} = \u\circ\psi\,,\quad \widebar{\bfw} = \bfw \circ\psi\,,\quad \widebar{\f} = \f\circ\psi\,, \quad \widebar{\g} = \g \circ \psi\,,
\end{align*}
and $\widebar{\bN}$ is the outward-pointing unit normal to $\rO$.
By Proposition \ref{prop:HkHl_product}, Corollary \ref{cor:JA_est}, and Corollary \ref{cor:f_comp_psi}, we find that
\begin{align*}
\|\widebar{a}^{jk} \rA^s_k \rA^r_j\|_{H^\rk(\rO)} &\le C(|\bdy\Omega|_{H^{k+0.5}}) \|a\|_{H^\rk(\Omega)} \,, \\
\|\widebar{\bfw}\|_{H^{\max\{\rk,\ell+1\}}(\Omega)} &\le C(|\bdy\Omega|_{H^{k+0.5}}) \|\bfw\|_{H^{\max\{\rk,\ell+1\}}(\Omega)}\,,\\
\|\widebar{\f}\|_{H^{\ell-1}(\rO)} + \|\widebar{\g}\|_{H^{\ell-0.5}(\bdy\rO)} &\le C(|\bdy\Omega|_{H^{k+0.5}}) \Big[ \|\f\|_{H^{\ell-1}(\Omega)} + \|\g\|_{H^{\ell-0.5}(\bdy\Omega)} \Big]\,.
\end{align*}
Theorem \ref{thm:vector-valued_elliptic_eq_Sobolev_coeff} then implies that
\begin{align*}
\|\widebar{\u}\|_{H^{\ell+1}(\rO)} &\le C \Big[\|\widebar{\f}\|_{H^{\ell-1}(\rO)} + \|\widebar{\g}\|_{H^{\ell-0.5}(\bdy\rO)} \\
&\qquad + \P\big(\|\rA \widebar{a} \rA^\rT\|_{H^\rk(\rO)}\big) \Big(\|\widebar{\f}\|_{L^2(\rO)} + \|\widebar{\g}\|_{H^{-0.5}(\bdy\rO)} \Big)\Big] \\
&\le C \Big[\|\f\|_{H^{\ell-1}(\rO)} + \|\g\|_{H^{\ell-0.5}(\bdy\Omega)} \\
&\qquad + \P\big(\|a\|_{H^\rk(\Omega)}, |\bdy\Omega|_{H^{\rk+0.5}}\big) \Big(\|\f\|_{L^2(\Omega)} + \|\g\|_{H^{-0.5}(\bdy\Omega)} \Big)\Big]
\end{align*}
for some constant $C = C\big(\|\bfw\|_{H^{\max\{\rk,\ell+1\}}(\Omega)}, |\bdy\Omega|_{H^{k+0.5}}\big)$. Estimate (\ref{vector-valued_elliptic_regularity_sobolev}) then follows from Corollary \ref{cor:f_comp_psi}.
\end{proof}

\begin{corollary}[Regularity for the classical Dirichlet and Neumann problems]\label{cor:scalar_elliptic_eq_Sobolev_coeff}
Let $\Omega \subseteq \Rn$ be a bounded $H^{\rk+1}$-domain for some integer $\rk > \novertwo$, and $a^{jk} \in H^\rk(\Omega)$ satisfies the positivity condition
$$
a^{jk} \xi_j \xi_k \ge \lambda |\xi|^2\qquad\Forall \xi \in \Rn\,.
$$
Let $\ell$ be an integer such that $1\le \ell\le \rk$. Then
\begin{enumerate}
\item[\rm1.] For any $f\in H^{\ell-1}(\Omega)$, the weak solution $u\in H^1_0(\Omega)$ to the Dirichlet problem
\begin{alignat*}{2}
- \frac{\p}{\p x_j} \Big(a^{jk} \frac{\p u}{\p x_k}\Big) &= f \qquad &&\text{in}\quad \Omega\,,\\
u &= 0 &&\text{on}\quad \bdy\Omega\,,
\end{alignat*}
belongs to $H^{\ell+1}(\Omega)$, and satisfies
\begin{equation}\label{scalar_elliptic_regularity_sobolev_Dirichlet_estimate}
\|u\|_{H^{\ell+1}(\Omega)} \le C \|f\|_{H^{\ell-1}(\Omega)}
\end{equation}
for some constant $C = C\big(\|a\|_{H^\rk(\Omega)}, |\bdy\Omega|_{H^{\rk+0.5}}\big)$.
\item[\rm2.] For any $f\in H^{\ell-1}(\Omega)$ and $g\in H^{\ell-0.5}(\bdy\Omega)$, the weak solution $v\in H^1(\Omega)$ to the Neumann problem
  \begin{alignat*}{2}
  v - \frac{\p}{\p x_j} \Big(a^{jk} \frac{\p v}{\p x_k}\Big) &= f \qquad &&\text{in}\quad \Omega\,,\\
  a^{jk} \frac{\p u}{\p x_k} \bN_j &= g &&\text{on}\quad \bdy\Omega\,,
  \end{alignat*}
  belongs to $H^{\ell+1}(\Omega)$, and satisfies
  \begin{equation}\label{scalar_elliptic_regularity_sobolev_Neumann_estimate}
  \|v\|_{H^{\ell+1}(\Omega)} \le C \Big[\|f\|_{H^{\ell-1}(\Omega)} + \|g\|_{H^{\ell-0.5}(\bdy\Omega)}\Big]
  \end{equation}
for some constant $C = C\big(\|a\|_{H^\rk(\Omega)}, |\bdy\Omega|_{H^{\rk+0.5}}\big)$.
\end{enumerate}
\end{corollary}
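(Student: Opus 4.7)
The plan is to reduce both scalar problems to the vector-valued regularity established in Corollary \ref{cor:vector-valued_elliptic_eq_Sobolev_coeff} by embedding each scalar unknown as one coordinate of an $\n$-vector, with a carefully chosen constraint direction $\bfw$.

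For the Neumann problem, I set $\u = v\,\mathbf{e}_1$, $\f = f\,\mathbf{e}_1$, $\g = g\,\mathbf{e}_1$, and take $\bfw$ to be a constant unit vector orthogonal to $\mathbf{e}_1$ (which exists since $\n\in\{2,3\}$). The constraint $\u\cdot\bfw=0$ holds trivially on all of $\cls{\Omega}$, while the projection $\rP_{\bfw^\perp}$ preserves the $\mathbf{e}_1$-component, so that the vector Neumann-type condition (\ref{vector-valued_elliptic_eq}c) on this coordinate reproduces exactly the scalar identity $a^{jk}\p_k v\,\bN_j = g$. The components of $\u$ transverse to $\mathbf{e}_1$ satisfy a homogeneous mixed boundary value problem with zero data, so by uniqueness from Theorem \ref{thm:weak_existence_vector-valued_elliptic_prob} they vanish identically. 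Applying (\ref{vector-valued_elliptic_regularity_sobolev}) and absorbing the polynomial tail into the constant via $\|f\|_{L^2(\Omega)}\le \|f\|_{H^{\ell-1}(\Omega)}$ (valid since $\ell\ge 1$) and $\|g\|_{H^{-0.5}(\bdy\Omega)}\le C\|g\|_{H^{\ell-0.5}(\bdy\Omega)}$ gives (\ref{scalar_elliptic_regularity_sobolev_Neumann_estimate}).

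For the Dirichlet problem, I use the same embedding with $\u = u\,\mathbf{e}_1$, but take $\bfw = \mathbf{e}_1$, so that $\u\cdot\bfw = u$ reproduces the Dirichlet condition $u=0$ on $\bdy\Omega$. Because the scalar Dirichlet operator lacks the zeroth-order term present in (\ref{vector-valued_elliptic_eq}a), I rewrite $-\p_j(a^{jk}\p_k u) = f$ in the equivalent form $u - \p_j(a^{jk}\p_k u) = f+u$ and apply the vector estimate with $\f = (f+u)\mathbf{e}_1$ and $\g = \mathbf{0}$. Lax-Milgram applied to the Dirichlet bilinear form on $H^1_0(\Omega)$, combined with \Poincare's inequality for coercivity, delivers the baseline bound $\|u\|_{H^1(\Omega)} \le C\|f\|_{L^2(\Omega)}$. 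I then iterate: for $j = 1,2,\ldots,\ell$, inductively knowing $u\in H^j(\Omega)$ with $\|u\|_{H^j(\Omega)} \le C\|f\|_{H^{\max\{j-2,0\}}(\Omega)}$, the vector estimate at level $j$ yields $u\in H^{j+1}(\Omega)$ with $\|u\|_{H^{j+1}(\Omega)} \le C\|f\|_{H^{j-1}(\Omega)}$, the $\|u\|_{H^{j-1}(\Omega)}$ contribution on the right-hand side being subsumed into $\|f\|_{H^{j-2}(\Omega)}$ by the inductive hypothesis and the polynomial lower-order tail by the $H^1$ baseline. After $\ell$ iterations we obtain (\ref{scalar_elliptic_regularity_sobolev_Dirichlet_estimate}).

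The reduction itself is purely algebraic, so no analytic machinery beyond Corollary \ref{cor:vector-valued_elliptic_eq_Sobolev_coeff} is required. The one genuine obstacle is the bookkeeping for the Dirichlet case: the missing zeroth-order term forces the iterative bootstrap to control the lower-order $\|u\|_{H^{\ell-1}(\Omega)}$ contribution, and one must verify at each step that the constant ultimately depends only on $\|a\|_{H^\rk(\Omega)}$ and $|\bdy\Omega|_{H^{\rk+0.5}}$, as required.
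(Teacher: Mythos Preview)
Your proposal is correct and follows essentially the same route as the paper: embed the scalar unknown as one coordinate of an $\Rn$-valued function and choose $\bfw$ so that the mixed boundary conditions of the vector problem collapse to the desired scalar condition (the paper uses $\bfw=\mathbf{e}_1$ for Dirichlet and $\bfw=\mathbf{e}_2$ for Neumann, which is your choice up to relabelling). The only cosmetic difference is in the Dirichlet bookkeeping: where you run an explicit bootstrap $H^1\to H^2\to\cdots\to H^{\ell+1}$ to absorb the extra $\|u\|_{H^{\ell-1}}$ term coming from the shifted forcing $f+u$, the paper applies the vector estimate once at level $\ell$ and then uses interpolation $\|u\|_{H^{\ell-1}}\le C_\delta\|u\|_{H^1}+\delta\|u\|_{H^{\ell+1}}$ together with the $H^1$ baseline to close in a single step; the interpolation shortcut still tacitly relies on your bootstrap to justify that $u\in H^{\ell+1}$ in the first place, so your version is if anything the more careful one.
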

\begin{proof}
It suffices to prove the case that $u$ and $v$ are both scalar functions.
\begin{enumerate}
\item Let $\bfw = (1,0,\cdots,0)$, and $\u $ be the solution to
  \begin{alignat}{2}
   \u  - \frac{\p}{\p x_j} \Big(a^{jk} \frac{\p \u }{\p x_k}\Big) &= (f+u,0,\cdots,0) \qquad&&\text{in}\quad\Omega\,,\\
  \u  \hspace{-.5pt}\cdot\hspace{-.5pt} \bfw &= 0 &&\text{on}\quad\bdy\Omega\,,\\
  \rP_{\bfw^\perp}\Big(a^{jk} \frac{\p \u }{\p x_k} \bN_j\Big) &= {\bf 0} &&\text{on}\quad\bdy\Omega\,.
  \end{alignat}
   Then $u = \u _1$ (in fact, $\u  = (u,0,\cdots, 0)$); thus (\ref{vector-valued_elliptic_regularity_sobolev}) implies that
  $$
   \|u\|_{H^{\ell+1}(\Omega)} \le C \|f + u\|_{H^{\ell-1}(\Omega)} \le C \Big[\|f\|_{H^{\ell-1}(\Omega)} + \|u\|_{H^{\ell-1}(\Omega)} \Big]
  $$
   for some constant $C = C\big(\|a\|_{H^\rk(\Omega)}, |\bdy\Omega|_{H^{k+0.5}}\big)$. By interpolation and Young's inequality,
  $$
   \|u\|_{H^{\ell+1}(\Omega)} \le C \|f\|_{H^{\ell-1}(\Omega)} + C_\delta \|u\|_{H^1(\Omega)} + \delta \|u\|_{H^{\ell+1}(\Omega)}\,;
  $$
   thus (\ref{scalar_elliptic_regularity_sobolev_Dirichlet_estimate}) follows from choosing $\delta > 0$ small enough and the estimate for the weak solution.
\item Let $\bfw = (0,1,0,\cdots,0)$, and $\v $ be the solution to
  \begin{alignat}{2}
   \v  - \frac{\p}{\p x_j} \Big(a^{jk} \frac{\p \v }{\p x_k}\Big) &= (0, f,0,\cdots,0) \qquad&&\text{in}\quad\Omega\,,\\
  \v  \hspace{-.5pt}\cdot\hspace{-.5pt} \bfw &= 0 &&\text{on}\quad\bdy\Omega\,,\\
   \rP_{\bfw^\perp}\Big(a^{jk} \frac{\p \v }{\p x_k} \bN_j\Big) &= (0,g,0,\cdots,0) &&\text{on}\quad\bdy\Omega\,.
  \end{alignat}
\end{enumerate}
\noindent\hspace{29pt}Then $v = \v _2$ (in fact, $\v  = (0,v,0,\cdots, 0)$); thus (\ref{scalar_elliptic_regularity_sobolev_Neumann_estimate}) follows from (\ref{vector-valued_elliptic_regularity_sobolev}).
\end{proof}

In general, elliptic estimates with Sobolev class coefficients $a^{jk}$ have a nonlinear dependence on the Sobolev norm of $a^{jk}$.  There are, however, situations
when the estimate becomes linear with respect to  the Sobolev norm of $a^{jk}$.

\begin{theorem}[Regularity estimate which is linear in the coefficient matrix $a^{jk}$] \label{thm_linear_est}
Suppose that  the assumptions of
Theorem {\rm\ref{thm:vector-valued_elliptic_eq_Sobolev_coeff}} are satisfied with $\ell = \rk$, and  that furthermore
$$
\|a - \text{\rm Id}\|_{L^\infty(\Omega)} \le \epsilon \ll 1\,.
$$
Then the solution $\u \in H^{\rk+1}(\Omega)$ to {\rm(\ref{vector-valued_elliptic_eq})} satisfies
\begin{equation}\label{vector-valued_elliptic_est2}
\|\u\|_{H^{\rk+1}(\Omega)} \le C \Big[\|\f\|_{H^{\rk-1}(\Omega)} + \|\g\|_{H^{\rk-0.5}(\bdy\Omega)} + \big(1+\|a\|_{H^\rk(\Omega)}\big) \|\nabla \u\|_{L^\infty(\Omega)} \Big]
\end{equation}
for some constant $C = C\big(\|\rw\|_{H^{\rk+1}(\Omega)}
\big)$. $($Recall that $\bfw$ is an $H^{\rk+1}(\Omega)$ vector field defined in a neighborhood of $\bdy\Omega$ which vanishes nowhere on $\bdy\Omega$.$)$
\end{theorem}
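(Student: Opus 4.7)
The plan is to retrace the proof of Theorem \ref{thm:vector-valued_elliptic_eq_Sobolev_coeff} almost verbatim in the case $\ell=\rk$, replacing every appeal to the commutator estimate of Corollary \ref{cor:useful_lemma_with_Sobolev_class_coeff} by the Moser-type product inequality (\ref{Hk_bdy_by_Linf_Hk}) of Theorem \ref{thm:useful_lemma_with_Sobolev_class_coeff2}. The key observation is the trivial identity $\comm{\nabla^\rk}{a^{jk}}=\comm{\nabla^\rk}{a^{jk}-\delta^{jk}}$, which, combined with (\ref{Hk_bdy_by_Linf_Hk}) applied to the product $(a-\id)\,g$, yields for every $g\in H^\rk(\Omega)\cap L^\infty(\Omega)$
\begin{equation}\label{linear_commutator_plan}
\big\|\comm{\nabla^\rk}{a}\,g\big\|_{L^2(\Omega)}\le C\Big[\|a-\id\|_{L^\infty(\Omega)}\|g\|_{H^\rk(\Omega)}+\big(1+\|a\|_{H^\rk(\Omega)}\big)\|g\|_{L^\infty(\Omega)}\Big].
\end{equation}
With $g=\nabla\u$, the first factor $\|a-\id\|_{L^\infty}\le\epsilon$ is small, while the second reproduces exactly the $\big(1+\|a\|_{H^\rk(\Omega)}\big)\|\nabla\u\|_{L^\infty(\Omega)}$ contribution permitted on the right-hand side of (\ref{vector-valued_elliptic_est2}).

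I would regularize as in Theorem \ref{thm:vector-valued_elliptic_eq_Sobolev_coeff}, setting $a_\epsilon=\eta_\epsilon\cvl(\rE a)$ and analogously for $\bfw,\f,\g$; for $\epsilon\ll 1$ one still has $\|a_\epsilon-\id\|_{L^\infty(\Omega)}\le 2\epsilon$ and a smooth solution $\u^\epsilon\in H^{\rk+1}(\Omega)$ is available. In each chart of Proposition \ref{prop:Cr_domain_charts}, the pulled-back coefficient $b_\epsilon^{rs}=(a_\epsilon^{jk}\circ\vartheta_m)A^s_kA^r_j$ also satisfies $\|b_\epsilon-\id\|_{L^\infty(B_m^+)}\le C\epsilon$, because $\|\nabla\vartheta_m-\id\|_{L^\infty}$ may be taken arbitrarily small; hence the horizontal analogue of (\ref{linear_commutator_plan}), with $\nabla^\rk$ replaced by the tangential derivative $\bp^\rk$, applies inside each $B_m^+$.

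I would then re-run Steps 1--3 of the proof of Theorem \ref{thm:vector-valued_elliptic_eq_Sobolev_coeff} with $\ell=\rk$. In the two tangential-regularity steps, every term of the shape $\int\comm{\bp^\rk}{b_\epsilon^{rs}}(\cdots),_s\,\bp^\rk(\cdots),_r\,dy$ that previously forced the polynomial-in-$\|a\|_{H^\rk}$ constant is now estimated by the chart version of (\ref{linear_commutator_plan}), producing one contribution of size $C\epsilon\,\|\u^\epsilon\|_{H^{\rk+1}(\Omega)}$, to be absorbed on the left together with the Young's-inequality remainder by choosing $\epsilon$ and the free parameter $\delta$ small, and one of size $C\big(1+\|a\|_{H^\rk(\Omega)}\big)\|\nabla\u^\epsilon\|_{L^\infty(\Omega)}$, which is exactly as allowed by (\ref{vector-valued_elliptic_est2}). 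In Step 3, the Piola identity reduces $\widetilde\u,_{\n\n}$ to horizontal second derivatives plus products of the form $b_\epsilon^{rs}\,\bp^{\rk-1-j}\nabla^j\widetilde\u,_{rs}$ and $b_{\epsilon,r}^{rs}\,\bp^{\rk-1-j}\nabla^j\widetilde\u,_s$; writing each of these as a $\nabla^{\rk-1}(b_\epsilon\nabla^2\u)$-type sum and invoking (\ref{Hk_bdy_by_Linf_Hk}) once more extracts the same two factors, $\epsilon\,\|\u^\epsilon\|_{H^{\rk+1}}$ (absorbed) and $(1+\|a\|_{H^\rk})\|\nabla\u^\epsilon\|_{L^\infty}$. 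The interior estimate, obtained as in Step 1 of Theorem \ref{thm:vector-valued_elliptic_regularity}, is literally unchanged. Summing over the finite cover, absorbing the $O(\epsilon)$ terms, and then letting $\epsilon\to 0$ (using the convergences $a_\epsilon\to a$ in $H^\rk$, $\bfw_\epsilon\to\bfw$ in $H^{\rk+1}$, $\f_\epsilon\to\f$ in $H^{\rk-1}$, $\g_\epsilon\to\g$ in $H^{\rk-0.5}$) delivers (\ref{vector-valued_elliptic_est2}).

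The main technical obstacle is precisely the Piola step, where the absence of an $L^\infty$-bound on $\nabla a$ (since $\nabla a\in H^{\rk-1}(\Omega)$ need not embed into $L^\infty(\Omega)$ under the sole hypothesis $\rk>\tfrac{\n}{2}$) rules out a direct Kato--Ponce commutator estimate. The pairing of the identity $\comm{\nabla^\rk}{a}=\comm{\nabla^\rk}{a-\id}$ with the Moser inequality (\ref{Hk_bdy_by_Linf_Hk}) is the device that trades the missing $\|\nabla a\|_{L^\infty}$ for the pair $\big(\|a-\id\|_{L^\infty},\|a\|_{H^\rk}\big)$, and the smallness of the first factor is exactly what makes the absorption argument close and produces the linear-in-$\|a\|_{H^\rk}$ estimate (\ref{vector-valued_elliptic_est2}).
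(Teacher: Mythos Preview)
Your proposal uses the same key device as the paper --- namely, the identity $a-\id$ combined with the Moser inequality (\ref{Hk_bdy_by_Linf_Hk}) to trade a polynomial dependence on $\|a\|_{H^\rk}$ for the pair $(\|a-\id\|_{L^\infty},\|a\|_{H^\rk})$ --- but the paper's route is far shorter.  Rather than re-run the four-step machinery of Theorem \ref{thm:vector-valued_elliptic_eq_Sobolev_coeff}, the paper simply observes that $\u\in H^{\rk+1}(\Omega)$ is already known, rewrites (\ref{vector-valued_elliptic_eq}) as
\[
\u-\Delta\u=\f+\frac{\p}{\p x_j}\Big((a^{jk}-\delta^{jk})\frac{\p\u}{\p x_k}\Big),
\qquad
\rP_{\bfw^\perp}\Big(\frac{\p\u}{\p\bN}\Big)=\rP_{\bfw^\perp}\Big(\g+(\delta^{jk}-a^{jk})\frac{\p\u}{\p x_k}\bN_j\Big),
\]
applies Theorem \ref{thm:vector-valued_elliptic_eq_Sobolev_coeff} \emph{with $a^{jk}=\delta^{jk}$} (so that the polynomial $\P(\|a\|_{H^\rk})$ collapses to a constant), and then bounds the perturbation terms $\|(a-\id)\nabla\u\|_{H^\rk(\Omega)}$ and $\|(a-\id)\nabla\u\,\bN\|_{H^{\rk-0.5}(\bdy\Omega)}$ directly by (\ref{Hk_bdy_by_Linf_Hk}).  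The resulting $C\epsilon\|\u\|_{H^{\rk+1}}$ contribution is absorbed.  This black-box bootstrap avoids the chart-by-chart tangential and normal estimates entirely.

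Your version is viable in spirit, but as written the normal-recovery step contains a gap.  You propose to estimate ``$\nabla^{\rk-1}(b_\epsilon\nabla^2\u)$-type'' quantities by (\ref{Hk_bdy_by_Linf_Hk}); with $f=b_\epsilon-\id$ and $g=\nabla^2\widetilde\u$ at level $\ell=\rk-1$, that inequality produces a factor $\|\nabla^2\widetilde\u\|_{L^\infty}$, which is not controlled by $\|\nabla\u\|_{L^\infty}$ under the sole hypothesis $\rk>\tfrac{\n}{2}$ (for instance $\n=3$, $\rk=2$).  Likewise, the lower-order term $b^{rs}_{\epsilon,r}\widetilde\u,_s$ in the $G_{(\rk,j)}$-expansion cannot be handled term by term for the same reason.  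The remedy --- and it is exactly what the paper's global rewriting accomplishes --- is to keep the perturbation in divergence form: write $\widetilde\u,_{\n\n}=-\sum_\alpha\widetilde\u,_{\alpha\alpha}+\widetilde\u-\widetilde\f-\partial_r\big((b_\epsilon^{rs}-\delta^{rs})\widetilde\u,_s\big)$ before applying $\bp^{\rk-1-j}\nabla^j$, so that the perturbation enters as $D^\rk((b_\epsilon-\id)\nabla\widetilde\u)$ and the Moser inequality applies at level $\ell=\rk$ with $g=\nabla\widetilde\u$, which \emph{is} in $L^\infty$.  With that correction your argument closes, but the paper's one-page bootstrap is the cleaner path.
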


\begin{proof}
By Theorem \ref{thm:vector-valued_elliptic_eq_Sobolev_coeff} we know that $\u \in H^{\rk+1}(\Omega)$ so equation (\ref{vector-valued_elliptic_eq}) holds in the pointwise sense. We rewrite (\ref{vector-valued_elliptic_eq}) as
\begin{alignat*}{2}
\u - \Delta \u &= {\bf f} \equiv\frac{\p}{\p x_j} \Big(\big(a^{jk} - \delta^{jk}\big) \frac{\p \u}{\p x_k}\Big) + \f \qquad&&\text{in}\quad \Omega\,,\\
\u\cdot \bfw &= 0 &&\text{on}\quad\bdy\Omega\,,\\
\rP_{\bfw^\perp} \Big(\frac{\p \u}{\p \bN}\Big)&= {\bf g} \equiv \rP_{\bfw^\perp} \Big(\big(\delta^{jk} - a^{jk}\big) \frac{\p \u}{\p x_k} \bN_j + \g \Big) \qquad &&\text{on}\quad\bdy\Omega\,.
\end{alignat*}
We then conclude from Theorem \ref{thm:vector-valued_elliptic_eq_Sobolev_coeff} that
\begin{align*}
\|\u\|_{H^{\rk+1}(\Omega)} &\le C \Big[\|{\bf f}\|_{H^{\rk-1}(\Omega)} + \|{\bf g}\|_{H^{\rk-0.5}(\bdy\Omega)}\Big] \\
&\le C \Big[\|\f\|_{H^{\rk-1}(\Omega)} + \|\g\|_{H^{\rk-0.5}(\bdy\Omega)} + \big\|\smallexp{$\displaystyle{}\frac{\p}{\p x_j} \Big(\big(\delta^{jk} - a^{jk}\big) \frac{\p \u}{\p x_k}\Big)$}\big\|_{H^{\rk-1}(\Omega)} \\
&\qquad + \big\|\rP_{\bfw^\perp} \smallexp{$\displaystyle{}\Big(\big(\delta^{jk} - a^{jk}\big) \frac{\p \u}{\p x_k} \bN_j\Big)$}\big\|_{H^{\rk-0.5}(\bdy\Omega)} \Big]
\end{align*}
for some constant $C = C\big(\|\bfw\|_{H^{\rk+1}(\Omega)}\big)$. By Theorem \ref{thm:useful_lemma_with_Sobolev_class_coeff2},
\begin{align*}
& \Big\|\smallexp{$\displaystyle{}\frac{\p}{\p x_j} \Big(\big(\delta^{jk} - a^{jk}\big) \frac{\p \u}{\p x_k}\Big)$}\Big\|_{H^{\rk-1}(\Omega)} \le \Big\|\smallexp{$\displaystyle{}\big(\delta^{jk} - a^{jk}\big) \frac{\p \u}{\p x_k}$}\Big\|_{H^\rk(\Omega)} \\
&\qquad\quad \le C \Big[\|\delta - a\|_{L^\infty(\Omega)} \|\nabla \u\|_{H^\rk(\Omega)} + \|\delta - a\|_{H^\rk(\Omega)} \|\nabla \u\|_{L^\infty(\Omega)} \Big] \\
&\qquad\quad \le C \epsilon \|\u\|_{H^{\rk+1}(\Omega)} + C \big(1+\|a\|_{H^\rk(\Omega)}\big) \|\nabla \u\|_{L^\infty(\Omega)} \,.
\end{align*}
Similarly, by the trace estimate (and the fact that $\rk-0.5 > \smallexp{$\displaystyle{}\frac{\n-1}{2}$}$, where $\n-1$ is the dimension of $\bdy\Omega$),
\begin{align*}
& \Big\|\rP_{\bfw^\perp} \smallexp{$\displaystyle{}\Big(\big(\delta^{jk} - a^{jk}\big) \frac{\p \u}{\p x_k} \bN_j\Big)$}\Big\|_{H^{\rk-0.5}(\bdy\Omega)} \le C \Big\|\big(\delta^{jk} - a^{jk}\big) \frac{\p \u}{\p x_k} \bN_j\Big\|_{H^{\rk-0.5}(\bdy\Omega)} \\
&\qquad\quad \le C \Big[\|\delta - a\|_{L^\infty(\Omega)} \|\nabla \u\|_{H^{\rk-0.5}(\bdy\Omega)} + \|\delta - a\|_{H^{\rk-0.5}(\bdy\Omega)} \|\nabla \u\|_{L^\infty(\bdy\Omega)}\\
&\qquad\quad \le C \epsilon \|\u\|_{H^{\rk+1}(\Omega)} + C \big(1+\|a\|_{H^\rk(\Omega)}\big) \|\nabla \u\|_{L^\infty(\bdy\Omega)}
\end{align*}
for some constant $C = C\big(\|\bfw\|_{H^{\rk+1}(\Omega)}\big)$. The embedding $H^{\frac{\n}{2}+\delta}(\Omega) \contsubset \mC^{0,\alpha}(\Omega)$ for some $\alpha > 0$ further suggests that $\nabla \u$ is uniformly \Holder\ continuous; thus $\|\nabla \u\|_{L^\infty(\bdy\Omega)} \le \|\nabla \u\|_{L^\infty(\Omega)}$. (\ref{vector-valued_elliptic_est2}) then follows from the assumption that $\epsilon \ll 1$.
\end{proof}

\begin{remark} As we noted, inequality {\rm(\ref{vector-valued_elliptic_est2})} is linear with respect to the highest-order norms.
This permits the use of linear
interpolation to extend this inequality to fractional-order Sobolev spaces.
\end{remark}

In the same way that we proved Theorem \ref{thm:vector-valued_elliptic_eq_Sobolev_coeff}, we can prove the following complimentary result:
\begin{theorem}\label{thm:vector-valued_elliptic_eq_Sobolev_coeff_v2}
Let $\Omega \subseteq \Rn$ be a bounded $H^{\rk+1}$-domain for some integer $\rk > \novertwo$\,. Suppose that $a^{jk} \in H^\rk(\Omega)$ satisfies the positivity condition
$$
a^{jk} \xi_j \xi_k \ge \lambda |\xi|^2\qquad\Forall \xi \in \Rn\,,
$$
and for some $1\le \ell\le \rk$, $\bfw \in H^{\max\{\rk,\ell+1\}}(\Omega)$ $($or $\bfw \in H^{\max\{\rk-\frac{1}{2},\ell+\frac{1}{2}\}}(\bdy\Omega))$ such that $\bfw$ vanishes nowhere on $\bdy\Omega$. Then for all $\f\in H^{\ell-1}(\Omega)$ and $g \in H^{\ell-0.5}(\bdy\Omega)$, there exists a solution $\u$ to
\begin{subequations}\label{vector-valued_elliptic_eq2}
\begin{alignat}{2}
\u^i - \frac{\p}{\p x_j} \Big(a^{jk} \frac{\p \u^i}{\p x_k}\Big) &= \f^i \qquad&&\text{in}\quad\Omega\,,\\
\u \hspace{-.5pt}\times\hspace{-.5pt} \bfw &= {\bf 0} &&\text{on}\quad \bdy\Omega\,,\\
a^{jk} \frac{\p \u^i}{\p x_k} \bN_j \bfw^i &= g &&\text{on}\quad \bdy\Omega\,,
\end{alignat}
\end{subequations}
and satisfies
\begin{equation}\label{vector-valued_elliptic_regularity_sobolev_v2}
\|\u\|_{H^{\ell+1}(\Omega)} \le C \Big[\|\f\|_{H^{\ell-1}(\Omega)} + \|\g\|_{H^{\ell-0.5}(\bdy\Omega)} + \P\big(\|a\|_{H^\rk(\Omega)}\big) \Big(\|\f\|_{L^2(\Omega)} + \|\g\|_{H^{-0.5}(\bdy\Omega)}\Big) \Big]
\end{equation}
for some constant $C = C\big(\|\bfw\|_{H^{\max\{\rk,\ell+1\}}(\Omega)}, |\bdy\Omega|_{H^{k+0.5}}\big)$ and some polynomial $\P$.
\end{theorem}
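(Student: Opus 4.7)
The plan is to follow the proof of Theorem \ref{thm:vector-valued_elliptic_eq_Sobolev_coeff} together with the pullback argument of Corollary \ref{cor:vector-valued_elliptic_eq_Sobolev_coeff} essentially line-by-line, with the roles played by the Dirichlet and Neumann components of $\u$ interchanged. First I would introduce the admissible space $\V' = \{\v \in H^1(\Omega) : \v \times \bfw = 0 \text{ on } \bdy\Omega\}$ of vector fields that are pointwise parallel to $\bfw$ on $\bdy\Omega$. Writing any $\Varphi \in \V'$ as $\Varphi = (\Varphi\cdot\bfw/|\bfw|^2)\,\bfw$ on $\bdy\Omega$ and integrating (\ref{vector-valued_elliptic_eq2}a) by parts, the weak formulation reads
\begin{equation*}
(\u,\Varphi)_{L^2(\Omega)} + \int_\Omega a^{jk}\,\u^i,_k \Varphi^i,_j\,dx = (\f,\Varphi)_{L^2(\Omega)} + \int_{\bdy\Omega} g\,\frac{\Varphi\cdot\bfw}{|\bfw|^2}\,dS \qquad \Forall \Varphi \in \V'.
\end{equation*}
Coercivity of the bilinear form on $\V'$ then gives, via Lax--Milgram, a weak solution in $H^1(\Omega)$ with the corresponding energy estimate. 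To upgrade regularity, I would pull back to a $\mC^\infty$-reference domain $\rO$ through the $H^{\rk+1}$-diffeomorphism $\psi$ exactly as in Corollary \ref{cor:vector-valued_elliptic_eq_Sobolev_coeff}, regularize $a^{jk}, \bfw, \f, \g$ by $\eta_\epsilon$ and $\Lambda_\epsilon$, and derive $\epsilon$-independent bounds on the resulting smooth approximate solutions $\u^\epsilon$.

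The interior estimate is identical to Step 1 of the proof of Theorem \ref{thm:vector-valued_elliptic_regularity} as it uses only compactly supported test functions. For the boundary estimate I would localize with $\zeta_m$, flatten with $\vartheta_m$ (tildes denoting pulled-back quantities), and handle tangential derivatives in two sub-steps. In the flattened picture the Dirichlet condition becomes $\rP_{\widetilde{\bfw}^\perp}\widetilde{\u}=0$ on $\{y_\n=0\}$, so the test function appropriate for estimating the $\widetilde{\bfw}^\perp$-component is
\begin{equation*}
\widetilde{\Varphi} = (-1)^\ell\,\widetilde{\zeta}\,\Lambda_\epsilon\Delta_0^\ell\Lambda_\epsilon\Big(\widetilde{\zeta}\,\big[\widetilde{\u}-(\widetilde{\u}\cdot\widetilde{\bfw})\widetilde{\bfw}/|\widetilde{\bfw}|^2\big]\Big);
\end{equation*}
since horizontal convolution preserves vanishing traces on $\{y_\n=0\}$, this $\widetilde{\Varphi}$ vanishes there and is therefore admissible. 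For the complementary parallel component $\widetilde{\u}\cdot\widetilde{\bfw}$, I would use
\begin{equation*}
\widetilde{\Varphi}^i = (-1)^\ell\,\widetilde{\zeta}\,\frac{\widetilde{\bfw}^i}{|\widetilde{\bfw}|^2}\,\Lambda_\epsilon\Delta_0^\ell\Lambda_\epsilon\big(\widetilde{\zeta}\,\widetilde{\u}\cdot\widetilde{\bfw}\big),
\end{equation*}
which is pointwise parallel to $\widetilde{\bfw}$ and hence automatically admissible, and whose pairing with the Neumann datum produces exactly the scalar appearing on the right-hand side of (\ref{vector-valued_elliptic_eq2}c), bounded by $\|g\|_{H^{\ell-0.5}(\bdy\Omega)}$ via duality. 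From here, the ellipticity bound (\ref{be_elliptic}), the commutator estimate (\ref{commutator_estimate_elliptic_est2}), and Young's inequality proceed exactly as in Steps 1--2 of the proof of Theorem \ref{thm:vector-valued_elliptic_eq_Sobolev_coeff}, yielding an $\epsilon$-independent bound on $\|\widetilde{\zeta}\,\bp^\ell\widetilde{\u}\|_{H^1(B^+_m)}$ modulo an absorbable $\|a\|_{H^\rk(\Omega)}\|\u^\epsilon\|_{H^{\ell+7/8}(\Omega)}$ term.

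Normal derivatives are recovered via the Piola identity exactly as in Step 3 of the proof of Theorem \ref{thm:vector-valued_elliptic_eq_Sobolev_coeff}: from (\ref{vector-valued_elliptic_eq2}a) and the positivity $b^{\n\n}_\epsilon\ge\lambda/8>0$, $\widetilde{\u},_{\n\n}$ is read off in terms of tangential second derivatives plus a lower-order remainder, and an induction on the order of pure-normal differentiation delivers $\|\widetilde{\zeta}\nabla^{\ell+1}\widetilde{\u}\|_{L^2(B^+_m)}$. Interpolation of $\|\u^\epsilon\|_{H^{\ell+7/8}}$ between $H^{\ell+1}$ and $H^1$, Young's inequality to absorb the top-order term, passage to the limit $\epsilon\to 0$, and the transfer from $\rO$ back to $\Omega$ through Corollaries \ref{cor:JA_est}--\ref{cor:f_comp_psi} then yield (\ref{vector-valued_elliptic_regularity_sobolev_v2}).

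The main obstacle is the design of these tangential test functions under the swapped boundary condition. In the original proof, admissibility of the Step 2 test function was automatic because $\widetilde{\u}\cdot\widetilde{\bfw}=0$ on the boundary supplied the vanishing of the relevant scalar; here one must instead transfer the Dirichlet vanishing through the perpendicular projection and the horizontal convolution, being careful that $(\widetilde{\u}\cdot\widetilde{\bfw})\widetilde{\bfw}/|\widetilde{\bfw}|^2$ is regular enough for the product and commutator estimates to apply (which follows from Proposition \ref{prop:HkHl_product} because $\bfw$ is bounded away from zero on $\bdy\Omega$), and then construct a second, globally $\widetilde{\bfw}$-parallel test function that feeds the scalar Neumann datum into the variational identity correctly. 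Once both test functions are in place, every remaining estimate imports verbatim from the proof of Theorem \ref{thm:vector-valued_elliptic_eq_Sobolev_coeff}.
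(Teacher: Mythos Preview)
Your proposal is correct and matches the paper's approach: the paper itself provides no detailed proof of this theorem, stating only that it is established ``in the same way that we proved Theorem~\ref{thm:vector-valued_elliptic_eq_Sobolev_coeff}.'' You have correctly identified the key modification---interchanging the roles of the two tangential test functions so that the one built from the Dirichlet datum $\rP_{\bfw^\perp}\widetilde{\u}$ vanishes on $\{y_\n=0\}$ (hence lies in $\V'$ trivially), while the one parallel to $\widetilde{\bfw}$ is admissible automatically and carries the scalar Neumann datum $g$---and the remainder of the argument indeed imports verbatim.
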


\section{The Proof of Theorem \ref{thm:main_thm2}}\label{sec:Hodge_elliptic_estimate}
In this section, we prove our main regularity result given by Theorem \ref{thm:main_thm2}. We first establish the following lemma which is also fundamental to the proof of Theorem \ref{thm:main_thm1}.
\begin{lemma}\label{lem:tangential_component_of_dwdN}
Let $\Omega \subseteq \bbR^3$ be a bounded $H^{\rk+1}$-domain with outward-pointing unit normal $\bN$.
Then for every differentiable vector field $\text{\bf\emph{w}}:\Omega \to \bbR^3$,  the following identities hold:
\begin{subequations}\label{wid}
\begin{alignat}{2}
\rP_{\bN^\perp} \Big(\frac{\p \text{\bf\emph{w}}}{\p \bN}\Big) &= (\curl \text{\bf\emph{w}} \hspace{-.5pt}\times\hspace{-.5pt} \bN) + \bdygrad \text{\bf\emph{w}} \cdot \bN &&\text{on}\quad\bdy\Omega\,, \label{PNdwdN_id}\\
\div \w &= \frac{\p \w}{\p \bN} \cdot \bN + 2 \rH (\w \cdot \bN) + \bdydiv  (\rP_{\bN^\perp} \w) \qquad&&\text{on}\quad\bdy\Omega\,,\label{wid2}\\
\curl \w \cdot \bN &= \bdydiv  (\w \times \bN) &&\text{on}\quad\bdy\Omega\,, \label{wid1}
\end{alignat}
\end{subequations}
where $\rH$ is the mean curvature of $\bdy\Omega$ {\rm(}in local chart $(\U,\vartheta)$, $\rH$ is given by $\rH = \dfrac{1}{2} g^{\alpha\beta} b_{\alpha\beta}${\rm)}.
\end{lemma}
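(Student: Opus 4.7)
My plan is to reduce each of the three identities to a pointwise computation at an arbitrary point $p \in \bdy\Omega$ in a local chart $(\U, \vartheta)$ chosen so that at $\vartheta^{-1}(p)$ the triple $\{\tau_1, \tau_2, \bN\}$, with $\tau_\alpha := \frac{\p\vartheta}{\p y_\alpha}$, is a positively oriented orthonormal basis of $\bbR^3$ satisfying $\tau_1 \times \tau_2 = \bN$, and so that the induced metric is in normal form, i.e., $g_{\alpha\beta}(p) = \delta_{\alpha\beta}$ and $\p_\gamma g_{\alpha\beta}(p) = 0$. Under the standing hypothesis $\rk > \frac{3}{2}$, Sobolev embedding ensures that $\bdy\Omega \in C^{1,\alpha}$ and that $b_{\alpha\beta}$ is continuous, so every quantity appearing in the identities is well-defined pointwise. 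In this distinguished frame $\sqrt{\rg}(p) = 1$ with $\p_\gamma \sqrt{\rg}(p) = 0$, the coordinate formula for $\bdydiv$ in Definition~\ref{defn:tangential_derivative} collapses to the flat divergence $\p_1 v^1 + \p_2 v^2$, and $2\rH(p) = b_{11}(p) + b_{22}(p)$. I will repeatedly use the auxiliary identity $\tau_\alpha \cdot \p_\beta \bN = b_{\alpha\beta}$ at $p$, obtained by differentiating $\tau_\alpha \cdot \bN = 0$ tangentially along the boundary.

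For (\ref{PNdwdN_id}), the starting point is the algebraic $\bbR^3$-identity
\begin{equation*}
\curl \w \times \bN \;=\; \frac{\p \w}{\p \bN} - (\nabla \w)^\rT \bN,
\end{equation*}
which follows from $\varepsilon_{ijk}\varepsilon_{jlm} = \delta_{im}\delta_{kl} - \delta_{il}\delta_{km}$. I then decompose $(\nabla \w)^\rT \bN$, whose $i$th component is $\bN_j \p_i \w_j$, by expanding $\p_i = \sum_\alpha \tau_{\alpha,i} (\tau_\alpha \cdot \nabla) + \bN_i (\bN \cdot \nabla)$ at $p$. The normal component comes out to $(\frac{\p \w}{\p \bN} \cdot \bN) \bN$, while the tangential component $\sum_\alpha \tau_\alpha \,\bN \cdot (\tau_\alpha \cdot \nabla) \w$ matches exactly the coordinate expression for $\bdygrad \w \cdot \bN$ read off from Definition~\ref{defn:tangential_derivative}. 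Substituting back yields $\rP_{\bN^\perp}(\frac{\p \w}{\p \bN}) = \curl \w \times \bN + \bdygrad \w \cdot \bN$ immediately.

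For (\ref{wid2}), the same orthonormal expansion gives $\div \w(p) = \bN \cdot \frac{\p \w}{\p \bN} + \sum_\alpha \tau_\alpha \cdot (\tau_\alpha \cdot \nabla) \w$. Splitting $\w|_{\bdy\Omega} = \rP_{\bN^\perp} \w + (\w \cdot \bN) \bN$ and inserting into the tangential sum, the contribution of the normal piece equals $(\w \cdot \bN) \sum_\alpha \tau_\alpha \cdot \p_\alpha \bN = (\w \cdot \bN)(b_{11} + b_{22}) = 2\rH (\w \cdot \bN)$ by the auxiliary identity, while the contribution of the tangential piece reduces at $p$, via the vanishing of the derivatives of $g_{\alpha\beta}$, to the flat expression $\sum_\alpha \p_\alpha (\rP_{\bN^\perp} \w)^\alpha$, which is precisely $\bdydiv(\rP_{\bN^\perp} \w)$ by the coordinate formula.

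For (\ref{wid1}), I expand $\curl \w \cdot \bN = \bN_i \varepsilon_{ijk} \p_j \w_k$ using $\bN = \tau_1 \times \tau_2$ to get $\curl \w \cdot \bN|_p = \tau_2 \cdot (\tau_1 \cdot \nabla) \w - \tau_1 \cdot (\tau_2 \cdot \nabla) \w$. Writing $\w = w^1 \tau_1 + w^2 \tau_2 + w^\bN \bN$ on $\bdy\Omega$, the non-tensorial terms $\tau_\beta \cdot \p_\alpha \tau_\gamma$ vanish at $p$ (from $\p_\gamma g_{\alpha\beta} = 0$ combined with $\p_\alpha \tau_\beta = \p_\beta \tau_\alpha$), and the would-be curvature term $w^\bN (b_{12} - b_{21})$ vanishes by symmetry of the second fundamental form, leaving $\curl \w \cdot \bN|_p = \p_1 w^2 - \p_2 w^1$. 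On the other hand $\w \times \bN = w^2 \tau_1 - w^1 \tau_2$ is tangent to $\bdy\Omega$, and the coordinate formula for $\bdydiv$ at $p$ produces the same expression $\p_1 w^2 - \p_2 w^1$. Since $p$ was arbitrary, all three identities hold on $\bdy\Omega$. The principal technical care lies in (\ref{wid2}) and (\ref{wid1}), where the assembly of curvature contributions into $2\rH$ and the cancellation of the Christoffel-type terms rely crucially on the choice of normal coordinates and on the symmetry of $b_{\alpha\beta}$.
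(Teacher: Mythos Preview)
Your proof is correct, but it takes a genuinely different route from the paper's.

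\textbf{Comparison.} The paper works throughout in \emph{general} local coordinates, introducing the explicit normal extension $\Theta(y',y_3) = \vartheta(y',0) + y_3\,\bN\circ\vartheta(y',0)$ and carrying the full metric $\G_{ij}$ and Christoffel symbols through every computation. You instead fix a point $p$ and pass to an adapted orthonormal (normal) frame, which collapses $g_{\alpha\beta}$ to $\delta_{\alpha\beta}$ and the Christoffel symbols to zero, reducing each identity to a flat $\bbR^3$ calculation. For identity~(\ref{PNdwdN_id}) your starting point is the clean algebraic relation $\curl\w\times\bN = \tfrac{\p\w}{\p\bN} - (\nabla\w)^{\rT}\bN$, which is slicker than the paper's separate coordinate expansions of the two sides. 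For identity~(\ref{wid1}) the contrast is sharper: the paper gives a \emph{weak} proof via the divergence theorem, showing $\int_{\bdy\Omega}(\curl\w\cdot\bN)\varphi\,dS = \int_{\bdy\Omega}\bdydiv(\w\times\bN)\,\varphi\,dS$ for all test $\varphi$, whereas you compute both sides pointwise and match them as $\p_1 w^2 - \p_2 w^1$.

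\textbf{Trade-offs.} Your frame argument is shorter and more geometric once the normal coordinates are in hand. The paper's integration-by-parts proof of (\ref{wid1}) is more robust at the minimal regularity $\rk>\tfrac{3}{2}$: it never needs $b_{\alpha\beta}$ or the Christoffel symbols to be continuous, whereas your choice of coordinates with $\p_\gamma g_{\alpha\beta}(p)=0$ requires $g\in C^1$ near $p$, which for $\rk=2$ is only guaranteed almost everywhere. This is a minor technicality (the paper's own pointwise use of $b_{\alpha\beta}$ in (\ref{wid2}) has the same caveat), and it can be handled by a standard approximation argument, but it is worth noting.
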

\begin{proof}
We define
$$
\Theta(y) = \vartheta(y_1,y_2,0) + y_3 (\bN \circ \vartheta)(y_1,y_2,0)\,,
$$
and $\G_{ij} = \Theta,_i \cdot \Theta,_j$ with inverse $\G^{ij}$. Let $\widetilde{\bN} \equiv (\bN\circ \vartheta)\big|_{y_3 = 0}$, and $\widetilde{\f} \equiv \f \circ \Theta$ if $\f\ne \bN$. Since $\Theta,_1$, $\Theta,_2 \perp \widetilde{\bN}$, for every vector $v\in \bbR^3$, $\widetilde{\v}$ can be expressed as the linear combination of $\Theta,_1$, $\Theta,_2$ and $\widetilde{\bN}$. In particular, we have
\begin{subequations}\label{decomposition_of_vector}
\begin{align}
\widetilde{\v}^i = (\widetilde{\v} \cdot \widetilde{\bN}) \widetilde{\bN}^i + (\G^{\alpha\beta} \Theta^j,_\beta \widetilde{\v}^j) \Theta^i,_\alpha \equiv \widetilde{\v}_3 \widetilde{\bN}^i + \widetilde{\v}_\alpha \Theta^i,_\alpha
\end{align}
\end{subequations}
and
$$
\f,_k \circ\, \Theta = \widetilde{\f},_3 \widetilde{\bN}^3 + \G^{\alpha\beta} \widetilde{\f},_\beta \Theta^k,_\alpha\,. \eqno{\rm(\ref{decomposition_of_vector}b)}
$$
To see (\ref{PNdwdN_id}), we first note that
\begin{equation}\label{pNdwdN_id_temp}
\frac{\p \text{\bf\emph{w}}^i}{\p \bN}\circ\vartheta \hspace{-1.5pt}=\hspace{-1.5pt} \big[\widetilde{\text{\bf\emph{w}}}_3 \widetilde{\bN}^i \hspace{-1.5pt}+\hspace{-0.5pt} \widetilde{\text{\bf\emph{w}}}_\alpha \Theta^i,_\alpha\hspace{-1.5pt}\big],_3\Big|_{y_3=0} \hspace{-1pt}=\hspace{-1pt} \big[\widetilde{\text{\bf\emph{w}}}_{3,3} \widetilde{\bN}^i \hspace{-1.5pt}+\hspace{-0.5pt} \widetilde{\text{\bf\emph{w}}}_{\alpha,3} \Theta^i,_\alpha \hspace{-1.5pt}+\hspace{-0.5pt} \widetilde{\text{\bf\emph{w}}}_\alpha \widetilde{\bN}^i,_\alpha\hspace{-1.5pt}\big]\Big|_{y_3=0};
\end{equation}
thus, since $\widetilde{\bN} \cdot \Theta,_\alpha = \widetilde{\bN} \cdot \widetilde{\bN},_\alpha = 0$,
\begin{align}
& \rP_{\bN^\perp}\Big(\frac{\p\text{\bf\emph{w}}}{\p \bN}\Big)\circ \vartheta \nonumber\\
&\qquad = \big[\widetilde{\text{\bf\emph{w}}}_{3,3} \widetilde{\bN}^i + \widetilde{\text{\bf\emph{w}}}_{\alpha,3} \Theta^i,_\alpha + \widetilde{\text{\bf\emph{w}}}_\alpha \widetilde{\bN}^i,_\alpha\big]\Big|_{y_3=0} - \big[\widetilde{\text{\bf\emph{w}}}_{3,3} \widetilde{\bN}^k + \widetilde{\text{\bf\emph{w}}}_{\alpha,3} \Theta^k,_\alpha + \widetilde{\text{\bf\emph{w}}}_\alpha \widetilde{\bN}^k,_\alpha\big]\Big|_{y_3=0} \widetilde{\bN}^k \widetilde{\bN}^i \nonumber\\
&\qquad = \widetilde{\text{\bf\emph{w}}}_{\alpha,3} \vartheta^i,_\alpha + \widetilde{\text{\bf\emph{w}}}_\alpha \widetilde{\bN}^i,_\alpha\,. \label{PNdwdN_id_temp}
\end{align}
Moreover, by the identity
$$
(\curl\text{\bf\emph{w}} \times \bN)^i = \varepsilon_{ijk} \varepsilon_{jrs} \text{\bf\emph{w}}^s,_r \bN^k = (\delta_{is} \delta_{kr} - \delta_{ir} \delta_{ks}) \text{\bf\emph{w}}^s,_r \bN^k = (\text{\bf\emph{w}}^i,_k - \text{\bf\emph{w}}^k,_i) \bN^k\,,
$$
we find that
\begin{align}
(\curl\text{\bf\emph{w}} \hspace{-1pt}\times\hspace{-1pt} \bN)^i\hspace{-1pt}\circ\hspace{-1pt}\vartheta &=\hspace{-1pt} \big[\hspace{-1pt}(\widetilde{\text{\bf\emph{w}}}^i,_3 \hspace{-1pt}\widetilde{\bN}^k \hspace{-1pt}+ \G^{\alpha\beta} \widetilde{\text{\bf\emph{w}}}^i,_\beta \hspace{-1pt}\Theta^k,_\alpha \hspace{-1pt}- \widetilde{\text{\bf\emph{w}}}^k,_3 \hspace{-1pt}\widetilde{\bN}^i \hspace{-1pt}- \G^{\alpha\beta} \widetilde{\text{\bf\emph{w}}}^k,_\beta \hspace{-1pt}\Theta^i,_\alpha\hspace{-2pt}) \widetilde{\bN}^k\big]\Big|_{y_3 = 0} \nonumber\\
&= \big[\widetilde{\text{\bf\emph{w}}}^i,_3 - \widetilde{\bN}^i \widetilde{\text{\bf\emph{w}}}^k,_3 \widetilde{\bN}^k - \G^{\alpha\beta} \Theta^i,_\alpha \widetilde{\text{\bf\emph{w}}}^k,_\beta \widetilde{\bN}^k\big]\Big|_{y_3=0} \nonumber\\
&= \big[(\widetilde{\text{\bf\emph{w}}}_3 \widetilde{\bN}^i + \widetilde{\text{\bf\emph{w}}}_\alpha \Theta^i,_\alpha),_3 - \widetilde{\bN}^i (\widetilde{\text{\bf\emph{w}}}_3 \widetilde{\bN}^k + \widetilde{\text{\bf\emph{w}}}_\alpha \Theta^k,_\alpha),_3 \widetilde{\bN}^k \nonumber\\
&\quad - \G^{\alpha\beta} \Theta^i,_\alpha (\widetilde{\text{\bf\emph{w}}}_3 \widetilde{\bN}^k + \widetilde{\text{\bf\emph{w}}}_\gamma \Theta^k,_\gamma),_\beta \widetilde{\bN}^k\big] \Big|_{y_3 = 0} \nonumber\\
&= \widetilde{\text{\bf\emph{w}}}_\alpha \widetilde{\bN}^i,_\alpha + \widetilde{\text{\bf\emph{w}}}_{\alpha,3} \vartheta^i,_\alpha - g^{\alpha\beta} \vartheta^i,_\alpha (\widetilde{\text{\bf\emph{w}}}_{3,\beta} - \widetilde{\text{\bf\emph{w}}}_\gamma b_{\gamma\beta}) \nonumber\\
&= \widetilde{\text{\bf\emph{w}}}_\alpha \widetilde{\bN}^i,_\alpha + \widetilde{\text{\bf\emph{w}}}_{\alpha,3} \vartheta^i,_\alpha - g^{\alpha\beta} \vartheta^i,_\alpha (\widetilde{\text{\bf\emph{w}}}_{,\beta} \cdot \widetilde{\bN})\,.
\label{wid3}
\end{align}
Combining (\ref{PNdwdN_id_temp}) and (\ref{wid3}), we conclude (\ref{PNdwdN_id}).

Now we proceed to the proof of identity (\ref{wid2}). Using (\ref{decomposition_of_vector}) we obtain that
\begin{align*}
\div \w \big|_{\bdy\Omega}\circ\vartheta 
&= \big[\wtrN^k (\widetilde{\w}_3 \wtrN^k + \widetilde{\w}_\alpha \Theta^k,_\alpha),_3 + \G^{\alpha\beta} \Theta^k,_\alpha (\widetilde{\w}_3 \wtrN^k \hspace{-1pt} + \widetilde{\w}_\gamma \Theta^k,_\gamma\hspace{-2pt}),_\beta\big] \Big|_{y_3=0} \nonumber\\
& = \widetilde{\w}_{3,3} + g^{\alpha\beta} \vartheta^k,_\alpha (\widetilde{\w}_\gamma,_\beta \vartheta^k,_\gamma + \widetilde{\w}_\gamma \vartheta^k,_{\beta\gamma} + \wtrN^k,_\beta \widetilde{\w}_3 + \wtrN^k \widetilde{\w}_3,_\beta) \nonumber\\
& = \widetilde{\w}_{3,3} + \widetilde{\w}_{\gamma,\gamma} + \Gamma^\beta_{\beta\gamma} \widetilde{\w}_\gamma + 2 \rH \widetilde{\w}_3 \,, 
\end{align*}
where $\Gamma_{\alpha\beta}^\gamma$ is the Christoffel symbol defined by
$$
\Gamma_{\alpha\beta}^\gamma = \frac{1}{2}\, g^{\gamma\delta} (g_{\alpha\delta,\beta} + g_{\beta\delta,\alpha} - g_{\alpha\beta,\delta}) = g^{\gamma\delta} \vartheta_{,\alpha\beta}\cdot \vartheta,_\delta\,.
$$
Since $\rg,_\alpha = \rg g^{\gamma\delta} g_{\gamma\delta,\alpha} = \rg \Gamma^\beta_{\alpha\beta}$, we find that the surface divergence operator $\bdydiv $ in Definition \ref{defn:tangential_derivative} can also be given by
$$
(\bdydiv  \v)\circ \vartheta = \widetilde{\v}_{\gamma,\gamma} + \Gamma^\beta_{\beta\gamma} \widetilde{\v}_\gamma\quad\ \Forall \v\in \rT(\bdy\Omega) \text{ \ (or equivalently, $\widetilde{\v}^i = \widetilde{\v}_\gamma \vartheta^i,_\gamma$)}\,.
$$
As a consequence,
$$
\div \w = \big[\widetilde{\w}_3,_3 + \bdydiv  (\rP_{\wtrN^\perp} \widetilde{\w}) + 2 \widetilde{\rH} \widetilde{\w}_3\big]\circ\vartheta^{-1} \qquad\text{on}\quad \bdy\Omega\,,
$$
where we recall that $\rP_{\bN^\perp}$ denotes the projection of a vector onto the tangent plane of $\bdy\Omega$. With the help of (\ref{pNdwdN_id_temp}), in local chart $(\U,\vartheta)$ we have
\begin{equation}\label{w3,3_id}
\Big[\frac{\p \w}{\p \bN} \cdot \bN \Big]\circ\vartheta = \big[\widetilde{\w}_3,_3 \wtrN^i + \widetilde{\w}_{\alpha,3} \vartheta^i,_\alpha + \widetilde{\w}_\alpha g^{\alpha\beta} \wtrN^i,_\beta\big] \wtrN^i = \widetilde{\w}_3,_3\,;
\end{equation}
thus (\ref{wid2}) is valid.

Finally, by the divergence theorem we obtain that
$$
\int_{\bdy\Omega} (\curl \w \cdot \bN) \varphi \,dS = \int_\Omega \curl \w \cdot \nabla \varphi \,dx = \int_{\bdy\Omega} (\bN \times \w) \cdot \nabla \varphi \,dS\qquad\Forall \varphi\in H^1(\Omega)\,.
$$
Since $(\nabla \varphi)^i = \varphi,_3 \bN^i + g^{\alpha\beta} \varphi,_\alpha \vartheta^i,_\beta = \dfrac{\p \varphi}{\p \bN} \bN^i + (\bdygrad \varphi)^i$ and $(\bN \times \w) \perp \bN$, we conclude that
$$
\int_{\bdy\Omega} (\curl \w \cdot \bN) \varphi \,dS = \int_{\bdy\Omega} (\bN \times \w) \cdot \bdygrad \varphi \,dS = \int_{\bdy\Omega} \bdydiv  (\w \times \bN) \varphi \,dS\qquad\Forall \varphi\in H^1(\Omega)\,;
$$
thus we verify (\ref{wid1}).
\end{proof}

With Lemma \ref{lem:tangential_component_of_dwdN}, we can now prove Theorem \ref{thm:main_thm2} with $\n=3$; we note that the case $\n=2$ follows from
the more general case by considering vector fields of the type $\u = (\u^1(x_1,x_2), \u^2(x_1,x_2),0)$.
\begin{proof}[Proof of Theorem {\rm\ref{thm:main_thm2}}]
Let $u\in H^{\rk+1}(\Omega)$, and $\curl \u =f$, $\div \u = g$, $\bdygrad \u\cdot \bN = h$. By the well-known identity
\begin{equation}\label{curlcurlu_identity}
- \Delta \u = \curl \curl \u - \nabla \div \u \qquad\text{in}\quad\Omega\,,
\end{equation}
we find that if $\chi$ is a smooth cut-off function with $\supp(\chi)\cptsubset \Omega$, then $\chi \u$ satisfies
\begin{alignat*}{2}
- \Delta (\chi \u) &= - \u\Delta \chi - 2 \nabla \chi \cdot \nabla \u + \chi (\curl\,\f - \nabla g) \qquad&&\text{in}\quad\rO \,,\\
\chi \u &= 0 \qquad&&\text{on}\quad\bdy\rO\,,
\end{alignat*}
for some smooth domain $\rO \cptsubset \Omega$ (choose $\rO$ to be some smooth domain so that $\supp(\chi) \cptsubset \rO \cptsubset \Omega$). Standard
interior elliptic estimates then show that
\begin{equation}
\|\chi \u\|_{H^{\rk+1}(\rO)} \le C \Big[\|\u\|_{H^\rk(\rO)} + \|\f\|_{H^\rk(\Omega)} + \|g\|_{H^\rk(\Omega)} \Big]\,. \label{div_curl_est_temp1}
\end{equation}
Now we proceed to the estimates near the boundary. Let $\{\zeta_m\}_{m=1}^K$ and $\{\vartheta_m\}_{m=1}^K$ be a partition of unity (subordinate to $\U_m$) and charts satisfying
\begin{enumerate}
\item $\vartheta_m: B(0,r_m) \to \U_m$ belongs to $H^{\rk+1}(B(0,r_m))$;
\item $\vartheta_m: B_+(0,r_m) \to \Omega \cap \U_m$;
\item $\vartheta_m: B(0,r_m)\cap\{y_3=0\} \to \bdy\Omega \cap \U_m$,
\end{enumerate}
and $g_m$ and $b_m$ denote the induced metric tensor and second fundamental form, respectively. Then
\begin{alignat*}{2}
- \Delta (\zeta_m \u) \hspace{-1pt}&= \zeta_m (\curl\,\f - \hspace{-1pt}\nabla g) - \u\Delta \zeta_m - 2 \nabla \zeta_m \hspace{-1pt} \cdot \hspace{-1pt} \nabla \u \qquad&&\text{in}\quad\U_m\,,\\
(\zeta_m \bdygrad \u)\cdot \bN \hspace{-1pt}&= \zeta_m h &&\text{on}\quad\U_m\,.
\end{alignat*}
In each local chart, we define the functions
\begin{align*}
& \widetilde{\u}_m = \u \circ \vartheta_m\,, \ \ \widetilde{\zeta}_m = \zeta_m \circ \vartheta_m \,, \ \ \wtrN = \bN \circ \vartheta_m \,, \\
&A = (\nabla \vartheta_m)^{-1}\,, \ \ J = \det(\nabla \vartheta_m) \,, \ \ \rg_m = \det(g_m) \,.
\end{align*}

Taking the composition of the equations above with map $\vartheta_m$, by the Piola identity (\ref{Piola_id}), we find that
\begin{subequations}\label{xi_v_eq}
\begin{alignat}{2}
- \big[J A^j_\ell A^k_\ell (\widetilde{\zeta}_m \widetilde{\u}_m),_k\hspace{-1pt}\big],_j &= J \Big[\zeta_m (\curl\,\f - \nabla g) - \u\Delta \zeta_m - 2 \nabla \zeta_m \cdot \nabla \u\Big]\circ\vartheta_m \qquad &&\text{in}\quad \rU\,,\\
(\widetilde{\zeta}_m \widetilde{\u}^i_m),_\sigma \hspace{-1pt}\wtrN^i &= \widetilde{\zeta}_m,_\sigma \hspace{-1pt}\widetilde{\u}^i_m \wtrN^i \hspace{-1pt}+\hspace{-1pt} \xi_m (h\circ\vartheta_m)\qquad &&\text{on}\quad \bdy\rU\,,
\end{alignat}
\end{subequations}
for some smooth domain $\rU$ satisfying that $\supp(\widetilde{\zeta}_m) \subseteq \cls{\rU}$ and $\supp(\xi_m) \cap \bdy\rU \subseteq \{y_3 = 0\}$.

The function $(\widetilde{\zeta}_m \widetilde{\u}_m),_\sigma$, where $\sigma = 1,\cdots,\n-1$, will be the fundamental (dependent) variable that we are going to estimate; however, in order to apply Theorem \ref{thm:vector-valued_elliptic_eq_Sobolev_coeff} we need to transform the boundary condition (\ref{xi_v_eq}b) to a homogeneous one. This is done by introducing the function ${\boldsymbol\phi}_\sigma$ which is the solution to the elliptic equation
\begin{alignat*}{2}
{\boldsymbol\phi}_\sigma - (J A^j_\ell A^k_\ell {\boldsymbol\phi}_\sigma,_k),_j &= {\bf 0} &&\text{in}\quad\rU\,,\\
{\boldsymbol\phi}_\sigma,_k A^k_\ell J A^j_\ell \nn_j &= \widetilde{\zeta}_m,_\sigma \hspace{-1pt}\widetilde{\u}^i_m J A^j_\ell \nn_j \hspace{-1pt}+\hspace{-1pt} \sqrt{\rg_m}\, \widetilde{\zeta}_m (h\circ\vartheta_m) \qquad&&\text{on}\quad\bdy\rU\,,
\end{alignat*}
in which $\nn$ is the outward-pointing unit normal to $\bdy \rU$, and then defining $\text{\bf\emph{w}}^i_\sigma = (\widetilde{\zeta}_m \widetilde{\u}^i_m),_\sigma - A^r_i{\boldsymbol\phi}_\sigma,_r$ as the new dependent variable of interest. Since $\sqrt{\rg_m}\,\wtrN = J A^\rT \nn$ on $B(0,r_m) \cap \{y_3 = 0\}$,
$$
\text{\bf\emph{w}}_\sigma \cdot \wtrN = (\widetilde{\zeta}_m \widetilde{\u}_m),_\sigma \cdot \wtrN - \frac{{\boldsymbol\phi}_\sigma,_k A^k_\ell J A^j_\ell \nn_j}{\sqrt{\rg_m}} = 0 \qquad\text{on}\quad \bdy\rU\,;
$$
thus $\text{\bf\emph{w}}_\sigma$ satisfies a homogeneous boundary condition.

Differentiating (\ref{xi_v_eq}a) with respect to $y_\sigma$, with $a^{jk}$ denoting $J A^j_\ell A^k_\ell$
we find that $\text{\bf\emph{w}}_\sigma$ satisfies
\begin{subequations}\label{wsigma_eq}
\begin{alignat}{2}
\text{\bf\emph{w}}_\sigma - \frac{\p}{\p y_j} \Big(a^{jk} \frac{\p \text{\bf\emph{w}}_\sigma}{\p y_k} \Big) &= \text{\bf\emph{F}}_\sigma \qquad&&\text{in}\quad \rU\,, \\
\text{\bf\emph{w}}_\sigma \cdot \wtrN &= 0 &&\text{on}\quad \bdy\rU\,,
\end{alignat}
\end{subequations}
where $\text{\bf\emph{F}}_\sigma$ is given by
\begin{align*}
\text{\bf\emph{F}}^i_\sigma &= \Big[J \big(\zeta_m (\curl\,\f - \nabla g) - \u\Delta \zeta_m - 2 \nabla \zeta_m \cdot \nabla \u\big)\circ\vartheta_m \Big]^i,_\sigma \\
&\quad + \text{\bf\emph{w}}^i_\sigma + \big[(J A^j_\ell A^k_\ell),_\sigma (\widetilde{\zeta}_m \widetilde{\u}^i_m),_k\big],_j + \big[J A^j_\ell A^k_\ell (A^r_i {\boldsymbol\phi}_\sigma,_r),_k\big],_j
\end{align*}
Moreover, by Lemma \ref{lem:tangential_component_of_dwdN},
\begin{align*}
& \Big[\rP_{\bN^\perp}\Big(\frac{\p (\text{\bf\emph{w}}_\sigma\circ \vartheta^{-1}_m)}{\p \bN}\Big)\Big] \circ \vartheta_m = \big[\curl (\text{\bf\emph{w}}_\sigma\circ \vartheta^{-1}_m) \times \bN\big]\circ \vartheta_m + g^{\alpha\beta}_m \vartheta_m,_\alpha (\text{\bf\emph{w}}_\sigma,_\beta \cdot \widetilde{\bN}) \\
&\qquad\quad = \big[\curl (\text{\bf\emph{w}}_\sigma\circ \vartheta^{-1}_m) \times \bN\big]\circ \vartheta_m + g^{\alpha\beta}_m \vartheta_m,_\alpha (\text{\bf\emph{w}}_\sigma \cdot \widetilde{\bN}),_\beta - g^{\alpha\beta}_m g^{\gamma\delta}_m (\text{\bf\emph{w}}_\sigma \cdot \vartheta,_\delta) b_{\gamma\beta} \, \vartheta,_\sigma
\,;
\end{align*}
thus using (\ref{wsigma_eq}b) in the second term of the right-hand side, we obtain that
\begin{equation}\label{wsigma_eq_c}
\rP_{\wtrN^\perp}\Big(a^{jk} \frac{\p \text{\bf\emph{w}}_\sigma}{\p x_k} \nn_j\Big) = \sqrt{\rg_m}\, \big[\curl (\text{\bf\emph{w}}_\sigma\circ \vartheta^{-1}_m) \times \bN\big]\circ \vartheta_m - \sqrt{\rg_m}\, g^{\alpha\beta}_m g^{\gamma\delta}_m (\text{\bf\emph{w}}_\sigma \cdot \vartheta,_\delta) b_{\gamma\beta} \, \vartheta,_\sigma
\text{ \ on \ }\bdy\rU\,.
\end{equation}
Since
\begin{align*}
& \big[\curl (\text{\bf\emph{w}}_\sigma\circ \vartheta^{-1}_m) \times \bN\big]^i\circ \vartheta_m = \varepsilon_{ijk} \varepsilon_{jrs} A^\ell_r w^s_\sigma,_\ell \wtrN^k \\
&\qquad = A^\ell_k \big[(\widetilde{\zeta}_m \widetilde{\u}^i_m),_\sigma \hspace{-1pt}-\hspace{-1pt} A^r_i{\boldsymbol\phi}_\sigma,_r\big],_\ell \wtrN^k \hspace{-1pt}-\hspace{-1pt} A^\ell_i \big[(\widetilde{\zeta}_m \widetilde{\u}^k_m),_\sigma \hspace{-1pt}-\hspace{-1pt} A^r_k{\boldsymbol\phi}_\sigma,_r\big],_\ell \wtrN^k \\
&\qquad = A^\ell_k (\widetilde{\zeta}_m \widetilde{\u}^i_m),_{\alpha\ell} \wtrN^k \hspace{-1pt}-\hspace{-1pt} A^\ell_i (\xi_m \widetilde{\u}^k_m),_{\alpha\ell} \wtrN^k \hspace{-1pt}-\hspace{-1pt} A^\ell_k (A^r_i{\boldsymbol\phi}_\sigma,_r),_\ell \wtrN^k \hspace{-1pt}+\hspace{-1pt} A^\ell_i (A^r_k{\boldsymbol\phi}_\sigma,_r),_\ell \wtrN^k
\end{align*}
and
\begin{align*}
& \big[\widetilde{\zeta}_m \big(\curl \u \times \bN\big) \circ \vartheta_m\big],_\sigma = \varepsilon_{ijk} \varepsilon_{jrs} \big(\widetilde{\zeta}_m A^\ell_r \widetilde{\u}^s_m,_\ell \wtrN^k\big),_\sigma \\
&\qquad = \big[A^\ell_k (\widetilde{\zeta}_m \widetilde{\u}^i_m),_\ell \wtrN^k \hspace{-1pt}-\hspace{-1pt} A^\ell_i (\widetilde{\zeta}_m \widetilde{\u}^k_m),_\ell \wtrN^k\big],_\sigma \hspace{-1pt}-\hspace{-1pt} \big[A^\ell_k \xi_m,_\ell \widetilde{\u}^i_m \wtrN^k \hspace{-1pt}-\hspace{-1pt} A^\ell_i \xi_m,_\ell \widetilde{\u}^k_m \wtrN^k\big],_\sigma \\
&\qquad = A^\ell_k (\widetilde{\zeta}_m \widetilde{\u}^i_m),_{\alpha\ell} \wtrN^k \hspace{-1pt}-\hspace{-1pt} A^\ell_i (\xi_m \widetilde{\u}^k_m),_{\alpha\ell} \wtrN^k \hspace{-1pt}-\hspace{-1pt} (A^\ell_k \wtrN^k),_\sigma (\xi_m \widetilde{\u}^i_m),_\ell \\
&\qquad\quad \hspace{-1pt}-\hspace{-1pt} (A^\ell_i \wtrN^k),_\sigma (\widetilde{\zeta}_m \widetilde{\u}^i_m),_\ell \hspace{-1pt}-\hspace{-1pt} \big[A^\ell_k \xi_m,_\ell \widetilde{\zeta}_m,_\ell \widetilde{\u}^i_m \wtrN^k \hspace{-1pt}-\hspace{-1pt} A^\ell_i \widetilde{\zeta}_m,_\ell \widetilde{\u}^k_m \wtrN^k\big],_\sigma \,,
\end{align*}
we find that
\begin{align*}
& \big[\curl (\text{\bf\emph{w}}_\sigma\circ \vartheta^{-1}_m) \times \bN\big]^i\circ \vartheta_m - \big[\widetilde{\zeta}_m \big(\curl \u \times \bN\big) \circ \vartheta_m\big],_\sigma \\
&\qquad = - A^\ell_k (A^r_i{\boldsymbol\phi}_\sigma,_r),_\ell \wtrN^k + A^\ell_i (A^r_k{\boldsymbol\phi}_\sigma,_r),_\ell \wtrN^k + (A^\ell_k \wtrN^k),_\sigma (\widetilde{\zeta}_m \widetilde{\u}^i_m),_\ell \\
&\qquad\quad + (A^\ell_i \wtrN^k),_\sigma (\widetilde{\zeta}_m \widetilde{\u}^i_m),_\ell + (A^\ell_k \xi_m,_\ell \widetilde{\zeta}_m,_\ell \widetilde{\u}^i_m \wtrN^k),_\sigma + (A^\ell_i \widetilde{\zeta}_m,_\ell \widetilde{\u}^k_m \wtrN^k),_\sigma \,;
\end{align*}
thus (\ref{wsigma_eq_c}) implies that
$$
\rP_{\wtrN^\perp}\Big(a^{jk} \frac{\p \text{\bf\emph{w}}_\sigma}{\p x_k} \nn_j\Big) = \sqrt{\rg_m}\, \text{\bf\emph{G}}_\sigma \qquad\text{on}\quad\bdy\rU\,, \eqno{\rm(\ref{wsigma_eq}c)}
$$
where $\text{\bf\emph{G}}_\sigma$ is given by
\begin{align*}
\text{\bf\emph{G}}_\sigma &= \big[\widetilde{\zeta}_m (f \times \bN)\circ \vartheta_m\big],_\sigma - A^\ell_k (A^r_i{\boldsymbol\phi}_\sigma,_r),_\ell \wtrN^k + A^\ell_i (A^r_k{\boldsymbol\phi}_\sigma,_r),_\ell \wtrN^k \\
&\quad + (A^\ell_k \wtrN^k),_\sigma (\widetilde{\zeta}_m \widetilde{\u}^i_m),_\ell + (A^\ell_i \wtrN^k),_\sigma (\widetilde{\zeta}_m \widetilde{\u}^i_m),_\ell + (A^\ell_k \widetilde{\zeta}_m,_\ell \widetilde{\u}^i_m \wtrN^k),_\sigma \\
&\quad + (A^\ell_i \widetilde{\zeta}_m,_\ell \widetilde{\u}^k_m \wtrN^k),_\sigma - g^{\alpha\beta}_m g^{\gamma\delta}_m \big[(\xi_m \widetilde{\u}^i_m),_\sigma \vartheta^i_m,_\delta - A^r_i{\boldsymbol\phi}_\sigma,_r \vartheta^i_m,_\delta\big] b_{\gamma\beta} \, \vartheta,_\sigma \,.
\end{align*}
As a consequence, $\text{\bf\emph{w}}_\sigma$ is the solution to equation (\ref{wsigma_eq}), and Theorem \ref{thm:vector-valued_elliptic_eq_Sobolev_coeff} (with $\ell = \rk-1$ and $\rw = n$) then implies that $(\widetilde{\zeta}_m\widetilde{\u}_m),_\sigma$ satisfies
\begin{equation}\label{wsigma_est}
\|\text{\bf\emph{w}}_\sigma\|_{H^\rk(\rU)} \le C \Big[\|\text{\bf\emph{F}}_\sigma\|_{H^{\rk-2}(\rU)} + \|\text{\bf\emph{G}}_\sigma\|_{H^{\rk-1.5}(\bdy\rU)}\Big]
\end{equation}
for some constant $C = C\big(\|a\|_{H^\rk(B(0,r_m))},\|A\|_{H^\rk(B(0,r_m))}, \|\wtrN\|_{H^{\rk-0.5}(\bdy\rU)}\big)$.

We focus on the estimate of $\text{\bf\emph{F}}_\sigma$ first. By Corollary \ref{cor:JA_est},
\begin{equation}\label{JAgm_est}
\displaystyle{}\|J\|_{H^\rk(B(0,r_m))} + \|A\|_{H^\rk(B(0,r_m))} + \|\rg_m\|_{H^{\rk-0.5}(\bdy\rU)} \le C(|\bdy\Omega|_{H^{\rk+0.5}})\,;
\end{equation}
thus Corollary \ref{cor:scalar_elliptic_eq_Sobolev_coeff} shows that
\begin{align}
\|{\boldsymbol\phi}_\sigma\|_{H^{\rk+1}(\rU)} &\le C(\|a\|_{H^\rk(\rU)},|\bdy\Omega|_{H^{k+0.5}}) \big\|\widetilde{\zeta}_m,_\sigma \hspace{-1pt}\widetilde{\u}^j_m J A^\ell_j \nn_\ell \hspace{-1pt}+\hspace{-1pt} \sqrt{\rg_m}\, \widetilde{\zeta}_m (h\circ\vartheta_m)\big\|_{H^{\rk-0.5}(\bdy\rU)} \nonumber\\
&\le C(|\bdy\Omega|_{H^{k+0.5}}) \Big[\|\u\|_{H^\rk(\Omega)} + \|h\|_{H^{k-0.5}(\bdy\Omega)}\Big] \,. \label{phi_sigma_est}
\end{align}
Moreover, by Corollary \ref{cor:f_comp_psi}, we also have that
\begin{equation}\label{an_est}
\|a\|_{H^\rk(\rU)} + \|\wtrN\|_{H^{\rk-0.5}(\bdy\rU)} \le C(|\bdy\Omega|_{H^{\rk+0.5}})\,.
\end{equation}
As a consequence,
\begin{equation}\label{Fsigma_est}
\|\text{\bf\emph{F}}_\sigma\|_{H^{\rk-2}(\rU)} \le\hspace{-1pt} C\hspace{-1pt}(|\bdy\Omega|_{H^{\rk+0.5}}) \Big[\hspace{-1pt}\|\f\|_{H^\rk(\Omega)} \hspace{-1.5pt}+\hspace{-1.5pt} \|g\|_{H^\rk(\Omega)} \hspace{-1.5pt}+\hspace{-1.5pt} \|h\|_{H^{\rk-0.5}(\bdy\Omega)} + \|\u\|_{H^\rk(\Omega)} \hspace{-1pt}\Big].
\end{equation}

As for the estimate of $\text{\bf\emph{G}}_\sigma$, the highest order terms are $(A^\ell_k \wtrN^k),_\sigma (\widetilde{\zeta}_m \widetilde{\u}^i_m),_\ell$, $(A^\ell_i \wtrN^k),_\sigma (\widetilde{\zeta}_m \widetilde{\u}^i_m),_\ell$ and $g^{\alpha\beta}_m g^{\gamma\delta}_m (\xi_m \widetilde{\u}^i_m),_\sigma \vartheta^i_m,_\delta b_{\gamma\beta} \, \vartheta,_\sigma$, and we apply (\ref{product_Hr_est}) to obtain, for example, that
\begin{align*}
& \big\|(A^\ell_k \wtrN^k),_\sigma (\widetilde{\zeta}_m \widetilde{\u}^i_m),_\ell\|_{H^{\rk-1.5}(\bdy\rU)} \le C \big\|(A^\ell_k \wtrN^k),_\sigma (\widetilde{\zeta}_m \widetilde{\u}^i_m),_\ell\big\|_{H^{\rk-1}(\rU)} \\
&\qquad\quad \le C \|\p (A \wtrN)\|_{H^{k-1}(\rU)} \|\nabla (\widetilde{\zeta}_m \widetilde{\u}_m)\|_{H^s(\rU)} \le C(|\bdy|\Omega|_{2.5}) \|\u\|_{H^{s+1}(\Omega)}\,,
\end{align*}
where $s = \max\big\{\rk-1,\smallexp{$\displaystyle{}\frac{\rk}{2} + \frac{\n}{4}$}\big\}$ is chosen so that (\ref{product_Hr_est}) can be applied (since $s>\novertwo$). Therefore,
\begin{equation}\label{Gsigma_est}
\|\text{\bf\emph{G}}_\sigma\|_{H^{\rk-1.5}(\bdy\rU)} \le C(|\bdy\Omega|_{H^{\rk+0.5}}) \Big[\|\f\|_{H^{\rk-1}(\Omega)} + \|h\|_{H^{\rk-0.5}(\bdy\Omega)} + \|\u\|_{H^{s+1}(\Omega)}\Big].
\end{equation}
Combining estimates (\ref{wsigma_est}), (\ref{JAgm_est}), (\ref{phi_sigma_est}), (\ref{an_est}), (\ref{Fsigma_est}) and (\ref{Gsigma_est}), we find that
\begin{align}
\|(\widetilde{\zeta}_m \widetilde{\u}_m),_\sigma\|_{H^\rk(\rU)} &\le \|\text{\bf\emph{w}}_\sigma\|_{H^\rk(\rU)} + \|A^\rT \nabla {\boldsymbol\phi}_\sigma\|_{H^\rk(\rU)} \nonumber\\
&\le C(|\bdy\Omega|_{H^{\rk+0.5}}) \Big[\|\f\|_{H^\rk(\Omega)} + \|g\|_{H^\rk(\Omega)} + \|h\|_{H^{\rk-0.5}(\bdy\Omega)} + \|\u\|_{H^{s+1}(\Omega)}\Big]. \label{xi_v_est_temp}
\end{align}

Finally, following the same procedure of Step 4 in the proof of Theorem \ref{thm:vector-valued_elliptic_regularity} (that is, using (\ref{xi_v_eq}a) to obtain an expression of $\widetilde{\zeta}_m\p^{\rk+1-j} \nabla^j \widetilde{\u}_m,_{33}$) and then arguing by induction on $j$, we find that
$$
\|\widetilde{\zeta}_m \widetilde{\u}_m\|_{H^{\rk+1}(\rU)} \le C(|\bdy\Omega|_{H^{\rk+0.5}}) \Big[\|\f\|_{H^\rk(\Omega)} + \|g\|_{H^\rk(\Omega)} + \|h\|_{H^{\rk-0.5}(\bdy\Omega)} + \|\u\|_{H^{s+1}(\Omega)}\Big].
$$
The estimate above and estimate (\ref{div_curl_est_temp1}) provide us with
$$
\|\u\|_{H^{\rk+1}(\rU)} \hspace{-1pt}\le\hspace{-1pt} C(|\bdy\Omega|_{H^{\rk+0.5}}) \Big[\hspace{-1pt}\|\f\|_{H^\rk(\Omega)} \hspace{-1pt}+\hspace{-1pt} \|g\|_{H^\rk(\Omega)} \hspace{-1pt}+\hspace{-1pt} \|h\|_{H^{\rk-0.5}(\bdy\Omega)} \hspace{-1pt}+\hspace{-1pt} \|\u\|_{H^{s+1}(\Omega)}\hspace{-1pt}\Big].
$$
Since $0 < s+1 < \rk+1$, by interpolation and Young's inequality,
$$
\|\u\|_{H^s(\Omega)} \le C_\delta \|\u\|_{L^2(\Omega)} + \delta \|\u\|_{H^{\rk+1}(\Omega)}\qquad\Forall \delta>0\,,
$$
so by choosing $\delta > 0$ small enough we conclude (\ref{Hodge_elliptic_estimate1}).
\end{proof}
By studying the vector-valued elliptic equation (\ref{vector-valued_elliptic_eq}),
with the help of Theorem \ref{thm:vector-valued_elliptic_eq_Sobolev_coeff_v2} we can also conclude (\ref{Hodge_elliptic_estimate2}).

\begin{remark}
Suppose that $\Omega$ is a bounded $H^{\rk+2}$-domain for some $\rk > \novertwo$\,. Since $\bdygrad \u \cdot \bN = \bdygrad (\u\cdot \bN) - \bdygrad \bN \cdot \u$, by interpolation we find that
\begin{align*}
\|\bdygrad \u \cdot \bN\|_{H^{\rk-0.5}(\bdy\Omega)} &\le\hspace{-1pt} \|\u\cdot \bN\|_{H^{\rk+0.5}(\bdy\Omega)} \hspace{-1pt}+\hspace{-1pt} \|\bdygrad\bN \cdot \u\|_{H^{\rk-0.5}(\bdy\Omega)} \\
&\le\hspace{-1pt} \|\u\cdot \bN\|_{H^{\rk+0.5}(\bdy\Omega)} \hspace{-1pt}+\hspace{-1pt} C(|\bdy\Omega|_{H^{\rk+1.5}}) \|\u\|_{H^\rk(\Omega)} \\
&\le\hspace{-1pt} \|\u\cdot \bN\|_{H^{\rk+0.5}(\bdy\Omega)} \hspace{-1pt}+\hspace{-1pt} C(|\bdy\Omega|_{H^{\rk+1.5}}, \delta) \|\u\|_{L^2(\Omega)} \hspace{-1pt}+\hspace{-1pt} \delta \|\u\|_{H^{\rk+1}(\Omega)}.
\end{align*}
Hence, by choosing $\delta>0$ small enough we conclude that there exists a generic constant $C = C(|\bdy\Omega|_{H^{\rk+1.5}})$ such that
$$
\|\u\|_{H^{\rk+1}(\Omega)} \le C \Big[\|\u\|_{L^2(\Omega)} + \|\curl \u\|_{H^\rk(\Omega)} + \|\div \u\|_{H^\rk(\Omega)} + \|\u\cdot \bN\|_{H^{\rk+0.5}(\bdy\Omega)}\Big].
$$
Similarly, we also have that
$$
\|\u\|_{H^{\rk+1}(\Omega)} \le C \Big[\|\u\|_{L^2(\Omega)} + \|\curl \u\|_{H^\rk(\Omega)} + \|\div \u\|_{H^\rk(\Omega)} + \|\u\times \bN\|_{H^{\rk+0.5}(\bdy\Omega)}\Big]
$$
for some constant $C = C(|\bdy\Omega|_{H^{\rk+1.5}})$.
\end{remark}

\section{The Proof of Theorem \ref{thm:main_thm1}}\label{sec:vector_decomp}
We begin with the following problem: find a vector field $\v$ such that
\begin{subequations}\label{ufromcurldiv_temp}
\begin{alignat}{2}
\curl \v &= \f \qquad&&\text{in}\quad \Omega\,,\\
\div \v &= g &&\text{in}\quad \Omega\,,\\
\v\cdot \bN &= h &&\text{on}\quad \bdy\Omega\,.
\end{alignat}
\end{subequations}
From the divergence theorem and the fact that $ \operatorname{div} \operatorname{curl} =0$ , we must require that
\begin{equation}\label{solvability_condition_div_curl_normal_trace}
\div \f = 0 \quad\text{and}\quad \int_\Omega g\, dx = \int_{\bdy\Omega} h\, dS\,.
\end{equation}

Since $g$ and $h$ satisfy the solvability condition (\ref{solvability_condition_div_curl_normal_trace}), there exists a solution $\phi$ to the Poisson equation
with Neumann boundary conditions:
\begin{subequations}\label{phi_eq}
\begin{alignat}{2}
\Delta \phi &= g \qquad&&\text{in}\quad \Omega\,,\\
\frac{\p \phi}{\p \bN} &= h &&\text{on}\quad \bdy\Omega\,.
\end{alignat}
\end{subequations}
Let $\u = \v - \nabla \phi$. Then $\u$ satisfies
\begin{subequations}\label{ufromcurldiv}
\begin{alignat}{2}
\curl \u &= \f \qquad&&\text{in}\quad \Omega\,,\\
\div \u &= 0 &&\text{in}\quad \Omega\,,\\
\u \cdot \bN &= 0 &&\text{on}\quad \bdy\Omega\,.
\end{alignat}
\end{subequations}
Hence, if (\ref{ufromcurldiv}) is solvable, then there exists a solution to (\ref{ufromcurldiv_temp}).

\subsection{Uniqueness of the solution}\label{sec:uniqueness_curl_div_normal_trace}
We show that under the assumptions of Theorem \ref{thm:main_thm1}, the solution to (\ref{ufromcurldiv_temp}) is unique.
We first assume that $\Omega$ is a bounded convex domain.  (Note that a convex set must be simply connected.) If $\Varphi \in \mC^2(\Omega)\cap \mC^1(\cls{\Omega})$, then for all $\u\in H^1(\Omega)$,
\begin{align}
\int_\Omega \curl \u \cdot \curl \Varphi\,dx &= \int_\Omega \u \cdot \curl\curl \Varphi\, dx + \int_{\bdy\Omega} (\bN \times \u) \cdot \curl \Varphi\, dS \nonumber\\
&= \int_\Omega \hspace{-2pt} \u \cdot (-\Delta \Varphi + \nabla \div \Varphi) \,dx + \int_{\bdy\Omega} (\bN \times \u) \cdot \curl \Varphi\, dS \nonumber\\
&= \int_\Omega \hspace{-2pt} \u \cdot (-\Delta \Varphi + \nabla \div \Varphi) \,dx + \int_{\bdy\Omega} \Big[\frac{\p \Varphi}{\p \bN} \cdot \u - \u^k \bN_j \Varphi^j,_k \Big] dS \nonumber\\
&= \int_\Omega (\nabla \u \hspace{-1pt}:\hspace{-1pt} \nabla \Varphi - \div \u \div \Varphi)\,dx + \int_{\bdy\Omega} \Big[(\u \cdot \bN) \div\Varphi - \u^k \bN_j \Varphi^j,_k \Big] dS \nonumber\,.
\end{align}
Using the notation introduced in the proof of Lemma \ref{lem:tangential_component_of_dwdN}, in any local chart $(\U,\vartheta)$ we have on $\bdy\Omega$,
\begin{align*}
(\u^k \bN_j \Varphi^j,_k)\circ \vartheta &= \widetilde{\u}^k \wtrN^j \big(\widetilde{\Varphi}^j,_\n \wtrN^k + g^{\alpha\beta} \widetilde{\Varphi}^j,_\alpha \vartheta^k,_\beta\big) \\
&= (\widetilde{\u} \cdot \wtrN) (\widetilde{\Varphi}^j,_\n \wtrN^j) + g^{\alpha\beta} (\widetilde{\u} \cdot \vartheta,_\beta) (\widetilde{\Varphi}\cdot\wtrN),_\alpha - g^{\alpha\beta} (\widetilde{\u} \cdot \vartheta,_\beta) \wtrN,_\alpha \cdot \widetilde{\Varphi} \\
&= (\widetilde{\u} \cdot \wtrN) (\widetilde{\Varphi}^j,_\n \wtrN^j) + g^{\alpha\beta} (\widetilde{\u} \cdot \vartheta,_\beta) (\widetilde{\Varphi}\cdot\wtrN),_\alpha - g^{\alpha\beta} g^{\gamma\delta} b_{\alpha\gamma} (\widetilde{\u} \cdot \vartheta,_\beta) (\widetilde{\Varphi} \cdot \vartheta,_\delta) \,,
\end{align*}
so that using (\ref{wid2}),
\begin{align}
\int_\Omega \curl \u \cdot \curl \Varphi\,dx &= \int_\Omega (\nabla \u : \nabla \Varphi - \div \u \div \Varphi)\,dx \nonumber\\
&\quad + \int_{\bdy\Omega} (\u \cdot \bN) \Big[\bdydiv  (\rP_{\bN^\perp} \Varphi) + 2 \rH (\Varphi \cdot \bN) \Big] dS \label{int_curlcurl_id}\\
&\quad + \sum_{m=1}^K \int_{\bdy\Omega\cap \U_m} \hspace{-2pt}\zeta_m \big[g^{\alpha\beta}_m g^{\gamma\delta}_m b_{m \alpha\gamma} \big((\u\circ \vartheta_m)\cdot \vartheta_m,_\beta)\big) \big((\Varphi \circ \vartheta_m) \cdot \vartheta_m,_\delta)\big]\circ \vartheta_m^{-1} dS \nonumber\\
&\quad - \sum_{m=1}^K \int_{\bdy\Omega\cap \U_m} \hspace{-2pt}\zeta_m \big[g^{\alpha\beta}_m \big((\u\circ\vartheta_m) \cdot \vartheta_m,_\beta\big) \big((\Varphi\cdot \bN)\circ \vartheta_m \big),_\alpha\,\big]\circ\vartheta_m^{-1} dS\,. \nonumber
\end{align}
Therefore, if $\v_1, \v_2\in H^1(\Omega)$ are two solutions to (\ref{ufromcurldiv_temp}), then $\v=\v_1 - \v_2$ satisfies
\begin{align*}
& \|\curl \v\|^2_{L^2(\Omega)} + \|\div \v\|^2_{L^2(\Omega)} \\
&\qquad = \|\nabla \v\|^2_{L^2(\Omega)} + \sum_{m=1}^K \int_{\bdy\Omega\cap \U_m} \hspace{-2pt}\zeta_m \big[g^{\alpha\beta}_m g^{\gamma\delta}_m b_{m \alpha\gamma} \big((\v \circ \vartheta_m)\cdot \vartheta_m,_\beta)\big) \big((\v \circ \vartheta_m) \cdot \vartheta_m,_\delta)\big]\circ \vartheta_m^{-1} dS \,.
\end{align*}
Since $\Omega$ is convex, $g^{\alpha\beta}_m g^{\gamma\delta}_m b_{m \alpha\gamma}$ is non-negative definite for all $m$; thus the \Poincare\ inequality (\ref{vectorPoincareineq}) shows that for some constant $c>0$,
$$
c \|\v\|^2_{H^1(\Omega)} \le \|\nabla \v\|^2_{L^2(\Omega)} \le \|\curl \v\|^2_{L^2(\Omega)} + \|\div \v\|^2_{L^2(\Omega)} = 0
$$
which implies that $\v = 0$. In other words, the $H^1$-solution to (\ref{ufromcurldiv_temp}) must be unique if $\Omega$ is bounded and convex.

Now we assume the more general case that $\Omega$ is a simply connected open set, and that there are two solutions $\v_1$ and $\v_2$ in $H^1(\Omega)$. Then $\v=\v_1-\v_2$ satisfies $\curl \v = 0$ in $\Omega$. By the simple connectedness of $\Omega$, $\v$ must be of the form $\v = \nabla p$ for some scalar potential $p$. Then the equation
\begin{subequations}\label{homogeneous_curl_div_eq1}
\begin{alignat}{2}
\curl \v &= {\bf 0} \qquad&&\text{in}\quad\Omega\,,\\
\div \v &= 0 &&\text{in}\quad\Omega\,,\\
\v \cdot \bN &= 0 &&\text{on}\quad \bdy\Omega\,,
\end{alignat}
\end{subequations}
has only the trivial solution $\v = {\bf 0}$.  In other words, if $\Omega$ is the disjoint union of simply connected open sets, then equation (\ref{div_curl}) with boundary condition (\ref{bc1}) has a unique solution.

\subsection{Existence of solutions}\label{sec:existence_curl_div_normal_trace}
We solve (\ref{ufromcurldiv}) by finding a solution $\u$ of the form $\u = \curl \w$ for a divergence-free vector field $\w$. Indeed, if such a $\w$ exists, then using (\ref{curlcurlu_identity}), $\w$ must solve
\begin{subequations}\label{w_eq_temp}
\begin{alignat}{2}
-\Delta \w &= \f \qquad&&\text{in}\quad \Omega\,,\\
\div \w &= 0 &&\text{in}\quad\Omega\,,\\
\curl \w\cdot \bN &= 0 &&\text{on}\quad \bdy\Omega\,.
\end{alignat}
\end{subequations}
We note that if $\w$ is sufficiently smooth, then the divergence-free condition (\ref{w_eq_temp}b) can instead be treated as a boundary condition
$$
\div \w = 0 \qquad\text{on}\quad \bdy\Omega\,. \eqno{\rm\text{(\ref{w_eq_temp}b')}}
$$
In fact, taking the divergence of (\ref{w_eq_temp}a) we find that
$$
\Delta \div \w = \div \f = 0 \qquad\text{in}\quad \Omega\,,
$$
where we use the solvability condition (\ref{solvability_condition_div_curl_normal_trace}) to establish the last equality; thus if $\w$ satisfies (\ref{w_eq_temp}a,b'), $\w$ automatically has zero divergence. In other words, we may instead assume that $\w$ satisfies (\ref{w_eq_temp}a,b',c). Our goal next is to find some suitable boundary condition to replace (\ref{w_eq_temp}b',c).

\subsubsection{The case that $\Omega = B(0,R)$}
Now we assume that $\Omega = B(0,R)$ for some $R>0$. Having obtained (\ref{wid2}) and (\ref{wid1}), in order to achieve (\ref{w_eq_temp}b',c) it is natural to consider the case $\rP_{\bN^\perp} \w = 0$. In other words, we consider the following elliptic problem (with a non-standard boundary condition)
\begin{subequations}\label{HodgeEQ1}
\begin{alignat}{2}
- \Delta \w &= \f \qquad&&\text{in}\quad\Omega\,,\\
\rP_{\bN^\perp} \w &= {\bf 0} &&\text{on}\quad\bdy\Omega\,,\\
\frac{\p \w}{\p \bN} \cdot \bN + 2\, \rH (\w\cdot \bN) &= 0 &&\text{on}\quad\bdy\Omega\,,
\end{alignat}
\end{subequations}
where we remark that $\rH = R^{-1}$ is a positive constant. We also note that (\ref{w_eq_temp}b') and (\ref{w_eq_temp}c) are direct consequence of (\ref{HodgeEQ1}b,c), and (\ref{w3,3_id}) shows that (\ref{HodgeEQ1}c) is in fact a Robin boundary condition for $\widetilde{\w}_3$. The goal is to find a solution to (\ref{HodgeEQ1}) in the Hilbert space
\begin{align*}
H^1_\tau(\Omega) \equiv \big\{ \w\in H^1(\Omega)\,\big|\, \rP_{\bN^\perp} \w = 0 \text{ on }\bdy\Omega\big\} = \big\{ \w\in H^1(\Omega)\,\big|\, \w \times \bN = 0 \text{ on }\bdy\Omega\big\}\,.
\end{align*}

In order to solve (\ref{HodgeEQ1}), we find the weak formulation first, and this amounts to computing
 $\smallexp{$\displaystyle{}\int_{\bdy\Omega} \frac{\p \w}{\p \bN}$} \cdot \Varphi\, dS$. If $\Varphi\in H^1_\tau(\Omega)$,
 then $\Varphi = (\Varphi\cdot \bN)\bN$; thus, if $\w$ satisfies (\ref{HodgeEQ1}c), then for all $\Varphi \in H^1_\tau(\Omega)$,
\begin{align}
- \int_{\bdy\Omega} \frac{\p \w}{\p \bN} \cdot \Varphi\, dS = - \int_{\bdy\Omega} \Big[\frac{\p \w}{\p \bN} \cdot \bN \Big] (\Varphi \cdot \bN)\, dS = 2 \int_{\bdy\Omega} \rH (\w\cdot \bN) (\Varphi\cdot \bN)\, dS\,. \label{ss1}
\end{align}
Using (\ref{ss1}), we can state the following
\begin{definition}
A vector-valued function $\w\in H^1_\tau(\Omega)$ is said to be a weak solution of {\rm(\ref{HodgeEQ1})} if
\begin{equation}\label{weakformHodgeEQ1}
\int_\Omega \hspace{-1.5pt}\nabla \w \hspace{-1pt}:\hspace{-1pt} \nabla \Varphi\, dx \hspace{-1pt}+\hspace{-1pt} 2 \int_{\bdy\Omega} \hspace{-1.5pt}\rH (\w\hspace{-1pt}\cdot\hspace{-1pt} \bN) (\Varphi \hspace{-1pt}\cdot\hspace{-1pt} \bN)\, dS \hspace{-1pt}=\hspace{-1pt} (\f,\Varphi)_{L^2(\Omega)} \quad\Forall\Varphi\in \hspace{-1pt} H^1_\tau(\Omega)\,,
\end{equation}
where $\nabla \w : \nabla \Varphi = \w^i,_j \Varphi^{\,i},_j$.
\end{definition}
Since $\rH > 0$, the left-hand side of (\ref{weakformHodgeEQ1}) obviously defines a bounded, coercive bilinear form on $H^1_\tau(\Omega)\times H^1_\tau(\Omega)$. In fact, using \Poincare's inequality (\ref{vectorPoincareineq}) we find that for some generic constant $c>0$,
$$
\int_\Omega \nabla \w:\nabla \w dx + 2 \int_{\bdy\Omega} \rH (\w\cdot \bN) (\w\cdot \bN)\,dS \ge c \|\w\|^2_{H^1(\Omega)} \qquad \w \in H^1_\tau(\Omega)\,;
$$
hence by the Lax-Milgram theorem, there exists a unique $\w\in H^1_\tau(\Omega)$ satisfying the weak formulation (\ref{weakformHodgeEQ1}) and the basic energy estimate
\begin{equation}\label{HodgeEQ_basic_estimate}
\|\w\|_{H^1(\Omega)} \le C \|\f\|_{L^2(\Omega)}\,.
\end{equation}

Before proceeding, we establish the corresponding regularity theory for equation (\ref{HodgeEQ1}).
\begin{lemma}\label{lem:curl_div_normal_trace_convex}
Let $\Omega = B(0,R) \subseteq \bbR^3$ for some $R>0$. Then for all $\f\in H^{\ell-1}(\Omega)$ for some $\ell \ge 1$, the weak solution $\w$ to {\rm(\ref{HodgeEQ1})} in fact belongs to $H^{\ell+1}(\Omega)$, and satisfies
\begin{equation}\label{estimate_for_given_vorticity}
\|\w\|_{H^{\ell+1}(\Omega)} \le C \|\f\|_{H^{\ell-1}(\Omega)}\,.
\end{equation}
\end{lemma}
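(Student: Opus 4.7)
The plan is to recast (\ref{HodgeEQ1}) as an instance of the vector-valued system (\ref{vector-valued_elliptic_eq2}) and to apply Theorem~\ref{thm:vector-valued_elliptic_eq_Sobolev_coeff_v2} in a bootstrap loop. Because $\Omega=B(0,R)$ is of class $\mC^\infty$, the outward unit normal $\bN$ and the mean curvature $\rH=R^{-1}$ (a constant) are as smooth as we please, so the Sobolev coefficient hypotheses of that theorem are trivially satisfied and can be ignored throughout the argument.

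First I match the two formulations. With $a^{jk}=\delta^{jk}$ and $\bfw=\bN$, the tangential boundary condition (\ref{HodgeEQ1}b), $\rP_{\bN^\perp}\w=0$, is exactly $\w\times\bN=0$, matching (\ref{vector-valued_elliptic_eq2}b). The Robin-type condition (\ref{HodgeEQ1}c) rewrites as
\[
a^{jk}\w^i{}_{,k}\bN_j\bN^i \;=\; \frac{\p\w}{\p\bN}\cdot\bN \;=\; -2\rH\,(\w\cdot\bN),
\]
so that (\ref{vector-valued_elliptic_eq2}c) holds with the right-hand side $g:=-2\rH(\w\cdot\bN)$. Finally, I add $\w$ to both sides of $-\Delta\w=\f$ to obtain the coercive form $\w-\Delta\w=\w+\f$ demanded by (\ref{vector-valued_elliptic_eq2}a).

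Next I run a bootstrap on $m$. The Lax--Milgram weak theory already yields $\w\in H^1(\Omega)$ with the $H^1$-bound (\ref{HodgeEQ_basic_estimate}). Assuming inductively that $\w\in H^m(\Omega)$ for some $1\le m\le\ell$, the trace theorem delivers $\w\cdot\bN\in H^{m-0.5}(\bdy\Omega)$, hence $g\in H^{m-0.5}(\bdy\Omega)$ at exactly the regularity required by Theorem~\ref{thm:vector-valued_elliptic_eq_Sobolev_coeff_v2}; likewise $\f+\w\in H^{m-1}(\Omega)$ since $\f\in H^{\ell-1}(\Omega)\subseteq H^{m-1}(\Omega)$. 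Applying the theorem with its parameter $\ell$ taken equal to $m$ (and $\rk$ as large as one wishes, which is legal on a smooth domain) upgrades the solution to $\w\in H^{m+1}(\Omega)$ and produces
\[
\|\w\|_{H^{m+1}(\Omega)} \;\le\; C\Big[\|\f\|_{H^{\ell-1}(\Omega)}+\|\w\|_{H^m(\Omega)}\Big].
\]
Iterating from $m=1$ up to $m=\ell$ lands $\w\in H^{\ell+1}(\Omega)$, and chaining these estimates against (\ref{HodgeEQ_basic_estimate}) yields (\ref{estimate_for_given_vorticity}).

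The only conceptual obstacle is the self-referential Robin datum $g=-2\rH(\w\cdot\bN)$, which involves $\w$ itself; but because the trace operator loses only half a derivative while each invocation of Theorem~\ref{thm:vector-valued_elliptic_eq_Sobolev_coeff_v2} buys back a full one, the bootstrap closes cleanly in $\ell$ steps, and this is not a genuine difficulty.
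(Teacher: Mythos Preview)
Your approach is correct and genuinely different from the paper's. The paper proves Lemma~\ref{lem:curl_div_normal_trace_convex} from scratch by induction on $j$, mimicking the proof of Theorem~\ref{thm:vector-valued_elliptic_regularity}: it chooses explicit test functions $\Varphi_1=(-1)^j\zeta_m\bN\big[\Lambda_\epsilon\bp^{2j}\Lambda_\epsilon(\widetilde\zeta_m\widetilde\w_m\cdot\wtrN)\big]\circ\vartheta_m^{-1}$ for the normal component and an analogous $\Varphi_2$ built from $\widetilde\w_m\times\wtrN$ for the tangential component, exploits the positivity of $\rH$ to absorb the Robin boundary integral as a good-sign term, and then recovers normal derivatives via the equation. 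In effect the paper re-derives, in this special setting, the regularity machinery already encapsulated in Theorem~\ref{thm:vector-valued_elliptic_eq_Sobolev_coeff_v2}.

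Your route is more economical: you recognise that once the Robin datum $-2\rH(\w\cdot\bN)$ is frozen as boundary forcing, the problem is literally an instance of \eqref{vector-valued_elliptic_eq2}, and the bootstrap closes because the trace loses only half a derivative while Theorem~\ref{thm:vector-valued_elliptic_eq_Sobolev_coeff_v2} returns a full one. The one point you might make explicit is the identification step: Theorem~\ref{thm:vector-valued_elliptic_eq_Sobolev_coeff_v2} is phrased as an existence statement, so you should note that the coercive problem $\u-\Delta\u=\cdot$ in $H^1_\tau(\Omega)$ has a \emph{unique} weak solution, which forces the $\u$ produced by that theorem to coincide with $\w$. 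What your approach buys is brevity and a clean separation of concerns; what the paper's direct argument buys is self-containment (it does not rely on Theorem~\ref{thm:vector-valued_elliptic_eq_Sobolev_coeff_v2}, which is only stated, not proved, in the paper) and it makes transparent where the sign of $\rH$ enters.
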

\begin{proof}
As the proof of Theorem \ref{thm:vector-valued_elliptic_regularity} we prove this lemma by induction. The weak solution $\w$ indeed belongs to $H^1(\Omega)$ satisfies (\ref{HodgeEQ_basic_estimate}). Assume that $\w\in H^j(\Omega)$ for some $j \le \ell$. If $\chi$ is a smooth cut-off function so that $\supp(\chi) \cptsubset \Omega$, the same computation as in the proof of Theorem \ref{thm:vector-valued_elliptic_regularity} (with $a^{jk} = \delta^{jk}$) shows that
\begin{equation}\label{vector-valued_interior_estimate1}
\|\chi \nabla^{j + 1} \w\|_{L^2(\Omega)} \le C \Big[ \|\f\|_{H^{j-1}(\Omega)} + \|\w\|_{H^j(\Omega)}\Big]\,,
\end{equation}
where the constant $C$ depends on the distance between the support of $\chi$ and $\bdy\Omega$.

Now we focus on the estimate of $\w$ near $\bdy\Omega$. Let $\{\zeta_m,\U_m,\vartheta_m\}_{m=1}^K$ be defined as in the proof of Theorem \ref{thm:main_thm2}, and
$g_{\alpha\beta} = \vartheta_m,_\alpha\cdot\, \vartheta_m,_\beta$. Define
$$
\Varphi_1 = (-1)^j \zeta_m \bN \big[\Lambda_\epsilon \bp^{2j} \Lambda_\epsilon(\widetilde{\zeta}_m \widetilde{\w}_m\hspace{-1pt}\cdot\wtrN)\big] \circ \vartheta^{-1}_m\,,
$$
where $\widetilde{\zeta}_m = \zeta_m \circ \vartheta_m$, $\widetilde{\w}_m = \w \circ \vartheta$ and $\wtrN = \bN \circ \vartheta$. Since $\rP_{\bN^\perp} \Varphi_1 = 0$, $\Varphi_1$ can be used as a test function in (\ref{weakformHodgeEQ1}). Similar to the computations in Step 2 in the proof of Theorem \ref{thm:vector-valued_elliptic_regularity}, we find that
\begin{align}
\int_{\Omega} \hspace{-1pt} \nabla \w \hspace{-1pt}:\hspace{-1pt} \nabla \Varphi_1\, dx \hspace{-1pt}&\ge\hspace{-1pt} \frac{1}{2} \big\|\nabla \bp^j \Lambda_\epsilon (\widetilde{\zeta}_m \widetilde{\w}_m\hspace{-2pt}\cdot\wtrN) \big\|^2_{L^2(B^+_m)} \hspace{-2pt}-\hspace{-1pt} C \|\u\|_{H^j(\Omega)} \Big[\big\|\bp^j \Lambda_\epsilon (\widetilde{\zeta}_m \widetilde{\w}_m\hspace{-2pt}\cdot\wtrN) \big\|_{H^1(B^+_m)} \hspace{-2pt}+\hspace{-1pt} \|\u\|_{H^j(\Omega)} \hspace{-1pt}\Big] \nonumber\\
&\ge \frac{1}{2} \big\|\nabla \bp^j \Lambda_\epsilon (\widetilde{\zeta}_m \widetilde{\w}_m\hspace{-2pt}\cdot\wtrN) \big\|^2_{L^2(B^+_m)} \hspace{-1pt}-\hspace{-1pt} C_\delta \|\w\|^2_{H^j(\Omega)} \hspace{-1pt}-\hspace{-1pt} \delta \big\|\bp^j \Lambda_\epsilon (\widetilde{\zeta}_m \widetilde{\w}_m\hspace{-2pt}\cdot\wtrN) \big\|_{H^1(B^+_m)} \,, \label{pf_thm_12.21_est1}
\end{align}

Now we focus on the term $\smallexp{$\displaystyle{}\int_{\bdy\Omega}$} \rH (\w\cdot \bN) (\Varphi_1\cdot \bN)\, dS$. Making a change of variable and integrating by parts, we find that
\begin{align*}
\int_{\bdy\Omega} \rH (\w\cdot \bN)(\Varphi_1\cdot \bN)\, dS &= \int_{\{y_3= 0\}} \hspace{-2pt}\bp^j \Lambda_\epsilon \big[\sqrt{\rg}\,\rH (\widetilde{\zeta}_m \widetilde{\w}_m \hspace{-1pt}\cdot\hspace{-1pt} \wtrN)\big] \bp^j \big( \Lambda_\epsilon (\widetilde{\zeta}_m \widetilde{\w}_m \hspace{-1pt}\cdot\hspace{-1pt} \wtrN)\big)\,dS \nonumber\\
&= \int_{\{y_3= 0\}} \hspace{-2pt}\bp \Lambda_\epsilon \big[\sqrt{\rg}\,\rH \bp^{j-1} (\widetilde{\zeta}_m \widetilde{\w}_m \hspace{-1pt}\cdot\hspace{-1pt} \wtrN)\big] \bp^j \big( \Lambda_\epsilon (\widetilde{\zeta}_m \widetilde{\w}_m \hspace{-1pt}\cdot\hspace{-1pt} \wtrN)\big)\,dS 
\\
&\quad + \hspace{-2pt} \sum_{k=1}^{j-1} \hspace{-2pt}\footnoteexp{$\displaystyle{}{{j\hspace{-1pt}-\hspace{-1pt}1}\choose{k}}$}\hspace{-3pt} \int_{\{y_3= 0\}} \hspace{-4pt}\bp \Lambda_\epsilon \big[\hspace{-1pt}\bp^k \hspace{-1pt}(\hspace{-1pt}\sqrt{\rg}\,\rH ) \bp^{j-1-k} (\widetilde{\zeta}_m \widetilde{\w}_m \hspace{-2pt}\cdot\hspace{-2pt} \wtrN)\hspace{-1pt}\big] \bp^j \big( \Lambda_\epsilon (\widetilde{\zeta}_m \widetilde{\w}_m \hspace{-2pt}\cdot\hspace{-2pt} \wtrN)\hspace{-1pt}\big)\,dS\,. \nonumber
\end{align*}
Using the commutator notation $\comm{A}{B} f = A (Bf) - B(Af)$,
\begin{align}
\int_{\bdy\Omega} \rH (\w\cdot \bN)&(\Varphi_1\cdot \bN)\, dS = \int_{\{y_3= 0\}} \hspace{-2pt} \sqrt{\rg}\,\rH \big|\bp^j \Lambda_\epsilon (\widetilde{\zeta}_m \widetilde{\w}_m \hspace{-1pt}\cdot\hspace{-1pt} \wtrN)\big|^2 dS \nonumber\\
& + \int_{\{y_3= 0\}} \hspace{-2pt} \big[\bp (\sqrt{\rg}\, \rH ) \bp^{j-1} \Lambda_\epsilon(\widetilde{\zeta}_m \widetilde{\w}_m \hspace{-1pt}\cdot\hspace{-1pt} \wtrN\big)\big] \bp^j \big( \Lambda_\epsilon (\widetilde{\zeta}_m \widetilde{\w}_m \hspace{-1pt}\cdot\hspace{-1pt} \wtrN)\big)\,dS \nonumber\\
& + \int_{\{y_3= 0\}} \hspace{-2pt}\bp \big[\bigcomm{\Lambda_\epsilon}{\sqrt{\rg}\,\rH } \bp^{j-1} (\widetilde{\zeta}_m \widetilde{\w}_m \hspace{-1pt}\cdot\hspace{-1pt} \wtrN)\big] \bp^j \Lambda_\epsilon (\widetilde{\zeta}_m \widetilde{\w}_m \hspace{-1pt}\cdot\hspace{-1pt} \wtrN)\,dS \label{pf_thm_12.21_id1}\\
& + \sum_{k=1}^{j-1} \footnoteexp{$\displaystyle{}{{j\hspace{-1pt}-\hspace{-1pt}1}\choose{k}}$} \hspace{-2pt}\int_{\{y_3= 0\}} \hspace{-4pt} \Lambda_\epsilon \big[\bp^k (\hspace{-1pt}\sqrt{\rg}\,\rH ) \bp^{j-k} (\widetilde{\zeta}_m \widetilde{\w}_m \hspace{-1.5pt}\cdot\hspace{-1pt} \wtrN)\big] \bp^j \big( \Lambda_\epsilon (\widetilde{\zeta}_m \widetilde{\w}_m \hspace{-1.5pt}\cdot\hspace{-1pt} \wtrN)\hspace{-1pt}\big)\,dS \nonumber\\
& + \sum_{k=1}^{j-1} \footnoteexp{$\displaystyle{}{{j\hspace{-1pt}-\hspace{-1pt}1}\choose{k}}$} \hspace{-2pt}\int_{\{y_3= 0\}} \hspace{-4pt} \Lambda_\epsilon \big[\bp^{k+1} (\hspace{-1pt}\sqrt{\rg}\,\rH ) \bp^{j-1-k} (\widetilde{\zeta}_m \widetilde{\w}_m \hspace{-1.5pt}\cdot\hspace{-1pt} \wtrN)\big] \bp^j \big( \Lambda_\epsilon (\widetilde{\zeta}_m \widetilde{\w}_m \hspace{-1.5pt}\cdot\hspace{-1pt} \wtrN)\hspace{-1pt}\big)\,dS\,.\nonumber
\end{align}
The commutator estimate (\ref{commutator_estimates2}) and interpolation, as well as Young's inequality, show that
\begin{align*}
&\int_{\{y_3= 0\}} \hspace{-2pt}\bp \big[\bigcomm{\Lambda_\epsilon}{\sqrt{\rg}\,\rH } \bp^{j-1} (\widetilde{\zeta}_m \widetilde{\w}_m \hspace{-1pt}\cdot\hspace{-1pt} \wtrN)\big] \bp^j \Lambda_\epsilon (\widetilde{\zeta}_m \widetilde{\w}_m \hspace{-1pt}\cdot\hspace{-1pt} \wtrN)\,dS \\
&\qquad\ge - C \big\|\bp^{j-1} \Lambda_\epsilon (\widetilde{\zeta}_m \widetilde{\w}_m \hspace{-1pt}\cdot\hspace{-1pt} \wtrN)\big\|_{L^2(\{y_3 = 0\})} \big\|\bp^j \Lambda_\epsilon (\widetilde{\zeta}_m \widetilde{\w}_m \hspace{-1pt}\cdot\hspace{-1pt} \wtrN)\big\|_{L^2(\{y_3 = 0\})} \\
&\qquad\ge - C_\delta \|\w\|^2_{H^j(\Omega)} - \delta \big\|\bp^j \Lambda_\epsilon (\widetilde{\zeta}_m \widetilde{\w}_m \hspace{-1pt}\cdot\hspace{-1pt} \wtrN)\big\|^2_{H^1(B(0,r_m))}\,.
\end{align*}
Using \Holder's inequality to estimate the other terms we obtain that
\begin{equation}\label{pf_thm_12.21_est2}
\begin{array}{l}
\displaystyle{} \int_{\bdy\Omega} \rH (\w\cdot \bN)(\Varphi_1\cdot \bN)\, dS \ge \int_{\{y_3= 0\}} \hspace{-2pt} \sqrt{\rg}\,\rH \big|\bp^j \Lambda_\epsilon (\widetilde{\zeta}_m \widetilde{\w}_m \hspace{-1pt}\cdot\hspace{-1pt} \wtrN)\big|^2 dS \vspace{.2cm}\\
\displaystyle{} \qquad\qquad - C_\delta \|\w\|^2_{H^j(\Omega)} - \delta \big\|\bp^j \Lambda_\epsilon (\widetilde{\zeta}_m \widetilde{\w}_m \hspace{-1pt}\cdot\hspace{-1pt} \wtrN)\big\|^2_{H^1(B(0,r_m))}.
\end{array}
\end{equation}
Moreover, it is easy to see that
\begin{align}
\int_{\Omega} \f\cdot \Varphi_1\, dx &\le C \|\f\|_{H^j(\Omega)} \big\|\bp^{j+1} \Lambda_\epsilon (\widetilde{\zeta}_m \widetilde{\w}_m \hspace{-1pt}\cdot\hspace{-1pt} \wtrN)\big\|_{L^2(B(0,r_m))} \nonumber\\
&\le C_\delta \|\f\|^2_{L^2(\Omega)} + \delta \big\|\bp^{j+1} \Lambda_\epsilon (\widetilde{\zeta}_m \widetilde{\w}_m \hspace{-1pt}\cdot\hspace{-1pt} \wtrN)\big\|^2_{L^2(B(0,r_m))} \,. \label{pf_thm_12.21_est3}
\end{align}
Combining (\ref{pf_thm_12.21_est1}), (\ref{pf_thm_12.21_est2}) and (\ref{pf_thm_12.21_est3}),
\begin{equation}
\begin{array}{l}
\displaystyle{} \big\|\nabla \bp^j \Lambda_\epsilon (\widetilde{\zeta}_m \widetilde{\w}_m\hspace{-1pt}\cdot\wtrN) \big\|^2_{L^2(B^+_m)} \hspace{-1pt}+\hspace{-1pt} \int_{\{y_3= 0\}} \hspace{-2pt} \sqrt{\rg}\,\rH \big|\bp^j \Lambda_\epsilon (\widetilde{\zeta}_m \widetilde{\w}_m \hspace{-1pt}\cdot\hspace{-1pt} \wtrN)\big|^2 dS \vspace{.2cm}\\
\displaystyle{} \qquad\qquad \le\hspace{-1pt} C_\delta \Big[\|\f\|^2_{L^2(\Omega)} \hspace{-1.5pt}+\hspace{-1pt} \|\w\|^2_{H^j(\Omega)} \Big] \hspace{-1.5pt}+\hspace{-1pt} \delta \big\|\bp^j \Lambda_\epsilon (\widetilde{\zeta}_m \widetilde{\w}_m \hspace{-1pt}\cdot\hspace{-1pt} \wtrN)\big\|^2_{H^1(B(0,r_m))}.
\end{array}
\end{equation}
Using \Poincare's inequality, there exists a constant $c > 0$ such that
$$
c \|\bp^j \Lambda_\epsilon (\widetilde{\zeta}_m \widetilde{\w}_m\hspace{-1pt}\cdot\wtrN)\|^2_{H^1(\Omega)} \le \big\|\nabla \bp^j \Lambda_\epsilon (\widetilde{\zeta}_m \widetilde{\w}_m\hspace{-1pt}\cdot\wtrN) \big\|^2_{L^2(B^+_m)} \hspace{-1pt}+\hspace{-1pt} \int_{\{y_3= 0\}} \hspace{-2pt} \sqrt{\rg}\,\rH \big|\bp^j \Lambda_\epsilon (\widetilde{\zeta}_m \widetilde{\w}_m \hspace{-1pt}\cdot\hspace{-1pt} \wtrN)\big|^2 dS\,,
$$
so by choosing $\delta>0$ small enough we find that
$$
\big\|\bp^j \Lambda_\epsilon (\widetilde{\zeta}_m \widetilde{\w}_m\hspace{-1pt}\cdot\wtrN) \big\|_{H^1(B^+_m)} \le C \Big[\|\f\|_{L^2(\Omega)} + \|\w\|_{H^j(\Omega)} \Big]\,.
$$
Since the right-hand side of the estimate above is independent of $\epsilon$, we can pass $\epsilon$ to the limit and obtain that
\begin{equation}\label{pf_thm_12.21_est4}
\big\|\bp^j (\widetilde{\zeta}_m \widetilde{\w}_m\hspace{-1pt}\cdot\wtrN) \big\|_{H^1(B^+_m)} \le C \Big[\|\f\|_{L^2(\Omega)} + \|\w\|_{H^j(\Omega)} \Big]\,.
\end{equation}
The estimate above provides the regularity of $\w$ in the normal direction.

To see the regularity of the tangential component of $\w$, an alternative test function has to be employed. Define
$$
\Varphi_2 = (-1)^j \zeta_m \bN \times \big[\Lambda_\epsilon \bp^{2j} \Lambda_\epsilon (\widetilde{\zeta}_m \widetilde{\w}_m \times \wtrN)\big]\circ \vartheta^{-1}_m\,.
$$
We note that since $\w \times \bN = 0$ on $\bdy\Omega$, $\Varphi_2 = 0$ on $\bdy\Omega$ so $\Varphi_2$ may be used as a test function. Since $\u \cdot (\v \times \w) = (\u \times \v) \cdot \w$, with $J$ and $A$ denoting $\det(\nabla \vartheta_m)$ and $(\nabla \vartheta_m)^{-1}$ respectively, using $b^{rs}$ to denote $J A^r_k A^s_k$ we find that
\begin{align*}
\int_{\Omega} \hspace{-2pt}\nabla \w : \nabla \Varphi_2\, dx &= (-1)^j \int_{B^+_m} \hspace{-2pt}b^{rs} \widetilde{\w}^i_m,_s \big[\widetilde{\zeta}_m \wtrN \hspace{-1pt}\times\hspace{-1pt} \Lambda_\epsilon \bp^{2j} \Lambda_\epsilon (\widetilde{\zeta}_m \widetilde{\w}_m \hspace{-1pt}\times\hspace{-1pt} \wtrN)\big]^i,_r dy \\
&= (-1)^j \int_{B^+_m}\hspace{-2pt} b^{rs} (\widetilde{\zeta}_m \widetilde{\w}^i_m,_s\hspace{-1pt}\times\hspace{-1pt} \wtrN)^i \Lambda_\epsilon \bp^{2j} \Lambda_\epsilon (\widetilde{\zeta}_m \widetilde{\w}_m \hspace{-1pt}\times\hspace{-1pt} \wtrN)^i,_r dy \\
&\quad + (-1)^j \int_{B^+_m}\hspace{-2pt} b^{rs} \widetilde{\w}^i_m,_s \big[(\widetilde{\zeta}_m \wtrN),_r \hspace{-1pt}\times\hspace{-1pt} \Lambda_\epsilon \bp^{2j} \Lambda_\epsilon (\widetilde{\zeta}_m \widetilde{\w}_m \hspace{-1pt}\times\hspace{-1pt} \wtrN)\big]^i dy \\
&= \int_{B^+_m} \bp^j \Lambda_\epsilon \big[b^{rs} (\widetilde{\zeta}_m \widetilde{\w}_m\hspace{-1pt}\times\hspace{-1pt} \wtrN)^i,_s\big] \bp^j \Lambda_\epsilon (\widetilde{\zeta}_m \widetilde{\w}_m \hspace{-1pt}\times\hspace{-1pt} \wtrN)^i,_r dy \\
&\quad - \hspace{-1pt}\int_{B^+_m} \hspace{-3.5pt}\bp^j \Lambda_\epsilon \big[b^{rs} \hspace{-1pt}\big((\widetilde{\zeta}_m,_s \widetilde{\w},_m \hspace{-1pt}\times\hspace{-1pt} \wtrN)^i \hspace{-2pt}+\hspace{-2pt} (\widetilde{\zeta}_m \widetilde{\w},_m \hspace{-1pt}\times\hspace{-1pt} \wtrN,_s\hspace{-2pt})^i\big)\hspace{-1pt}\big] \bp^j \Lambda_\epsilon (\widetilde{\zeta}_m \widetilde{\w}_m \hspace{-1pt}\times\hspace{-1pt} \wtrN)^i,_r dy \\
&\quad + \int_{B^+_m} \bp^j \hspace{-2pt} \Lambda_\epsilon b^{rs} \big[\widetilde{\w}^i_m,_s \times (\widetilde{\zeta}_m \wtrN),_r\big] \bp^j \Lambda_\epsilon (\widetilde{\zeta}_m \widetilde{\w}_m \hspace{-1pt}\times\hspace{-1pt} \wtrN)^i dy \,.
\end{align*}
Similar to the procedure of deriving (\ref{vector-valued_elliptic_est_id1}), by Leibniz's rule,
\begin{align*}
\int_{B^+_m} \bp^j \Lambda_\epsilon b^{rs} &(\widetilde{\zeta}_m \widetilde{\w}_m\times \wtrN)^i,_s \bp^j \Lambda_\epsilon (\widetilde{\zeta}_m \widetilde{\w}_m \times \wtrN)^i,_r dy \\
&\quad = \int_{B^+_m} b^{rs} \bp^j \Lambda_\epsilon (\widetilde{\zeta}_m \widetilde{\w}_m\times \wtrN)^i,_s \bp^j \Lambda_\epsilon (\widetilde{\zeta}_m \widetilde{\w}_m \times \wtrN)^i,_r dy \\
&\qquad + \int_{B^+_m} \bp b^{rs} \bp^{j-1} \Lambda_\epsilon (\widetilde{\zeta}_m \widetilde{\w}_m\times \wtrN)^i,_s \bp^j \Lambda_\epsilon (\widetilde{\zeta}_m \widetilde{\w}_m \times \wtrN)^i,_r dy \\
&\qquad + \int_{B^+_m} \bp \bigcomm{b^{rs}}{\Lambda_\epsilon} \bp^{j-1}(\widetilde{\zeta}_m \widetilde{\w}_m\times \wtrN)^i,_s \bp^j \Lambda_\epsilon (\widetilde{\zeta}_m \widetilde{\w}_m \times \wtrN)^i,_r dy \\
&\qquad + \sum_{k=0}^{j-2} \footnoteexp{$\displaystyle{}{{j-1}\choose{k}}$} \int_{B^+_m} \bp \Lambda_\epsilon \big[\bp^{j-1-k} b^{rs} \bp^k (\widetilde{\zeta}_m \widetilde{\w}_m\times \wtrN)^i,_s \big] \bp^j \Lambda_\epsilon (\widetilde{\zeta}_m \widetilde{\w}_m \times \wtrN)^i,_r dy \,.
\end{align*}
Since $\{\vartheta_m\}_{m=1}^M$ is chosen so that $A \approx \id$, $b^{rs}$ is positive-definitive. As a consequence, by the commutator estimate (\ref{commutator_estimates2}) and Young's inequality,
\begin{align*}
\int_{\Omega} \nabla \w : \nabla \Varphi_2\, dx &\ge \frac{1}{2}\, \big\|\bp^j \nabla \Lambda_\epsilon (\widetilde{\zeta}_m \widetilde{\w}_m\hspace{-1pt}\times\hspace{-1pt} \wtrN)\big\|^2_{L^2(B^+_m)} \\
&\quad - C \big\|\bp^{j-1} \nabla \Lambda_\epsilon (\widetilde{\zeta}_m \widetilde{\w}_m\hspace{-1pt}\times\hspace{-1pt} \wtrN)\big\|_{L^2(B^+_m)}\big\|\bp^j \nabla \Lambda_\epsilon (\widetilde{\zeta}_m \widetilde{\w}_m\hspace{-1pt}\times\hspace{-1pt} \wtrN)\big\|_{L^2(B^+_m)} \\
&\ge \frac{1}{4}\, \big\|\bp^j \nabla \Lambda_\epsilon (\widetilde{\zeta}_m \widetilde{\w}_m\hspace{-1pt}\times\hspace{-1pt} \wtrN)\big\|^2_{L^2(B^+_m)} \hspace{-1.5pt}-\hspace{-1pt} C \|\w\|^2_{H^j(\Omega)}.
\end{align*}
On the other hand,
\begin{align*}
\int_{\Omega} \f\cdot \Varphi_2\, dx &\le C \|\f\|_{H^{j-1}(\Omega)} \big\|\bp^{j+1} \Lambda_\epsilon (\widetilde{\zeta}_m \widetilde{\w}_m \hspace{-1pt}\times\hspace{-1pt} \wtrN)\big\|_{L^2(B(0,r_m))}\\
&\le C_\delta \|\f\|^2_{L^2(\Omega)} + \delta \big\|\bp^{j+1} \Lambda_\epsilon (\widetilde{\zeta}_m \widetilde{\w}_m \hspace{-1pt}\times\hspace{-1pt} \wtrN)\big\|^2_{L^2(B(0,r_m))}\,;
\end{align*}
thus using $\Varphi_2$ as a test function in (\ref{weakformHodgeEQ1}) and choosing $\delta>0$ small enough, by the fact that $\smallexp{$\displaystyle{}\int_{\bdy\Omega}$} \rH (\w\cdot \bN) (\Varphi_2 \cdot \bN)\, dS = 0$ we conclude that
\begin{equation}\label{pf_thm_12.21_est4.5}
\big\|\bp^j \Lambda_\epsilon (\widetilde{\zeta}_m \widetilde{\w}_m \hspace{-1pt}\times\hspace{-1pt} \wtrN)\big\|_{H^1(B^+_m)} \le C \Big[ \|\f\|_{L^2(\Omega)} + \|\w\|_{H^j(\Omega)}\Big]\,.
\end{equation}
Since the right-hand side is $\epsilon$-independent, we can pass $\epsilon$ to the limit and obtain that
\begin{equation}\label{pf_thm_12.21_est5}
\big\|\bp^j (\widetilde{\zeta}_m \widetilde{\w}_m \hspace{-1pt}\times\hspace{-1pt} \wtrN)\big\|_{H^1(B^+_m)} \le C \Big[ \|\f\|_{L^2(\Omega)} + \|\w\|_{H^j(\Omega)}\Big]\,.
\end{equation}
The estimate above provides the regularity of $\w$ in the tangential direction.

Since every vector $u$ can be expressed as $\u = \wtrN \times (\u\times \wtrN) + (\u \cdot \wtrN) \wtrN$, the combination of (\ref{pf_thm_12.21_est4}) and (\ref{pf_thm_12.21_est5}) then shows that
$$
\big\|\widetilde{\zeta}_m \bp^j \widetilde{\w}_m \big\|_{H^1(B^+_m)} \le C \Big[ \|\f\|_{L^2(\Omega)} + \|\w\|_{H^j(\Omega)}\Big]\,.
$$
Finally, we follow Step 4 of Theorem \ref{thm:vector-valued_elliptic_regularity} or Step 3 of Theorem \ref{thm:vector-valued_elliptic_eq_Sobolev_coeff} to conclude that
\begin{equation}\label{pf_thm_12.21_est_temp1}
\big\|\widetilde{\zeta}_m \nabla^j \widetilde{\w}_m\big\|_{H^1(B^+_m)} \le C \Big[\|\f\|_{L^2(\Omega)} + \|\w\|_{H^j(\Omega)} \Big]\,.
\end{equation}
Estimate (\ref{estimate_for_given_vorticity}) is concluded from combining the $H^1$-estimate (\ref{HodgeEQ_basic_estimate}), the interior estimate (\ref{vector-valued_interior_estimate1}) and the boundary estimate (\ref{pf_thm_12.21_est_temp1}).
\end{proof}

\subsubsection{The case that $\Omega$ is a general $H^{\rk+1}$-domain}\label{sec:d=divw_is_div_free}
If $\Omega$ is a general $H^{\rk+1}$-domain, the mean curvature $\rH$ can be negative on some portion of $\bdy\Omega$,
leading to a problematic Robin boundary condition (\ref{HodgeEQ1}c), with the wrong sign. To overcome this difficulty, we instead consider a similar problem defined on a ball containing $\Omega$.

\def\F {\text{\bf\emph{F}}}

Let $B(0,R)$ be an open ball so that $\Omega \cptsubset B(0,R)$, and let $\F$ denote a divergence-free vector field on $B(0,R)$ such that $\F = \f$ in $\cls{\Omega}$; that is, $\F$ is a divergence-free extension of $\f$. For a vector field $\f\in L^2(\Omega)$, such an $\F$ (in $B(0,R)\backslash \Omega$) can be obtained by first solving the elliptic equation
\begin{subequations}\label{divF=0a}
\begin{alignat}{2}
\Delta \phi &=0 &&\text{in}\quad B(0,R)\backslash \Omega\,,\\
\frac{\p \phi}{\p \bN} &= \f\cdot \bN \qquad&&\text{on}\quad\bdy\Omega\,,\\
\frac{\p \phi}{\p \bN} &= 0 \qquad&&\text{on}\quad\bdy B(0,R)\,,
\end{alignat}
\end{subequations}
and setting $\F = \nabla \phi$ on $B(0,R)\backslash \Omega$. We note that (\ref{divF=0a}) is solvable if the solvability condition
(\ref{solvability_condition_for_normal_trace_problem}) holds. To see this, let $\Omega_i$ be one of the connected component of $\Omega$, and let $\O_i$ be one of bounded connected components of
$\Omega^\complement_i$ with boundary $\Gamma_i$. Then $\Gamma_i$ must be one of the connected component of $\bdy\Omega$, and (\ref{divF=0a}) implies that in particular,
\begin{alignat*}{2}
\Delta \phi &=0 &&\text{in}\quad \O_i\,,\\
\frac{\p \phi}{\p \bN} &= \f\cdot \bN \qquad&&\text{on}\quad\Gamma_i\,.
\end{alignat*}
Therefore, (\ref{solvability_condition_for_normal_trace_problem}) has to be satisfied in order for the above equation to be solvable.
We also note that $\F\in L^2(B(0,R))$ even if $\f\in H^{\ell-1}(\Omega)$; thus $\F$ must be less regular than $\f$ due to the lack of continuity of the derivatives of $\F$ across $\bdy\Omega$.

Next,  consider the following elliptic system:
\begin{subequations}\label{HodgeEQ2}
\begin{alignat}{2}
- \Delta \w &= \F \qquad&&\text{in}\quad B(0,R)\,,\\
\rP_{\bN^\perp} \w &= {\bf 0} &&\text{on}\quad \bdy B(0,R)\,,\\
\frac{\p \w}{\p \bN} \cdot \bN + 2\, \rH (\w\cdot \bN) &= 0 &&\text{on}\quad \bdy B(0,R)\,.
\end{alignat}
\end{subequations}
By Lemma \ref{lem:curl_div_normal_trace_convex}, there exists a strong solution $\w \in H^2(B(0,R))$ to (\ref{HodgeEQ2}) (so that (\ref{HodgeEQ2}) also holds in a pointwise sense).

Now we show that $\w$ has zero divergence. Let $d = \div \w \in H^1(B(0,R))$. We claim that $d$ is a weak solution to
\begin{subequations}\label{d=divw_eq}
\begin{alignat}{2}
\Delta d &= 0 \qquad&&\text{in}\quad B(0,R)\,,\\
d &= 0 &&\text{on}\quad \bdy B(0,R)\,;
\end{alignat}
\end{subequations}
that is, $d \in H^1_0(B(0,R))$ and $d$ satisfies
\begin{equation}\label{weak_form_for_d}
\int_{B(0,R)} \nabla d \cdot \nabla \varphi\, dx = 0 \qquad\Forall \varphi \in H^1_0(B(0,R))\,.
\end{equation}
The boundary condition $d=0$ on $\bdy B(0,R)$ is obvious because of (\ref{wid2}) and (\ref{HodgeEQ2}b,c). To see (\ref{weak_form_for_d}), we note that it suffices to show that $\Delta d = 0$ in the sense of distribution, since $\D(B(0,R))$ is dense in $H^1_0(B(0,R))$. Let $\varphi \in \D(B(0,R))$, and define ${\boldsymbol\psi} = \nabla\varphi$. Then ${\boldsymbol\psi}\in \D(B(0,R))$, and
\begin{align*}
- \int_{B(0,R)} \Delta \w \cdot {\boldsymbol\psi}\,dx &= \int_{B(0,R)} \F \cdot {\boldsymbol\psi}\,dx = \int_{B(0,R)\backslash \Omega} \F \cdot \nabla \varphi\,dx + \int_\Omega \f \cdot \nabla \varphi\,dx \\
&= \int_{\bdy\Omega} (\f\cdot \bN - \frac{\p \phi}{\p \bN})\, \varphi\,dS = 0\,.
\end{align*}
On the other hand, since $\w\in H^2(B(0,R))$, we have $d\in H^1(B(0,R))$ and
\begin{align*}
- \int_{B(0,R)} \Delta \w \cdot {\boldsymbol\psi}\,dx &= \int_{B(0,R)} \nabla \w : \nabla {\boldsymbol\psi}\, dx = \int_{B(0,R)} \w^i,_j \varphi,_{ij} dx \\
&= - \int_{B(0,R)} d,_j \varphi,_j dx = - \int_{B(0,R)} \nabla d \cdot \nabla \varphi\, dx\,;
\end{align*}
thus we conclude (\ref{weak_form_for_d}). Therefore, $d$ is the weak solution to (\ref{d=divw_eq}) and so $d$ must vanish in $\Omega$ which implies that $\div \w = 0$ in $\Omega$. Finally, since $\w \in H^2(\Omega)$, applying (\ref{curlcurlu_identity}), we find that $\v = \curl \w \in H^1(\Omega)$ satisfies $\curl \v = \f$ in $\Omega$.

So far, we have shown that there exists $\v\in H^1(B(0,R))$ satisfying
\begin{alignat*}{2}
\curl \v &= \F \qquad&&\text{in}\quad B(0,R)\,,\\
\div \v &= 0 &&\text{in}\quad B(0,R)\,,\\
\v \cdot \bN &= 0 &&\text{on}\quad \bdy B(0,R)\,,
\end{alignat*}
which in particular shows that $\curl \v = \f$ in $\Omega$. It is not clear that if $\v$ possesses better regularity, however, since $\v$ has been constructed using a non-smooth forcing function $\F$. Let $p$ be the $H^2$-solution to the elliptic equation
\begin{alignat*}{2}
\Delta p &= 0 \qquad&&\text{in}\quad\Omega\,,\\
\frac{\p p}{\p \bN} &= - \v \cdot \bN\qquad&&\text{on}\quad\bdy\Omega\,,
\end{alignat*}
and define $\u = \v + \nabla p$; then, $\u$ is a solution to (\ref{ufromcurldiv}). We note that $\u\in H^1(\Omega)$ and satisfies
\begin{align}
\|\u\|_{H^1(\Omega)} &\le \|\v\|_{H^1(\Omega)} + \|\nabla p\|_{H^1(\Omega)} \le C(|\bdy\Omega|_{H^{\rk+0.5}}) \|\w\|_{H^2(\Omega)} \nonumber\\
&\le C(|\bdy\Omega|_{H^{\rk+0.5}}) \|\f\|_{L^2(\Omega)}\,. \label{curlu=f_H2_est}
\end{align}
In the following lemma, we show that while $\v$ and $\nabla p$ both have low regularity, in fact, their sum, $\u$, possesses $H^\ell$-regularity if $\f\in H^{\ell-1}(\Omega)$ for some $\ell \ge 2$.

\begin{lemma}\label{lem:curlu=f_est}
Let $\Omega\subseteq \bbR^3$ be a bounded $H^{\rk+1}$-domain for some $\rk > \smallexp{$\displaystyle{}\frac{3}{2}$}$\,. Then for all $\f\in H^{\ell-1}(\Omega)$ for some $1\le \ell \le \rk$, there exists a solution $\u\in H^\ell(\Omega)$ to {\rm(\ref{ufromcurldiv})} satisfying
\begin{equation}\label{curlu=f_Hl_est}
\|\u\|_{H^\ell(\Omega)} \le C(|\bdy\Omega|_{H^{\rk+0.5}}) \|\f\|_{H^{\ell-1}(\Omega)}\,.
\end{equation}
\end{lemma}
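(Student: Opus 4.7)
The plan is to establish the estimate (\ref{curlu=f_Hl_est}) by induction on $\ell$, using the Hodge elliptic estimate (\ref{Hodge_elliptic_estimate1}) at the appropriate intermediate level. The base case $\ell=1$ is precisely the bound (\ref{curlu=f_H2_est}) already obtained from the $H^2$-solution $\w$ of the auxiliary problem (\ref{HodgeEQ2}) on the enclosing ball. So it remains to treat the inductive step: assume $\u\in H^{\ell-1}(\Omega)$ with $\|\u\|_{H^{\ell-1}(\Omega)} \le C\|\f\|_{H^{\ell-2}(\Omega)}$, and deduce $\u\in H^\ell(\Omega)$ with the desired bound.

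The first key observation is that the argument proving Theorem \ref{thm:main_thm2} in fact delivers the estimate at every integer level $1\le \ell\le \rk$, not just at the top level $\rk+1$: that proof reduces to an application of Theorem \ref{thm:vector-valued_elliptic_eq_Sobolev_coeff}, whose hypothesis allows $1\le \ell\le \rk$, since the coefficients $a^{jk}=J A^j_r A^k_r$ arising from the chart maps $\vartheta_m$ lie in $H^\rk$. Granting this, the Hodge estimate applied to our divergence-free vector field with vanishing normal trace reads
\begin{equation*}
\|\u\|_{H^\ell(\Omega)} \le C(|\bdy\Omega|_{H^{\rk+0.5}}) \Big[\|\u\|_{L^2(\Omega)} + \|\f\|_{H^{\ell-1}(\Omega)} + \|\bdygrad \u \cdot \bN\|_{H^{\ell-1.5}(\bdy\Omega)}\Big].
\end{equation*}

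To handle the boundary term, observe that $\u\cdot\bN=0$ on $\bdy\Omega$, so the product rule recorded in Definition \ref{defn:tangential_derivative} gives $\bdygrad \u\cdot\bN = -\bdygrad \bN \cdot \u$ on $\bdy\Omega$. Since $\bdy\Omega\in H^{\rk+0.5}$, the outward unit normal satisfies $\bN\in H^{\rk-0.5}(\bdy\Omega)$, hence $\bdygrad \bN\in H^{\rk-1.5}(\bdy\Omega)$. A product estimate on the 2D surface $\bdy\Omega$ of the type supplied by Theorem \ref{thm:HkHl_product2} (valid because $\rk-0.5>1$), together with the trace theorem $\|\u\|_{H^{\ell-1.5}(\bdy\Omega)}\le C\|\u\|_{H^{\ell-1}(\Omega)}$ applied to the inductive hypothesis, yields
\begin{equation*}
\|\bdygrad \bN \cdot \u\|_{H^{\ell-1.5}(\bdy\Omega)} \le C(|\bdy\Omega|_{H^{\rk+0.5}})\,\|\u\|_{H^{\ell-1}(\Omega)}.
\end{equation*}
Substituting this, the base-case bound $\|\u\|_{L^2}\le C\|\f\|_{L^2}$, and the induction hypothesis into the Hodge estimate closes the induction and proves (\ref{curlu=f_Hl_est}).

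The main technical obstacle lies in the boundary product estimate: $\bdygrad \bN$ has only $H^{\rk-1.5}$ regularity on $\bdy\Omega$, which is not continuously embedded in $L^\infty$ when $\rk$ is close to $\tfrac{3}{2}$. In that regime the symmetric product inequality is unavailable, and one must instead invoke the asymmetric version with the factors placed so that the high-regularity slot receives $\bN$ itself (gaining a derivative) rather than $\bdygrad\bN$; alternatively, one may bootstrap in half-integer rather than integer steps, successively upgrading $\u$ from $H^{\ell-1}$ to $H^{\ell-1/2}$ to $H^\ell$. A secondary verification is that the proof of Theorem \ref{thm:main_thm2} in Section \ref{sec:Hodge_elliptic_estimate} does yield the Hodge estimate at the intermediate level $\ell\le\rk$; this requires re-reading the localization and horizontal-mollification steps to confirm none of them implicitly relies on maximal boundary regularity.
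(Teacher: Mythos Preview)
Your approach is close in spirit to the paper's and is essentially correct for $\rk\ge 3$, but it takes an indirect route and leaves the low-regularity case only sketched. The paper does \emph{not} invoke Theorem \ref{thm:main_thm2} at intermediate levels; instead it reformulates (\ref{ufromcurldiv}) as the vector-valued elliptic system (\ref{curlcurlu=curlf}), namely $-\Delta\u=\curl\f$ with $\u\cdot\bN=0$ and $\rP_{\bN^\perp}\big(\tfrac{\p\u}{\p\bN}-\f\times\bN-\h\big)=0$, and applies Corollary \ref{cor:vector-valued_elliptic_eq_Sobolev_coeff} directly. The boundary term $\h$ there is, in local coordinates, $-g^{\alpha\beta}g^{\gamma\delta}[(\u\circ\vartheta)\cdot\vartheta,_\beta]\,b_{\alpha\gamma}\,\vartheta,_\delta$ --- exactly your term $\bdygrad\bN\cdot\u$ expressed via the second fundamental form. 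So the two arguments collapse to the same inductive estimate once one realizes that the proof of Theorem \ref{thm:main_thm2} is itself an application of Corollary \ref{cor:vector-valued_elliptic_eq_Sobolev_coeff}; the paper's route simply bypasses the extra verification you flagged (that Theorem \ref{thm:main_thm2} holds at each level $\ell\le\rk$).

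The genuine gap you identify --- the product estimate fails when $\rk=2$ because $b_{\alpha\gamma}\in H^{0.5}(\bdy\Omega)$ and $\u\in H^{0.5}(\bdy\Omega)$ are both borderline --- is real, and the paper does not evade it by asymmetric product estimates or half-integer bootstrapping. Instead it mollifies the second fundamental form: one solves a perturbed problem (\ref{ufromcurldiv_eps}) with $b_{m\alpha\gamma}$ replaced by $\Lambda_\epsilon b_{m\alpha\gamma}$ and an added zeroth-order term $\lambda\u^\epsilon$ to restore coercivity, obtains an $\epsilon$-independent $H^2$ bound on $\u^\epsilon$ by interpolation (placing $\u^\epsilon$ in $H^{1.25}(\bdy\Omega)$ after the first regularity gain), and passes to the limit. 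Your suggested alternatives might also work, but you would need to carry one of them out.
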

\begin{proof}
We again show that $u\in H^\ell(\Omega)$ by induction. We have shown the validity of the lemma for the case that $\ell=1$. Now suppose that $\ell \ge 2$ and $u\in H^j(\Omega)$ for some $j \le \ell-1$. Since $\u = \curl \w \in H^1(\Omega)$ satisfies (\ref{ufromcurldiv}b,c), using (\ref{int_curlcurl_id}) we find that $\u$ satisfies
\begin{align*}
& \int_\Omega \curl \u \cdot \curl \Varphi\, dx = \int_\Omega \nabla \u : \nabla \Varphi\,dx - \int_{\bdy\Omega} \h \cdot \Varphi\, dS \qquad\Forall \Varphi \in H^1_n(\Omega)\,,
\end{align*}
where in local chart $(\U,\vartheta)$ $\h$ is given by $\h\circ \vartheta = - g^{\alpha\beta} g^{\gamma\delta} \big[(\u\circ \vartheta)\cdot \vartheta,_\beta\big] b_{\alpha\gamma} \, \vartheta,_\delta$. On the other hand,
\begin{align*}
\int_\Omega \f \cdot \curl \Varphi\, dx &= \int_\Omega \varepsilon_{ijk} \f^i \Varphi^k,_j dx = - \int_\Omega \varepsilon_{ijk} \f^i,_j \Varphi^k dx + \int_{\bdy\Omega} \varepsilon_{ijk} \f^i \bN_j \Varphi^k dS \\
&= \int_\Omega \curl \f \cdot \Varphi\,dx + \int_{\bdy\Omega} (\f\times \bN) \cdot \Varphi\, dS \qquad\Forall \Varphi\in H^1(\Omega).
\end{align*}
Using (\ref{ufromcurldiv}a), we find that $u$ satisfies
$$
\int_\Omega \nabla \u : \nabla \Varphi\,dx = \int_\Omega \curl \f \cdot \Varphi\,dx + \int_{\bdy\Omega} (\f\times \bN + \h) \cdot \Varphi\, dS \qquad\Forall \Varphi \in H^1_n(\Omega)\,;
$$
thus, $\u$ is a weak solution of the following elliptic system:
\begin{subequations}\label{curlcurlu=curlf}
\begin{alignat}{2}
- \Delta \u &= \curl\,\f \qquad&&\text{in}\quad\Omega\,,\\
\u \cdot \bN &= 0 &&\text{on}\quad\bdy\Omega\,,\\
\rP_{\bN^\perp}\Big(\frac{\p \u}{\p \bN} - \f \times \bN - \h\Big) &= {\bf 0}\qquad&&\text{on}\quad\bdy\Omega\,.
\end{alignat}
\end{subequations}

Let us first assume that $\rk \ge 3$. Then $\rk-1.5 > 1 = \smallexp{$\displaystyle{}\frac{2}{2}$}$\,. Moreover, $j-0.5 \le \rk-1.5$\,; thus Proposition \ref{prop:HkHl_product} shows that
\begin{equation}\label{h_est}
\|\h\|_{H^{j-0.5}(\bdy\Omega)} \le C(|\bdy\Omega|_{H^{\rk+0.5}}) \|b\|_{H^{\rk-1.5}(\{y_3=0\})} \|\u\|_{H^{j-0.5}(\bdy\Omega)} \le C(|\bdy\Omega|_{H^{\rk+0.5}}) \|\u\|_{H^j(\Omega)}\,.
\end{equation}
Therefore, by Corollary \ref{cor:vector-valued_elliptic_eq_Sobolev_coeff} (with $a^{jk} = \delta^{jk}$ and $\rw = \bN$) we conclude that
\begin{align*}
\|\u\|_{H^{j+1}(\Omega)} &\le C(|\bdy\Omega|_{H^{\rk+0.5}}) \Big[\|\curl\,\f\|_{H^{j-1}(\Omega)} + \|\f\times \bN + \h\|_{H^{j-0.5}(\bdy\Omega)}\Big] \\
&\le C(|\bdy\Omega|_{H^{\rk+0.5}}) \Big[\|\f\|_{H^j(\Omega)} + \|\u\|_{H^j(\Omega)}\Big]
\end{align*}
which implies $\u\in H^{j+1}(\Omega)$. Estimate (\ref{curlu=f_Hl_est}) then is concluded from estimate (\ref{curlu=f_H2_est}), interpolation and Young's inequality.

The case that $\rk = 2$ (and $\ell = 2$) is a bit tricky. In this case (\ref{h_est}) cannot be applied since $b,\u$ both belong to $H^{0.5}(\bdy\Omega)$ while $H^{0.5}(\bdy\Omega)$ is not a multiplicative algebra. To see why $\u$ indeed belongs to $H^2(\Omega)$ if $\f\in H^1(\Omega)$, let $\u^\epsilon$ to be the solution to
\begin{subequations}\label{ufromcurldiv_eps}
\begin{alignat}{2}
\lambda \u^\epsilon - \Delta \u^\epsilon &= \curl\,\f + \lambda \u \qquad&&\text{in}\quad\Omega\,,\\
\u^\epsilon \cdot \bN &= 0 &&\text{on}\quad\bdy\Omega\,,\\
\rP_{\bN^\perp}\Big(\dfrac{\p \u^\epsilon}{\p \bN} - \f \times \bN - \h_\epsilon\Big) &= {\bf 0}\qquad&&\text{on}\quad\bdy\Omega\,,
\end{alignat}
\end{subequations}
where $\u$ on the right-hand side of (\ref{ufromcurldiv_eps}a) is the solution to (\ref{ufromcurldiv}), $\h_\epsilon$ is a smooth version of $\h$ given by
$$
\h_\epsilon = - \sum_{m=1}^K \zeta_m \Big[g^{\alpha\beta}_m g^{\gamma\delta}_m \big((\u^\epsilon\circ \vartheta_m)\cdot \vartheta_m,_\beta\big) (\Lambda_\epsilon b_{m\alpha\gamma})\, \vartheta_m,_\delta \Big] \circ \vartheta_m^{-1}
$$
in which $\Lambda_\epsilon$ is the horizontal convolution defined in Section \ref{sec:convolution}, and $\lambda \gg 1$ is a big constant so that the bilinear form
$$
B(\u^\epsilon,\Varphi) = \lambda (\u^\epsilon,\Varphi)_{L^2(\Omega)} + (\nabla \u^\epsilon, \nabla \Varphi)_{L^2(\Omega)} + \int_{\bdy\Omega} \h_\varepsilon \cdot \Varphi\, dS
$$
is coercive on $H^1_n(\Omega) \times H^1_n(\Omega)$. Since $\Lambda_\epsilon b_m$ is smooth, we find that $\h_\epsilon \in H^{0.5}(\bdy\Omega)$ satisfying
\begin{align*}
\|\h_\epsilon\|_{H^{0.5}(\bdy\Omega)} &\le C(|\bdy\Omega|_{H^{2.5}}) \Big[\|\p_y \vartheta_m\|^6_{H^{1.25}(B(0,r_m)\cap \{y_3 = 0\})} \|\Lambda_\epsilon b_m\|_{H^{1.25}(B(0,r_m)\cap \{y_3 = 0\})} \|\u^\epsilon\|_{H^{0.5}(\bdy\Omega)} \Big] \\
&\le C_\epsilon \|\u^\epsilon\|_{H^1(\Omega)} \le C_\epsilon \Big[\|\f\|_{L^2(\Omega)} + \lambda \|\u\|_{L^2(\Omega)}\Big] \le C_\epsilon \|\f\|_{L^2(\Omega)} \,,
\end{align*}
where the dependence on $\epsilon$ in the constant $C_\epsilon$ is due to the horizontal convolution $\Lambda_\epsilon$. As a sequence, $\u^\epsilon \in H^2(\Omega)$, and this fact further shows that $\h_\epsilon$ satisfies
\begin{align*}
\|\h_\epsilon\|_{H^{0.5}(\bdy\Omega)} &\le C(|\bdy\Omega|_{H^{2.5}}) \Big[\|\p_y \vartheta_m\|^6_{H^{1.25}(B(0,r_m)\cap \{y_3 = 0\})} \|b_m\|_{H^{0.5}(B(0,r_m)\cap \{y_3 = 0\})} \|\u^\epsilon\|_{H^{1.25}(\bdy\Omega)} \Big] \\
&\le C(|\bdy\Omega|_{H^{2.5}}) \|\u^\epsilon\|_{H^{1.75}(\Omega)} \le C(|\bdy\Omega|_{H^{2.5}}) \|\u^\epsilon\|^\frac{1}{4}_{H^1(\Omega)} \|\u^\epsilon\|^\frac{3}{4}_{H^2(\Omega)}\,.
\end{align*}
By Young's inequality, we find that that $\u^\epsilon$ satisfies
\begin{align*}
\|\u^\epsilon\|_{H^2(\Omega)} &\le C(|\bdy\Omega|) \Big[\|\curl \f\|_{L^2(\Omega)} + \|\f \times \bN + \h_\epsilon\|_{H^{0.5}(\bdy\Omega)}\Big] \\
&\le C(|\bdy\Omega|_{H^{2.5}},\delta) \|\f\|_{H^1(\Omega)} + \delta \|\u^\epsilon\|_{H^2(\Omega)}\,.
\end{align*}
Choosing $\delta > 0$ small enough, we conclude that $\u^\epsilon$ has a uniform $H^2$ upper bound and possesses an $H^1$ convergent subsequence $\u^{\epsilon_j}$ with limit $\v$. This limit $\v$ must be $\u$ since $\u$ is also a weak solution to (\ref{ufromcurldiv_eps}) and the strong solution to (\ref{ufromcurldiv_eps}) is unique (by the Lax-Milgram theorem). Moreover, $\u$ satisfies (\ref{curlu=f_Hl_est}) (for $\ell = 2$).
\end{proof}

Lemma \ref{lem:curlu=f_est} together with the elliptic estimate
$$
\|\nabla \phi\|_{H^{j+1}(\Omega)} \le C \big[\|g\|_{H^j(\Omega)} + \|h\|_{H^{j-0.5}(\bdy\Omega)}\big]
$$
for the solution $\phi$ to (\ref{phi_eq}) then concludes the first part of Theorem \ref{thm:main_thm1}.

\subsection{Solutions with prescribed tangential trace} Having considered the boundary condition $\v \cdot \bN =h$, we now establish the existence and uniqueness of the following problem:
\begin{subequations}\label{ufromcurldiv_tan}
\begin{alignat}{2}
\curl \v &= \f \qquad&&\text{in}\quad \Omega\,,\\
\div \v &= g &&\text{in}\quad \Omega\,,\\
\v \times \bN &= \h &&\text{on}\quad \bdy\Omega\,,
\end{alignat}
\end{subequations}
in which (\ref{ufromcurldiv_tan}c) prescribes the tangential trace of $\v$. We impose the following conditions on the forcing functions $\f$ and $\h$:
\begin{subequations}\label{solvability_condition_for_curl_div_tangential_trace}
\begin{align}
\div \f = 0 \text{ \ in \ }\Omega,\quad \f \text{ satisfies (\ref{solvability_condition_for_normal_trace_problem})}, \quad\text{and}\quad \h\cdot \bN = 0 \text{ \ on \ } \bdy\Omega\,.
\end{align}
\end{subequations}
Moreover, using (\ref{wid1}) and the identity $\bN \times (\v \times \bN) = \v - (\v \cdot \bN) \bN$ on $\bdy\Omega$, we find that $\f$ and $\h$ must have the relation
$$
\f \cdot \bN = \bdydiv  \h \quad\text{on}\quad\bdy\Omega\,. \eqno{\rm(\ref{solvability_condition_for_curl_div_tangential_trace}b)}
$$

For (\ref{ufromcurldiv_tan}) to have a solution, one additional solvability condition has to be imposed. Let $\u$ be a solution to (\ref{ufromcurldiv}), and $\phi$ be the solution to
\begin{alignat*}{2}
\Delta \phi &= g \qquad&&\text{in}\quad\Omega\,,\\
\phi &= 0 &&\text{on}\quad\bdy\Omega\,.
\end{alignat*}
Then $\w = \v - \u - \nabla \phi$ satisfies
\begin{subequations}\label{ufromcurldiv_tan1}
\begin{alignat}{2}
\curl \w &= {\bf 0} &&\text{in}\quad \Omega\,,\\
\div \w &= 0 &&\text{in}\quad \Omega\,,\\
\w \times \bN &= \h - \u\times \bN \qquad&&\text{on}\quad \bdy\Omega\,.
\end{alignat}
\end{subequations}
Taking the cross product of $\bN$ with (\ref{ufromcurldiv_tan1}c), we find that
$$
\w - (\w\cdot \bN)\bN = \bN \times \h - \big[\u - (\u \cdot \bN)\bN\big] \qquad\text{on}\quad \bdy\Omega\,.
$$
If $C$ is a closed curve on $\bdy\Omega$ enclosing a surface $\Sigma \subseteq \cls{\Omega}$ so that $C = \bdy\Sigma$ with a parametrization ${\mathbf r}$ and $\bfn$ is the unit normal on $\Sigma$ compatible with the orientation of $C$, then the Stokes theorem implies that
\begin{align*}
0 &= \int_\Sigma \curl \w \hspace{-1pt}\cdot\hspace{-1pt} \bfn\, dS = {\oint}_{\hspace{-3pt}C} \w \hspace{-1pt}\cdot\hspace{-1pt} d {\mathbf r} = {\oint}_{\hspace{-3pt}C} [\w - (\w \hspace{-1pt}\cdot\hspace{-1pt} \bN)\bN] \hspace{-1pt}\cdot\hspace{-1pt} d {\mathbf r} = {\oint}_{\hspace{-3pt}C} (\bN \times \h - \u)\hspace{-1pt}\cdot\hspace{-1pt} d {\mathbf r} \\
&= {\oint}_{\hspace{-3pt}C} (\bN \times \h)\hspace{-1pt}\cdot\hspace{-1pt} d {\mathbf r} - \int_\Sigma \curl \u \cdot \bfn\, dS = {\oint}_{\hspace{-3pt}C} (\bN \times \h)\hspace{-1pt}\cdot\hspace{-1pt} d {\mathbf r} - \int_\Sigma \f \cdot \bfn\, dS\,,
\end{align*}
As a consequence, 
$$
\int_\Sigma \f \cdot \bfn\, dS = {\oint}_{\hspace{-3pt}\bdy\Sigma} (\bN\times \h)\cdot d {\mathbf r} \qquad \Forall \Sigma \subseteq \cls{\Omega}, \bdy\Sigma \subseteq \bdy\Omega\,. \eqno{\rm(\ref{solvability_condition_for_curl_div_tangential_trace}c)}
$$
Conditions
(\ref{solvability_condition_for_curl_div_tangential_trace}a), (\ref{solvability_condition_for_curl_div_tangential_trace}b) and (\ref{solvability_condition_for_curl_div_tangential_trace}c)
constitute the solvability conditions for equation (\ref{ufromcurldiv_tan}). We remark that
(\ref{solvability_condition_for_curl_div_tangential_trace}c) follows from (\ref{solvability_condition_for_curl_div_tangential_trace}b) if $\Omega$ is
simply connected; however, if $\Omega$ is not simply connected, then (\ref{solvability_condition_for_curl_div_tangential_trace}c) is not necessarily true even if (\ref{solvability_condition_for_curl_div_tangential_trace}b) holds.

\subsubsection{Uniqueness of solutions}
The kernel of the tangential trace problem (\ref{ufromcurldiv_tan}) has been well studied in  \cite{FoTe1978, Ge1979, Picard1982, BeDoGa1985, AmBeDaGi1998, KoYa2009, AmSe2013} and references therein. We shall  establish uniqueness for the case that $\Omega$ is of class $H^{\rk+1}$ for some $\rk > \dfrac{3}{2}$. Without loss of generality, we can assume  that $\Omega$ is a connected bounded open set. We let $\{\Gamma_i\}_{i=0}^I$ denote the connected components of $\bdy\Omega$ in which $\Gamma_0$ is the boundary of the unbounded connected component of $\Omega^\complement$. To establish uniqueness of  solutions to (\ref{ufromcurldiv_tan}), we look for solutions to the equation
\begin{subequations}\label{homogeneous_curl_div_eq2}
\begin{alignat}{2}
\curl \v &= {\bf 0} \qquad&&\text{in}\quad\Omega\,,\\
\div \v &= 0 &&\text{in}\quad\Omega\,,\\
\v \times \bN &= {\bf 0} &&\text{on}\quad \bdy\Omega\,.
\end{alignat}
\end{subequations}
If $I\ge 1$ (which means $\bdy\Omega$ has multiple connected components), let $\{r_i\}_{i=1}^I$ solve
\begin{subequations}\label{kernel_of_the_tangential_trace_problem}
\begin{alignat}{2}
- \Delta r_i &= 0 &&\text{in}\quad\Omega\,,\\
r_i &= 0 &&\text{on}\quad \Gamma_0\,,\\
r_i &\text{ is constant } &&\text{on}\quad\Gamma_j \ \ \Forall 1\le j\le I\,,\\
\int_{\Gamma_j} \frac{\p r_i}{\p \bN} dS &= \delta_{ij}\,, &&\hspace{-.5cm}\int_{\Gamma_0} \frac{\p r_i}{\p \bN} dS = -1\,,
\end{alignat}
\end{subequations}
whose existence is guaranteed by the Lax-Milgram theorem applied to the variational problem
\begin{equation}\label{variational_problem}
\begin{array}{l}
\displaystyle{}\int_\Omega \nabla r_i \cdot \nabla \varphi\, dx = \varphi\big|_{\Gamma_i} \ \ \\
\displaystyle{}\qquad\qquad \Forall \varphi \in \big\{q \in H^1(\Omega)\,\big|\, q|_{\Gamma_0} = 0 \text{ and }  q \text{ is constant on each $\Gamma_i$ for all $1\le i\le I$}\big\}\,.
\end{array}
\end{equation}
In fact, let $r_i$ be the solution to the variation problem above. Define $C^\ell_i = r_i\big|_{\Gamma_\ell}$, and let $q_i$ be the solution to (\ref{q_eq}). Then $q_i\in H^{\rk+1}(\Omega)$ by Corollary \ref{cor:scalar_elliptic_eq_Sobolev_coeff}. Moreover, $r_i - C^\ell_i q_\ell \in H^1_0(\Omega)$ can be used as a test function in (\ref{variational_problem}); thus
$$
\int_\Omega \nabla r_i \cdot \nabla (r_i - C^\ell_i q_\ell) dx = 0\,.
$$
As a consequence,
\begin{align*}
& \int_\Omega \nabla (r_i - C^j_i q_j) \cdot \nabla (r_i - C^\ell_i q_\ell) dx = - C^j_i \int_\Omega \nabla q_j \cdot \nabla (r_i - C^\ell_i q_\ell) dx \\
&\qquad = - C^j_i \int_{\bdy\Omega} \frac{\p q_j}{\p \bN} r_i dS + C^j_i C^\ell_i \int_{\bdy\Omega} \frac{\p q_j}{\p \bN} q_\ell dS = - C^j_i C^\ell_i \int_{\Gamma_\ell} \frac{\p q_j}{\p \bN} dS + C^j_i C^\ell_i \int_{\Gamma_\ell} \frac{\p q_j}{\p \bN} dS = 0\,;
\end{align*}
hence $r_i - C^j_i q_j$ is a constant. This constant must be zero since $r_i$ and $q_j$ both vanish on $\Gamma_0$; thus we establish the identity
\begin{equation}\label{r_in_terms_of_q}
r_i = C^\ell_i q_\ell \qquad\Forall 1\le i\le I\,.
\end{equation}
Therefore, $r_i \in H^{\rk+1}(\Omega)$. Integrating by parts shows that
\begin{align*}
&- \int_\Omega \varphi\Delta r_i\, dx + \sum_{j=1}^I \varphi\big|_{\Gamma_j}\int_{\Gamma_j} \frac{\p r_i}{\p \bN} \, dS = \varphi\big|_{\Gamma_i} \\
&\qquad\qquad\Forall \varphi \in \big\{q \in H^1(\Omega)\,\big|\, q|_{\Gamma_0} = 0 \text{ and } q \text{ is constant on each $\Gamma_i$ for all $1\le i\le I$}\big\}\,.
\end{align*}
Choosing $\varphi \in H^1_0(\Omega)$ arbitrarily, we conclude that $r_i$ satisfies (\ref{kernel_of_the_tangential_trace_problem}a). Since $\varphi$ can be chosen as an arbitrary constant on $\Gamma_k$, we must have that $\smallexp{$\displaystyle{}\int_{\Gamma_j} \frac{\p r_i}{\p \bN}$} dS = \delta_{ij}$; thus
$$
\int_{\Gamma_0} \frac{\p r_i}{\p \bN} dS = \int_{\Gamma_0} \frac{\p r_i}{\p \bN} dS + \sum_{j=1}^I \int_{\Gamma_j} \frac{\p r_i}{\p \bN} dS - 1 = \int_{\bdy\Omega} \frac{\p r_i}{\p \bN} dS - 1 = \int_\Omega \Delta r_i dx - 1 = -1\,.
$$
In other words, $r_i$ satisfies (\ref{kernel_of_the_tangential_trace_problem}d); hence $r_i$ is a strong solution to (\ref{kernel_of_the_tangential_trace_problem}). We note that $\nabla r_i$ is not identically zero in $\Omega$ since $r_i$ cannot be constant in $\Omega$.

Let $\v \in H^1(\Omega)$ be a solution to (\ref{homogeneous_curl_div_eq2}). Define $\sbF \in H^1(\Omega)$ by
$$
\sbF = \v - \sum_{i=1}^I \left(\int_{\Gamma_i} \v\cdot \bN dS\right) \nabla r_i\,.
$$
Then $\curl \sbF = {\bf 0}$ in $\Omega$. Using (\ref{kernel_of_the_tangential_trace_problem}b,c), we find that
$$
\sbF \times \bN = \v \times \bN - \sum_{i=1}^I \left(\int_{\Gamma_i} \v\cdot \bN dS\right) (\nabla r_i \times \bN) = - \sum_{i=1}^I \left(\int_{\Gamma_i} \v\cdot \bN dS\right) (\bdygrad r_i \times \bN) = 0 \ \ \text{on}\ \ \bdy\Omega\,.
$$
Moreover, for $1\le k\le I$,
$$
\int_{\Gamma_k} \sbF \cdot \bN dS = \int_{\Gamma_k} \v \cdot \bN dS - \sum_{i=1}^I \left(\int_{\Gamma_i} \v\cdot \bN dS\right) \int_{\Gamma_k} \frac{\p r_i}{\p \bN} dS = 0\,.
$$
Since $\div \F = 0$ in $\Omega$,
$$
\int_{\Gamma_0} \sbF \cdot \bN dS = \int_{\Gamma_0} \sbF \cdot \bN dS + \sum_{k=1}^I \int_{\Gamma_k} \sbF \cdot \bN dS = \int_{\bdy\Omega} \sbF \cdot \bN dS = \int_{\bdy\Omega} \div \sbF dx = 0\,;
$$
hence,  $\F$ satisfies (\ref{solvability_condition_for_normal_trace_problem}). Lemma \ref{lem:curlu=f_est} then guarantees the existence of a vector $\Varpsi\in H^2(\Omega)$ satisfying
\begin{alignat*}{2}
\curl \Varpsi &= \F \qquad&&\text{in}\quad \Omega\,,\\
\div \Varpsi &= 0 &&\text{in}\quad \Omega\,,\\
\Varpsi \cdot \bN &= 0 &&\text{on}\quad \bdy\Omega\,.
\end{alignat*}
As a consequence,
$$
\|\sbF\|^2_{L^2(\Omega)} = \int_\Omega \sbF \cdot \curl \Varpsi dx = \int_\Omega \curl \sbF \cdot \Varpsi dx + \int_{\bdy\Omega} (\bN \times \sbF) \cdot \Varpsi dS = 0
$$
which implies that $\sbF = 0$. In other words, if $\v$ is a solution to (\ref{homogeneous_curl_div_eq2}), then
\begin{equation}\label{kernel_representation}
\v = \sum_{i=1}^I \left(\int_{\Gamma_i} \v\cdot \bN dS\right) \nabla r_i \qquad\text{in}\quad\Omega\,;
\end{equation}
that is, $\{\nabla r_i\}_{i=1}^I$ spans the solution space of (\ref{homogeneous_curl_div_eq2}). Therefore, as long as the boundary of a connected component of $\Omega$ has only one connected component, then only the trivial solution to (\ref{homogeneous_curl_div_eq2}) exists, and
uniqueness is established.

\begin{remark}
The identities {\rm(\ref{r_in_terms_of_q})} and {\rm(\ref{kernel_representation})} show that $\{\nabla q_i\}_{i=1}^I$ span the solution space of {\rm(\ref{homogeneous_curl_div_eq2})}; see, for example, \cite{Ge1979, Picard1982, BeDoGa1985, AmBeDaGi1998, KoYa2009, AmSe2013}.
\end{remark}

\subsubsection{Existence of solutions}
Let the pair $(\w,p)$ denote the solutions to the following elliptic problems:
\begin{alignat*}{2}
\Delta \w &= {\bf 0} &&\text{in}\quad\Omega\,,\\
\w &= \bN \times \h \qquad&&\text{on}\quad\bdy\Omega\,,
\end{alignat*}
and
\begin{alignat*}{2}
\Delta p &= g - \div \w \qquad&&\text{in}\quad\Omega\,,\\
p &= 0 &&\text{on}\quad\bdy\Omega\,.
\end{alignat*}
Then $(\w,p)$ satisfies
\begin{equation}\label{wp_div_Lap_eq_estimate}
\|\w\|_{H^\ell(\Omega)} + \|p\|_{H^{\ell+1}(\Omega)} \le C(|\bdy\Omega|_{H^{\rk+0.5}}) \big[\|g\|_{H^{\ell-1}(\Omega)} + \|\h\|_{H^{\ell-0.5}(\bdy\Omega)} \big]\,.
\end{equation}
We note that if $\u$ is a solution to the equation
\begin{subequations}\label{ufromcurldiv_tan2}
\begin{alignat}{2}
\curl \u &= \f - \curl \w \qquad&&\text{in}\quad\Omega\,,\\
\div \u &= 0 &&\text{in}\quad\Omega\,,\\
\u \times \bN &= {\bf 0} &&\text{on}\quad\bdy\Omega\,,
\end{alignat}
\end{subequations}
then $\v = \u + \w + \nabla p$ is a solution to (\ref{ufromcurldiv_tan}).

We first establish existence of an $H^1(\Omega)$ solution to (\ref{ufromcurldiv_tan2}), and then employ our regularity theory on Sobolev class domains to show
that  solutions to (\ref{ufromcurldiv_tan}) have the desired $H^\ell(\Omega)$-regularity stated in Theorem \ref{thm:main_thm1}.
We use the following lemma, which is Theorem 3.17 in \cite{AmBeDaGi1998}, to establish the existence of a  $\u \in H^1(\Omega)$ solving
(\ref{ufromcurldiv_tan2}).
\begin{lemma}\label{thm:existence_to_tangential_trace_problem}
Suppose that $\Omega \subseteq \bbR^3$ is a Lipschitz domain, and $\{\Sigma_j\}_{j=1}^J$ are cuts of $\Omega$; that is,
$\Sigma_j \subseteq \Omega$ for all
$j\in \{1,\cdots, J\}$ are connected smooth $2$-manifolds with unit normal $\bfn$ such that $\Omega^0 \equiv \Omega \backslash \bigcup\limits_{j=1}^J \Sigma_j$ is simply connected, where $J$ is the minimal number of cuts required. Then a function $\text{\bf\emph{F}} \in H^1(\Omega)$ satisfies $\div \text{\bf\emph{F}} = 0$ in $\Omega$, $\text{\bf\emph{F}}\cdot \bN = 0$ on $\bdy\Omega$, and
\begin{align*}
\int_{\Sigma_j} \text{\bf\emph{F}} \cdot \bfn \,dS &= 0 \text{ for each cut $\Sigma_j$, $1\le j\le J$};
\end{align*}
if and only if there exists a unique a vector potential $\u \in H^1(\Omega)$ satisfying
\begin{alignat*}{2}
\curl \u &= \text{\bf\emph{F}} \qquad&&\text{in}\quad\Omega\,,\\
\div \u &= 0 &&\text{in}\quad\Omega\,,\\
\u \times \bN &= {\bf 0} &&\text{on}\quad\bdy\Omega\,,\\
\int_\Gamma \u \cdot \bN \,dS &= 0 &&\text{for each connected component $\Gamma$ of $\bdy\Omega$}\,.
\end{alignat*}
\end{lemma}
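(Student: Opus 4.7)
The reverse direction will be a direct verification. Given $\u$ with the stated properties, the identity $\operatorname{div}\operatorname{curl}\u\equiv 0$ yields $\operatorname{div}\F=0$; identity (\ref{wid1}) of Lemma \ref{lem:tangential_component_of_dwdN} gives
\[
\F\cdot\bN = \operatorname{curl}\u\cdot\bN = \bdydiv(\u\times\bN) = 0 \quad\text{on }\bdy\Omega,
\]
since $\u\times\bN=0$. For each cut, Stokes' theorem gives $\int_{\Sigma_j}\F\cdot\bfn\,dS = \oint_{\bdy\Sigma_j}\u\cdot d{\bf r}$; because $\bdy\Sigma_j\subseteq\bdy\Omega$ and $\u$ is purely normal to $\bdy\Omega$, the integrand $\u\cdot d{\bf r}$ vanishes along the curve, so the right-hand side is zero.

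For the forward direction, the plan is a three-stage construction. \textbf{Stage 1 (preliminary potential).} Since $\operatorname{div}\F=0$ and $\F\cdot\bN=0$, the flux condition (\ref{solvability_condition_for_normal_trace_problem}) holds trivially, so Lemma \ref{lem:curlu=f_est} produces $\Varpsi\in H^1(\Omega)$ satisfying $\operatorname{curl}\Varpsi=\F$, $\operatorname{div}\Varpsi=0$, and $\Varpsi\cdot\bN=0$ on $\bdy\Omega$. \textbf{Stage 2 (tangential trace correction).} I will seek a function $p$ harmonic in $\Omega$ such that $\u_0:=\Varpsi+\nabla p$ satisfies $\u_0\times\bN=0$; the boundary condition is equivalent to $\bdygrad p = -\rP_{\bN^\perp}\Varpsi$ on $\bdy\Omega$. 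Existence of a single-valued $p$ is the key topological step: locally, for any contractible loop $C\subseteq\bdy\Omega$ bounding a disk $D\subseteq\bdy\Omega$, Stokes' theorem gives
\[
\oint_C \Varpsi\cdot d{\bf r} = \int_D \operatorname{curl}\Varpsi\cdot\bN\,dS = \int_D \F\cdot\bN\,dS = 0,
\]
so the tangential trace of $\Varpsi$ is closed on $\bdy\Omega$. The homology of $\bdy\Omega$ is generated by the cut boundaries $\{\bdy\Sigma_j\}_{j=1}^J$, and on each of these
\[
\oint_{\bdy\Sigma_j}\Varpsi\cdot d{\bf r} = \int_{\Sigma_j}\operatorname{curl}\Varpsi\cdot\bfn\,dS = \int_{\Sigma_j}\F\cdot\bfn\,dS = 0
\]
by the cut-flux hypothesis. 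Hence a globally single-valued boundary trace $p\in H^{3/2}(\bdy\Omega)$ with $\bdygrad p=-\rP_{\bN^\perp}\Varpsi$ exists, and its harmonic extension to $\Omega$ yields $\u_0\in H^1(\Omega)$ with $\operatorname{curl}\u_0=\F$, $\operatorname{div}\u_0=0$, and $\u_0\times\bN=0$.

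\textbf{Stage 3 (integral normalization).} The harmonic functions $r_i$ from (\ref{kernel_of_the_tangential_trace_problem}) satisfy $\operatorname{curl}\nabla r_i=0$, $\operatorname{div}\nabla r_i=0$, and $\nabla r_i\times\bN=0$ (because $r_i$ is constant on each component of $\bdy\Omega$), with the normalization $\int_{\Gamma_j}\tfrac{\p r_i}{\p\bN}\,dS=\delta_{ij}$ for $1\le i,j\le I$. Setting
\[
\u = \u_0 - \sum_{i=1}^I\Big(\int_{\Gamma_i}\u_0\cdot\bN\,dS\Big)\nabla r_i
\]
preserves all properties of $\u_0$ and forces $\int_{\Gamma_j}\u\cdot\bN\,dS=0$ for $1\le j\le I$; the corresponding identity on $\Gamma_0$ follows from $\int_{\bdy\Omega}\u\cdot\bN\,dS=\int_\Omega\operatorname{div}\u\,dx=0$. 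Uniqueness then follows immediately from the kernel characterization (\ref{kernel_representation}): any solution of the homogeneous problem is a linear combination $\sum c_i\nabla r_i$, and the imposed integral normalizations force each $c_i$ to vanish. The main obstacle will be Stage 2, where single-valuedness of $p$ on $\bdy\Omega$ rests essentially on the cut-flux hypothesis; the $H^1$-regularity of the final $\u$ follows from $\Varpsi\in H^1(\Omega)$ together with standard elliptic regularity for the Dirichlet problem defining $p$.
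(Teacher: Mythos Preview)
The paper does not prove this lemma; it is quoted as Theorem~3.17 of \cite{AmBeDaGi1998} and used as a black box. So there is no ``paper's proof'' to compare against, and your proposal must stand on its own.

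Your reverse direction and Stages~1 and~3 are fine in outline, but Stage~2 has a genuine topological gap. You assert that ``the homology of $\bdy\Omega$ is generated by the cut boundaries $\{\bdy\Sigma_j\}_{j=1}^J$''; this is false. By the half-lives--half-dies principle (or Poincar\'e--Lefschetz duality), $\dim H_1(\bdy\Omega;\bbR)=2J$, not $J$. The curves $\bdy\Sigma_j$ bound in $\Omega$ and account for only half the generators; the other $J$ generators (the ``longitudes'') are nontrivial in $H_1(\Omega)$ and do \emph{not} bound any surface in $\cls\Omega$. For a concrete failure, take $\Omega$ the solid torus and $\F=0$: the field $\Varpsi=(-y,x,0)/(x^2+y^2)$ satisfies $\curl\Varpsi=0$, $\div\Varpsi=0$, $\Varpsi\cdot\bN=0$, yet its period over the longitude is $2\pi$, so no single-valued $p$ with $\bdygrad p=-\rP_{\bN^\perp}\Varpsi$ exists. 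Your Stage~1 potential $\Varpsi$ is not unique (the kernel of the normal-trace problem \eqref{homogeneous_curl_div_eq1} has dimension $J$ on a non--simply-connected domain), and nothing in your argument rules out such longitude periods for the particular $\Varpsi$ produced by Lemma~\ref{lem:curlu=f_est}. The fix is to subtract from $\Varpsi$ a suitable combination of the $J$ harmonic Neumann fields spanning that kernel so as to kill the longitude periods---but you have not done this, and the argument as written does not close.

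A secondary issue: you invoke Lemma~\ref{lem:curlu=f_est} in Stage~1, but that lemma is stated for $H^{\rk+1}$-domains with $\rk>3/2$, whereas the present statement is for Lipschitz $\Omega$. This is a regularity mismatch that would need to be addressed (or the hypothesis strengthened) for the argument to apply as stated.
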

Now we prove the existence of a solution $\u\in H^1(\Omega)$ to (\ref{ufromcurldiv_tan2}). First, noting that our domain $\Omega$ is of class
$H^{\rk+1}$ with $\rk > \novertwo$, by the  Sobolev embedding theorem,  $\Omega$ is a Lipschitz domain. Let $\text{\bf\emph{F}} = \f - \curl \w$. It is clear that $\div\text{\bf\emph{F}} = 0$ in $\Omega$. By the fact that
$$
\h\cdot \bN =0,\quad \w = \bN \times \h \quad\text{and}\quad \f \cdot \bN = \bdydiv  \h \quad\text{on \ $\bdy\Omega$}\,,
$$
(\ref{wid1}) implies that
$$
\text{\bf\emph{F}} \cdot \bN = \f\cdot \bN - \bdydiv  (\w \times \bN) = \f \cdot \bN - \bdydiv  \h = 0 \quad\text{on \ $\bdy\Omega$}.
$$
Finally, let $\text{\bf\emph{t}}$ denote the unit tangent vector on $\bdy\Sigma$ such that $\widetilde{\bfn} = \text{\bf\emph{t}} \times \bfn$ is the outward pointing unit normal to $\Sigma_j$. Since $\f$ satisfies (\ref{solvability_condition_for_curl_div_tangential_trace}c), using (\ref{wid1}) again we conclude that
\begin{align*}
\int_{\Sigma_j} \text{\bf\emph{F}} \cdot \bfn \,dS &= \int_{\Sigma_j} \f \cdot \bfn \,dS - \int_{\Sigma_j} \div_{\Sigma_j} (\w\times \bfn) \,dS = \int_{\Sigma_j} \f \cdot \bfn \,dS - \int_{\bdy\Sigma_j} (\w \times \bfn) \cdot \widetilde{\bfn} \,ds \\
&= \int_{\Sigma_j} \f \cdot \bfn \,dS - \int_{\bdy\Sigma_j} \w \cdot (\bfn \times \widetilde{\bfn}) \,ds = \int_{\Sigma_j} \f \cdot \bfn \,dS - \int_{\bdy\Sigma_j} \w \cdot \text{\bf\emph{t}} \,ds\\
&= \int_{\Sigma_j} \f \cdot \bfn \,dS - {\oint}_{\hspace{-3pt}\bdy\Sigma_j} (\bN\times \h)\cdot d {\mathbf r} = 0\,.
\end{align*}
Therefore, \text{\bf\emph{F}} satisfies all the conditions of Lemma \ref{thm:existence_to_tangential_trace_problem}, and so we have established the
existence of a solution $\u\in H^1(\Omega)$ to (\ref{ufromcurldiv_tan2}).  We next establish the regularity of this solution.

\subsubsection{Regularity of solutions}
We follow the proof of Lemma \ref{lem:curlu=f_est} to establish the following
\begin{lemma}
Let $\Omega\subseteq \bbR^3$ be a bounded $H^{\rk+1}$-domain for some $\rk > \smallexp{$\displaystyle{}\frac{3}{2}$}$\,. Then for all $(\f,\w) \in H^{\ell-1}(\Omega)\times H^\ell(\Omega)$ for some $1\le \ell \le \rk$, there exists a solution $\u\in H^\ell(\Omega)$ to {\rm(\ref{ufromcurldiv_tan2})} satisfying
\begin{equation}\label{curlu=f_tangential_Hl_est}
\|\u\|_{H^\ell(\Omega)} \le C(|\bdy\Omega|_{H^{\rk+0.5}}) \big[\|\f\|_{H^{\ell-1}(\Omega)} + \|\w\|_{H^\ell(\Omega)}\big] \,.
\end{equation}
\end{lemma}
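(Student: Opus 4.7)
The plan is to mimic the inductive scheme used in the proof of Lemma \ref{lem:curlu=f_est}, only with the boundary condition $\u\times\bN=0$ replacing $\u\cdot\bN=0$, and with Theorem \ref{thm:vector-valued_elliptic_eq_Sobolev_coeff_v2} (the version adapted to $\u\times\bfw=0$) replacing Corollary \ref{cor:vector-valued_elliptic_eq_Sobolev_coeff}. The base case $\ell=1$ is already in hand: Lemma \ref{thm:existence_to_tangential_trace_problem} together with an energy estimate in $H^1_\tau(\Omega)$ (using the Poincar\'e inequality (\ref{vectorPoincareineq})) yields
$$\|\u\|_{H^1(\Omega)}\le C\big[\|\f\|_{L^2(\Omega)}+\|\curl\w\|_{L^2(\Omega)}\big]\le C\big[\|\f\|_{L^2(\Omega)}+\|\w\|_{H^1(\Omega)}\big].$$

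For the inductive step, assume $\u\in H^j(\Omega)$ with $1\le j\le \ell-1$. Using (\ref{curlcurlu_identity}) together with $\div\u=0$, the solution $\u$ satisfies, in the sense of distributions,
$$-\Delta\u=\curl(\f-\curl\w)\qquad\text{in}\quad\Omega.$$
The boundary data compatible with Theorem \ref{thm:vector-valued_elliptic_eq_Sobolev_coeff_v2} comes out of Lemma \ref{lem:tangential_component_of_dwdN}: since $\u\times\bN=0$ implies $\rP_{\bN^\perp}\u=0$ on $\bdy\Omega$, identity (\ref{wid2}) combined with $\div\u=0$ reduces to
$$\frac{\p\u}{\p\bN}\cdot\bN+2\rH(\u\cdot\bN)=0\qquad\text{on}\quad\bdy\Omega.$$
Hence (after adding $\u$ to both sides of the PDE for coercivity, as is standard) $\u$ solves a system of the form (\ref{vector-valued_elliptic_eq2}) with $a^{jk}=\delta^{jk}$, $\bfw=\bN$, interior data $\u+\curl(\f-\curl\w)$, and boundary data $g=-2\rH(\u\cdot\bN)$. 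Applying Theorem \ref{thm:vector-valued_elliptic_eq_Sobolev_coeff_v2} yields
$$\|\u\|_{H^{j+1}(\Omega)}\le C(|\bdy\Omega|_{H^{\rk+0.5}})\Big[\|\u\|_{H^{j-1}(\Omega)}+\|\curl(\f-\curl\w)\|_{H^{j-1}(\Omega)}+\|\rH(\u\cdot\bN)\|_{H^{j-0.5}(\bdy\Omega)}\Big].$$
The first two interior terms are bounded by $\|\u\|_{H^j(\Omega)}+\|\f\|_{H^j(\Omega)}+\|\curl\curl\w\|_{H^{j-1}(\Omega)}\le\|\u\|_{H^j(\Omega)}+\|\f\|_{H^{\ell-1}(\Omega)}+\|\w\|_{H^\ell(\Omega)}$ since $j\le\ell-1$.

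For the curvature-dependent boundary term, when $\rk\ge 3$ the estimate $\rk-1.5>1=\frac{\n-1}{2}$ places us inside the multiplicative-algebra regime on $\bdy\Omega$, so that Proposition \ref{prop:HkHl_product} (or its boundary analogue) gives
$$\|\rH(\u\cdot\bN)\|_{H^{j-0.5}(\bdy\Omega)}\le C(|\bdy\Omega|_{H^{\rk+0.5}})\,\|\u\|_{H^j(\Omega)},$$
and the induction closes after applying interpolation and Young's inequality to absorb $\|\u\|_{H^j(\Omega)}$ against $\|\u\|_{H^{j+1}(\Omega)}$ and $\|\u\|_{H^1(\Omega)}$. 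The delicate obstacle is the borderline case $\rk=2$, $\ell=j+1=2$, in which $\rH\in H^{0.5}(\bdy\Omega)$ sits exactly at the critical regularity and $H^{0.5}(\bdy\Omega)$ fails to be a multiplicative algebra in $\bbR^3$. I would resolve this exactly as in the $\rk=2$ branch of Lemma \ref{lem:curlu=f_est}: introduce a regularized problem with $\rH$ replaced by $\Lambda_\epsilon\rH$ (using the horizontal convolution $\Lambda_\epsilon$ in each boundary chart) and a large zero-order term $\lambda\u^\epsilon$ ensuring coercivity, obtain first an $\epsilon$-dependent $H^2$-bound, then re-estimate through $\|\u^\epsilon\|_{H^{1.75}(\Omega)}$ by interpolation to extract an $\epsilon$-uniform $H^2$-bound, and finally pass to the limit using uniqueness of the Lax--Milgram solution of the limiting problem. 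This critical-regularity absorption step is the principal technical difficulty; the remaining estimates (\ref{curlu=f_tangential_Hl_est}) follow by routine bookkeeping.
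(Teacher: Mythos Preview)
Your proposal is correct and follows essentially the same route as the paper: derive that $\u$ weakly solves the elliptic system $-\Delta\u=\curl(\f-\curl\w)$ with $\u\times\bN=0$ and $\dfrac{\p\u}{\p\bN}\cdot\bN+2\rH(\u\cdot\bN)=0$ on $\bdy\Omega$, then invoke the tangential-trace regularity theory (Theorem \ref{thm:vector-valued_elliptic_eq_Sobolev_coeff_v2}) by induction exactly as in Lemma \ref{lem:curlu=f_est}. Your treatment is in fact more explicit than the paper's, which simply records the system \eqref{curlcurlu=curlf2} and then appeals to the proof of Lemma \ref{lem:curlu=f_est} without spelling out the borderline $\rk=2$ argument again.
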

\begin{proof}
It suffices to prove the case $\ell \ge 2$. Using (\ref{int_curlcurl_id}) we find that if $\Varphi \in H^1_\tau(\Omega)$,
\begin{align*}
\int_\Omega \curl (\f - \curl \w) \cdot \Varphi \,dx &= \int_{\bdy\Omega} (\f - \curl \w) \cdot (\Varphi \times \bN) \,dS + \int_\Omega \f \cdot \curl \Varphi \,dx = \int_\Omega \curl \u \cdot \curl \Varphi \,dx \\
&= \int_\Omega \nabla \u : \nabla \Varphi + \int_{\bdy\Omega} (\u \cdot \bN) \Big[\bdydiv  \Varphi + 2 \rH (\Varphi \cdot \bN) \Big] \,dS \\
&= \int_\Omega \nabla \u : \nabla \Varphi \,dx + \int_{\bdy\Omega} 2 \rH (\u \cdot \bN) (\Varphi \cdot \bN)\,dS\,.
\end{align*}
In other words, $\u$ is a weak solution of the following elliptic system:
\begin{subequations}\label{curlcurlu=curlf2}
\begin{alignat}{2}
- \Delta \u &= \curl\,\f + \Delta \w \qquad&&\text{in}\quad\Omega\,,\\
\u \times \bN &= 0 &&\text{on}\quad\bdy\Omega\,,\\
\frac{\p \u}{\p \bN} \cdot \bN + 2 \rH (\u \cdot \bN) &= 0 &&\text{on}\quad\bdy\Omega\,.
\end{alignat}
\end{subequations}
Similar to the proof of Lemma \ref{lem:curlu=f_est}, by induction we conclude that $\u \in H^\ell(\Omega)$ satisfies
$$
\|\u\|_{H^\ell(\Omega)} \le C \|\curl\, \f + \Delta \w\|_{H^{\ell-2}(\Omega)}
$$
which concludes the lemma.
\end{proof}
The regularity of $\u$ together with the inequality (\ref{wp_div_Lap_eq_estimate}) concludes the proof of regularity of the tangential trace problem, hence we have finished the proof of Theorem \ref{thm:main_thm1}.

\section{The Proof of Theorem {\rm\ref{thm:main_thm3}}}\label{sec6}
Now we proceed to the proof of Theorem {\rm\ref{thm:main_thm3}}. We only prove (\ref{Hodge_elliptic_estimate1b}) since the proof of (\ref{Hodge_elliptic_estimate2b}) is similar.
By assumption $\bdy \Omega $ is in a small tubular neighborhood of the normal bundle over $\bdy \mathcal{D} $; hence, there is height function
$h(x,t)$ such that each point on $\bdy \Omega$ is given by $x+ h(x)\nn(x)$, $x \in \bdy \mathcal{D} $, where $\nn$ is the outward-pointing unit normal to $\bdy
\mathcal{D} $.
 Let $\psi:\D \to \bbR^2$ solve
\begin{alignat*}{2}
\Delta \psi &= 0 &&\text{in}\quad\D\,,\\
\psi &= e + h \nn \qquad&&\text{on}\quad\bdy\D\,,
\end{alignat*}
where $e$ is the identity map. Then $\psi:\bdy\D \to \bdy\Omega$, and standard elliptic estimates show that for some constant
$C = C(|\bdy\D|_{H^{\rk+0.5}})$,
\begin{equation}\label{psi_est}
\|\nabla \psi - \id\|_{H^\rk(\D)} \le C \|h\|_{H^{\rk+0.5}(\bdy\D)} \le C \epsilon \ll 1
\end{equation}
which further shows that $\psi:\D \to \Omega$ is an $H^{\rk+1}$-diffeomorphism since $\|h\|_{H^{\rk+0.5}(\bdy\D)} < \epsilon \ll 1$.
We note that according to the proofs of Corollary \ref{cor:JA_est} and Theorem \ref{thm:HkHl_product2}, there exist generic constants $c_1$ and $C_1$ independent of $|\bdy\Omega|_{H^{\rk+0.5}}$ such that if $j\le k+1$,
\begin{equation}
c_1(1-\epsilon) \|f\|_{H^j(\Omega)} \le \|f\circ \psi\|_{H^j(\D)} \le C_1(1 + \epsilon) \|f\|_{H^j(\Omega)}\qquad\Forall f\in H^j(\Omega)\,. \label{another_f_comp_psi_est}
\end{equation}
As a consequence, letting $A = (\nabla \psi)^{-1}$ we obtain that
\begin{align*}
\|(\curl \u)\circ \psi\|_{H^\rk(\D)} &= \|\varepsilon_{ijk} A^r_j (\u^k\circ \psi),_r\|_{H^\rk(\D)} = \|\varepsilon_{ijk} (A^r_j - \delta^r_j) (\u^k\circ \psi),_r + \varepsilon_{ijk} (\u^k\circ \psi),_j\|_{H^\rk(\D)} \\
&\ge \|\curl (\u \circ \psi)\|_{H^\rk(\D)} - C \|A - \id\|_{H^\rk(\D)} \|\u\circ \psi\|_{H^{\rk+1}(\D)}\,,
\end{align*}
where the constant $C = C(|\bdy\D|_{H^{\rk+0.5}})$. Therefore,
\begin{subequations}\label{div-curl_est}
\begin{align}
\|\curl (\u \circ \psi)\|_{H^\rk(\D)} &\le \|(\curl \u)\circ \psi\|_{H^\rk(\D)} + C \epsilon \|\u\circ \psi\|_{H^{\rk+1}(\D)} \nonumber\\
&\le C_1 \|\curl \u\|_{H^\rk(\Omega)} + (C_1 + C) \epsilon \|\u\|_{H^{\rk+1}(\Omega)}\,.
\end{align}
\end{subequations}
Similarly,
$$
\|\div (\u \circ \psi)\|_{H^\rk(\D)} \le C_1 \|\div \u\|_{H^\rk(\Omega)} + (C_1 + C) \epsilon \|\u\|_{H^{\rk+1}(\Omega)} \,. \eqno{\rm(\ref{div-curl_est}b)}
$$
Let $\nn$ be the outward-pointing unit normal to $\bdy\D$. Then by the identity $\bN \circ \psi = \dfrac{A^\rT \nn}{|A^\rT \nn|}$, we find that
$$
\|(\bN \circ \psi) - \nn\|_{H^{\rk-0.5}(\bdy D)} \le C_2(|\bdy\D|_{H^{\rk+0.5}}) \epsilon \,.
$$
Therefore, in addition to estimate (\ref{div-curl_est}a,b), we also have
\begin{align*}
\|\bpD (\u\circ \psi)\cdot \nn\|_{H^{\rk-0.5}(\bdy\D)} &\le \|\nabla_{\bdy \D} (\u\circ \psi) \cdot (\bN \circ \psi)\|_{H^{\rk-0.5}(\bdy\D)} + C_2 \epsilon \|\u\|_{H^{\rk+1}(\bdy\Omega)} \\
&\le C_1 (1+\epsilon) \|\bdygrad \u \cdot \bN\|_{H^{\rk-0.5}(\bdy\Omega)} + C_2 \epsilon \|\u\|_{H^{\rk+1}(\Omega)} \\
&\le C_1 \|\bdygrad \u \cdot \bN\|_{H^{\rk-0.5}(\bdy\Omega)} + (C_1 + C_2) \epsilon \|\u\|_{H^{\rk+1}(\Omega)}\,.
\end{align*}
Finally, by Theorem \ref{thm:main_thm2}, there exists a generic constant $C_3 = C_3(|\bdy\D|_{H^{\rk+0.5}})$ such that
$$
\|\v\|_{H^{\rk+1}(\D)} \hspace{-1pt}\le\hspace{-1pt} C_3 \Big[\|\v\|_{L^2(\D)} \hspace{-1pt}+\hspace{-1pt} \|\curl \v\|_{H^\rk(\D)} \hspace{-1pt}+\hspace{-1pt} \|\div \v\|_{H^\rk(\D)} \hspace{-1pt}+\hspace{-1pt} \|\bpD \v \cdot \nn \|_{H^{\rk-0.5}(\bdy\D)} \Big] \quad \Forall \v \in H^{\rk+1}(\D)\,.
$$
Letting $\v = \u\circ \psi$, using (\ref{another_f_comp_psi_est}) and (\ref{div-curl_est}) we find that
\begin{align*}
c_1 (1-\epsilon) \|\u\|_{H^{\rk+1}(\Omega)} &\le C_3 C_1 \Big[\|\u\|_{L^2(\Omega)} \hspace{-1pt}+\hspace{-1pt} \|\curl \u\|_{H^\rk(\Omega)} \hspace{-1pt}+\hspace{-1pt} \|\div \u\|_{H^\rk(\Omega)} \hspace{-1pt}+\hspace{-1pt} \|\bdygrad \u \cdot \bN \|_{H^{\rk-0.5}(\bdy\Omega)} \Big] \\
&\quad + C_3(C_1 + C_2 + C)\epsilon \|\u\|_{H^{\rk+1}(\Omega)} \qquad \Forall \u\in H^{\rk+1}(\Omega)\,.
\end{align*}
Since $\epsilon \ll 1$, the last term on the right-hand side can be absorbed by the left-hand side, yielding a linear inequality. The conclusion
of Theorem {\rm\ref{thm:main_thm3}} then follows by linear interpolation.

\appendix
\section{Proofs of the inequalities in Section \ref{sec:useful_ineq}}\label{appendixA}
\begin{proof}[Proof of Proposition {\rm\ref{prop:Cr_domain_charts}}]
Let $p \in \bdy\Omega$. Since $\bdy\Omega$ is a $\mC^\rk$-surface, there exists a tangent space $T_p \bdy\Omega$ to the boundary $\bdy\Omega$ at $p$, and in a neighborhood of $p$ $\bdy\Omega$ can be view as a graph of a function $\phi$ defined on that neighborhood. In other words, there exist 
an (\smallexp{$(\n-1)$}-dimensional) ball $D(p,R) \subseteq T_p \bdy\Omega$ and a $\mC^\rk$-map $\phi:D(p,R) \to \bdy\Omega$ such that
$$
\big\{(y',\phi(y'))\in \bbR^\n\,\big|\, y'\in D(p,R)\big\} \subseteq \bdy\Omega\,.
$$
Choose an othornormal basis $\{\re_1,\cdots,\re_{\n-1}\}$ on the tangent space $T_p \bdy\Omega$ such that every $y'\in T_p\bdy\Omega$ can be written as $y' = (y_1,\cdots,y_{\n-1})$ in the sense that $y' = y_1 \re_1 + \cdots y_{\n-1} \re_{\n-1}$, and let $\bN$ denote the inward-pointing unit normal to the tangent plane $T_{p} \bdy\Omega$. Then $\{\re_1,\cdots,\re_{\n-1},\bN\}$ as the orthonormal basis in $\bbR^\n$. We define $\vartheta:B(0,R) \to \bbR^\n$, where $B(0,R)$ is a $\n$-dimensional ball, by
$$
\vartheta(y',y_\n) = y' + \big(y_\n + \phi(y')\big) \bN \qquad\Forall y' \in D(p,R)\,, (y',y_\n) \in B(0,R)\,,
$$
or equivalently,
$$
\vartheta(y_1,\cdots,y_\n) = \big(y_1,\cdots,y_{\n-1}, y_\n + \phi(y')\big)\,.
$$

Since $T_p \bdy\Omega$ is a tangent space, $\vartheta,_\alpha(0) \cdot \bN = 0$ for all $1\le \alpha\le \n-1$; thus $\phi,_\alpha(0) = 0$ for all $1\le \alpha \le \n-1$. By the continuity of $\nabla \phi$, there exists $r> 0$ such that
$$
|\phi,_\alpha(y')| < \varepsilon \qquad\text{whenever}\quad |y'| < r \text{ and } 1\le \alpha \le \n-1\,.
$$
It is clear that $\vartheta$ is injective on $B(0,r)$, and the inverse function theorem implies that $\vartheta:B(0,r) \to \U := \vartheta(B(0,r))$ is a $\mC^\rk$-diffeomorphism between open sets $B(0,r)$ and $\U$. Moreover, $\vartheta: B(0,r) \cap \{y_\n = 0\} \to \bdy\Omega$ and by the choice of $\bN$, $\vartheta: B(0,r) \cap \{y_\n > 0\} \to \Omega$. Finally, since
$$
\nabla \vartheta(y) = \left[\begin{array}{ccccc}
1 & 0 & \cdots & \cdots & 0\\
0 & \ddots & \ddots & & \vdots \\
\vdots & & \ddots & \ddots & 0 \\
0 & \cdots & 0 & 1 & 0\\
\phi,_1(y') &\cdots & \cdots & \phi,_{\n-1}(y') & 1
\end{array}
\right]\,,
$$
we immediately conclude that $\det(\nabla \vartheta) = 1$ and $\|\nabla \vartheta - \id\|_{L^\infty(B(0,r))} < \varepsilon$. Therefore, we establish that for each $p\in \bdy\Omega$, there exist $\vartheta:B(0,r) \to \U$ satisfying
\begin{enumerate}
\item $\vartheta: B(0,r) \to \U \text{ is a } \mC^\rk\text{-diffeomorphism} $;
\item $\vartheta: B(0,r) \cap \{y_n=0\} \to \U \cap \p \Omega $;
\item $\vartheta: B(0,r) \cap \{y_\n>0\} \to \U \cap \Omega $;
\item $\det (\nabla \vartheta) =1 $;
\item $\|\nabla \vartheta - \id\|_{L^\infty(B(0,r))} \le \varepsilon$.
\end{enumerate}
The proposition is concluded by the fact that $\bdy\Omega$ is compact.
\end{proof}

\begin{proof}[Proof of Proposition {\rm\ref{prop:HkHl_product}}]
We estimate $\nabla^j f \nabla^{\ell-j} g$ for $j=1,\cdots,\ell$ as follows:
\begin{enumerate}
\item[(1)] If $1\le j\le \novertwo$, by the Sobolev inequalities
  \begin{align*}
  \|w\|_{L^\frac{\n}{j-\sigma}(\Omega)} \hspace{-2pt}&\le 
  C_\sigma \|w\|_{H^{\frac{\n}{2} - j + \sigma}(\Omega)} \quad\text{(\hspace{1pt}if $0< \sigma < 1$)}\,,\\
  \|w\|_{L^\frac{2\n}{\n-2(j-\sigma)}(\Omega)} \hspace{-2pt}&\le C \|w\|_{H^{j-\sigma}(\Omega)}\,,
  \end{align*}
  we find that
  \begin{align*}
  \qquad\quad\|\nabla^j f \nabla^{\ell-j} g\|_{L^2(\Omega)} &\le \|\nabla^j f\|_{L^\frac{\n}{j-\sigma}(\Omega)} \|\nabla^{\ell-j} g\|_{L^\frac{2\n}{\n-2(j-\sigma)}(\Omega)} \le C_\sigma \|f\|_{H^{\frac{\n}{2}+\sigma}(\Omega)} \|g\|_{H^{\ell-\sigma}(\Omega)} \,.
  \end{align*}

\item[(2)] If $j = \ell$, by the Sobolev inequality
  $$
  \qquad\qquad\quad \|w\|_{L^\infty(\Omega)} \hspace{-3pt}\le C_\sigma \|w\|_{H^{\frac{\n}{2} + \sigma}(\Omega)}\,,
  $$
  we find that
  $$
  \|\nabla^j f \nabla^{\ell-j} g\|_{L^2(\Omega)} 
  \le C_\sigma \|f\|_{H^\ell(\Omega)} \|g\|_{H^{\frac{\n}{2} + \sigma}(\Omega)} \,.
  $$

\item[(3)] If $\novertwo < j < \ell$ {\rm(}this happens only when $\novertwo < \ell \le \rk${\rm)}, we consider the following two sub-cases:
\begin{enumerate}
\item {\it The case $\ell \le \n$}\,: Similar to the previous case, by the Sobolev inequalities
  $$
  \|w\|_{L^\frac{2\n}{\n - 2(\ell-j)}(\Omega)} \hspace{-3pt}\le C \|w\|_{H^{\ell-j}(\Omega)} \ \ \text{and}\ \
  \|w\|_{L^\frac{\n}{\ell-j}(\Omega)} \hspace{-3pt}\le C \|w\|_{H^{\frac{\n}{2} - \ell + j}(\Omega)} \,,
  $$
  we obtain that
  \begin{align}
   \|\nabla^j f \nabla^{\ell-j} g\|_{L^2(\Omega)} &\le \|\nabla^j f\|_{L^\frac{2\n}{n - 2(\ell-j)}(\Omega)} \|\nabla^{\ell-j} g\|_{L^\frac{\n}{\ell-j}(\Omega)} \nonumber\\
  &\le C \|f\|_{H^\ell(\Omega)} \|g\|_{H^{\frac{\n}{2}}(\Omega)}\,. \nonumber
  \end{align}

\item {\it The case $\n < \ell \le \rk$}\,: If $j > \rk- \novertwo$\,, by the Sobolev inequalities
  $$
  \|w\|_{L^\frac{2\n}{\n - 2(\rk-j)}(\Omega)} \hspace{-3pt}\le C \|w\|_{H^{\rk-j}(\Omega)}\ \ \text{and}\ \
  \|w\|_{L^\frac{\n}{\rk-j}(\Omega)} \hspace{-3pt}\le C \|w\|_{H^{\frac{\n}{2} - \rk + j}(\Omega)}\,,
  $$
  we obtain that
  \begin{align}
  \qquad\qquad\quad\|\nabla^j f \nabla^{\ell-j} g\|_{L^2(\Omega)} &\le \|\nabla^j f\|_{L^\frac{2\n}{n - 2(\rk-j)}(\Omega)} \|\nabla^{\ell-j} g\|_{L^\frac{\n}{\rk-j}(\Omega)} \le C \|f\|_{H^\rk(\Omega)} \|g\|_{H^{\frac{\n}{2}-\rk+\ell}(\Omega)}\,. \nonumber
  \end{align}
  Now suppose that $\novertwo < j \le \rk - \novertwo$\,. Note that if $0 < \sigma < \smallexp{$\displaystyle{}\frac{1}{2}$}$,
  \begin{align*}
   \|w\|_{H^{\frac{\n}{2} + \sigma}(\Omega)} \hspace{-2pt}&\le C_\sigma \|w\|_{W^{j,\infty}(\Omega)} \le C_\sigma \|w\|_{H^\rk(\Omega)} \,, \\
   \|w\|_{H^{\frac{\n}{2}-\rk+\ell}(\Omega)} \hspace{-2pt}&\le C \|w\|_{H^{\ell-j}(\Omega)} \le C \|w\|_{H^{\ell-\sigma}(\Omega)}\,.
  \end{align*}
   Therefore, by the Gagliardo-Nirenberg-Sobolev interpolation inequality we obtain that
  \begin{align*}
  \|\nabla^j f \nabla^{\ell-j} g\|_{L^2(\Omega)} &\le \|f\|_{W^{j,\infty}(\Omega)} \|g\|_{H^{\ell-j}(\Omega)} \\
   & \le C_\sigma \|f\|^{1-\alpha_j}_{H^{\frac{\n}{2}+\sigma}(\Omega)} \|f\|^{\alpha_j}_{H^\rk(\Omega)} \|g\|^{\alpha_j}_{H^{\frac{\n}{2}-\rk+\ell}(\Omega)} \|g\|^{1-\alpha_j}_{H^{\ell-\sigma}(\Omega)}
  \end{align*}
  for some $\alpha_j \in (0,1)$; thus Young's inequality implies that
  $$
  \qquad\qquad\|\nabla^j f \nabla^{\ell-j}g\|_{L^2(\Omega)} \le C_\sigma \Big[\|f\|_{H^{\frac{\n}{2}+\sigma}(\Omega)} \|g\|_{H^{\ell-\sigma}(\Omega)} + \|f\|_{H^\rk(\Omega)} \|g\|_{H^{\frac{\n}{2}-\rk+\ell}(\Omega)} \Big]\,.
  $$
\end{enumerate}
\end{enumerate}
Summing over all the possible $\ell$, we conclude that for $0 < \sigma < \smallexp{$\displaystyle{}\frac{1}{2}$}$,
\begin{align*}
\sum_{j=1}^\ell \|\nabla^j f \nabla^{\ell-j} g\|_{L^2(\Omega)} \le \left\{\begin{array}{ll}
 C_\sigma \|f\|_{H^{\frac{\n}{2}+\sigma}(\Omega)} \|g\|_{H^{\ell - \sigma}(\Omega)} & \text{if $\ell \le \novertwo$}\,,\vspace{.2cm}\\
 C_\sigma \Big[\|f\|_{H^{\frac{\n}{2}+\sigma}(\Omega)} \|g\|_{H^{\ell-\sigma}(\Omega)} + \|f\|_{H^\rk(\Omega)} \|g\|_{H^{\frac{\n}{2}+\sigma}(\Omega)} \Big] & \text{otherwise}\,.
\end{array}
\right.
\end{align*}
Estimate (\ref{commutator_estimate_elliptic_est_temp}) is then concluded by the fact that for all $\sigma \in \big(0,\smallexp{$\displaystyle{}\frac{1}{4}$}\big)$,
$$
\frac{\n}{2} + \sigma \le \rk \quad\text{and}\quad
\frac{\n}{2} + \sigma \le \ell - \sigma \text{ \ if (in addition)\ }\ell > \frac{\n}{2}\,.
$$
Finally, we conclude estimate (\ref{HkHl_product}) by an additional estimate \vspace{.2cm}\\
$
\displaystyle{}\hspace{60pt} \|f \nabla^\ell g\|_{L^2(\rO)} \le \|f\|_{L^\infty(\rO)} \|g\|_{H^\ell(\rO)} \le C \|f\|_{H^\rk(\rO)} \|g\|_{H^\ell(\rO)}\,.
$
\end{proof}

\begin{proof}[Proof of Corollary {\rm\ref{cor:JA_est}}]
By the definition of determinant, inequality (\ref{HkHl_product}) shows that
$$
\|J\|_{H^\rk(\rO)} \le C \|\nabla \psi\|^\n_{H^\rk(\rO)}
\,.
$$
By the Sobolev embedding $H^\rk(\rO) \subseteq \mC^{0,\alpha}(\rO)$, we find that $J$ is uniformly continuous on $\cls{\rO}$. Since $J\ne 0$ in $\cls{\rO}$ (by the virtue of that $\psi$ being a diffeomorphism), $\|1/J\|_{L^\infty(\rO)} < \infty$ and $|J| \ge 1/\|1/J\|_{L^\infty(\rO)} > 0$. Let $\delta = 1/\|1/J\|_{L^\infty(\rO)}$. The cofactor formula for the inverse of matrices then provides that
\begin{equation}\label{A_L2_est}
\|A\|_{L^2(\rO)} \le \Big\|\dfrac{1}{J}\Big\|_{L^\infty(\rO)} \|JA\|_{L^2(\rO)} \le \frac{C}{\delta} \|\nabla \psi\|^{\n-2}_{H^\rk(\rO)} \|\nabla\psi\|_{L^2(\rO)} \,.
\end{equation}
Therefore, by interpolation and Young's inequality, using (\ref{commutator_estimate_elliptic_est_temp}) with $\sigma = \dfrac{1}{8}$ we find that
\begin{align*}
\|\nabla^\rk A\|_{L^2(\rO)} &\le \frac{1}{\delta} \|J \nabla^\rk A\|_{L^2(\rO)} \le \frac{1}{\delta} \Big[\|\nabla^k (J A)\|_{L^2(\rO)} + \sum_{j=1}^\rk \smallexp{$\displaystyle{}{{k}\choose{j}}$} \|\nabla^j J \nabla^{\rk-j} A\|_{L^2(\rO)}\Big] \\
&\le \frac{C}{\delta} \Big[ \|\nabla \psi\|^{\n-1}_{H^\rk(\rO)} + \|J\|_{H^\rk(\rO)} \|A\|_{H^{\rk-\frac{1}{8}}(\rO)} \Big] \\
&\le \frac{C}{\delta} \Big[\|\nabla \psi\|^{\n-1}_{H^\rk(\rO)} + \|J\|_{H^\rk(\rO)} \|A\|^{1-\frac{1}{8\rk}}_{H^\rk(\rO)} \|A\|^{\frac{1}{8\rk}}_{L^2(\rO)} \Big]\\
&\le C_{\delta,\delta_1} \big(\|\nabla \psi\|_{H^\rk(\rO)}\big) + \delta_1 \|A\|_{H^\rk(\rO)}\,.
\end{align*}
Combining the estimate above with (\ref{A_L2_est}), by choosing $\delta_1 > 0$ sufficiently small we conclude (\ref{JA_est}b).
\end{proof}

\begin{proof}[Proof of Corollary {\rm\ref{cor:f_comp_psi}}]
We prove (\ref{f_comp_psi_est}) by induction. Let $J = \det(\nabla \psi)$ and $A = (\nabla \psi)^{-1}$. With the help of (\ref{JA_est}), the case that $\ell=0$ is concluded by
\begin{equation}\label{f_comp_psi_L2_est1}
\|f\|^2_{L^2(\Omega)} = \int_{\rO} |(f\circ \psi)(y)|^2 |J(y)|\, dy \le C(\|\nabla \psi\|_{H^\rk(\rO)}) \|f\circ \psi\|^2_{L^2(\rO)}
\end{equation}
and
\begin{equation}\label{f_comp_psi_L2_est2}
\|f\circ\psi\|^2_{L^2(\rO)} = \int_\Omega |f(x)|^2 \frac{1}{(J\circ \psi^{-1})(x)}\, dx \le \frac{1}{\delta} \|f\|^2_{L^2(\Omega)}\,,
\end{equation}
where $\delta = 1/\|1/J\|_{L^\infty(\rO)} >0$ is a lower bound for $\|J\|_{L^\infty(\rO)}$. Suppose that (\ref{f_comp_psi_est}) holds for $\ell=j \,(\le \rk)$. Then for $\ell=j+1$, by (\ref{HkHl_product}) and (\ref{JA_est}) we obtain that
\begin{align*}
\|\nabla^{j+1}f\|_{L^2(\Omega)} &\le \|\nabla f\|_{H^j(\Omega)} \le C(\|\nabla\psi\|_{H^\rk(\rO)}) \|(\nabla f)\circ \psi\|_{H^j(\rO)} \\
&\le C(\|\nabla\psi\|_{H^\rk(\rO)}) \big\|A^\rT \nabla (f\circ\psi)\|_{H^j(\rO)} \\
\text{\tiny\color{blue}{($\le$ holds if $j\le \rk$)}} &\le C(\|\nabla\psi\|_{H^\rk(\rO)}) \|A\|_{H^\rk(\rO)} \|D(f\circ\psi)\|_{H^j(\rO)} \\
&\le C(\|\nabla\psi\|_{H^\rk(\rO)}) \|f\circ\psi\|_{H^{j+1}(\rO)}
\end{align*}
and
\begin{align*}
\|\nabla^{j+1}(f\circ \psi)\|_{L^2(\rO)} &= \big\|\nabla^j \big[(\nabla f)\circ\psi D\psi\big] \big\|_{L^2(\rO)} \le \big\|(\nabla f)\circ \psi D\psi\big\|_{H^j(\rO)} \\
\text{\tiny\color{blue}{($\le$ holds if $j\le \rk$)}} &\le C \|\nabla\psi\|_{H^\rk(\rO)} \|(\nabla f)\circ\psi \|_{H^j(\rO)} \\
&\le C \|\nabla\psi\|_{H^\rk(\rO)} \|\nabla f\|_{H^j(\Omega)} \le C \|\nabla \psi\|_{H^\rk(\rO)} \|f\|_{H^{j+1}(\Omega)}\,,
\end{align*}
which, together with the (\ref{f_comp_psi_L2_est1}) and (\ref{f_comp_psi_L2_est2}), concludes the case that $\ell = j+1$.
\end{proof}
\vspace{.1in}

\noindent {\bf Acknowledgments.} AC was supported by the Ministry of Science and Technology (Taiwan) under grant 103-2115-M-008-010-MY2 and by the National Center of Theoretical Sciences. SS was supported by the National Science Foundation under grants DMS-1001850 and DMS-1301380, by OxPDE at the University of Oxford,
and by the Royal Society Wolfson Merit Award. We would like to thank the referees for carefully reading our manuscript and for making extremely useful
showions and corrections which have improved the paper.

\bibliography{Hodge-elliptic}
\bibliographystyle{plain}

\end{document}